\newcommand{\N}{\mathbb{N}}
\newcommand{\R}{\mathbb{R}}
\newcommand{\Z}{\mathbb{Z}}
\newcommand{\ext}{\operatorname{Ext}}
\newcommand{\avg}{\operatorname{Avg}}
\newcommand{\lip}{\operatorname{Lip}}
\newtheorem{thm}{Theorem}[section]
\newtheorem{prop}[thm]{Proposition}
\newtheorem{lem}[thm]{Lemma}
\theoremstyle{definition}
\newtheorem{defn}[thm]{Definition}
\newtheorem{rem}[thm]{Remark}
\newtheorem{assumption}[thm]{Assumption}
\theoremstyle{definition}
\numberwithin{equation}{section}
\author[A. Aboud]{\href{https://www.westmont.edu/people/anna-aboud-phd}{Anna Aboud}}
\address{Department of Mathematics\\
Westmont College\\ 
955 La Paz Road\\
Santa Barbara CA 93108,
United States of America}
\email{aaboud@westmont.edu}
\author[P.~Alonso Ruiz]{\href{https://www.math.tamu.edu/~paruiz/}{Patricia Alonso Ruiz}}
\address{Institute of Mathematics\\ Friedrich Schiller University Jena\\ 07743 Jena, Germany}
\email{patricia.alonso.ruiz@uni-jena.de}
\author[M.~Vaughan]{\href{https://maryvaughan.github.io/}{Mary Vaughan}}
\address{Department of Mathematics and Statistics\\
The University of Western Australia\\
35 Stirling HWY\\
Crawley WA 6009, Australia}
\email{mary.vaughan@uwa.edu.au}
\keywords{Nonlocal energies, Dirichlet forms, discrete approximations, Graph-directed construction, Mosco convergence, electrical networks, analysis on fractals.}
\subjclass[2020]{Primary: 
35R11,  %fractional partial differential equations
31C25,  %Dirichlet forms
28A80.  %fractals
Secondary: 
49J45} %Methods involving semicontinuity and convergence; relaxation
\begin{document}
%%%%%%%%%%%%%%%%%%%%%%%%%%%%%%%%%%%%%%%%%%%%%%%%

%%%%%%%%%%%%%%%%%%%%%%%%%%%%%%%%%%%%%%%%%%%%%%%%
\title[Nonlocal energies on the unit interval]{A new graph-directed construction \\of nonlocal energies on the unit interval}
%%%%%%%%%%%%%%%%%%%%%%%%%%%%%%%%%%%%%%%%%%%%%%%%

%%%%%%%%%%%%%%%%%%%%%%%%%%%%%%%%%%%%%%%%%%%%%%%%
\begin{abstract}
We present an analytic construction of nonlocal energies on the unit interval. 
The energies are defined using a new graph-directed construction of discrete energies on dyadic approximations of the interval. 
When the discrete jump kernels are comparable to the kernel of the fractional discrete Laplacian, we prove that the discrete energies Mosco converge and the limiting energy is equivalent to the fractional Gagliardo seminorm. 
\end{abstract}
%%%%%%%%%%%%%%%%%%%%%%%%%%%%%%%%%%%%%%%%%%%%%%%%

\maketitle
%%%%%%%%%%%%%%%%%%%%%%%%%%%%%%%%%%%%%%%%%%%%%%%%

%%%%%%%%%%%%%%%%%%%%%%%%%%%%%%%%%%%%%%%%%%%%%%%%
\section{Introduction}
%%%%%%%%%%%%%%%%%%%%%%%%%%%%%%%%%%%%%%%%%%%%%%%%

In a continuum space, such as the unit interval, nonlocal energies naturally arise as functionals associated with operators describing processes that exhibit long-range interactions. These appear in applications across many areas of science, such as mathematical finance, materials science, fluid mechanics, and social science.  
Roughly speaking, (purely) nonlocal processes are associated with jumps. This is in contrast to local processes, where interactions happen only within small neighborhoods and whose associated processes are of diffusion type. Nonlocal processes can be obtained from local ones through subordination, as is the case of L\'evy processes constructed from Brownian motion.

When the underlying space is discrete, as happens with graphs and networks, the concept of nonlocality refers to the fact that interactions (wires) are allowed beyond graph neighbors. Such models appear in the context of anomalous diffusions in complex systems; see e.g.~\cites{MK00,RM14} and references therein.

Both discrete and continuous settings come together when a continuum, as the unit interval, is approximated by a sequence of discrete sets, for example dyadic points. Discrete approximations are the basis for practical simulation of processes taking place in a theoretic continuum. They are also key in the construction of local processes on porous media modeled by post-critically finite (p.c.f.) self-similar fractals; see~\cites{Kigami01,Str06} for a general introduction to the topic.

Motivated by the local construction in the fractal setup, the present paper explores an analogous analytic construction of \emph{nonlocal} energies in the continuum space $[0,1]$ as limits of 
energies on discrete dyadic approximations. The focus is thus on the ability to directly construct a nonlocal process in a limit space \emph{without using the (a priori) existence of such a process}. This approach stands in contrast to the typical subordination method used in the literature, see e.g.~\cites{Hin09,CKK}. 

More precisely, we propose a way to construct  
energy forms on finite (dyadic) graph approximations of $[0,1]$ whose limit will yield nonlocal Dirichlet forms on the unit interval. The approach draws from the theory of electrical networks that is used in the fractal context to construct local Dirichlet forms~\cite{Kigami01}*{Chapter 2}. As such, we refer to the graph approximations as networks.
Following the precedent of~\cite{Str06}, the interval $[0,1]$ serves as model space to test the construction. 
Discrete approximations of nonlocal energies and operators have appeared in \cite{Stinga} to compare discrete fractional Laplacians and discretized fractional Laplacians, 
and in \cite{CKK} to study discrete approximations of jump processes on metric measure spaces. 
Discrete approximations are also useful in numerical approximations; our approach is partially motivated by the possibility of creating new % a 
numerical schemes incorporating boundary conditions in the future.

An important observation in the nonlocal setting is that, to encode nonlocality into the resulting Dirichlet form, the finite networks (i.e.~graphs) must be complete, that is any two nodes (i.e.~vertices) in the network
are connected by a wire (i.e.~edge), see Figure \ref{fig:intro} below.   
This precludes both finite ramification and self-similarity, 
and it is not obvious how to implement the analogous steps to those in the standard local construction where it is crucial to understand how the resistances between wires change from one approximation stage to the next. Other works in the literature dealing with approximation of nonlocal processes leave this question open~\cite{CKK}.

\begin{figure}[htb]
\begin{center}
\begin{tikzpicture}[thick,scale=.7, main/.style ={circle, draw, fill=black!50,
                        inner sep=0pt, minimum width=4pt}]
%n=0
\node[main]  (0) at (-10,3.5) {};
\node[main]  (4) at (-2,3.5) {};
    \draw (0) to (4);
\node at (-6,2.8) {\small Stage $0$};
%n=1
\node[main]  (0) at (0,3.5) {};
\node[main]  (2) at (4,3.5) {};
\node[main]  (4) at (8,3.5) {};
    \draw (0) to (2) to (4);
    \draw[blue] (0) to  [looseness=1]  (4);
\node at (4, 2.8) {\small Stage $1$};
%n=2
\node[main]  (0) at (-10,0) {};
\node[main]  (1) at (-8,0) {};
\node[main]  (2) at (-6,0) {};
\node[main]  (3) at (-4,0) {};
\node[main]  (4) at (-2,0) {};
% length 4
\draw[orange] (0) to  [looseness=1]  (4);
% length 3
\draw[red] (0) to  [looseness=1]  (3);
\draw[red] (1) to  [looseness=1]  (4);
% length 2
\draw[blue] (0) to  [looseness=1]  (2) to   [looseness=1] (4);
\draw[blue] (1) to  [looseness=1]  (3);
    % length 1
\draw (0) to (1) to (2) to (3) to (4);
\node at (-6, -.7) {\small Stage $2$};
%%n=3
\foreach \x in {0,1,2,3,4,5,6,7,8}{
	\node[main]  (\x) at (\x,0) {};
    }
% length 1
\foreach \x in {0,1,2,3,4,5,6,7}{
    \draw (\x) to (\x+1,0);
    }
% length 8
\draw[violet] (0) to  [looseness=1]  (8);
% length 7
\draw[magenta] (0) to  [looseness=1]  (7);
\draw[magenta] (1) to  [looseness=1]  (8);
% length 6
\foreach \x in {0,1,2}{
    \draw[cyan] (\x) to [looseness=1] (\x+6,0);
    }
% length 5
\foreach \x in {0,1,2,3}{
    \draw[green!85!blue] (\x) to [looseness=1] (\x+5,0);
    }
% length 4
\foreach \x in {0,1,2,3,4}{
    \draw[orange] (\x) to [looseness=1] (\x+4,0);
    }
% length 3
\foreach \x in {0,1,2,3,4,5}{
    \draw[red] (\x) to [looseness=1] (\x+3,0);
    }
% length 2
\foreach \x in {0,1,2,3,4,5,6}{
    \draw[blue] (\x) to [looseness=1] (\x+2,0);
    }
\foreach \x in {0,1,2,3,4,5,6,7,8}{
	\node[main]  (\x) at (\x,0) {};
    }
\node at (4, -.7) {\small Stage $3$};
\end{tikzpicture}
\end{center}
\caption{Electrical networks (also called finite graph approximations)}
\label{fig:intro}
\end{figure}
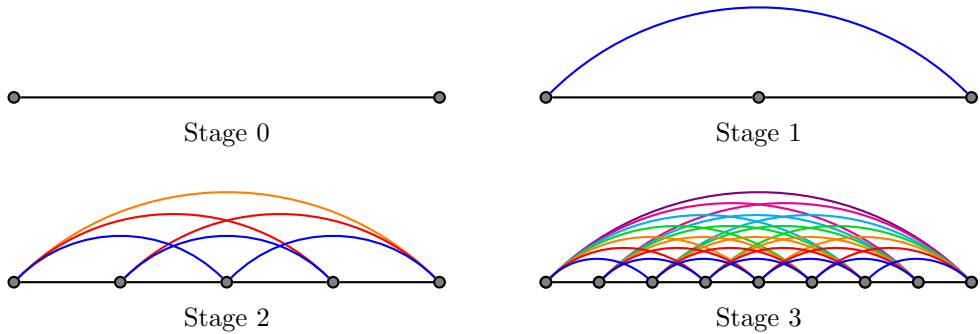

The solution we propose is a new \emph{graph-directed} construction of discrete energies on the discrete approximations of the unit interval. 
An extension to more general spaces, including p.c.f.~fractals, will be the subject of future work.

\subsection{Graph-directed constructions}\label{subsec:GD_construction}

Graph-directed constructions were  
introduced by Maudin and Williams in \cite{MauldinWilliams} as a way to obtain new deterministic fractals. 
Roughly speaking, such a graph is a geometric construction based on the following ingredients: 
\begin{enumerate}
    \item a (finite) index set $\{1,\dots, N\}$,
    \item a  sequence of compact and connected subsets $J_i \subset \R^d$, $i \in \{1,\dots, N\}$,
    \item a weighted, directed graph consisting of $N$ vertices, indexed by $i \in \{1,\dots,N\}$,
    \item a (contraction) map from $J_j$ to $J_i$ for each directed edge from $i$ to $j$.
\end{enumerate}
Under appropriate assumptions on the compact sets and maps, the construction by Mauldin and Williams produces a fractal in $\mathbb{R}^d$ based on that index set $\{1,\dots, N\}$.  
Later on, Hambly and Nyberg developed a general analytic approach to diffusion processes on finitely ramified, graph-directed self-similar  
fractals that followed the Mauldin--Williams construction~\cite{Hambly-Nyberg}. Their methods rely on the theory of Dirichlet forms on p.c.f.~fractals due to Kigami~\cites{Kigami01}. 
We refer to~\cites{Hybrids,NX20,CQTY22} for further analytic investigations on graph-directed fractals. 

While the unit interval as a set can be obtained from a graph-directed construction with $N=1$ 
(see Appendix \ref{sec:appendixA}),
our goal is rather to introduce a graph-directed construction as in \cite{Hambly-Nyberg} to obtain a nonlocal energy on $[0,1]$ as the  limit of discrete energies featuring the long-range interactions at each dyadic approximation of $[0,1]$. 
We highlight some unique features of our construction here and refer to Section \ref{sec:construction} for details.

From Figure \ref{fig:intro}, recall that the approximating electrical networks must be \emph{complete}, meaning all nodes are connected by a wire, to effectively encode all nonlocal  interactions in the resulting  energy on $[0,1]$. 
Consequently, the number of wires connected to a given node grows exponentially like $2^n$ as the stage $n$ of approximation increases.  
In this regard, it is helpful to view the wires in Figure \ref{fig:intro} as intervals in $\R$:
\begin{itemize}
    \item At stage 0, view the single black wire as $[0,1]$. 
    \item At stage 1, the black wires 
    correspond to $[0,\frac12]$ and $[\frac12,1]$, regarded as contractions of $[0,1]$. The blue wire corresponds to a copy of $[0,1]$ (drawn with an arc for visual purposes), and it is the new wire introduced at this stage. 
\end{itemize}
The colored wires in Figure \ref{fig:intro} each arise from a copy of the unit interval and make up the sequence of compact, connected subsets of $\R$ in our graph-directed construction. Creation of a complete network necessitates an index set of $\N$, rather than the finite index set of size $N$. 
To the best of our knowledge, 
graph-directed constructions with infinitely many vertices are new in the literature and present a significant distinction from \cites{Hambly-Nyberg,MauldinWilliams}. 

Another important difference from previous graph-directed works is that not all of the maps involved in the construction have contraction ratio strictly less than one. Indeed, returning to Figure \ref{fig:intro}, the blue wire at Stage 2 arises from the identity map (i.e.~contraction ratio $1$), and we have nontrivial overlap with the two black wires (again viewed as the intervals $[0,1]$, $[0,\frac12]$, and $[\frac12,1]$ respectively). 
Since we are not interested in developing the unit interval itself, this does not pose a problem in our setting.

As a bi-product of the graph-directed construction, each index $i \in \N$ yields 
a unique sequence of electrical networks  that give rise to 
energies on the compact sets $[0,\frac{1}{i}] \cup [1 - \frac{1}{i},1]$.
We call these the \emph{physical spaces} (see Section \ref{sec:physical space}). 
The electrical networks for $i>1$ are complete bipartite graphs, namely, the wires connect dyadic rationals in $[0, \frac{1}{i}]$ to dyadic rationals in $[1 - \frac{1}{i},1]$. 
In a sense, these characterize the long-range interactions in the electrical networks and physical space for $i=1$ that are absent in the classical construction of local energies on the unit interval.

Using the electrical networks created by the graph-directed construction, we obtain discrete energies $\{\mathcal{E}_i^{(n)}\}_{n \geq 0, i \geq 1}$ that are related through a so-called graph-directed self-similar relationship, see Lemma \ref{L:graph_directed_ss_energy}. 

%%%%%%%%%%%%%%%%%%
\subsection{Obtaining nonlocal energies on the physical spaces}
%%%%%%%%%%%%%%%%%%

To define a meaningful nonlocal energy on $[0,1]$ from the discrete energies $\{\mathcal{E}_1^{(n)}\}_{n \geq 0}$, we start by following the 
\emph{finite energy approach} from~\cite{Kig89} and consider
\begin{equation}\label{eq:energy-intro}
\mathcal{E}_1^{(\infty)}(u) := \lim_{n \to \infty} \mathcal{E}_1^{(n)}(u \big|_{V_1^{(n)}}),
\end{equation}
where $u\big|_{V_1^{(n)}}$ denotes the usual restriction of $u \in L^2([0,1],dx)$ to the set of dyadic rationals in $[0,1]$ at stage $n$. 
The general idea of this approach is to use the theory of electrical networks to determine compatibility conditions on the weights of the wires in the discrete approximations that allow one to appropriately move between stages, see \cites{Kigami01,Tetali,DoyleSnell}. 
Since our electric networks are complete, we cannot repeatedly apply the usual network reduction rules (e.g.~series, parallel, delta-wye). Nevertheless, we present some necessary conditions for compatibility in Section \ref{subsec: compat} and provide a more comprehensive explanation in Appendix \ref{sec:appendixA}. 

Of particular interest and one of the main results in the paper is to determine the Mosco convergence of the sequence $\{\mathcal{E}_1^{(n)}\}_{n \geq 0}$. Mosco convergence has a number of valuable implications, including the convergence of related processes, of energy minimizers, and of the associated semigroups. We are able overcome the aforementioned compatibility challenges and construct a meaningful nonlocal energy by assuming that the jump kernels of the discrete energies are comparable to the kernel of the $s$-fractional discrete Laplacian for $s \in (0,\frac12)$ studied in \cite{Stinga}. 
A key observation towards the proof is the ability 
to characterize the limiting energy~\eqref{eq:energy-intro} by means of a \emph{density approach} as was done in~\cite{CKK}. Indeed, we prove in Theorem~\ref{thm:DF-main} that both approaches yield the same energy functionals with the same domains under reasonable assumptions on the discrete jump kernels. 
One benefit to the density approach over the finite energy approach is the simplicity of the domain and the availability of tools from \cite{CKK} to study the (Mosco) convergence as $n \to \infty$. That being said, the main convergence result in \cite{CKK} does not immediately apply to our setting as not all assumptions hold.
In addition, we prove that the energy \eqref{eq:energy-intro} is equivalent to the square of the fractional seminorm for the Gagliardo space $H^s([0,1])$, $s \in (0,1)$.

\medskip

Although our main objective  
 is the construction of nonlocal energies on the interval, we also analyze the energies on the physical spaces $[0,\frac{1}{i}] \cup [1 - \frac{1}{i},1]$ associated with $i>1$ in the graph-directed construction. As it turns out, less regularity is required to obtain similar results to the case of the interval, including the $\Gamma$-convergence of the discrete energies.

Since we are dealing with infinitely many energies, both on each level of approximation $\{\mathcal{E}_i^{(n)}\}_{i\geq 1}$ and on the physical spaces $\{\mathcal{E}_i^{(\infty)}\}_{i \geq 1}$, we also explore the result when we send $i \to \infty$. 
Evaluated along a fixed function $u$, we prove in Section \ref{sec:i-limit} that both limits result in the same energy $|u(0) - u(1)|^2$. 
Studying more general notions of convergence from this point of view is left as the subject of future research.

\medskip

%%%%%%%%%%
\subsection{Organization of the paper}
%%%%%%%%%%

The paper is organized as follows:
Section \ref{sec:construction} presents the graph-directed construction that models the discrete approximations of the physical spaces and their associated electric networks.
The latter are used in Section \ref{sec:discrete}
to define discrete energies on the discrete approximations of the physical space. 
In Section \ref{sec:limit}, 
we discuss both the finite energy approach and the density approach and show their equivalence. 
The proof of the Mosco convergence of the nonlocal discrete energies under suitable assumptions is presented in Section \ref{sec:mosco}. 
Finally, 
Section \ref{sec:i-limit} addresses the question of taking the limit of the nonlocal energies as $i \to \infty$. 
For expository reasons, we present the analogous construction of local energies on the unit interval and showcase the challenges that arise in the nonlocal setting in Appendix \ref{sec:appendixA}. 
In Appendix \ref{sec:appendixB}, we establish some estimates on the effective resistance distance which  
may be useful for future study.

%%%%%%%%%%%%%%%%%%%%%%%%%%%%%%%%%%%%%%%%%%%%%%%%
\section{Graph-directed construction of electrical networks}\label{sec:construction}
%%%%%%%%%%%%%%%%%%%%%%%%%%%%%%%%%%%%%%%%%%%%%%%%

In this section, we explain how to use a graph-directed construction that  gives rise to
a sequence of electrical networks on the discrete approximations of the so-called physical spaces $\overline{V}_i^*$, see~\eqref{E:Vstar_def}. The unit interval corresponds to the case $i=1$.  
First, we introduce a weighted, directed graph that we call the \emph{index space}. 
Second, the directed graph is used to define finite approximations of the physical spaces, and finally to construct the associated electrical networks.

%%%%%%%%%%%%%%%%%%%%%%%%%%%%%%%%%%%%%%%%%%%%%%%%
\subsection{Index space}\label{sec:symbolic space}
%%%%%%%%%%%%%%%%%%%%%%%%%%%%%%%%%%%%%%%%%%%%%%%%

We consider the directed graph $(\N,E)$ described in Figure~\ref{fig:graph-directed}.
Its edge set is given by
\begin{equation*}\label{E:edge_set_def}
E := \bigcup_{i,j\in\N} E_{i,j},
\end{equation*}
where $E_{i,j}$ is the set of all directed edges connecting vertex $i$ to vertex $j$ and is given by
\begin{equation}\label{E:Eij_def}
    E_{i,j}=\begin{cases}
    \{e_{i,j}\}&\text{if }j=2i-2 \text{ and }i\geq 2;\\
    \{e_{i,j},e_{i,j}'\}&\text{if }j=2i-1;\\
    \{e_{i,j}\}&\text{if }j=2i;\\
    \emptyset&\text{ else (including $i=1,j=0$)}.
    \end{cases}
\end{equation}
As an abuse of notation, we will sometimes write $e_{i,j}$ to denote an arbitrary edge from $i$ to $j$, i.e.,~we do not always include the prime notation when $j=2i-1$. 
In addition, for $i\in\N$, the set $E_i$ will denote the set of all edges that exit the vertex $i$, that is
\begin{equation}\label{E:Ei_def}
    E_i:=\bigcup_{j\in\N}E_{i,j}.
\end{equation}
From \eqref{E:Eij_def} and Figure \ref{fig:graph-directedA}, note that $e_{1,0}$ does not exist and $E_1$ only has three elements, whereas $E_i$ has four elements for all $i \geq 2$.

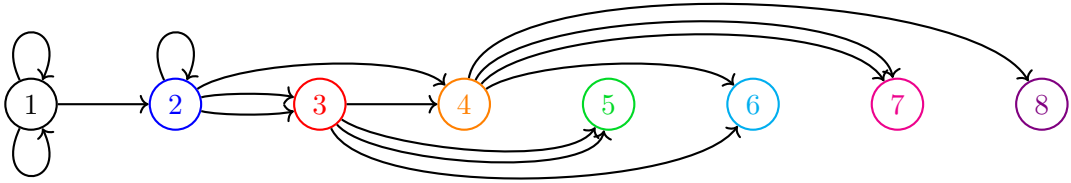
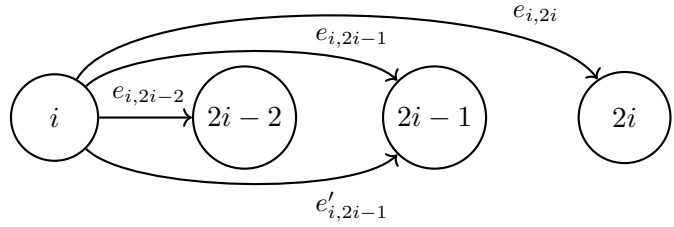
\begin{figure}[hbt]
\begin{center}
\subfloat[Directed edges from vertices $i=1,2,3,4$]{
\label{fig:graph-directedA}
\begin{tikzpicture}[ node distance={19mm}, thick, main/.style = {draw, circle}] 
\node[main] (1) {$1$}; 
\node[main,blue] (2) [right of=1] {$2$};
\node[main,red] (3) [ right of=2] {$3$};
\node[main,orange] (4) [right of=3] {$4$};
\node[main,green!85!blue] (5) [right of=4] {$5$};
\node[main,cyan] (6) [right of=5] {$6$};
\node[main,magenta] (7) [right of=6] {$7$};
\node[main,violet] (8) [right of=7] {$8$};
% \node[main,brown] (9) [right of=8] {$9$};
% \node[main,gray] (10) [right of=9] {$10$};
\draw[->] (1) -- (2);
\draw[->] (1) to [out=115,in=65,looseness=8] (1);
\draw[->] (1) to [out=-115,in=-65,looseness=8] (1);
\draw[->] (2) to [out=115,in=65,looseness=8]  (2);
\draw[->] (2) to [out=15,in=165,looseness=.5](3);
\draw[->] (2) to [out=-15,in=-165,looseness=.5] (3);
\draw[->] (2)  to [out=35,in=135,looseness=.5] (4);
\draw[->] (3) --(4);
\draw[->] (3) to [out=-35,in=-120,looseness=.5]  (5);
\draw[->] (3) to [out=-50,in=-100,looseness=.5] (5);
\draw[->] (3) to [out=-65,in=-125,looseness=.5]  (6);
\draw[->] (4)  to [out=35,in=135,looseness=.5] (6);
\draw[->] (4) to [out=50,in=120,looseness=.5]  (7);
\draw[->] (4) to [out=65,in=100,looseness=.5] (7);
\draw[->] (4) to [out=80,in=120,looseness=.5] (8);
\end{tikzpicture} }

\subfloat[Directed edges from vertex $i \geq 2$]{
\label{fig:graph-directedB}
\begin{tikzpicture}[node distance={25mm}, thick, main/.style = {draw, circle}] 
\node[main] (3) {$\,\,\,\,\,i\,\,\,\,\,$}; 
\node[main] (4) [right of=3] {$2i-2$};
\node[main] (5) [right of=4] {$2i-1$};
\node[main] (6) [right of=5] {$\,\,\,\,2i\,\,\,\,$};
\draw[->] (3) -- (4);
\draw[->] (3) to [out=45,in=135,looseness=.5] (5);
\draw[->] (3) to [out=-45,in=-135,looseness=.5] (5);
\draw[->] (3) to [out=60,in=125,looseness=.5] (6);
%weights
\node [above right=-.4cm and .2cm of 3] {\small $e_{i,2i-2}$}; 
\node [above right=.3cm and .3cm of 4] {\small $e_{i,2i-1}$}; 
\node [below right=.35cm and .3cm of 4] {\small $e_{i,2i-1}'$};
\node [above right=.6cm and .4cm of 5] {\small $e_{i,2i}$}; 
\end{tikzpicture} 
}
\end{center}
\caption{Directed graph $(\N,E)$ in the index space}
\label{fig:graph-directed}
\end{figure}

A path $\sigma$ of length $n\geq 1$ is a concatenation of $n$ consecutive edges as traversed along $(\N,E)$ in Figure \ref{fig:graph-directed} and may be represented as
\begin{equation}\label{eq:path}
\sigma = e_{i_0, i_1} e_{i_1,i_2} \dots e_{i_{n-1}, i_n}.
\end{equation}
In particular, the above path 
originates at the vertex $i_0$,  passes through the vertices $i_k$, $1 \leq k \leq n-1$, and terminates at vertex $i_n$.
Furthermore, in view of~\eqref{E:edge_set_def}, 
\begin{equation}\label{eq:allowable}
i_{k} \in \{ 2i_{k-1}-2,  2i_{k-1}-1, 2i_{k-1} \}, \quad k=1,\dots, n.
\end{equation}
Given a path $\sigma$ as in \eqref{eq:path}, 
we denote the $j$th vertex it passes through by
\[
\sigma(j) := i_j,\qquad 0 \leq j \leq n
\]
and write $\sigma = e_{\sigma(0), \sigma(1)} \cdots e_{\sigma(n-1), \sigma(n)}$. 
Analogous to~\eqref{E:Eij_def} and~\eqref{E:Ei_def}, we define the collection of paths $\sigma$ of length $n$ starting at $i$ and ending at $j$ by
\begin{equation*}
    E_{i,j}^{(n)}:=\{\sigma\text{ of length }n~\colon~\sigma(1)=i,~\sigma(n)=j\}.
\end{equation*}
In particular, $E_{i,j}^{(1)}=E_{i,j}$. 

We now describe the sets $E_i^{(n)}$ in terms of the sets $E_{i,j}^{(n)}$. 

\begin{lem}\label{lem:path sets}
Fix $n \in \N$ and let $|E|$ denote the cardinality of a countable set $E$. 
Then
\begin{equation}\label{eq:by-object}
E_1^{(n)} = \bigcup_{j=1}^{2^n} E_{1,j}^{(n)} \quad \hbox{and} \quad
E_i^{(n)} =\bigcup_{j=2^n(i-2)+2}^{2^ni} E_{i,j}^{(n)} \quad \hbox{for}~i>1.
\end{equation}
\end{lem}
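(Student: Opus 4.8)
The plan is to reduce the claim to a reachability computation in the directed graph $(\N,E)$ and then carry out that computation by induction on $n$. Since $E_i^{(n)} = \bigcup_{j\in\N} E_{i,j}^{(n)}$ by definition, and since $E_{i,j}^{(n)}$ is nonempty precisely when there is a path of length $n$ from $i$ to $j$, the statement amounts to identifying the set $R_n(i) := \{ j \in \N : E_{i,j}^{(n)} \neq \emptyset\}$ of vertices reachable from $i$ in exactly $n$ steps. I claim $R_n(1) = \{1, \dots, 2^n\}$ and $R_n(i) = \{2^n(i-2)+2, \dots, 2^n i\}$ for $i > 1$. Granting this, the two displayed identities follow at once: dropping the empty terms from $\bigcup_{j\in\N} E_{i,j}^{(n)}$ leaves exactly the union over $j \in R_n(i)$, which is the stated range.

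The key structural observation is that each $R_n(i)$ is an \emph{integer interval} and that the one-step transition map preserves this property. By \eqref{eq:allowable}, the vertices reachable from a single vertex $k$ in one step are $\{2k-2, 2k-1, 2k\}$ when $k \geq 2$, and $\{1,2\}$ when $k=1$ (reading off \eqref{E:Eij_def}). For two consecutive values $k$ and $k+1$, the blocks $\{2k-2, 2k-1, 2k\}$ and $\{2k, 2k+1, 2k+2\}$ overlap in the point $2k$; hence taking the union of these three-element blocks over any integer interval $k \in \{a, \dots, b\}$ yields exactly the integer interval $\{2a-2, \dots, 2b\}$. This ``no gaps'' fact is the heart of the argument and is what makes the endpoints propagate cleanly.

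For $i \geq 2$ I would argue by induction on $n$, starting from $R_0(i) = \{i\}$ (equivalently, checking $n=1$ directly against \eqref{E:Eij_def}). The crucial point is that the lower endpoint $2^n(i-2)+2$ is always at least $2$, so vertex $1$ is never reached and the uniform recursion $k \mapsto \{2k-2, 2k-1, 2k\}$ applies at every step. Writing $R_n(i) = \{a_n, \dots, b_n\}$ with $a_n = 2^n(i-2)+2$ and $b_n = 2^n i$, the overlap fact gives $R_{n+1}(i) = \{2a_n - 2, \dots, 2b_n\}$, and a direct computation yields $2a_n - 2 = 2^{n+1}(i-2)+2 = a_{n+1}$ and $2b_n = 2^{n+1} i = b_{n+1}$, closing the induction.

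The case $i = 1$ requires separate treatment precisely because the self-loops at vertex $1$ (the two edges $e_{1,1}, e_{1,1}'$, arising from $j = 2i-1 = 1$ in \eqref{E:Eij_def}) prevent the lower endpoint from shifting: reachability is clamped at $1$ rather than decreasing. Here I would again induct, splitting the one-step union at $k=1$: the term $k=1$ contributes $\{1,2\}$, while the terms $k = 2, \dots, 2^n$ contribute the interval $\{2, \dots, 2^{n+1}\}$ by the overlap fact, and their union is $\{1, \dots, 2^{n+1}\}$, as required. The only genuine subtlety is this boundary behavior at vertex $1$; once it is isolated, both cases reduce to the same elementary interval bookkeeping.
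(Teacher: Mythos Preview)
Your proof is correct and follows essentially the same route as the paper: both arguments track reachability in $(\N,E)$ via the recursion $i_k \mapsto \{2i_k-2,\,2i_k-1,\,2i_k\}$ from \eqref{eq:allowable}, treating $i=1$ separately because of the self-loops. The only difference is that the paper computes just the extremal endpoints $j_{\min}$ and $j_{\max}$ (which suffices, since including empty $E_{i,j}^{(n)}$ in the union is harmless), whereas your overlap argument additionally verifies that every intermediate $j$ is reached---a slightly stronger conclusion than the lemma requires, but harmless and arguably more informative.
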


\begin{proof}
Let $i \in \N$. 
Let $j_{\min}$ and $j_{\max}$ be smallest and largest possible $j \in \N$ such that $\sigma \in E_i^{(n)}$ with $\sigma(n) = j$, respectively. 
From \eqref{eq:allowable}, we observe that $j_{\max} = 2^ni$ as seen from the path described by the sequence of indices
\[
i_0 := i, \quad i_{k} := 2i_{k-1} = 2^k i, \quad k=1,\dots n. 
\]
On the other hand, $j_{\min}$ is described by the allowable sequence of indices
\[
i_0 := i>1, \quad i_k := 2i_{k-1}-2 \quad \hbox{for}~k=1,\dots, n. 
\]
One can show inductively that $i_k = 2^k(i-2)+2$. 
Consequently, $j_{\min} = 2^k(i-2)+2$ for $i >1$. 
When $i=1$, we have $j_{\min} = 1$. 
Hence, 
\[
E_{i,j}^{(n)} = \emptyset \quad \hbox{for}~j< 2^n(i-2)+2~\hbox{and}~j>2^ni
\]
which gives \eqref{eq:by-object}.
\end{proof}

On each edge $e \in E$, we assign a positive weight $r_{e}>0$. 
For ease of the notation, we may write $r_{i,j} := r_{e_{i,j}}$ and assume that 
\begin{equation*}
r_{i,2i-1}= r_{e_{i, 2i-1}} = r_{e_{i, 2i-1}'} \quad
\text{for all}~i \in \N.
\end{equation*}
The weighted directed graph $(\N,E)$ will be used in the next two subsections to construct a physical space for each $i\in\N$ as well an associated sequence of finite electrical networks approximating a nonlocal energy on said space. 

%%%%%%%%%%%%%%%%%%%%%%%%%%%%%%%%%%%%%%%%%%%%%%%%
\subsection{Physical spaces}\label{sec:physical space}
%%%%%%%%%%%%%%%%%%%%%%%%%%%%%%%%%%%%%%%%%%%%%%%%

What is the physical space associated with the index $i\in\N$? 
To any edge $e\in E_{i,j}$, $j \in \{2i-2,2i-1,2i\}$, we associate the mapping $\phi_e: [0,1] \to [0,1]$ 
given by
\begin{equation*}
\phi_e(x) := \frac{j}{2i}x + s_e,
\end{equation*}
where $s_e \geq 0$ denotes the shift
\[
s_e := \begin{cases}
0 & \hbox{if}~e  \in \{e_{i,2i-1}, e_{i,2i}\}\\
\frac{1}{2i} & \hbox{if}~e \in \{e_{i,2i-2}, e_{i,2i-1}'\}.
\end{cases}
\]
In particular, 
\begin{equation}\label{E:def_phi_e}
\phi_e([0,1]) = 
\begin{cases}
[0,1] & \hbox{if}~e = e_{i,2i}, \\
[0,1-\frac{1}{2i}] & \hbox{if}~e = e_{i,2i-1}, \\
[\frac{1}{2i},1] & \hbox{if}~e = e_{i,2i-1}', \\
[\frac{1}{2i},1-\frac{1}{2i}] & \hbox{if}~e = e_{i,2i-2}~\text{and}~i\geq 2.
\end{cases}
\end{equation}
In addition, for any path $\sigma = e_{\sigma(0), \sigma(1)} \dots e_{\sigma(n-1), \sigma(n)}\in E_i$ starting at $\sigma(0)=i\in\N$, define
\begin{equation}\label{eq:composition}
\phi_\sigma(x):= 
\phi_{e_{\sigma(n-1), \sigma(n)}}\circ 
\dots \circ 
\phi_{e_{\sigma(1), \sigma(2)}} \circ
\phi_{e_{\sigma(0), \sigma(1)}}.
\end{equation}

\begin{figure}[htb]
\begin{center}
\subfloat[$n=0,1,2$ and $i=1$]{
\label{fig:physical-1}
\begin{tikzpicture}[thick,scale=.79, main/.style ={circle, draw, fill=black!50,
                        inner sep=0pt, minimum width=4pt},
main2/.style ={circle, draw, fill=white,
                        inner sep=0pt, minimum width=4pt}]
%level 0
\draw[thin,dashed] (0,2)--(8,2);
\node[main] at (0,2) {};
\node[main] at (8,2) {};
\node at (-1,2) {\scriptsize  $n=0$};
%level 1
\draw[thin,dashed] (0,1)--(8,1);
\node[main] at (0,1) {};
\node[main] at (4,1) {};
\node[main] at (8,1) {};
\node at (-1,1) {\scriptsize  $n=1$};
%\level 2
\draw[thin,dashed] (0,0)--(8,0);
\node[main] at (0,0) {};
\node[main] at (2,0) {};
\node[main] at (4,0) {};
\node[main] at (6,0) {};
\node[main] at (8,0) {};
%labels
\node at (-1,0) {\scriptsize  $n=2$};
\node at (0,-.5) {\scriptsize$0$};
\node at (2,-.5) {\scriptsize$\frac{1}{4}$};
\node at (4,-.5) {\scriptsize$\frac{1}{2}$};
\node at (6,-.5) {\scriptsize$\frac{3}{4}$};
\node at (8,-.5) {\scriptsize$1$};
\node at (4,-1.5) {\vdots};
%continuous space
\draw[line width=2pt] (0,-2.25)--(8,-2.25);
\node[main] at (0,-2.25) {};
\node[main] at (8,-2.25) {};
\node at (-1,-2.25) {\scriptsize $\overline{V}_i^*$};
\node at (0,-2.75) {\scriptsize$0$};
\node at (8,-2.75) {\scriptsize$1$};
\node at (0,-2.8) {\phantom{blank}};
\node at (8.5,-1) {\phantom{x}};
\end{tikzpicture}
}
%%%
\subfloat[$n=0,1,2$ and $i>2$]{
\label{fig:physical-i}
\begin{tikzpicture}[thick,scale=.79, main/.style ={circle, draw, fill=black!50,
                        inner sep=0pt, minimum width=4pt},
main2/.style ={circle, draw, fill=white,
                        inner sep=0pt, minimum width=4pt}]
%level 0
\draw[thin,dashed] (0,2)--(8,2);
\node[main] (0L) at (0,2) {};
\node[main2] at (2,2) {};
\node[main2] at (6,2) {};
\node[main] (0R) at (8,2) {};
%\node at (-1,2) {\scriptsize  $n=0$};
%level 1
\draw[thin,dashed] (0,1)--(8,1);
\node[main] (0L) at (0,1) {};
\node[main] (2L) at (1,1) {};
\node[main2] at (2,1) {};
\node[main2] at (6,1) {};
\node[main] (2R) at (7,1) {};
\node[main] (0R) at (8,1) {};
%\node at (-1,1) {\scriptsize $n=1$};
%\level 2
\draw[thin,dashed] (0,0)--(8,0);
\node[main] (0L) at (0,0) {};
\node[main] (1L) at (.5,0) {};
\node[main] (2L) at (1,0) {};
\node[main]  (3L) at (1.5,0) {};
\node[main2] at (2,0) {};
\node[main2] at (6,0) {};
\node[main] (3R) at (6.5,0) {};
\node[main] (2R) at (7,0) {};
\node[main] (1R) at (7.5,0) {};
\node[main] (0R) at (8,0) {};
%labels
%\node at (-1,0) {\scriptsize  $n=2$};
%
\node at (0,-.5) {\scriptsize$0$};
\node at (.4,-.5) {\scriptsize$\frac{1}{2^2i}$};
\node at (1,-.5) {\scriptsize$\frac{2}{2^2i}$};
\node at (1.6,-.5) {\scriptsize$\frac{3}{2^2i}$};
\node at (2,-.5) {\scriptsize$\frac{1}{i}$};
\node at (8,-.5) {};%{\small$1$};
\node at (7.5,-.5) {};
\node at (7,-.5) {};
\node at (6.5,-.5) {};
\node at (6,-.5) {};%{\small$1-\frac{1}{i}$};
\node[scale=1.15] at (.75,-1.35) {\scriptsize $\underbrace{\hphantom{\text{xxx text}}}_{ V_{i-}^{(2)}}$};
\node[scale=1.15] at (7.25,-.75) {\scriptsize $\underbrace{\hphantom{\text{xxx text}}}_{ V_{i+}^{(2)}}$};
\node at (4,-1.5) {\vdots};
%continuous space
\draw[thin,dashed] (0,-2.25)--(8,-2.25);
\draw[line width=2pt] (0,-2.25)--(2,-2.25);
\draw[line width=2pt] (6,-2.25)--(8,-2.25); 
\node[main] (0L) at (0,-2.25) {};
\node[main] at (2,-2.25) {};
\node[main] at (6,-2.25) {};
\node[main] (0R) at (8,-2.25) {};
%\node at (-1,-2.25) {\small$\overline{V}_i^*$};
\node at (0,-2.75) {\scriptsize$0$};
\node at (2,-2.75) {\scriptsize$\frac{1}{i}$};
\node at (6,-2.75) {\scriptsize$1-\frac{1}{i}$};
\node at (8,-2.75) {\scriptsize$1$};
\end{tikzpicture}
}
\end{center}
\caption{Discrete approximations $V_i^{(n)}$ of the physical spaces $\overline{V}_i^*$}
\label{fig:physical}
\end{figure}
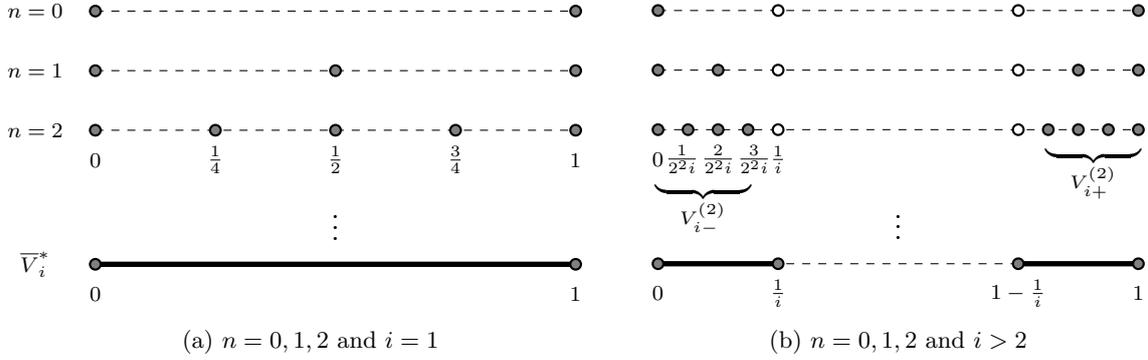

The physical space associated with the vertex $i\in\N$ of the directed graph $(\N,E)$ arises from a sequence of finite approximations $\{V_i^{(n)}\}_{n\geq 0}$ constructed as follows: Set $V_i^{(0)}:=\{0,1\}$ and for each $n\geq 1$ define
\begin{equation*}%\label{E:Vi_n_def}
V_i^{(n)} 
    :=\bigcup_{\sigma\in E_i^{(n)}} \phi_\sigma (V_i^{(0)}).
\end{equation*}
Alternatively and more explicitly,
\begin{equation}\label{eq:nodes}
 V_i^{(n)} = \bigcup_{j=2i-2}^{2i} \bigcup_{e \in E_{i,j}} \phi_e(V_i^{(n-1)})=
    \begin{cases}
    \displaystyle \left\{ \frac{k}{2^n}\right\}_{k=0}^{2^n} &\text{for }i=1,\\[.25em]
    \displaystyle\left\{ \frac{k}{i 2^n}, 1- \frac{k}{i2^n} \right\}_{k=0}^{2^n-1} & \text{for }i>1,
    \end{cases}
\end{equation} 
see Figure \ref{fig:physical}.
For indices $i >1$, it will be useful later on to partition $ V_i^{(n)}$ into the set of points to the left of $\frac{1}{2}$ and those to the right of $\frac{1}{2}$ by setting
\begin{align*}
V_{i-}^{(n)} &:= V_i^{(n)} \cap \Big[0,\frac{1}{i} \Big)= \Big\{ \frac{k}{i 2^n}\Big\}_{k=0}^{2^n-1}\\[.5em]
V_{i+}^{(n)} &:= V_i^{(n)} \cap \Big(1-\frac{1}{i},1 \Big] = \Big\{ 1-\frac{k}{i 2^n}\Big\}_{k=0}^{2^n-1},
\end{align*}
see Figure \ref{fig:physical-i}. 
Finally, the physical space associated with $i\in\N$ is obtained as the closure of
\[
V_i^{*} :=\bigcup_{n \geq 0} V_i^{(n)}.
\]
In particular,
\begin{equation}\label{E:Vstar_def}
\overline{V}_i^{*}
= \Big[0, \frac{1}{i}\Big] \cup \Big[1-\frac{1}{i}, 1\Big] \quad \text{for any}~i \in \N.
\end{equation}

\begin{rem}%\label{R:V1_V2_interval}
The indices $1$ and $2$ are both associated with the same physical space, namely 
\[
\overline{V_1}^{*}=[0,1]=\overline{V}_2^{*}.
\]
\end{rem}  

\subsection{Electric networks}\label{sec:electrical-networks}

Given $i\in\N$ and an approximation level $n\geq 0$, how do the weights $\{r_e\}_{e\in E}$ define an electric network in $V_i^{(n)}$? 
An electric network, or more simply network, consists of a set of \emph{nodes}, a set of \emph{wires} and \emph{resistances} attached to each wire. 
Networks are also referred to as resistance networks in the literature.
In the present setting, the nodes are the set $V_i^{(n)}$ given in~\eqref{eq:nodes} and the set of wires $ W_i^{(n)}$ is a collection of unordered pairs $\{x,y\}$ defined as
\begin{equation}\label{E:def_wires_n}
    W_i^{(n)}:= \bigcup_{\sigma\in E_i^{(n)}} \left\{\phi_\sigma(0),\phi_\sigma(1)\right\},
\end{equation}
since the wires are unordered pairs, $\{x,y\} \in W_i^{(n)}$ if and only if $\{y,x\} \in W_i^{(n)}$. 
In particular, the set of wires is given by 
\begin{equation}\label{eq:wires}
W_i^{(n)}
    =\begin{cases}
    (V_1^{(n)} \times V_1^{(n)}) \setminus \{\{x,x\} : x \in V_1^{(n)}\}& \hbox{if}~i=1,\\[.5em]
    (V_{i-}^{(n)} \times V_{i+}^{(n)}) \cup (V_{i+}^{(n)} \times V_{i-}^{(n)}) & \hbox{if}~i >1.
    \end{cases}
\end{equation}
We illustrate the networks $(V_i^{(n)}, W_i^{(n)})$ for $i=1,2$ and $n=0,1,2$ in Figure~\ref{fig:networks}. 
There we use arcs to visualize the wires $\{x,y\} \in W_i^{(n)}$ and color them according to the following two criteria:
\begin{enumerate}
\item In a given network, all wires $\{x,y\}$ with the same physical Euclidean distance $|x-y|$ are colored the same. 
\item A wire $\{x,y\}$ with index $\sigma\in E_i^{(n)}$ is colored the same as the wire in $(V_j^{(0)}, W_j^{(0)})$ such that $\sigma(n) = j$. For example, the red wire $\{0, \frac34\}$ in $(V_1^{(2)}, W_1^{(2)})$ corresponds to the path $\sigma = e_{1,2} e_{2,3}$, so that $\sigma(2)=3$. Thus, its color is the same as the red wire in $(V_3^{(0)}, W_3^{(0)})$. 
\end{enumerate}

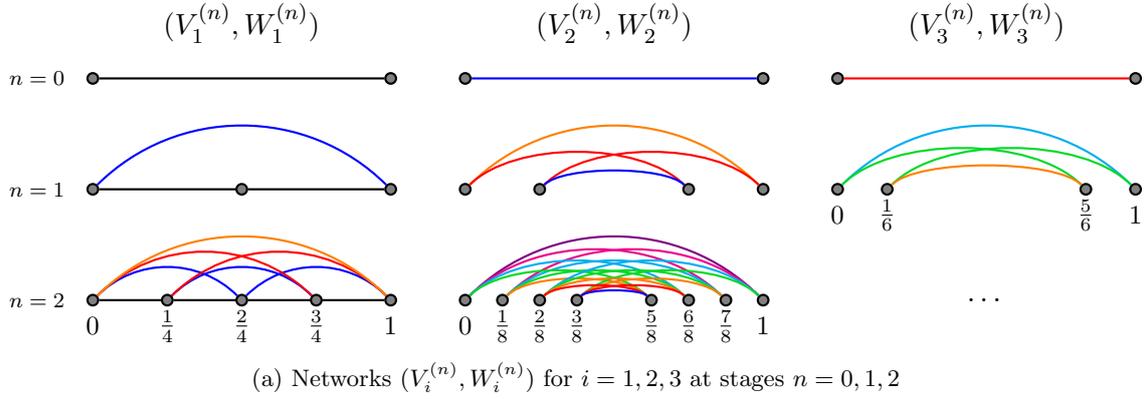
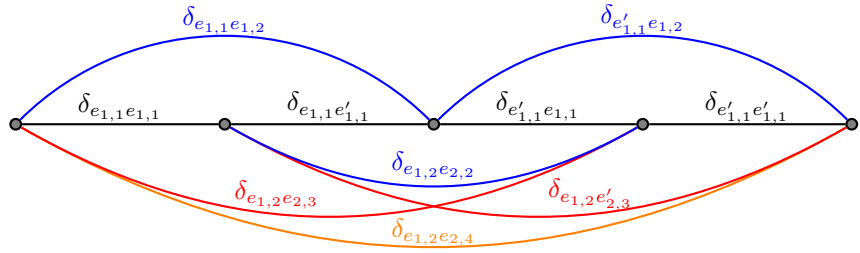
\begin{figure}[htb]
\begin{center}
\subfloat[Networks $(V_i^{(n)}, W_i^{(n)})$ for  $i=1,2,3$ at stages $n=0,1,2$]{\label{fig:networks}
\begin{tikzpicture}[thick,scale=.98, main/.style ={circle, draw, fill=black!50,
                        inner sep=0pt, minimum width=4pt}]
%%labels
\node at (2,.75) {$(V_1^{(n)}, W_1^{(n)})$};
\node at (7,.75) {$(V_2^{(n)}, W_2^{(n)})$};
\node at (12,.75) {$(V_3^{(n)}, W_3^{(n)})$};
\node at (-.75,0) {\scriptsize $n=0$};
\node at (-.75,-1.5) {\scriptsize $n=1$};
\node at (-.75,-3) {\scriptsize $n=2$};
%%i=1,n=0
\node[main]  (0) at (0,0) {};
\node[main]  (4) at (4,0) {};
    \draw (0) to (4);
%%i=2,n=0
\node[main]  (0') at (5,0) {};
\node[main]  (4') at (9,0) {};
    \draw[blue] (0') to (4');
%%i=3,n=0
\node[main]  (0'') at (10,0) {};
\node[main]  (4'') at (14,0) {};
    \draw[red] (0'') to (4'');
%%i=1, n=1
\node[main]  (0) at (0,-1.5) {};
\node[main]  (2) at (2,-1.5) {};
\node[main]  (4) at (4,-1.5) {};
    \draw (0) to (2) to (4);
    \draw[blue] (0) to  [looseness=1]  (4);
%%i=2, n=1
\node[main]  (0') at (5,-1.5) {};
\node[main]  (1') at (6,-1.5) {};
\node[main]  (3') at (8,-1.5) {};
\node[main]  (4') at (9,-1.5) {};
    \draw[orange] (0') to  [looseness=1]  (4');
    \draw[red] (0') to  [looseness=.75]  (3');
    \draw[red] (1') to  [looseness=.75]  (4');
    \draw[blue] (1') to  [looseness=.5]  (3');
%%i=3, n=1
\node[main]  (0'') at (10,-1.5) {};
\node[main]  (1'') at (10.666,-1.5) {};
\node[main]  (3'') at (13.333,-1.5) {};
\node[main]  (4'') at (14,-1.5) {};
    \draw[cyan] (0'') to  [looseness=1]  (4'');
    \draw[green!85!blue] (0'') to  [looseness=.75]  (3'');
    \draw[green!85!blue] (1'') to  [looseness=.75]  (4'');
    \draw[orange] (1'') to  [looseness=.5]  (3'');
%%i=1, n=2
\node[main]  (0) at (0,-3) {};
\node[main]  (1) at (1,-3) {};
\node[main]  (2) at (2,-3) {};
\node[main]  (3) at (3,-3) {};
\node[main]  (4) at (4,-3) {};
    % length 1
    \draw (0) to (1) to (2) to (3) to (4);
    % length 2
    \draw[blue] (0) to  [looseness=1]  (2) to   [looseness=1] (4);
    \draw[blue] (1) to  [looseness=1]  (3);
    % length 3
    \draw[red] (0) to  [looseness=1]  (3);
    \draw[red] (1) to  [looseness=1]  (4);
    % length 4
    \draw[orange] (0) to  [looseness=1]  (4);
%%i=2, n=2
\node[main]  (0L) at (5,-3) {};
\node[main]  (1L) at (5.5,-3) {};
\node[main]  (2L) at (6,-3) {};
\node[main]  (3L) at (6.5,-3) {};
\node[main]  (3R) at (7.5,-3) {};
\node[main]  (2R) at (8,-3) {};
\node[main]  (1R) at (8.5,-3) {};
\node[main]  (0R) at (9,-3) {};
    %from 4:
    \draw[violet] (0L) to  [looseness=1]  (0R);
    \draw[magenta] (0L) to  [looseness=.9]  (1R);
    \draw[magenta] (1L) to  [looseness=.9]  (0R);
    \draw[cyan] (1L) to  [looseness=.8]  (1R);
    %From 3L
    \draw[cyan] (0L) to  [looseness=.8]  (2R);
    \draw[green!85!blue] (0L) to  [looseness=.7]  (3R);
    \draw[green!85!blue] (1L) to  [looseness=.7]  (2R);
    \draw[orange] (1L) to  [looseness=.6]  (3R);
    %From 3R
    \draw[cyan] (2L) to  [looseness=.8]  (0R);
    \draw[green!85!blue] (2L) to  [looseness=.7]  (1R);
    \draw[green!85!blue] (3L) to  [looseness=.7]  (0R);
    \draw[orange] (3L) to  [looseness=.6]  (1R);
    %From 2:
    \draw[orange] (2L) to  [looseness=.6]  (2R);
    \draw[red] (2L) to  [looseness=.5]  (3R);
    \draw[red] (3L) to  [looseness=.5]  (2R);
    \draw[blue] (3L) to  [looseness=.4]  (3R);
%%%
\node at (0,-3.35) {\small $0$};
\node at (1,-3.35) {\small $\frac{1}{4}$};
\node at (2,-3.35) {\small $\frac{2}{4}$};
\node at (3,-3.35) {\small $\frac{3}{4}$};
\node at (4,-3.35) {\small $1$};
%%%
\node at (5,-3.35) {\small $0$};
\node at (5.5,-3.35) {\small $\frac{1}{8}$};
\node at (6,-3.35) {\small $\frac{2}{8}$};
\node at (6.5,-3.35) {\small $\frac{3}{8}$};
\node at (7.5,-3.35) {\small $\frac{5}{8}$};
\node at (8,-3.35) {\small $\frac{6}{8}$};
\node at (8.5,-3.35) {\small $\frac{7}{8}$};
\node at (9,-3.35) {\small $1$};
% \node at (12,-2.25) {\vdots};
% \node at (10.5,-3) {\dots};
\node at (12,-3) {\dots};
%%%
\node at (10,-1.85) {\small $0$};
\node at (10.666,-1.85) {\small $\frac{1}{6}$};
\node at (13.333,-1.85) {\small $\frac{5}{6}$};
\node at (14,-1.85) {\small $1$};
\end{tikzpicture}
}

\vspace{.2cm}

\subfloat[The network $(V_1^{(2)}, W_1^{(2)})$ with resistances]{\label{fig:resistances}
\begin{tikzpicture}[thick,scale=2.75, main/.style ={circle, draw, fill=black!50,
                        inner sep=0pt, minimum width=4pt}]
% nodes
\node[main]  (0) at (0,0) {};
\node[main]  (1) at (1,0) {};
\node[main]  (2) at (2,0) {};
\node[main]  (3) at (3,0) {};
\node[main]  (4) at (4,0) {};
% wires
\draw (0) to (1) to (2) to (3) to (4);
\draw[blue] (0) to  [looseness=1]  (2) to   [looseness=1] (4);
\draw[orange] (0) to  [bend right,looseness=1]  (4);
\draw[red] (0) to  [bend right,looseness=1]  (3);
\draw[red] (1) to  [bend right,looseness=1]  (4);
\draw[blue] (1) to  [bend right,looseness=1]  (3);
%resistances
\node at (.5,.08) {\small $\delta_{e_{1,1} e_{1,1}}$};
\node at (1.5,.08) {\small $\delta_{e_{1,1} e_{1,1}'}$};
\node at (2.5,.08) {\small $\delta_{e_{1,1}' e_{1,1}}$};
\node at (3.5,.08) {\small $\delta_{e_{1,1}' e_{1,1}'}$};
\node[blue] at (1,.5) {\small $\delta_{e_{1,1}e_{1,2}}$};
\node[blue] at (3,.5) {\small $\delta_{e_{1,1}'e_{1,2}}$};
\node[blue] at (2,-.2) {\small $\delta_{e_{1,2}e_{2,2}}$};
\node[red] at (1.25,-.34) {\small $\delta_{e_{1,2}e_{2,3}}$};
\node[red] at (2.75,-.34) {\small $\delta_{e_{1,2}e_{2,3}'}$};
\node[orange] at (2,-.52) {\small $\delta_{e_{1,2}e_{2,4}}$};
\end{tikzpicture}
}
\end{center}
\caption{Electrical networks}
\label{fig:physical construction}
\end{figure}

\begin{rem}\label{rem:copies}
In Figure \ref{fig:physical construction}, notice that $(V_1^{(2)}, W_1^{(2)})$ contains two scaled copies of $(V_1^{(1)}, W_1^{(1)})$ and one copy of $(V_2^{(1)}, W_2^{(1)})$. This is a consequence of the graph-directed construction and holds in general: for $i,n \in \N$, the network 
$(V_i^{(n)}, W_i^{(n)})$ contains a scaled copy of $(V_{2i-2}^{(n-1)}, W_{2i-2}^{(n-1)})$ (if $i>1$), two copies of $(V_{2i-1}^{(n-1)}, W_{2i-1}^{(n-1)})$, and one copy of $(V_{2i}^{(n-1)}, W_{2i}^{(n-1)})$. 
\end{rem}

The resistance $\delta_\sigma>0$ of a wire in $W_i^{(n)}$ indexed by $\sigma\in E_i^{(n)}$ is determined by the weights of the directed graph $(\N,E)$, see Figure \ref{fig:resistances}. 
For $\sigma = e_{\sigma(0), \sigma(1)} \dots e_{\sigma(n-1), \sigma(n)}$, we define
\begin{equation}\label{E:def_resistance_wire}
    \delta_\sigma:=r_{e_{\sigma(0), \sigma(1)}} \cdots r_{e_{\sigma(n-1), \sigma(n)}},
\end{equation}
where we recall that $r_{i,j}$ is edge weight on the edge $e_{i,j}$ in the index space.
By convention, for any $i \in \N$, the resistance on the wires $\{0,1\}$ at stage $n=0$ is always $1$.

In the next lemma, we will see that, in the case $i=1$, at each fixed approximation level $n\geq 0$, 
there is a one-to-one correspondence between wires $W_i^{(n)}$ and paths $E_i^{(n)}$.

\begin{lem}\label{lem:last edge}
Let $n \in \N$ and 
$\{x,y\} \in W_i^{(n)}$. 
Assume that $\sigma$ is the unique path associated to the wire $\{x,y\}$. % \in W_1^{(n)}$. 
Then $\sigma(n) = 2^ni|x-y|$ and, setting $j = \sigma(n)$, we have
\[
e_{\sigma(n-1),\sigma(n)}
    = \begin{cases}
        e_{\frac{j+1}{2}, j} & \hbox{if $j$ is odd and}~x \in V_i^{(n-1)} \\
        e_{\frac{j+1}{2}, j}' & \hbox{if $j$ is odd and}~x \notin V_i^{(n-1)} \\
        e_{\frac{j+2}{2}, j} & \hbox{if $j$ is even and}~x \notin V_i^{(n-1)} \\
        e_{\frac{j}{2}, j} & \hbox{if $j$ is even and}~x \in V_i^{(n-1)}.
    \end{cases}
\]
\end{lem}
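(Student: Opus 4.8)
The plan is to prove the two assertions in turn, the first by a direct slope computation and the second by peeling the last edge off of $\phi_\sigma$ and analysing a single shift. For the identity $\sigma(n)=2^ni|x-y|$, note that each edge map appearing in~\eqref{eq:composition} is affine and increasing: the map attached to $e_{\sigma(k-1),\sigma(k)}$ has slope $\tfrac{\sigma(k)}{2\sigma(k-1)}$, as one reads off from the definition of the edge maps (see~\eqref{E:def_phi_e}). Hence $\phi_\sigma$ is affine and increasing with slope equal to the telescoping product $\prod_{k=1}^n\tfrac{\sigma(k)}{2\sigma(k-1)}=\tfrac{\sigma(n)}{2^n\sigma(0)}=\tfrac{\sigma(n)}{2^ni}$, using $\sigma(0)=i$. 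Since $\{x,y\}=\{\phi_\sigma(0),\phi_\sigma(1)\}$, the quantity $|x-y|$ equals this slope, and rearranging gives $\sigma(n)=2^ni|x-y|$. No case analysis is needed here.

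For the edge formula, I would first use the admissibility relation~\eqref{eq:allowable} to narrow down the last edge from the parity of $j=\sigma(n)$. If $j$ is odd then $j=2\sigma(n-1)-1$ forces $\sigma(n-1)=\tfrac{j+1}{2}$, leaving the two parallel edges $e_{(j+1)/2,j}$ and $e'_{(j+1)/2,j}$; if $j$ is even then either $j=2\sigma(n-1)$ or $j=2\sigma(n-1)-2$, leaving $e_{j/2,j}$ and $e_{(j+2)/2,j}$. In either case the remaining binary choice is recorded exactly by the shift $s$ of the last edge: $s=0$ selects $e_{(j+1)/2,j}$ (respectively $e_{j/2,j}$), while $s=\tfrac{1}{2\sigma(n-1)}$ selects $e'_{(j+1)/2,j}$ (respectively $e_{(j+2)/2,j}$). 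Thus the statement reduces to proving the dichotomy $s=0\iff x\in V_i^{(n-1)}$.

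The main step is to compute $x=\phi_\sigma(0)$ by isolating the last edge. Let $\tau$ be the path formed by the first $n-1$ edges of $\sigma$, so that $\tau\in E_i^{(n-1)}$, $\tau(n-1)=\sigma(n-1)$, and $\phi_\sigma=\phi_\tau\circ\phi_{e_{\sigma(n-1),\sigma(n)}}$. Then $\phi_\tau$ is affine increasing with slope $\tfrac{\sigma(n-1)}{2^{n-1}i}$ and $\phi_\tau(0),\phi_\tau(1)\in V_i^{(n-1)}$ by definition of $V_i^{(n-1)}$. Evaluating at $0$ gives $x=\phi_\tau(s)$. If $s=0$ then $x=\phi_\tau(0)\in V_i^{(n-1)}$. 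If $s=\tfrac{1}{2\sigma(n-1)}$ then, independently of the value of $\sigma(n-1)$, the factor $\sigma(n-1)$ cancels and $x=\phi_\tau(0)+\tfrac{\sigma(n-1)}{2^{n-1}i}\cdot\tfrac{1}{2\sigma(n-1)}=\phi_\tau(0)+\tfrac{1}{i2^n}$. I expect the remaining claim $x\notin V_i^{(n-1)}$ to be the crux, and I would settle it with a parity argument based on the explicit form~\eqref{eq:nodes}: over the common denominator $i2^n$, every point of $V_i^{(n-1)}$, in both $V_{i-}^{(n-1)}$ and $V_{i+}^{(n-1)}$ (and likewise when $i=1$), has an even numerator, whereas $x=\phi_\tau(0)+\tfrac{1}{i2^n}$ has an odd numerator. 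Hence $x\notin V_i^{(n-1)}$, which completes the dichotomy and, combined with the parity analysis of $j$, yields the four stated cases.

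One point I would make explicit at the outset is the order of composition, since the factorization $\phi_\sigma=\phi_\tau\circ\phi_{e_{\sigma(n-1),\sigma(n)}}$ places the last edge as the innermost map; this is the reading consistent with the wire labels in Figure~\ref{fig:resistances} (for instance, the wire indexed by $e_{1,1}e_{1,1}'$ is $\{\tfrac14,\tfrac12\}$, which pins $\phi_\sigma(0)=\tfrac14\notin V_1^{(1)}$ to the primed last edge, as predicted). Under this convention $\phi_\sigma(0)$ depends on the last-edge shift $s$ exactly as used above, so I would fix this orientation before writing the details and then the correspondence goes through cleanly.
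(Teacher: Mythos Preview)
Your proof is correct. The paper states Lemma~\ref{lem:last edge} without proof, so there is nothing to compare against directly; your argument via the telescoping slope for $\sigma(n)=2^ni|x-y|$ and the parity/shift dichotomy for the last edge is exactly the natural one, and the parity observation that every point of $V_i^{(n-1)}$ has even numerator over $i2^n$ cleanly settles the case $s\neq 0$.

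One remark worth keeping: you are right that the order in~\eqref{eq:composition} is inconsistent with the rest of the paper. The proof of Lemma~\ref{L:graph_directed_ss_energy} uses $\phi_{e\tilde\sigma}=\phi_e\circ\phi_{\tilde\sigma}$ with $e$ the first edge, and Figure~\ref{fig:resistances} (as you checked with $e_{1,1}e_{1,1}'\mapsto\{1/4,1/2\}$) confirms that the first edge is outermost and the last edge innermost. Your factorization $\phi_\sigma=\phi_\tau\circ\phi_{e_{\sigma(n-1),\sigma(n)}}$ is therefore the correct one, and flagging this convention at the outset is a good idea.
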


By Lemma \ref{lem:last edge}, given any $\{x,y\} \in W_i^{(n)}$ with $x < y$,  
we can find the unique path $\sigma \in E_i^{(n)}$ such that $x=\phi_{\sigma}(0)$ and $y = \phi_{\sigma}(1)$. 
That is, each wire in the network is determined by a unique path $\sigma\in E_i^{(n)}$ with associated resistance  $\delta_{\sigma}>0$. 

\begin{rem}\label{R:complete_graph}
The networks $(V_i^{(n)},W_i^{(n)})$ can be seen as undirected, weighted graphs where vertices are nodes, edges are wires, and edge weights are resistances.
By construction, every node of $V_1^{(n)}$ is connected by a wire, so that $(V_1^{(n)},W_1^{(n)})$ is a \emph{complete graph}. 
When $i>1$, $(V_i^{(n)}, W_i^{(n)})$ is a \emph{complete bipartite graph} since every node in $V_{i-}^{(n)}$ (the left half interval) is connected to every node in $V_{i+}^{(n)}$ (the right half interval), but no two nodes in the same half interval are connected, see for example second and third columns in Figure \ref{fig:networks}.
\end{rem}

%%%%%%%%%%%%%%%%%%%%%%%%%%%%%%%%%%%%%
\subsection{Notation}
%%%%%%%%%%%%%%%%%%%%%%%%%%%%%%%%%%%%%

Let us concisely summarize the above notation. 

\begin{minipage}{\textwidth}
\begin{multicols}{3}

\noindent
\emph{Index space}.
\begin{itemize}[leftmargin=10pt]
\item $(\N,E)$: directed graph
\item $e$, $e_{i,j}$: edge 
\item $\sigma$: path
\item $E_i$: edges from $i$
\item $E_{i,j}$: edges from $i$ to $j$
\item \mbox{$E_{i}^{(n)}$: paths of length $n$ from $i$}
\item $r_e$: edge weights 
\end{itemize}
\vfill
\hphantom{ghost}

\columnbreak

\noindent
\emph{Physical spaces}.
\begin{itemize}[leftmargin=10pt]
\item $\phi_e$, $\phi_\sigma$: affine maps
\item $V_i^{(n)}$: discrete approx.
\item $V_{i\pm }^{(n)}$: partition of $V_i^{(n)}$ 
\item $\overline{V}_i^{*}$: physical space
\end{itemize}
\vfill
\hphantom{ghost}

\columnbreak

\noindent
\emph{Electric networks}. 
\begin{itemize}[leftmargin=10pt]
\item $(V_i^{(n)}, W_i^{(n)})$: networks
\item $\delta_{\sigma}$: resistance
\end{itemize}
\hphantom{ghost}
\end{multicols}
\end{minipage}

%%%%%%%%%%%%%%%%%%%%%%%%%%%%%%%%%%%%%%%%%%%%%%%%
\section{Approximating energies}\label{sec:discrete}
%%%%%%%%%%%%%%%%%%%%%%%%%%%%%%%%%%%%%%%%%%%%%%%

In this section, we use the networks $(V_i^{(n)},W_i^{(n)})$ to define associated discrete energies that will give rise in Section~\ref{sec:limit} to a nonlocal energy on the physical spaces $\overline{V}_i^{*}$, and in particular on the unit interval when $i=1$. 
While the construction is inspired by the graph-directed approach of Hambly-Nyberg~\cite{Hambly-Nyberg} in the context of local energies on fractals, there are fundamental differences, as has been pointed out in Section~\ref{subsec:GD_construction}. 

%%%%
\subsection{Defining discrete energies}
%%%%

For any $i,n\in\mathbb{N}$, let $\ell(V_i^{(n)})$ denote the set of functions on the nodes of the physical space network, i.e.,
\begin{equation*}
    \ell(V_i^{(n)}):=\{u\colon V_i^{(n)}\to\mathbb{R}\}.
\end{equation*}
The basic building block to define an energy on a finite network is to start with the energy defined on the network $(V_i^{(0)},W_i^{(0)})$, given by 
\begin{equation*}%\label{eq:initial-energy}
    \mathcal{E}_i^{(0)}(u):= (u(1)-u(0))^2, \qquad 
    u\in\ell(V_i^{(0)}) = \ell(\{0,1\}).
\end{equation*}
As explained in Section~\ref{sec:electrical-networks}, each wire in a generic level $n$ is determined by a path $\sigma \in E_i^{(n)}$ and has an associated resistance $\delta_\sigma$. To define the energy associated with the network $(V_i^{(n)},W_i^{(n)})$, we make use of the resistances $\delta_\sigma$ and the mappings $\phi_\sigma$ from~\eqref{E:def_phi_e}. 

\begin{defn}
For each $i,n \in \N$, the energy $(\mathcal{E}^{(n)}_i,\ell(V_i^{(n)}))$ associated with the resistance network $(V_i^{(n)},W_i^{(n)})$ is given by 
\begin{equation}\label{E:def_finite_energy}
\mathcal{E}^{(n)}_i(u)
    := \sum_{\sigma \in E_i^{(n)}} \delta_\sigma^{-1} \mathcal{E}_{\sigma(n)}^{(0)}(u \circ \phi_\sigma),\qquad u\in\ell(V_i^{(n)}).
\end{equation}
\end{defn}

\begin{rem}\label{R:energy_n_alt}
One can use Lemma~\ref{lem:path sets} to rewrite the energy~\eqref{E:def_finite_energy}  
as
\begin{equation*}%\label{E:energy_1n_alt}
    \mathcal{E}_1^{(n)}(u)=\sum_{j=1}^{2^n}\sum_{\sigma\in E_{i,j}^{(n)}}\delta_\sigma^{-1}\mathcal{E}_j^{(0)}(u\circ\phi_\sigma),\qquad u\in\ell(V_1^{(n)})
\end{equation*}
and 
\begin{equation*}%\label{E:energy_in_alt}
    \mathcal{E}_i^{(n)}(u)=\sum_{j=2^n(i-2)+2}^{2^ni}\sum_{\sigma\in E_{i,j}^{(n)}}\delta_\sigma^{-1}\mathcal{E}_j^{(0)}(u\circ\phi_\sigma),\qquad u\in\ell(V_i^{(n)}), \quad i \geq 2.
\end{equation*}
\end{rem}

In view of the definition of the resistances in \eqref{E:def_resistance_wire}, 
we can also represent the energy recursively with what we call a graph-directed self-similar structure. This is consistent with Remark \ref{rem:copies}.

\begin{lem}\label{L:graph_directed_ss_energy}
For $i,n \geq 1$, it holds that
\begin{equation}\label{E:graph_directed_ss_energy}
\mathcal{E}^{(n)}_i(u)
    = \sum_{j=2i-2}^{2i} \sum_{e \in E_{ij}} r_e^{-1}\mathcal{E}_j^{(n-1)}(u \circ \phi_e),\quad u\in\ell(V_i^{(n)}).
\end{equation}
\end{lem}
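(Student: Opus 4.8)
The plan is to unfold the definition~\eqref{E:def_finite_energy} of $\mathcal{E}_i^{(n)}$ and reorganize the sum over $E_i^{(n)}$ according to the first edge each path traverses out of the vertex $i$. The combinatorial backbone is that every path $\sigma\in E_i^{(n)}$ factors uniquely as a concatenation $\sigma=e\tau$ of its first edge $e=e_{\sigma(0),\sigma(1)}\in E_{i,j}$, with $j=\sigma(1)\in\{2i-2,2i-1,2i\}$ by~\eqref{eq:allowable}, and the length-$(n-1)$ tail $\tau=e_{\sigma(1),\sigma(2)}\cdots e_{\sigma(n-1),\sigma(n)}\in E_j^{(n-1)}$ starting at $j$; conversely every such pair $(e,\tau)$ reassembles into a valid path. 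First I would record this as a bijection
\[
E_i^{(n)} \;\cong\; \bigcup_{j=2i-2}^{2i}\ \bigcup_{e\in E_{i,j}} E_j^{(n-1)},
\]
taking care that the two parallel edges $e_{i,2i-1}$ and $e_{i,2i-1}'$ are kept distinct at the level of $e$ (not merely of the target $j$), so that no path is double counted or omitted.

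Next I would track how the three ingredients of~\eqref{E:def_finite_energy} behave under the factorization $\sigma=e\tau$. By~\eqref{E:def_resistance_wire}, the product of edge weights splits off the first factor, $\delta_\sigma=r_e\,\delta_\tau$; by the composition rule~\eqref{eq:composition}, the iterated affine maps factor as $\phi_\sigma=\phi_e\circ\phi_\tau$, so that $u\circ\phi_\sigma=(u\circ\phi_e)\circ\phi_\tau$; and the terminal vertex is unchanged, $\sigma(n)=\tau(n-1)$, whence $\mathcal{E}_{\sigma(n)}^{(0)}=\mathcal{E}_{\tau(n-1)}^{(0)}$. Substituting these three identities into~\eqref{E:def_finite_energy} and pulling the factor $r_e^{-1}$ out of the sum over $\tau$ gives
\[
\mathcal{E}_i^{(n)}(u) = \sum_{j=2i-2}^{2i}\ \sum_{e\in E_{i,j}} r_e^{-1} \sum_{\tau\in E_j^{(n-1)}} \delta_\tau^{-1}\,\mathcal{E}_{\tau(n-1)}^{(0)}\bigl((u\circ\phi_e)\circ\phi_\tau\bigr).
\]
By the very definition~\eqref{E:def_finite_energy}, the innermost double sum is exactly $\mathcal{E}_j^{(n-1)}(u\circ\phi_e)$, and this yields the claimed identity~\eqref{E:graph_directed_ss_energy}.

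I expect the crux to be bookkeeping rather than anything conceptual, concentrated in two places. The first is the \emph{order of composition}: one must verify $\phi_\sigma=\phi_e\circ\phi_\tau$ and not $\phi_\tau\circ\phi_e$, since the affine maps do not commute in general and the two orders produce genuinely different wires (for instance the two middle wires of the network in Figure~\ref{fig:resistances}, which carry equal resistance but correspond to the two opposite orderings); getting this right is precisely what makes the composition with $u$ inside the lower-level energy read as $(u\circ\phi_e)\circ\phi_\tau$. The second is a well-definedness check: for $\mathcal{E}_j^{(n-1)}(u\circ\phi_e)$ to make sense one needs $u\circ\phi_e\in\ell(V_j^{(n-1)})$, i.e.\ $\phi_e\bigl(V_j^{(n-1)}\bigr)\subseteq V_i^{(n)}$, which is exactly the nesting of the discrete approximations behind Remark~\ref{rem:copies} (that $(V_i^{(n)},W_i^{(n)})$ contains scaled copies of the children's networks $j\in\{2i-2,2i-1,2i\}$). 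Finally I would dispatch the base case $n=1$ directly: there the tail $\tau$ is the trivial length-zero path with $\delta_\tau=1$ and $\phi_\tau=\mathrm{id}$, so the identity collapses to the defining relation between $\mathcal{E}_i^{(1)}$ and $\mathcal{E}_j^{(0)}$.
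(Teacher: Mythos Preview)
Your proposal is correct and follows essentially the same argument as the paper: factor each path $\sigma\in E_i^{(n)}$ uniquely as a first edge $e\in E_{i,j}$ followed by a tail $\tilde\sigma\in E_j^{(n-1)}$, use the multiplicativity $\delta_\sigma=r_e\,\delta_{\tilde\sigma}$ and the composition $\phi_\sigma=\phi_e\circ\phi_{\tilde\sigma}$, and recognize the inner sum as $\mathcal{E}_j^{(n-1)}(u\circ\phi_e)$. The separate treatment of the base case $n=1$ is harmless but unnecessary, since the argument is not inductive and applies uniformly for all $n\geq 1$.
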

\begin{proof}
Note first that any $\sigma\in E_i^{(n)}$ may be written as $\sigma=e_{i,\sigma(1)}\tilde{\sigma}$, where $\tilde{\sigma}\in E_{\sigma(1)}^{(n-1)}$ satisfies $\tilde{\sigma}(k)=\sigma(k+1)$ for each $k=0,\ldots,n-1$. Moreover, by construction, $\sigma(1)\in\{2i-2,2i-1,2i\}$, 
\begin{align*}
    \mathcal{E}^{(n)}_i(u)
    =\sum_{\sigma\in E_i^{(n)}}\delta_\sigma^{-1}\mathcal{E}_{\sigma(n)}^{(0)}(u\circ\phi_\sigma)
    &=\sum_{j=2i-1}^{2i}\sum_{e\in E_{i,j}}\sum_{\tilde{\sigma}\in E_j^{(n-1)}}\delta_{e\tilde{\sigma}}^{-1}\mathcal{E}_{\tilde{\sigma}(n-1)}^{(0)}(u\circ\phi_{e\tilde{\sigma}})\\
    &=\sum_{j=2i-1}^{2i}\sum_{e\in E_{i,j}}r_e^{-1}\sum_{\tilde{\sigma}\in E_j^{(n-1)}}\delta_{\tilde{\sigma}}^{-1}\mathcal{E}_{\tilde{\sigma}(n-1)}^{(0)}(u\circ\phi_e\circ\phi_{\tilde{\sigma}})\\
    &=\sum_{j=2i-1}^{2i}\sum_{e\in E_{i,j}}r_e^{-1}\mathcal{E}_j^{(n-1)}(u\circ\phi_e),
\end{align*}
as we wanted to prove.
\end{proof}

%%%%%%%%%%%%%%%%%%%%%%%%%%%%%%%%%%%%%%%%%%%%%%%%
\begin{rem}%\label{R:gdss_alt}
The same relationship~\eqref{E:graph_directed_ss_energy} can be alternatively obtained 
with the expression of the energy from Remark~\ref{R:energy_n_alt}. In the case $i=1$, writing $\sigma=e_{1,k}\tilde{\sigma}$ for any $\sigma\in E_{1,j}^{(n)}$,
\begin{equation}\label{E::gdss_alt_1}
\begin{split}
    \mathcal{E}_1^{(n)}(u)&=\sum_{j=1}^{2^n}\sum_{\sigma\in E_{i,j}^{(n)}}\delta_\sigma^{-1}\mathcal{E}_j^{(0)}(u\circ\phi_\sigma)\\
    &=\sum_{j=1}^{2^n}\sum_{\tilde{\sigma}\in E_{1,j}^{(n-1)}}\delta_{e_{1,1}\tilde{\sigma}}^{-1}\mathcal{E}_j^{(0)}(u\circ\phi_{e_{1,1}\tilde{\sigma}})+
    \sum_{j=1}^{2^n}\sum_{\tilde{\sigma}\in E_{1,j}^{(n-1)}}\delta_{e'_{1,1}\tilde{\sigma}}^{-1}\mathcal{E}_j^{(0)}(u\circ\phi_{e'_{1,1}\tilde{\sigma}})\\
    &\quad+\sum_{j=1}^{2^n}\sum_{\tilde{\sigma}\in E_{1,j}^{(n-1)}}\delta_{e_{2,1}\tilde{\sigma}}^{-1}\mathcal{E}_j^{(0)}(u\circ\phi_{e_{2,1}\tilde{\sigma}}).
\end{split}
\end{equation}
By virtue of Lemma~\ref{lem:path sets}, there are no paths of length $n-1$ starting at $i=1$ that end at $i=2^{n-1}$, whence $E_{1,j}^{(n-1)}$ is empty for $j>2^{n-1}$. Similarly, no backtracking implies $E_{2,j}^{(n-1)}$ is empty if $j<2$. Therefore,~\eqref{E::gdss_alt_1} becomes
\begin{align*}
    &r_{1,1}^{-1}\sum_{j=1}^{2^{n-1}}\sum_{\tilde{\sigma}\in E_{1,1}^{(n-1)}}\delta_{\tilde{\sigma}}^{-1}\mathcal{E}_j^{(0)}(u\circ\phi_{e_{1,1}}\circ\phi_{\tilde{\sigma}})+r_{1,1}^{-1}\sum_{j=1}^{2^{n-1}}\sum_{\tilde{\sigma}\in E_{1,j}^{(n-1)}}\delta_{\tilde{\sigma}}^{-1}\mathcal{E}_j^{(0)}(u\circ\phi_{e'_{1,1}}\circ\phi_{\tilde{\sigma}})\\
    &+r_{1,2}^{-1}\sum_{j=2}^{2^{n-1}\cdot 2}\sum_{\tilde{\sigma}\in E_{2,j}^{(n-1)}}\delta_{\tilde{\sigma}}^{-1}\mathcal{E}_j^{(0)}(u\circ\phi_{e_{1,1}}\circ\phi_{\tilde{\sigma}})
    =\sum_{e\in E_1^{(1)}}r_e^{-1}\mathcal{E}_{e(1)}^{(n-1)}(u\circ\phi_e)\\
    &=\sum_{j=1}^2\sum_{e\in E_{i,j}}r_{1,j}^{-1}\mathcal{E}_j^{(n-1)}(u\circ\phi_e).
\end{align*}
Similar computations lead to~\eqref{E:graph_directed_ss_energy} for any $i\geq 2$. 
\end{rem}

\subsection{Dirichlet forms and discrete jump kernels}%\label{sec:DF-jumps}

The discrete energies defined in the previous section give rise here to Dirichlet forms, which will later on approximate nonlocal energies on the physical spaces $\overline{V}_i^{*}$. We refer the reader to~\cite{Keller-Lenz-Wojciechowski} for basic results and details regarding Dirichlet forms on graphs.

\medskip

To properly regard the energy $(\mathcal{E}_i^{(n)},\ell(V_i^{(n)}))$ as a discrete Dirichlet form with an associated jump kernel, we first equip $V_i^{(n)}$ with the following measure: Fix $i \in \N$ and $n \geq 0$ and consider the partition 
\[
\overline{V}_i^{*} = \bigcup_{x \in V_i^{(n)}} \overline{U_i^{(n)}(x)} 
\]
where, for $x \in V_i^{(n)}$, 

\begin{equation}\label{eq:partiton}
\begin{aligned}
\hbox{for}~i=1, \quad U_1^{(n)}(x) 
    &:= \bigg[x - \frac{1}{2^{n+1}}, x + \frac{1}{2^{n+1}}\bigg) \cap [0,1]\\[.75em]
\quad \hbox{and, for}~i>1, \quad U_i^{(n)}(x) 
   & := \begin{cases} 
        \displaystyle \bigg[x, x+ \frac{1}{2^ni}\bigg) & \hbox{if}~x < \frac{1}{i} \\[1em]
        \displaystyle \bigg(x - \frac{1}{2^n i}, x \bigg] & \hbox{if}~x > 1-\frac{1}{i}.
    \end{cases}
    \quad \quad \quad \mbox{}
\end{aligned}
\end{equation}
Notice that the intervals $U_1^{(n)}(x)$ are centered at points in $x \in V_1^{(n)} \setminus \{0,1\}$ while, for $i>1$, the intervals $U_i^{(n)}(x)$ use $x \in V_i^{(n)}$ as endpoints. 
For $x \in V_i^{(n)}$, we have that 
\begin{align*}
\hbox{for}~i=1, \quad |U_1^{(n)}(x)| &= \begin{cases} \displaystyle\frac{1}{2^n} & \hbox{if}~x \in V_1^{(n)} \setminus \{0,1\} \\[.5em]
\displaystyle\frac{1}{2^{n+1}} &\hbox{if}~x \in  \{0,1\}
\end{cases}\\[.5em]
 \hbox{and, for}~i>1, \quad
|U_i^{(n)}(x)| &= \frac{1}{2^ni}
\end{align*}
where in this context $|\cdot|$ denotes the one-dimensional Lebesgue measure. 
On the set $V_i^{(n)}$, we define the measure $\mu_i^{(n)}$ to be 
\begin{equation}\label{eq:discrete-measure}
\mu_i^{(n)}(B)  
    := \sum_{x\in V_i^{(n)}}\mathbf{1}_B(x) |U_i^{(n)}(x)| \quad \hbox{where}~B \subset V_i^{(n)},
\end{equation}
where $\mathbf{1}_B$ denotes the characteristic function of the set $B$.
Consequently, 
\[
\mu_i^{(n)}(V_i^{(n)}) = |\overline{V}_i^{*}| = \begin{cases}
1 & \hbox{if}~i=1 \\
\frac{2}{i} & \hbox{if}~i>1. 
\end{cases}
\]

With a measure now at hand, we view $(V_i^{(n)},\mu_i^{(n)})$ as a discrete measure space and define a (weighted) graph in the sense of~\cite{Keller-Lenz-Wojciechowski}*{Definition 1.1} by introducing the weight function $j^{(n)}_i\colon V_i^{(n)} \times V_i^{(n)}\to [0,\infty]$ given by
\begin{equation}\label{E:def_jump_kernel}
j^{(n)}_i(x,y):=
\begin{cases}(\delta_{\sigma_{x,y}^{(n)}}\mu_i^{(n)}(x)\mu_i^{(n)}(y))^{-1}
&\hbox{if}~\{x,y\} \in W_i^{(n)}\\
0 & \hbox{otherwise},
\end{cases}
\end{equation}
where 
\begin{equation}\label{eq:path-notation}
\sigma_{x,y}^{(n)} \in E_i^{(n)}
\end{equation}
denotes the unique path of length $n$ associated with $\{x,y\} \in W_i^{(n)}$. 
In particular, $x = \phi_{\sigma_{x,y}^{(n)}}(0)$ and $y = \phi_{\sigma_{x,y}^{(n)}}(1)$.
With this notation, the discrete energy can now be written as
\begin{equation}\label{E:energy_as_DF}
    \mathcal{E}_i^{(n)}(u)=\sum_{x,y\in V_i^{(n)}}(u(x)-u(y))^2j_i^{(n)}(x,y)\mu_i^{(n)}(x)\mu_i^{(n)}(y),
\end{equation}
and we will therefore refer to the function $j^{(n)}_i(x,y)$ as the \emph{jump kernel} associated with $(\mathcal{E}_i^{(n)},\ell(V_i^{(n)}))$. 

\begin{rem}\label{rem:split}
Note that in the case $i>1$, we can write the energy as
\begin{align*}
\mathcal{E}_i^{(n)}(u)
    &= 2\sum_{x\in V_{i-}^{(n)}} \,\, \sum_{y\in V_{i+}^{(n)}} (u(x)-u(y))^2j_i^{(n)}(x,y)\mu_i^{(n)}(y)\mu_i^{(n)}(x)
\end{align*}
because $j_i^{(n)}(x,y) =0$ when $x,y \in V_{i-}^{(n)}$ or $x,y \in V_{i+}^{(n)}$.
\end{rem}

\begin{lem}%\label{lem:discrete dirichlet form}
For each fixed $i, n \in \N$, the pair $(\mathcal{E}_i^{(n)},\ell(V_i^{(n)}))$ defines a Dirichlet form. 
\end{lem}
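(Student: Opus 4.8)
The plan is to verify the defining properties of a Dirichlet form directly from the representation \eqref{E:energy_as_DF}, exploiting that $V_i^{(n)}$ is a finite set. Recall that a Dirichlet form on $L^2(V_i^{(n)},\mu_i^{(n)})$ is a densely defined, closed, symmetric, non-negative bilinear form that is Markovian, i.e.\ invariant under normal contractions. First I would observe that, because $\mu_i^{(n)}$ assigns strictly positive mass $|U_i^{(n)}(x)|>0$ to every node $x\in V_i^{(n)}$, the space $L^2(V_i^{(n)},\mu_i^{(n)})$ coincides as a vector space with $\ell(V_i^{(n)})$ and carries an inner-product norm equivalent to any norm on this finite-dimensional space. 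In particular the stated domain $\ell(V_i^{(n)})$ is dense (indeed equal to the whole space) and complete.

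Polarizing \eqref{E:energy_as_DF} gives the symmetric bilinear form
\begin{equation*}
\mathcal{E}_i^{(n)}(u,v)=\sum_{x,y\in V_i^{(n)}}(u(x)-u(y))(v(x)-v(y))\,j_i^{(n)}(x,y)\,\mu_i^{(n)}(x)\mu_i^{(n)}(y).
\end{equation*}
Symmetry in $u,v$ is immediate, and symmetry of the kernel $j_i^{(n)}(x,y)=j_i^{(n)}(y,x)$ holds because a wire $\{x,y\}\in W_i^{(n)}$ is an unordered pair: by Lemma \ref{lem:last edge} the associated path $\sigma_{x,y}^{(n)}$ depends only on the set $\{x,y\}$, and hence so does the resistance $\delta_{\sigma_{x,y}^{(n)}}$ through \eqref{E:def_resistance_wire}, while $\mu_i^{(n)}(x)\mu_i^{(n)}(y)$ is manifestly symmetric. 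Non-negativity $\mathcal{E}_i^{(n)}(u)\geq 0$ follows term by term, since each summand is the square $(u(x)-u(y))^2\geq 0$ times the non-negative weight $j_i^{(n)}(x,y)\mu_i^{(n)}(x)\mu_i^{(n)}(y)\geq 0$. Closedness is then automatic: on the finite-dimensional space $\ell(V_i^{(n)})$ the form norm $\mathcal{E}_i^{(n)}(u)+\|u\|_{L^2}^2$ is equivalent to $\|u\|_{L^2}^2$, so $\ell(V_i^{(n)})$ is complete in the form norm.

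Next I would check the Markov property by verifying invariance under the unit contraction $C(t):=(0\vee t)\wedge 1$, the standard normal contraction, which is $1$-Lipschitz with $C(0)=0$. For any $u\in\ell(V_i^{(n)})$ and any $x,y$ one has the pointwise estimate $|C(u(x))-C(u(y))|\leq|u(x)-u(y)|$, hence $(C(u(x))-C(u(y)))^2\leq(u(x)-u(y))^2$. Multiplying by the non-negative weight $j_i^{(n)}(x,y)\mu_i^{(n)}(x)\mu_i^{(n)}(y)$ and summing over $x,y\in V_i^{(n)}$ yields $\mathcal{E}_i^{(n)}(C\circ u)\leq\mathcal{E}_i^{(n)}(u)$, which is the Markovian property; since $\ell(V_i^{(n)})$ is trivially stable under $C$, this closes the verification.

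I do not expect any genuine obstacle here: the only points requiring care are the symmetry of $j_i^{(n)}$ (which rests on the unordered-pair description of wires from Lemma \ref{lem:last edge}) and the identification of $\ell(V_i^{(n)})$ with $L^2(V_i^{(n)},\mu_i^{(n)})$, valid precisely because $\mu_i^{(n)}$ charges every node positively. Alternatively, and perhaps more economically, one can simply note that, with edge weight (conductance) $b(x,y):=j_i^{(n)}(x,y)\mu_i^{(n)}(x)\mu_i^{(n)}(y)=\delta_{\sigma_{x,y}^{(n)}}^{-1}$ on wires and $0$ otherwise, the data $(V_i^{(n)},\mu_i^{(n)})$ together with $b$ is exactly a finite weighted graph in the sense of \cite{Keller-Lenz-Wojciechowski}*{Definition 1.1}, and \eqref{E:energy_as_DF} is its associated energy form; the general theory there then immediately gives that $(\mathcal{E}_i^{(n)},\ell(V_i^{(n)}))$ is a Dirichlet form.
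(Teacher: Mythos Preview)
Your proposal is correct. Your primary route is a direct axiom-by-axiom verification (density, symmetry, non-negativity, closedness, Markov property), whereas the paper takes the shortcut you mention only at the end: it simply observes that finiteness of $V_i^{(n)}$ gives $\sum_{y}j_i^{(n)}(x,y)\mu_i^{(n)}(y)<\infty$ for each $x$, and then invokes \cite{Keller-Lenz-Wojciechowski}*{Proposition 1.14}. Your direct argument is more self-contained and makes explicit why each property holds (in particular, you spell out that symmetry of $j_i^{(n)}$ follows from the unordered-pair description of wires, and that closedness is automatic in finite dimensions), at the cost of a few extra lines; the paper's version is terser but requires the reader to chase the cited reference. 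Both are entirely adequate for such a routine lemma.
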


\begin{proof}
Since $V_i^{(n)}$ is a finite set, for any fixed $x\in V_i^{(n)}$ and $n \in \N$
\[
    \sum_{y\in V_i^{(n)}}j_i^{(n)}(x,y)\mu_i^{(n)}(y)<\infty.
\]
Therefore, applying~\cite{Keller-Lenz-Wojciechowski}*{Proposition 1.14} with $b(x,y)=j_i^{(n)}(x,y)\mu_i^{(n)}(y)$ the claim follows. 
\end{proof}

We will define nonlocal energies on the physical spaces $\overline{V}_i^{*}$ by taking an appropriate limit of the sequences $(\mathcal{E}_i^{(n)},\ell(V_i^{(n)}))$ as $n \to \infty$. Sections~\ref{sec:limit} and \ref{sec:mosco} will discuss that convergence at the level of Dirichlet forms. 

\begin{rem}%\label{R:unbdd_degree}
In contrast to results regarding the convergence of Dirichlet forms on finite graphs appearing in the literature, see e.g.~\cites{Hambly-Nyberg,CKK,Hybrids}, the associated graphs in our setting do \emph{not} have uniformly bounded (unweighted) degree in $n$. Indeed, the kernel (weight) $j_i^{(n)}(x,y)$ from~\eqref{E:def_jump_kernel} is non-zero anytime $x$ is wired to $y$ (i.e.~they are neighbors in the graph sense) and the number of wires attached to a given node $x$ grows with the level $n$, c.f.~\eqref{eq:wires}.
\end{rem}

%%%
\subsection{Compatibility}\label{subsec: compat}
%%%

We now relate the behavior of the Dirichlet forms  $\{(\mathcal{E}_i^{(n)},\ell(V_i^{(n)}))\}_{n\geq 0}$ to the sequence of networks $\{(V_i^{(n)},W_i^{(n)})\}_{n\geq 0}$ using the notion of effective resistance.  

\begin{defn}\label{defn:ER}
  The \textit{effective resistance} between $x,y \in V_i^{(n)}$ is given by
\[
    R_i^{(n)}(x,y)=\sup\Big\{\frac{|u(x)-u(y)|^2}{\mathcal{E}_i^{(n)}(u,u)}\colon u \in \ell(V_i^{(n)}),~\mathcal{E}_i^{(n)}(u,u)>0\Big\}.
\]  
\end{defn} 
It is well-known that the effective resistance is a metric on $V_i^{(n)}$. Moreover, it provides a useful tool for comparing electrical networks.

\begin{defn}\label{defn:EE}
Let $i\in\N$ and $n>k \geq 0$. 
The networks $(V_i^{(n)}, W_i^{(n)})$ and $(V_i^{(k)}, W_i^{(k)})$ are \emph{electrically equivalent} if  $V_i^{(k)} \subseteq V_i^{(n)}$ and
\[
R_i^{(n)}(x,y)=R_i^{(k)}(x,y) \quad \hbox{for any}~x,y\in V_i^{(k)}.
\]
We say the sequence $\{(V_i^{(n)}, W_i^{(n)})\}_{n \geq 0}$ 
of networks
is  electrically equivalent if $(V_i^{(n)}, W_i^{(n)})$ and $(V_i^{(n-1)}, W_i^{(n-1)})$ are electrically equivalent for all $n \geq 1$.
\end{defn}

Given an electrical network $(V_i^{(n)}, W_i^{(n)})$, one can apply the standard network reduction rules (e.g.,~series, parallel, delta-wye) to find electrically equivalent networks. For \emph{any} two nodes in $(V_i^{(n)}, W_i^{(n)})$ it is always possible to find an electrically equivalent network consisting solely of those two nodes connected with a single wire. The resistance on that wire is precisely the effective resistance between the original nodes. Indeed, the effective resistance is sometimes defined in this manner. 

The notion of electrically equivalent networks is closely connected to the concept of compatible energies.

\begin{defn}\label{defn:Compatible}
Let $n,i \geq 1$. The energy forms $(\mathcal{E}_i^{(n)},\ell(V_{i}^{(n)}))$ and $(\mathcal{E}_i^{(n-1)},\ell(V_{i}^{(n-1)}))$ are \emph{compatible} if $V_{i}^{(n-1)} \subseteq V_{i}^{(n)}$ and
\begin{equation*}%\label{eq:compatibility}
\mathcal{E}_i^{(n-1)}(u,u) = \min \{\mathcal{E}_i^{(n)}(v,v): v \in \ell(V_i^{(n)})~\hbox{and}~v \big|_{V_{i}^{(n-1)}} = u\}.
\end{equation*}
We say the sequence $\{(\mathcal{E}_i^{(n)},\ell(V_{i}^{(n)}))\}_{n \geq 0}$ of energy forms is compatible if 
$(\mathcal{E}_i^{(n)},\ell(V_{i}^{(n)}))$ and $(\mathcal{E}_i^{(n-1)},\ell(V_{i}^{(n-1)}))$ are compatible for all $n \geq 1$. 
\end{defn}

The sequence of energy forms $\{(\mathcal{E}_i^{(n)},\ell(V_i^{(n)}))\}_{n\geq 0}$ are compatible precisely when 
the sequence of networks 
 $\{(V_i^{(n)},W_i^{(n)})\}_{n\geq 0}$ are electrically  equivalent. In this case, one can take a limit of $(\mathcal{E}_i^{(n)},\ell(V_i^{(n)}))$ as $n \to \infty$ to find a nonlocal energy on the physical space. 
For more information on equivalent networks, compatibility, and the associated limiting process, we refer the reader to ~\cites{Kigami01,Tetali,DoyleSnell}.

\begin{lem}\label{lem:compat-1}
Fix $i \geq 1$. 
The networks $(V_i^{(0)}, W_i^{(0)})$ and $(V_i^{(1)}, W_i^{(1)})$ are electrically equivalent if and only if 
the triplet $(r_{i, 2i-2}, r_{i, 2i-1}, r_{i, 2i})$ satisfies the matching condition
\begin{equation}\label{eq:matching}
1 = \frac{(r_{i, 2i-2} + 2r_{i, 2i-1})r_{i,2i}}{r_{i, 2i-2} + 2r_{i, 2i-1}+r_{i,2i}}, \quad i \geq 1,
\end{equation}
where we use the convention $r_{1,0} = 0$. 
\end{lem}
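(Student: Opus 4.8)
The plan is to reduce everything to a single effective-resistance computation. Since $V_i^{(0)}=\{0,1\}$, Definition~\ref{defn:EE} tells us that $(V_i^{(1)},W_i^{(1)})$ and $(V_i^{(0)},W_i^{(0)})$ are electrically equivalent precisely when the one nontrivial identity $R_i^{(1)}(0,1)=R_i^{(0)}(0,1)$ holds (the inclusion $V_i^{(0)}\subseteq V_i^{(1)}$ being automatic). By the convention that the lone wire $\{0,1\}$ at stage $0$ has resistance $1$, we have $R_i^{(0)}(0,1)=1$, so the lemma is equivalent to the assertion that $R_i^{(1)}(0,1)=1$ is the same as the matching condition~\eqref{eq:matching}.

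First I would read off the stage-$1$ network from the graph-directed data. For $i>1$, equation~\eqref{eq:nodes} gives the four nodes $0,\tfrac{1}{2i},1-\tfrac{1}{2i},1$, and~\eqref{eq:wires} together with the maps $\phi_e$ from~\eqref{E:def_phi_e} and the resistance rule~\eqref{E:def_resistance_wire} (or, equivalently, Lemma~\ref{lem:last edge}) identify the four wires and their resistances: the direct wire $\{0,1\}$ comes from $e_{i,2i}$ and carries $r_{i,2i}$, while $\{0,1-\tfrac1{2i}\}$, $\{\tfrac1{2i},1-\tfrac1{2i}\}$, $\{\tfrac1{2i},1\}$ come from $e_{i,2i-1}$, $e_{i,2i-2}$, $e'_{i,2i-1}$ and carry $r_{i,2i-1}$, $r_{i,2i-2}$, $r_{i,2i-1}$ respectively.

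The key structural observation is that, apart from the direct wire $\{0,1\}$, each interior node $\tfrac1{2i}$ and $1-\tfrac1{2i}$ has degree exactly $2$, so the remaining three wires form a single series path $0-(1-\tfrac1{2i})-\tfrac1{2i}-1$ with no bridging edge. Hence the network is series–parallel between $0$ and $1$ and no delta–wye reduction is needed: the series rule collapses that path to one wire of resistance $r_{i,2i-2}+2r_{i,2i-1}$, and the parallel rule combines it with the direct wire to yield
\[
R_i^{(1)}(0,1)=\frac{(r_{i,2i-2}+2r_{i,2i-1})\,r_{i,2i}}{r_{i,2i-2}+2r_{i,2i-1}+r_{i,2i}}.
\]
Imposing $R_i^{(1)}(0,1)=1$ gives exactly~\eqref{eq:matching}, and since every step is an equivalence the ``if and only if'' follows at once. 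If one prefers not to invoke the reduction rules, the same formula drops out of Definition~\ref{defn:ER} by minimizing the quadratic energy over the single free interior degree of freedom, using the $x\mapsto 1-x$ symmetry of the network, which forces the two interior potentials to sum to $u(0)+u(1)$.

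Finally I would handle $i=1$ separately, where $V_1^{(1)}=\{0,\tfrac12,1\}$ has only three nodes because the edge $e_{1,0}$ does not exist: the series path is simply $0-\tfrac12-1$ of resistance $2r_{1,1}$, and the parallel combination with the direct wire $r_{1,2}$ gives $R_1^{(1)}(0,1)=\tfrac{2r_{1,1}r_{1,2}}{2r_{1,1}+r_{1,2}}$, which is precisely the $i=1$ specialization of the displayed formula under the convention $r_{1,0}=0$ (the vanishing middle resistance merges the two interior nodes into one). The circuit analysis itself is completely elementary, so the only real care needed anywhere is the bookkeeping that correctly matches wires to edges and hence to resistances, together with the degenerate $i=1$ convention; I expect that identification, rather than any analytic difficulty, to be the one place where an error could creep in.
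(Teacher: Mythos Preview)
Your proof is correct and follows essentially the same approach as the paper: both reduce $(V_i^{(1)},W_i^{(1)})$ to $(V_i^{(0)},W_i^{(0)})$ by recognizing that the three non-direct wires form a series chain between $0$ and $1$ (the paper's ``untwisting'') which is then combined in parallel with the direct wire of resistance $r_{i,2i}$. Your bookkeeping matching wires to edges via $\phi_e$ is accurate, and your separate treatment of $i=1$ with the convention $r_{1,0}=0$ mirrors the paper's handling exactly.
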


\begin{proof}
Electrical equivalence can be determined by  following the  usual series and parallel reduction techniques as illustrated in Figure \ref{fig:stage 1 reduction} with 
\begin{equation}\label{eq:greek}
(\alpha, \beta, \gamma)= (r_{i,2i-2}, r_{i, 2i-1}, r_{2,2i}), \quad \hbox{where}~\alpha=0~\hbox{for}~i=1,
\end{equation}
to renormalize the resistances.

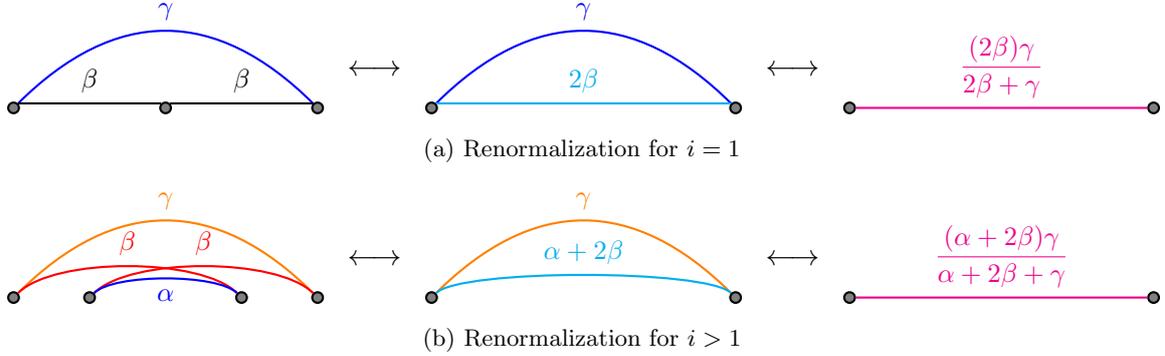
\begin{figure}[H]
\begin{center}
\subfloat[Renormalization for $i=1$]{
\label{fig:reduction-1}
\begin{tikzpicture}[thick,scale=1, main/.style ={circle, draw, fill=black!50,
                        inner sep=0pt, minimum width=4pt}]
%%%first graph
\node[main]  (0) at (0,0) {};
\node[main]  (2) at (2,0) {};
\node[main]  (4) at (4,0) {};
\path[blue]
	(0) edge[looseness=1.2] node[above] {\small $\gamma$} (4);
\path[black]
	(0) edge[looseness=0] node[above] {\small $\beta$} (2);
\path[black]
	(2) edge[looseness=0] node[above] {\small $\beta$} (4);
%%%second graph
\node[main]  (0) at (5.5,0) {};
% \node[main]  (1) at (1,0) {};
% \node[main]  (3) at (3,0) {};
\node[main]  (4) at (9.5,0) {};
\path[blue]
	(0) edge[looseness=1.2] node[above] {\small $\gamma$} (4);
\path[cyan]
	(0) edge[looseness=0] node[above] {\small $2\beta$} (4);
%%%third graph
\node[main]  (0) at (11,0) {};
\node[main]  (4) at (15,0) {};
\node at (0,-.1) {};
\draw[magenta] (0) -- node[above] {\small $\displaystyle \frac{(2 \beta)\gamma}{2 \beta + \gamma}$} ++ (4);
%%%arrows
\node at (4.75,.5) {$\longleftrightarrow$};
\node at (10.25,.5) {$\longleftrightarrow$};
\end{tikzpicture} 
}
\end{center}

\subfloat[Renormalization for $i>1$]{
\label{fig:reduction-i}
\begin{tikzpicture}[thick,scale=1, main/.style ={circle, draw, fill=black!50,
                        inner sep=0pt, minimum width=4pt}]
%%%first graph
\node[main]  (0) at (0,0) {};
\node[main]  (1) at (1,0) {};
\node[main]  (3) at (3,0) {};
\node[main]  (4) at (4,0) {};
\path[orange]
	(0) edge[looseness=1.2] node[above] {\small $\gamma$} (4);
\path[red]
	(0) edge[looseness=.6] node[above] {\small $\beta$} (3);
\path[red]
	(1) edge[looseness=.6] node[above] {\small $\beta$} (4);
\path[blue]
	(1) edge[looseness=.5] node[below] {\small $\alpha$} (3);
%%%second graph
\node[main]  (0) at (5.5,0) {};
% \node[main]  (1) at (1,0) {};
% \node[main]  (3) at (3,0) {};
\node[main]  (4) at (9.5,0) {};
\path[orange]
	(0) edge[looseness=1.2] node[above] {\small $\gamma$} (4);
\path[cyan]
	(0) edge[looseness=.3] node[above] {\small $\alpha + 2\beta$} (4);
%%%third graph
\node[main]  (0) at (11,0) {};
\node[main]  (4) at (15,0) {};
\node at (0,-.1) {};
\draw[magenta] (0) -- node[above] {\small $\displaystyle \frac{(\alpha + 2 \beta)\gamma}{\alpha + 2 \beta + \gamma}$} ++ (4);
%%%arrows
\node at (4.75,.5) {$\longleftrightarrow$};
\node at (10.25,.5) {$\longleftrightarrow$};
\end{tikzpicture}
}
\caption{
Renormalization of resistances
from $(V_i^{(1)}, W_i^{(1)})$ to $(V_i^{(0)}, W_i^{(0)})$}
\label{fig:stage 1 reduction}
\end{figure} 

Indeed for $i>1$, we first ``untwist'' as in Figure \ref{fig:reduction-i} to see that the wire with resistance $\alpha$ and the two wires with resistances $\beta$ are connected in series.
After reducing, the remaining two wires are connected in parallel. 
Since the wire $\{0,1\}$ has resistance 1 at stage $n=0$, electrical equivalence is characterized by \eqref{eq:matching}.
From Figure \ref{fig:reduction-1}, the case $i=1$ is similar.   
\end{proof}

\begin{rem}
The matching condition \eqref{eq:matching} only guarantees electrical equivalence from stage 0 to stage 1. 
However, due to the high degree of connectedness of our network (complete for $i=1$ and compete bipartite for $i>1$), we cannot iterate the same renormalization steps outlined in Figure \ref{fig:stage 1 reduction} to obtain explicit expressions for the resistances ensuring that $(V_i^{(n-1)},W_i^{(n-1)})$ and 
$(V_i^{(n)},W_i^{(n)})$ are electrically equivalent for $n \geq 2$. 
This problem is highly nontrivial even in the local context; we refer to~\cites{Kigami01,Str06} for some examples where the reduction is possible. 
Nevertheless, we know appropriate resistances exist, and we discuss this further in Appendix \ref{sec:appendixA}.
\end{rem}

\begin{rem}
Notice that the matching condition \eqref{eq:matching} is the same for all $i>1$. 
Thus, additional constraints may be imposed to distinguish $(V_i^{(n)}, W_i^{(n)})$ from $(V_j^{(n)}, W_j^{(n)})$ for $i \not= j$ and $n \geq 1$. 
For example, one might take $r_{i,2i-2}< r_{i+1, 2(i+1)-2}$ for all $i \in \N$.
\end{rem}

From \eqref{eq:matching}, if $r_{i,2i-2}+2r_{i,2i-1}=1$, then $r_{i,2i}=+\infty$.  
For $i=1$, this means that $r_{1,1} = \frac12$ and $r_{1,2}=+\infty$ which corresponds to the standard discrete approximation of the classical local energy on the unit interval, see e.g.~\cite{Str06}*{Section 1.3}. 
In our setting, we assume that
\begin{equation}\label{eq:not1}
r_{i,2i-2}+2r_{i,2i-1}\not=1 \quad \hbox{and} \quad r_{i,2i}\not=1. 
\end{equation}

We draw the reader's attention to the fundamental 
challenging novelties of our setting: 
In general, for the construction of \emph{local} energies by discrete approximation, the resistances attached to the wires are strictly less than one, see Appendix \ref{sec:appendixA}. The intuition is that only ``close-by'' nodes (meaning~the Euclidean distance between them is minimal) are wired together, so that the resistance on the wire between these 
nodes is small. This is the idea of a ``short-range interaction.'' 
However, in the \emph{nonlocal} setting, one has to take on account ``long-range interactions'', in the sense that 
 ``far-away'' nodes 
(meaning the Euclidean distance between them is \emph{not} minimal)
will also be connected by a wire.
Nevertheless, because the nodes are far, the resistance on the wire may be strictly larger than one. We see this in the next result.

\begin{lem}\label{lem:big-r}
If the matching condition \eqref{eq:matching} holds for $i \in \N$, then 
\[
r_{1,1} > \frac12 \quad \hbox{and} \quad r_{1,2} > 1,
\]
and for $i>1$,
\[
r_{i,2i-2} + 2r_{i,2i-1}>1 \quad \hbox{and} \quad r_{i, 2i} >1.
\]
\end{lem}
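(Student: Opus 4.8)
The plan is to manipulate the matching condition \eqref{eq:matching} into a form in which the conclusion becomes transparent, handling the cases $i=1$ and $i>1$ simultaneously. First I would introduce the abbreviations $A := r_{i,2i-2} + 2r_{i,2i-1}$ and $C := r_{i,2i}$, recalling the convention $r_{1,0}=0$, so that $A = 2r_{1,1}$ when $i=1$. Since every edge weight satisfies $r_e>0$, we have $A>0$ and $C>0$ in all cases.

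With this notation, \eqref{eq:matching} reads $1 = \frac{AC}{A+C}$, equivalently $A+C = AC$. Dividing through by $AC$, which is legitimate because $A,C>0$, yields the symmetric relation
\[
\frac{1}{A} + \frac{1}{C} = 1.
\]
As both $\tfrac1A$ and $\tfrac1C$ are strictly positive and their sum equals $1$, each summand must lie strictly between $0$ and $1$. In particular $\tfrac1A<1$ and $\tfrac1C<1$, which give $A>1$ and $C>1$.

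Unwinding the abbreviations then produces the claimed inequalities: for $i=1$, the bound $A = 2r_{1,1}>1$ is exactly $r_{1,1}>\tfrac12$, and $C=r_{1,2}>1$; for $i>1$, we obtain $A=r_{i,2i-2}+2r_{i,2i-1}>1$ and $C=r_{i,2i}>1$.

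There is no genuine obstacle here. The only point requiring care is verifying the positivity of $A$ and $C$ before dividing, which is immediate from $r_e>0$ (together with the convention $r_{1,0}=0$ in the degenerate summand). It is worth recording the physical intuition, which is really the content of the argument: the right-hand side of \eqref{eq:matching} is the resistance of $A$ and $C$ combined in \emph{parallel}, and the parallel resistance of two strictly positive resistors is always strictly smaller than each of them. Forcing this parallel resistance to equal $1$ (the stage-$0$ resistance of the wire $\{0,1\}$) therefore compels both $A>1$ and $C>1$, which is precisely the assertion.
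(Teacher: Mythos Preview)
Your proof is correct and essentially the same as the paper's: both rewrite the matching condition in terms of $A=r_{i,2i-2}+2r_{i,2i-1}$ and $C=r_{i,2i}$ and deduce $A>1$, $C>1$ from elementary algebra. Your reciprocal form $\tfrac1A+\tfrac1C=1$ is arguably a touch cleaner than the paper's approach of solving $C=\tfrac{A}{A-1}$ and arguing by contradiction, but the content is identical.
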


\begin{proof}
Recall the notation \eqref{eq:greek} from the proof of Lemma \ref{lem:compat-1} and set
$\xi := \alpha + 2 \beta$.
From \eqref{eq:not1}, we have $\xi, \gamma \in (0,\infty) \setminus \{1\}$. 
Observe that \eqref{eq:matching} can be written as
\[
    \frac{\xi\gamma}{\xi+\gamma}=1
    \quad\Leftrightarrow\quad
    \gamma=\frac{\xi}{\xi-1}=1+\frac{1}{\xi-1} 
    \quad \Leftrightarrow \quad 
    \xi = \frac{\gamma}{\gamma-1}= 1 + \frac{1}{\gamma-1}.
\]
If $\xi \in (0,1)$, then $\gamma < 0$, a contradiction. Therefore, $\xi>1$, and consequently $\gamma>1$. 
For $i=1$, the condition $\xi>1$ is equivalent to $2r_{1,1} = 2 \beta >1$. 
\end{proof}

Note in Lemma \ref{lem:big-r} that one could still have $r_{i, 2i-2}, r_{i, 2i-1} \in (0,1)$.

Another distinguishing feature from the classical 
construction is that if $\{x,y\} \in W_i^{(n_0)}$ for some $n_0 \in \N$, then $\{x,y\} \in W_i^{(n)}$ for all $n \geq n_0$. Consequently, we may take a limit of the resistance between two points $x$ and $y$ as $n$ approaches infinity. 
Since our networks are complete/complete bipartite (recall Remark \ref{R:complete_graph}), 
the resistance on the wire $\{x,y\} \in W_i^{(n)}$ wire must become infinitely large as $n$ increases in order to maintain electrical equivalence. 
We see this in the next result, for which we recall the notation \eqref{eq:path-notation}.

\begin{lem}%\label{lem:limit of delta}
Assume that the matching condition \eqref{eq:matching} holds and 
that
\begin{equation}\label{eq:log-sum}
\sum_{k=0}^\infty\log r_{2^ki_0, 2^{k+1}i_0} = \infty~\hbox{for all}~i_0 \in \N. 
\end{equation} 
If $x$ and $y$ are such that $\{x,y\} \in W_i^{(n_0)}$ for some $n_0,i \in \N$, 
then
\[
\lim_{n \to \infty} \delta_{\sigma_{x,y}^{(n)}}^{-1} = 0.
\]
\end{lem}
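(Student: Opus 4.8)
The plan is to track the unique path $\sigma_{x,y}^{(n)}$ representing the persistent wire $\{x,y\}$ and to show that its resistance $\delta_{\sigma_{x,y}^{(n)}}$ grows without bound. The crucial structural observation is that the ``doubling'' edge $e_{j,2j}$ induces the identity map $\phi_{e_{j,2j}}=\mathrm{id}$ on $[0,1]$, as recorded in \eqref{E:def_phi_e}. Consequently, appending such an edge to a path does not move the associated wire, and all of the growth of the resistance is produced by these doubling edges.

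First I would record that, since $\{x,y\}\in W_i^{(n_0)}$, both endpoints lie in $V_i^{(n_0)}\subseteq V_i^{(n)}$ for every $n\geq n_0$, so the wire persists and $\sigma_{x,y}^{(n)}$ is well defined for all $n\geq n_0$. Writing $j_n:=\sigma_{x,y}^{(n)}(n)$ for the terminal vertex, the key claim is the recursion
\[
\sigma_{x,y}^{(n+1)}=\sigma_{x,y}^{(n)}\,e_{j_n,2j_n},\qquad n\geq n_0.
\]
To prove it, I would exhibit the right-hand side as a legitimate path of length $n+1$ starting at $i$ (the edge $e_{j_n,2j_n}$ exists in $(\N,E)$ by \eqref{E:Eij_def}), observe that its associated map equals $\phi_{e_{j_n,2j_n}}\circ\phi_{\sigma_{x,y}^{(n)}}=\phi_{\sigma_{x,y}^{(n)}}$ and hence represents the same wire $\{x,y\}$, and finally invoke the uniqueness of the representing path from Lemma \ref{lem:last edge}. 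Iterating the recursion gives $j_{n_0+m}=2^m j_0$, where $j_0:=j_{n_0}=2^{n_0}i\,|x-y|$ by Lemma \ref{lem:last edge}.

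Next I would compute the resistance from \eqref{E:def_resistance_wire}: the recursion multiplies the resistance by exactly one factor at each step, so for $n=n_0+m$,
\[
\delta_{\sigma_{x,y}^{(n)}}=\delta_{\sigma_{x,y}^{(n_0)}}\prod_{\ell=0}^{m-1} r_{2^\ell j_0,\,2^{\ell+1} j_0}.
\]
Taking logarithms turns the right-hand side into $\log\delta_{\sigma_{x,y}^{(n_0)}}+\sum_{\ell=0}^{m-1}\log r_{2^\ell j_0,\,2^{\ell+1}j_0}$. Under the matching condition \eqref{eq:matching}, Lemma \ref{lem:big-r} guarantees $r_{k,2k}>1$ for every $k\in\N$, so each summand is strictly positive; combined with the divergence hypothesis \eqref{eq:log-sum} applied with $i_0=j_0$, the partial sums tend to $+\infty$ as $m\to\infty$. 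Therefore $\delta_{\sigma_{x,y}^{(n)}}\to+\infty$ and $\delta_{\sigma_{x,y}^{(n)}}^{-1}\to 0$, as required.

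The main obstacle is establishing the path recursion cleanly: one must be sure that extending by the doubling edge yields \emph{the} unique representing path at the next stage, rather than merely \emph{a} path with the correct endpoints, which is exactly where the uniqueness statement of Lemma \ref{lem:last edge} is essential. The remaining analytic step is then routine, with Lemma \ref{lem:big-r} conveniently removing any concern about cancellation in the logarithmic series, so that the divergence assumption \eqref{eq:log-sum} transfers directly to the blow-up of $\delta_{\sigma_{x,y}^{(n)}}$.
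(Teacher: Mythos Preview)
Your proposal is correct and follows essentially the same approach as the paper: both identify that the unique path at level $n$ is obtained from the one at level $n_0$ by repeatedly appending the doubling edge $e_{j,2j}$ (since $\phi_{e_{j,2j}}=\mathrm{id}$), yielding the factorization $\delta_{\sigma_{x,y}^{(n)}}=\delta_{\sigma_{x,y}^{(n_0)}}\prod_{k=0}^{n-n_0-1}r_{2^k j_0,\,2^{k+1}j_0}$, and then invoke \eqref{eq:log-sum} to conclude. Your version is slightly more explicit in justifying the path recursion via the uniqueness in Lemma~\ref{lem:last edge}, and your appeal to Lemma~\ref{lem:big-r} for positivity of the log terms is a harmless extra (the hypothesis $=\infty$ in \eqref{eq:log-sum} already forces the partial sums to $+\infty$).
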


\begin{proof}
Set $i_0 = \sigma_{x,y}^{(n_0)}(n_0)$.  
Since the Euclidean distance $|x-y|$ is fixed at each stage $n \geq n_0$, we recall \eqref{E:def_phi_e} to determine that
\[
\sigma_{x,y}^{(n)} = \sigma_{x,y}^{(n_0)} \rho^{(n-n_0)}_{i_0}
\]
where $\rho = \rho^{(n-n_0)}_{i_0}$ is a path of length $n-n_0$ satisfying
\[
\rho(0)=i_0 \quad \hbox{and} \quad \rho(k) = 2\rho(k-1) \quad \hbox{for}~1 \leq k \leq n-n_0.
\]
Recursively, we can show that $\rho(k) = 2^{k} i_0$ for all $0 \leq k \leq n-n_0$. 
By definition,
\begin{equation}\label{eq:delta-n0-rho}
\delta_{\sigma_{x,y}^{(n)}} = \delta_{\sigma_{x,y}^{(n_0)}} \delta_{\rho^{(n-n_0)}_{i_0}}
\end{equation}
where 
\[
\delta_{\rho^{(n-n_0)}_{i_0}}
    = \prod_{k=0}^{n-n_0-1} r_{2^ki_0, 2^{k+1}i_0}.
\]
By \eqref{eq:log-sum}, 
\[
\lim_{n \to \infty} \delta_{\rho^{(n-n_0)}_{i_0}}
    = \prod_{k=0}^{\infty} r_{2^ki_0, 2^{k+1}i_0} = \infty
\]
which, together with \eqref{eq:delta-n0-rho}, gives the desired result.  
\end{proof}

We remark that the conclusion of Lemma \ref{lem:big-r} does not necessarily imply \eqref{eq:log-sum}. 
For this one would need additional assumptions on the sequence $\{r_{i, 2i}\}_{i \in \N}$, such as a uniform lower bound greater than one.

%%%%%%%%%%%%%%%%%%%%%%%%%%%%%%%%%%%%%%%%%%%%%%%%
\section{Energy in the limit}\label{sec:limit}
%%%%%%%%%%%%%%%%%%%%%%%%%%%%%%%%%%%%%%%%%%%%%%%%

In this section, we present two ways of taking the limit of energies on the discrete approximations and prove in Theorem~\ref{thm:DF-main} that, under certain assumptions on the discrete jump kernels, both approaches yield the same nonlocal energy on the physical space. 

%%%%%%%%%%%%%
\subsection{Preliminaries}
%%%%%%%%%%%%%

We begin by setting notation. 
For a measurable set $A \subset \R$, let $L^2(A,dx)$ denote the space of $L^2$ functions on $A$ with respect to the 1-dimensional Lebesgue measure. 
For $s \in (0,1)$ and $A_1,A_2 \subset \R$, $H^s(A_1,A_2)$ denotes the class of functions $u \in L^2(A_1\cup A_2, dx)$ such that
\begin{equation}\label{def:Gaglicardo_seminorm}
[u]_{H^s(A_1,A_2)}^2 := \int_{A_1} \int_{A_2} \frac{|u(x) - u(y)|^2}{|x-y|^{1+2s}} \, dy \, dx<\infty. 
\end{equation}
When $A_1=A_2=A$ we simply write $H^s(A)$ and $[u]_{H^s(A)}$ instead of $H^s(A,A)$ and $[u]_{H^s(A,A)}$.   
In this case, note that~\eqref{def:Gaglicardo_seminorm} corresponds to the standard Gagliardo seminorm on the fractional Sobolev space $H^s(A)$. 

Further, let $L^{\infty}(A)$ denote the space of bounded functions on $A \subset \R$. 
For $\alpha \in (0,1)$, the class of $\alpha$-H\"older continuous functions  $C^\alpha(A)$ is the set functions $u \in L^{\infty}(A)$ such that
\[
[u]_{C^\alpha(A)} = \sup_{\substack{x,y \in A \\x\not=y}} \frac{|u(x) - u(y)|}{|x-y|^\alpha} <\infty.
\]
Finally, $\lip(A)$ will denote the class of Lipschitz functions on $A \subset \R$.

\medskip

At the discrete level, for any $i\geq 1$, $n\geq 0$ and $B\subseteq V_i^{(n)}$, we denote by $\ell^2(B, \mu_i^{(n)})$ the set of functions $u:B \to \R$ satisfying 
\[
\|u\|_{\ell^2(B, \,\mu_i^{(n)})}^2 = \sum_{x \in B} |u(x)|^2 \mu_i^{(n)}(x)<\infty,
\]
where $\mu_i^{(n)}$ is the discrete measure defined in~\eqref{eq:discrete-measure}. 
Since the cardinality of $V_i^{(n)}$ is finite, note that $\ell^2(B, \mu_i^{(n)}) = \ell(B)=:\{u:B \to \R\}$. 
We next introduce two operators based on the partition $\{U_i^{(n)}(x)\}_{x\in V_i^{(n)}}$ of $\overline{V}_i^{*}$ into the intervals defined in~\eqref{eq:partiton}.  

\begin{defn} \label{defn:operators}
Let $i \in \N$ and $n \geq 0$.
\begin{enumerate}
\item The extension operator $\ext_i^{(n)}: \ell^2(V_i^{(n)}, \mu_i^{(n)}) \to L^2(\overline{V_i^{*}},dx)$ is defined as 
\[
\ext_i^{(n)} u(x) = u(\bar{x}) \quad \hbox{where}~\bar{x}\in V_i^{(n)}~\hbox{is such that} ~x \in U_i^{(n)}(\bar{x}).
\]
\item The average $\avg_i^{(n)}: L^2(\overline{V_i^{*}},dx)\to \ell^2(V_i^{(n)}, \mu_i^{(n)})$ is defined as
\[
\avg_i^{(n)}u(\bar{x}) = \frac{1}{|U_i^{(n)}(\bar{x})|} \int_{U_i^{(n)}(\bar{x})} u(x) \, dx \quad \hbox{for}~\bar{x} \in V_i^{(n)}. 
\]
\end{enumerate}
\end{defn}

As a brief aside, we note that that $\avg_i^{(n)}$ is the left inverse of $\ext_i^{(n)}$ for a fixed $i \in \N$, $n \geq 0$, namely, for all $v \in \ell^2(V_i^{(n)}, \mu_i^{(n)})$ and $u \in L^2(\overline{V}_i^*, dx)$, 
\[
\avg_i^{(n)}  \ext_i^{(n)} v = v 
\]
and
\[
\langle \avg_i^{(n)} u, v \rangle_{\ell^2(V_i^{(n)}, \mu_i^{(n)})} 
    = \langle  u, \ext_i^{(n)} v \rangle_{L^2(\overline{V_i^{*}},dx)}. 
\]

%%%%%%%%%%%%%%%%%%%%%%%%%%
\subsection{Two approaches} 
%%%%%%%%%%%%%%%%%%%%%%%%%%

We now present two approaches for taking the limit as $n \to \infty$ of the discrete energies $(\mathcal{E}_i^{(n)}, \ell(V_i^{(n)}))$ from \eqref{E:def_finite_energy}. 

\subsubsection*{Finite energy approach.}
Following the standard construction of local energies on p.c.f.~self-similar sets, see e.g.~\cite{Kig89}, 
we define for each $i \in \N$ the energy form $(\mathcal{E}_i^{(\infty)}, \mathcal{F}_i^{(\infty)})$ by setting
\begin{equation}\label{E:energy_limit_fin_energy}
\begin{aligned}
&\mathcal{E}_i^{(\infty)}(u):=\lim_{n\to\infty}\mathcal{E}_i^{(n)}(u|_{V_i^{(n)}})\\
&\mathcal{F}_i^{(\infty)}:=\big\{u\in L^2(\overline{V}_i^*,dx)\colon \mathcal{E}_i^{(\infty)}(u)<\infty\big\},
\end{aligned}
\end{equation}
where $u |_{V_i^{(n)}}$ denotes the usual restriction of $u$ to the set $V_i^{(n)}$. The main advantage of this \emph{finite energy approach} is the straightforward definition of both the functional and the domain. 
However, common tools such as those in~\cite{Kigami01} to analyze the limiting form $(\mathcal{E}_i^{(\infty)}, \mathcal{F}_i^{(\infty)})$, and in particular to verify its closability, cannot be practically applied in our setting.

%%%%%%%%%
\subsubsection*{Density approach}
%%%%%%%%%

To simplify the question of closability, one can follow an approach discussed by Chen, Kim, and Kumagai~in~\cite{CKK}, and define for each $i \in \N$ an energy $(\widetilde{\mathcal{E}}_i^{(\infty)}, \widetilde{\mathcal{F}}_i^{(\infty)})$ by setting
\begin{equation}\label{E:energy_limit_Lip}
\begin{aligned}
&\widetilde{\mathcal{E}}_i^{(\infty)}(u):=\lim_{n\to\infty}\mathcal{E}_i^{(n)}(\avg_i^{(n)}u)\\[.5em]
&\widetilde{\mathcal{F}}_i^{(\infty)}:=
\begin{cases}
\overline{{\rm Lip}([0,1])}^{\widetilde{\mathcal{E}}} & \hbox{if}~i=1,\\[.5em]
\overline{C^{\frac{1}{2}}([0,1])}^{\widetilde{\mathcal{E}}} & \hbox{if}~i = 2,\\[.5em]
\overline{L^{\infty}(\overline{V_i^*})}^{\widetilde{\mathcal{E}}} & \hbox{if}~i>2,
\end{cases}
\end{aligned}
\end{equation}
where the latter closure is taken with respect to $\widetilde{\mathcal{E}}=(\widetilde{\mathcal{E}}_i^{(\infty)}(\cdot)+\|\cdot\|_{L^2(\overline{V_i^*},dx)}^2)^{1/2}$.
In this \emph{density approach},
the definition of the energy and its domain are less straightforward than in the finite energy approach.

%%%%%%%%%%
\subsection{Connecting the two approaches}
%%%%%%%%%%

In the next paragraphs, we will show that both constructions give rise to the same energy in the limit under reasonable assumptions on the jump kernels.

%%%%%%%%%%%%%%%%%%%%%%%%%%%%%%%%%%%%%%%%%%%%%%%%
\subsubsection{Main assumptions}%\label{sec:jump}
%%%%%%%%%%%%%%%%%%%%%%%%%%%%%%%%%%%%%%%%%%%%%%%%
For the remainder of the paper, we will make the following 
assumptions on the jump kernels $j_i^{(n)}(x,y)$ defined in~\eqref{E:def_jump_kernel}.

\begin{assumption}\label{A:kernel}
Fix $s \in (0,1)$ and let $i \in \N$. 
\begin{enumerate}[label=(\alph*)]
\item \label{item:upper} (Upper bound)
There exists $\Lambda_i>0$ such that, for all $n\geq 0$ and $\{x,y\} \in W_i^{(n)}$,
\begin{equation}\label{eq:kernel-estimate-new}
0 < j_i^{(n)}(x,y) \leq \Lambda_i|x-y|^{-(1+2s)}.
\end{equation}
\item \label{item:lower} (Lower bound)
There exists $\lambda_i>0$  such that, for all $n\geq 0$ and $\{x,y\} \in W_i^{(n)}$,
\begin{equation}\label{eq:kernel-lower-new}
\lambda_i |x-y|^{-(1+2s)} \leq j_i^{(n)}(x,y).
\end{equation}
\end{enumerate}
\end{assumption}

\begin{rem}\label{rem:conductance-bound}
In view of~\eqref{E:def_jump_kernel}, Assumption \ref{A:kernel} for $\{x,y \} \in W_i^{(n)}$ is equivalent to 
\[
\frac{\lambda_i}{|x-y|^{1+2s}}\mu_i^{(n)}(x) \mu_i^{(n)}(y)
\leq 
\delta_{\sigma_{x,y}^{(n)}}^{-1} \leq \frac{\Lambda_i}{|x-y|^{1+2s}}\mu_i^{(n)}(x) \mu_i^{(n)}(y). 
\]
\end{rem}

\begin{rem} 
Our results under Assumption \ref{A:kernel} also hold for more regular kernels with powers $\alpha\in (0,1]$. 
The power $1+2s$ corresponds to the kernel for the fractional Laplacian in $\R$. 
Fractional discrete Laplacians on the infinite lattice $h \Z$, $h>0$, were studied in \cite{Stinga}, where it was shown that the discrete jump kernel $K^{(h)}$ satisfies
\[
\frac{\lambda}{|x|^{1+2s}} h \leq K^{(h)}(x) \leq \frac{\Lambda}{|x|^{1+2s}}h, \quad x \in h \Z \setminus \{0\},
\]
for constants $\lambda,\Lambda>0$ depending only on $s \in (0,1)$. 
Our situation corresponds to $h = 2^{-n}= \mu_1^{(n)}(x)$ when $x\not=0,1$ and $i=1$, with the kernel $K^{(h)}$ including the measure. 
We emphasize that our discrete operators are different than those in~\cite{Stinga} as we work in bounded domains and use a graph-directed construction. 
\end{rem}

\begin{rem}\label{rem:nonsingular}
The kernel is not singular in the case $i>2$. Indeed, 
 if $x \in [0,\frac{1}{i}]$ and $y \in [1-\frac{1}{i},1]$, then for all $s \in (0,1)$ 
\begin{equation}\label{eq:boundedkernel}
(y-x)^{-(1+2s)}\leq  \left(1 - \frac{2}{i}\right)^{-(1+2s)} =: b_i < \infty.
\end{equation}
When $i=2$, there is a singularity at $\frac{1}{2}$ since the physical gap between $V_{2-}^{(n)}$ and $V_{2+}^{(n)}$ disappears in the limit.
\end{rem}

\begin{rem}Under Assumption \ref{A:kernel}, we can show that the effective resistance distance is bounded above by the Euclidean distance to a power when $i=1$ and by a geodesic distance if $i>1$. 
A precise statement and proof is given in Appendix \ref{sec:appendixB} (see Lemma \ref{lem:R-euclidean}).
\end{rem}

%%%%%%%%
\subsubsection{Statement of the main result} 
%%%%%%%%

To rigorously connect the two approaches, we first prove that the domains $\mathcal{F}_i^{(\infty)}$ and $\widetilde{\mathcal{F}}_i^{(\infty)}$ coincide for all $i \in \N$ (see Section \ref{subsec:domains}). 
Then, we prove that the forms $\mathcal{E}_i^{(\infty)}$ and $\widetilde{\mathcal{E}}_i^{(\infty)}$ coincide in a dense set of functions (see Section \ref{subsec: forms}). 
In the proofs of these steps, we will explicitly distinguish the cases $i=1$ and $i>1$, and treat $i=2$ separately, pointing out subtle but important differences.
The main result of this section is the following. 

\begin{thm}\label{thm:DF-main}
Let $i \in \N$ be fixed and let Assumption \ref{A:kernel} hold for some fixed $s \in (0,1)$. 
Then, $(\mathcal{E}_i^{(\infty)},\mathcal{F}_i^{(\infty)})=(\widetilde{\mathcal{E}}_i^{(\infty)},\widetilde{\mathcal{F}}_i^{(\infty)})$ where
\begin{equation}\label{eq:domain-description}
\mathcal{F}_i^{(\infty)} = \begin{cases}
    H^s([0,1]) & \hbox{if}~i=1\\[.5em]
    H^s\left(\left[0,\frac{1}{2}\right],\left[\frac{1}{2},1\right]\right)  & \hbox{if}~i=2\\[.5em]
    L^2(\overline{V}_i^*,dx) & \hbox{if}~i>2.
\end{cases}
\end{equation}
\end{thm}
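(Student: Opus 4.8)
The plan is to reduce both constructions to a single geometric comparison functional and then exploit Assumption~\ref{A:kernel}. Writing
\[
\mathcal{S}_i^{(n)}(v):=\sum_{\{x,y\}\in W_i^{(n)}}(v(x)-v(y))^2\,|x-y|^{-(1+2s)}\,\mu_i^{(n)}(x)\,\mu_i^{(n)}(y),
\]
the two-sided bound of Remark~\ref{rem:conductance-bound} gives, for every $v\in\ell(V_i^{(n)})$,
\[
\lambda_i\,\mathcal{S}_i^{(n)}(v)\le \mathcal{E}_i^{(n)}(v)\le \Lambda_i\,\mathcal{S}_i^{(n)}(v).
\]
Applying this with $v=u|_{V_i^{(n)}}$ (finite energy approach) and with $v=\avg_i^{(n)}u$ (density approach) controls both constructions by the same quantity, so the heart of the matter is to understand the limit of $\mathcal{S}_i^{(n)}$.

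First I would identify that limit. Since $\mu_i^{(n)}(x)\asymp 2^{-n}$ (with the appropriate scaling in $i$), the sum $\mathcal{S}_i^{(n)}(\avg_i^{(n)}u)$ is comparable, with $n$-independent constants, to the Gagliardo double integral of the piecewise-constant extension $\ext_i^{(n)}\avg_i^{(n)}u$; the only care is on the diagonal cells, where the vanishing increment absorbs the nonintegrable kernel. Because $W_i^{(n)}$ is complete for $i=1$ but connects only $V_{i-}^{(n)}$ to $V_{i+}^{(n)}$ for $i>1$ (Remark~\ref{R:complete_graph}), the limiting integral is over $[0,1]\times[0,1]$ when $i=1$ and over $[0,\tfrac1i]\times[1-\tfrac1i,1]$ (counted twice) when $i>1$. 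Using that $\ext_i^{(n)}\avg_i^{(n)}u\to u$ in $L^2$, together with the graph-directed self-similar relation of Lemma~\ref{L:graph_directed_ss_energy} to control successive increments (adapting the convergence scheme of~\cite{CKK}), I obtain existence of the limits defining $\widetilde{\mathcal{E}}_i^{(\infty)}$ on the dense class, together with the comparabilities
\[
\widetilde{\mathcal{E}}_1^{(\infty)}(u)\asymp[u]_{H^s([0,1])}^2,\qquad
\widetilde{\mathcal{E}}_i^{(\infty)}(u)\asymp[u]_{H^s([0,\frac1i],[1-\frac1i,1])}^2\quad(i>1).
\]

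These comparabilities yield the domain description~\eqref{eq:domain-description} and the equality of the two domains. For $i=1$ finiteness of the energy is equivalent to $[u]_{H^s([0,1])}<\infty$; for $i=2$ the bipartite integral is exactly $[u]_{H^s([0,\frac12],[\frac12,1])}^2$; and for $i>2$ the kernel is bounded by $b_i$ (Remark~\ref{rem:nonsingular}), so the energy is dominated by a constant multiple of $\|u\|_{L^2(\overline{V}_i^*)}^2$ and every $L^2$ function has finite energy. The density domains then match because $\lip([0,1])$, $C^{1/2}([0,1])$, and $L^\infty(\overline{V}_i^*)$ are dense in $H^s([0,1])$, $H^s([0,\frac12],[\frac12,1])$, and $L^2(\overline{V}_i^*)$ respectively, while the sandwich bound makes the form norm $\widetilde{\mathcal{E}}$ equivalent to the norm of the ambient space; hence $\widetilde{\mathcal{F}}_i^{(\infty)}=\mathcal{F}_i^{(\infty)}$ equals the stated space.

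To finish, I would show the two forms agree on the relevant dense class of continuous functions (Lipschitz for $i=1$, $C^{1/2}$ for $i=2$, continuous functions for $i>2$) by proving
\[
\lim_{n\to\infty}\big(\mathcal{E}_i^{(n)}(u|_{V_i^{(n)}})-\mathcal{E}_i^{(n)}(\avg_i^{(n)}u)\big)=0 .
\]
The cellwise deviation $|u(\bar x)-\avg_i^{(n)}u(\bar x)|\lesssim\omega_u(2^{-n})$ is controlled by the modulus of continuity $\omega_u$, and feeding this into the kernel bound dominates the difference by a vanishing tail; closedness of both forms then upgrades $\mathcal{E}_i^{(\infty)}=\widetilde{\mathcal{E}}_i^{(\infty)}$ from the dense class to the whole common domain. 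The main obstacle is the convergence of the singular sums $\mathcal{S}_i^{(n)}$ near the singularity of the kernel---along the entire diagonal $x=y$ for $i=1$, and at the single point $x=y=\tfrac12$ for $i=2$, where the physical gap between $V_{2-}^{(n)}$ and $V_{2+}^{(n)}$ collapses (Remark~\ref{rem:nonsingular}). Controlling these contributions is exactly what forces the regularity thresholds in $\widetilde{\mathcal{F}}_i^{(\infty)}$: Lipschitz regularity makes $|u(x)-u(y)|^2|x-y|^{-(1+2s)}\lesssim|x-y|^{1-2s}$ integrable across the diagonal for all $s<1$, while $C^{1/2}$-regularity tames the boundary singularity at $\tfrac12$ over the whole range $s\in(0,1)$; for $i>2$ no such issue arises since the kernel is bounded.
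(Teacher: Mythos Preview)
Your strategy is essentially the paper's: sandwich $\mathcal{E}_i^{(n)}$ between multiples of a Gagliardo-type functional via Assumption~\ref{A:kernel}, identify the limiting domain from this comparison, and then show $\mathcal{E}_i^{(n)}(u|_{V_i^{(n)}})-\mathcal{E}_i^{(n)}(\avg_i^{(n)}u)\to 0$ on the dense classes $\lip$, $C^{1/2}$, $L^\infty$. The paper carries this out in Lemmas~\ref{lem:domains}--\ref{lem:domains-tilde} for the domains and Lemmas~\ref{lem:samelimit}, \ref{lem:samelimit-2}, \ref{lem:samelimit-i} for the energies, with the same case split and the same identification of the singular sets (the full diagonal for $i=1$, the point $\tfrac12$ for $i=2$).

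Two places where your sketch diverges from or glosses over the paper's argument are worth flagging. First, the graph-directed self-similar relation (Lemma~\ref{L:graph_directed_ss_energy}) plays no role in this proof; you can drop that appeal entirely. Second, your claim that $\mathcal{S}_i^{(n)}(\avg_i^{(n)}u)$ is comparable with $n$-independent constants to $[\ext_i^{(n)}\avg_i^{(n)}u]_{H^s}^2$, and that $L^2$-convergence of $\ext_i^{(n)}\avg_i^{(n)}u\to u$ then yields the limit, hides the real work: passing to the limit in the singular sum requires a dominated-convergence argument, which the paper implements via a $\delta$-cutoff of the kernel together with a Hardy--Littlewood maximal function bound on $|\avg_i^{(n)}u(\bar x)|$ (for the upper bound) and Fatou's lemma (for the lower bound). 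Your modulus-of-continuity estimate $|u(\bar x)-\avg_i^{(n)}u(\bar x)|\lesssim\omega_u(2^{-n})$ is the right idea for the energy-difference step, and the paper makes this quantitative through the kernel estimates of Lemmas~\ref{lem:kernel-estimate1} and~\ref{lem:kernel-estimate2}, which are exactly what is needed to show the ``vanishing tail'' you invoke is uniformly summable.
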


\begin{proof}
For $i \in \N$, the equivalence of domains $\mathcal{F}_i^{(\infty)} = \widetilde{\mathcal{F}}_i^{(\infty)}$ and the description \eqref{eq:domain-description} follows from Lemmas \ref{lem:domains} and \ref{lem:domains-tilde}.
The equivalence of energies $\mathcal{E}_i^{(\infty)}(u) = \widetilde{\mathcal{E}}_i^{(\infty)}(u)$ for all $u \in \mathcal{F}_i^{(\infty)}$ then follows from Lemmas \ref{lem:samelimit}, \ref{lem:samelimit-2}, and \ref{lem:samelimit-i}.   
\end{proof}

\begin{rem}\label{rem:ellipticity}
As a consequence of the proof of Theorem \ref{thm:DF-main}, if $u \in \mathcal{F}_i^{(\infty)}$, then 
\begin{equation}\label{eq:ellipticity}
\begin{array}{rcccll} 
\lambda_1 [u]_{H^s([0,1])}^2 
&\leq& \mathcal{E}_1^{(\infty)}(u) &\leq&\Lambda_1 [u]_{H^s([0,1])}^2 
& \hbox{if}~i=1, 
 \\[1em] 
 2\lambda_2 [u]_{H^s([0,\frac12],[\frac12,1])}^2 &\leq&
\mathcal{E}_2^{(\infty)}(u) 
&\leq& 2\Lambda_2 [u]_{H^s([0,\frac12],[\frac12,1])}^2 
& \hbox{if}~i=2, 
 \\[1em]
0 &\leq&  \mathcal{E}_i^{(\infty)}(u) 
&\leq&  \frac{4\Lambda_i b_i}{i}  \|u\|_{L^2(\overline{V}_i^*)}^2 & \hbox{if}~i>2,
\end{array}
\end{equation}
where $b_i \in (0,\infty)$, $i>2$, is defined in  \eqref{eq:boundedkernel}. 
\end{rem}

%%%%%%%%%%%%%%%%%%%%%%%%%%%%%%%%
\subsection{Equivalence of domains}\label{subsec:domains}
%%%%%%%%%%%%%%%%%%%%%%%%%%%%%%%%

The goal of this section is to prove that $\mathcal{F}_i^{(\infty)} = \widetilde{\mathcal{F}}_i^{(\infty)}$ for all $i \in \N$. 
We begin by proving the following description of $\mathcal{F}_i^{(\infty)}$. 

\begin{lem}\label{lem:domains}
Let $i \in \N$ be fixed and let Assumption~\ref{A:kernel}\ref{item:upper} hold for some $s\in (0,1)$. Then,
\begin{equation}\label{eq:inclusion_domains}
\begin{array}{ll}
\lip([0,1])\subset H^s([0,1])\subset \mathcal{F}_1^{(\infty)}  & \hbox{if}~i=1,\\[.75em]
C^{\frac12}([0,1])\subset H^s([0,\frac12],[\frac12,1])\subset\mathcal{F}_2^{(\infty)} & \hbox{if}~i=2,\\[.75em]
L^{2}(\overline{V}_i^*,dx) =\mathcal{F}_i^{(\infty)} & \hbox{if}~i>2.
\end{array}
\end{equation}
If in addition~Assumption~\ref{A:kernel}\ref{item:lower} holds, then $\mathcal{F}_i^{(\infty)}$ is given by \eqref{eq:domain-description} for all $i \in \N$.
\end{lem}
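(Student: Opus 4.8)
The plan is to treat the three regimes $i=1$, $i=2$, and $i>2$ separately and, in each, to deduce the stated identity by sandwiching the discrete energy between two copies of a discrete Gagliardo-type sum. Concretely, set
\[
G_i^{(n)}(u) := \sum_{\substack{x,y \in V_i^{(n)}\\ \{x,y\} \in W_i^{(n)}}} \frac{(u(x)-u(y))^2}{|x-y|^{1+2s}}\,\mu_i^{(n)}(x)\,\mu_i^{(n)}(y).
\]
Since the jump kernel vanishes off wires, \eqref{E:energy_as_DF} together with Assumption \ref{A:kernel} gives, for every $n \geq 0$,
\[
\lambda_i\, G_i^{(n)}(u) \leq \mathcal{E}_i^{(n)}\big(u|_{V_i^{(n)}}\big) \leq \Lambda_i\, G_i^{(n)}(u).
\]
The wire structure \eqref{eq:wires} makes $G_i^{(n)}$ a discretization of the continuous seminorm over exactly the correct index set: all of $[0,1]^2$ when $i=1$ (complete graph), and $[0,\frac12]\times[\frac12,1]$ together with its reflection when $i=2$ (complete bipartite graph). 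Everything therefore reduces to comparing $G_i^{(n)}(u)$, as $n\to\infty$, with $[u]_{H^s([0,1])}^2$ (for $i=1$) and with $[u]_{H^s([0,\frac12],[\frac12,1])}^2$ (for $i=2$).

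I would first dispose of the elementary Sobolev inclusions, which require no hypothesis on the kernel. For $i=1$, a Lipschitz $u$ with constant $L$ obeys $(u(x)-u(y))^2 \leq L^2|x-y|^2$, so $[u]_{H^s([0,1])}^2 \leq L^2\iint_{[0,1]^2}|x-y|^{1-2s}\,dx\,dy<\infty$ because $1-2s>-1$; this yields $\lip([0,1])\subset H^s([0,1])$ for all $s\in(0,1)$. For $i=2$ the kernel is singular only at the midpoint, and for $u\in C^{\frac12}([0,1])$ one has $(u(x)-u(y))^2\leq [u]_{C^{\frac12}([0,1])}^2|x-y|$, whence $[u]_{H^s([0,\frac12],[\frac12,1])}^2\leq [u]_{C^{\frac12}([0,1])}^2\int_0^{1/2}\!\!\int_{1/2}^1|x-y|^{-2s}\,dy\,dx$; the substitution $x=\tfrac12-a$, $y=\tfrac12+b$ turns the integrand into $(a+b)^{-2s}$, integrable on $(0,\tfrac12)^2$ for every $s\in(0,1)$, so $C^{\frac12}([0,1])\subset H^s([0,\frac12],[\frac12,1])$. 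The reason $i=1$ demands Lipschitz regularity while $i=2$ needs only $C^{\frac12}$ is that for $i=1$ the diagonal $x=y$ contributes throughout $[0,1]^2$, whereas for $i=2$ the singular set collapses to the single point $\tfrac12$. The case $i>2$ is soft: by Remark \ref{rem:nonsingular} the kernel is bounded by $\Lambda_i b_i$, so Remark \ref{rem:split} and $(u(x)-u(y))^2\leq 2u(x)^2+2u(y)^2$ give $\mathcal{E}_i^{(n)}(u|_{V_i^{(n)}})\leq \tfrac{4\Lambda_i b_i}{i}\|u|_{V_i^{(n)}}\|_{\ell^2(V_i^{(n)},\mu_i^{(n)})}^2$, whose right-hand side tends to $\tfrac{4\Lambda_i b_i}{i}\|u\|_{L^2(\overline{V}_i^{*})}^2$; hence every $L^2$ function has finite limiting energy and, since $\mathcal{F}_i^{(\infty)}\subseteq L^2(\overline{V}_i^{*},dx)$ by definition, $L^2(\overline{V}_i^{*},dx)=\mathcal{F}_i^{(\infty)}$.

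For $i=1,2$ it then remains to compare $G_i^{(n)}(u)$ with $[u]_{H^s}^2$ as $n\to\infty$. For the upper inclusion $H^s\subset\mathcal{F}_i^{(\infty)}$, using Assumption \ref{A:kernel}\ref{item:upper}, I would split $G_i^{(n)}(u)$ according to whether the cells $U_i^{(n)}(x)$ and $U_i^{(n)}(y)$ are non-adjacent or adjacent. On non-adjacent cells $|\xi-\eta|\asymp|x-y|$ for $\xi\in U_i^{(n)}(x)$ and $\eta\in U_i^{(n)}(y)$, so these terms are dominated by the Gagliardo integral of $u$ over $U_i^{(n)}(x)\times U_i^{(n)}(y)$, and summation recovers a constant multiple of $[u]_{H^s}^2$; the adjacent and diagonal terms, where $|x-y|$ is comparable to the mesh $2^{-n}$ (resp.\ $2^{-n}/i$), are estimated separately and shown to vanish in the limit on the dense regular class ($\lip$ for $i=1$, $C^{\frac12}$ for $i=2$), after which the bound extends to all of $H^s$. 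Adding Assumption \ref{A:kernel}\ref{item:lower} produces the reverse inclusion $\mathcal{F}_i^{(\infty)}\subset H^s$: from $\mathcal{E}_i^{(n)}(u|_{V_i^{(n)}})\geq \lambda_i G_i^{(n)}(u)$ and a Fatou-type lower-semicontinuity statement $[u]_{H^s}^2\leq\liminf_n G_i^{(n)}(u)$—obtained by viewing $G_i^{(n)}(u)$ through the piecewise-constant extension $\ext_i^{(n)}(u|_{V_i^{(n)}})$ and letting the mesh shrink on the off-diagonal region—finiteness of $\mathcal{E}_i^{(\infty)}(u)$ forces $[u]_{H^s}^2<\infty$. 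Together with the upper inclusion this yields the equalities in \eqref{eq:domain-description}.

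I expect the crux to be the discrete-to-continuous comparison of $G_i^{(n)}(u)$ with $[u]_{H^s}^2$, and in particular the reconciliation of the \emph{point values} $u(x)$ intrinsic to the finite energy approach with the cell integrals of $u$: the clean Jensen-type estimates are available for cell averages rather than for point evaluations, so the near-diagonal terms—where $|x-y|$ is of the order of the mesh and the kernel is most singular—must be isolated and shown to be harmless for both inequalities, and one must simultaneously verify that the limit defining $\mathcal{E}_i^{(\infty)}$ exists. This is also exactly the point where the regularity dichotomy ($\lip$ versus $C^{\frac12}$) and the separate treatment of $i=2$ enter, since the strength of the midpoint singularity is what dictates the admissible class of test functions.
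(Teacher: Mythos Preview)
Your structural approach---sandwiching $\mathcal{E}_i^{(n)}(u|_{V_i^{(n)}})$ between $\lambda_i G_i^{(n)}(u)$ and $\Lambda_i G_i^{(n)}(u)$ and then comparing the discrete Gagliardo sum with the continuous seminorm---is exactly what the paper does, and your treatment of the elementary Sobolev inclusions and of the case $i>2$ matches the paper almost line for line.

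The divergence is in how you propose to pass from $G_i^{(n)}(u)$ to $[u]_{H^s}^2$ for $i=1,2$. The paper is far more direct: it simply observes that $G_i^{(n)}(u)$ is a Riemann sum for $[u]_{H^s}^2$ (over $[0,1]^2$ when $i=1$, over $[0,\tfrac12]\times[\tfrac12,1]$ when $i=2$) and takes the limit in one line---no splitting into adjacent versus non-adjacent cells, no intermediate dense class, no Fatou step. The same Riemann-sum identification is invoked for both the upper inequality (under Assumption~\ref{A:kernel}\ref{item:upper}) and the lower inequality (under Assumption~\ref{A:kernel}\ref{item:lower}).

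Your alternative route---prove the bound first for Lipschitz (respectively $C^{1/2}$) functions and then extend by density to all of $H^s$---is more elaborate and, as you yourself anticipate at the end, contains a genuine gap: the quantities $\mathcal{E}_i^{(n)}(u|_{V_i^{(n)}})$ and $G_i^{(n)}(u)$ depend on \emph{point values} of $u$, and point evaluation is not continuous with respect to the $H^s$ norm (certainly not for $s\le\tfrac12$), so there is no mechanism to transport a bound from the Lipschitz class to arbitrary $H^s$ functions. The step you describe as ``these terms are dominated by the Gagliardo integral of $u$ over $U_i^{(n)}(x)\times U_i^{(n)}(y)$'' fails for the same reason: $(u(x)-u(y))^2$ at the grid points is not controlled by the cell average of $(u(\xi)-u(\eta))^2$ for a function that is merely in $L^2$. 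Your concern about reconciling point values with cell integrals is legitimate---the paper is admittedly terse on this point---but the density-extension strategy you sketch would not resolve it; the paper simply declines to introduce the issue and treats the whole passage as a Riemann-sum limit.
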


\begin{proof}

\underline{\bf Step 1}. We begin by proving \eqref{eq:inclusion_domains}. 
First, consider $i=1$. 
By Assumption~\ref{A:kernel}\ref{item:upper} and the definition of Riemann integration, for $u\in H^s([0,1])$,
\begin{align*}
\mathcal{E}_1^{(\infty)}(u)
    &\leq \lim_{n \to \infty}  \Lambda_1 \sum_{x, y \in V_1^{(n)}} \frac{|u(x) - u(y)|^2}{|x-y|^{1+2s}} \, \mu_1^{(n)}(y) \, \mu_1^{(n)}(x)\notag\\
    &= \Lambda_1 \int_0^1 \int_0^1 \frac{|u(x) - u(y)|^2}{|x-y|^{1+2s}} \, dy \, dx = \Lambda_1[u]_{H^s([0,1])}^2. 
\end{align*} 
Moreover, since $\lip([0,1]) \subset H^s([0,1])$, it follows that
\begin{equation*}%\label{eq:inclusions-1}
 \lip([0,1]) \subset
 H^s([0,1]) \subset \mathcal{F}_1^{(\infty)}
\end{equation*}
which proves \eqref{eq:inclusion_domains} for $i=1$. 

For $i\geq 2$, we similarly estimate 
\begin{align*}
\mathcal{E}_i^{(\infty)}(u)
    &\leq \lim_{n \to \infty}  2\Lambda_i \sum_{x \in V_{i-}^{(n)}} \,\, \sum_{y\in V_{i+}^{(n)}} \frac{(u(x)-u(y))^2}{(y-x)^{1+2s}} \mu_i^{(n)}(y)\mu_i^{(n)}(x)\\
    &= 2\Lambda_i \int_0^{\frac{1}{i}} \int_{1-\frac{1}{i}}^1 \frac{|u(x) - u(y)|^2}{(y-x)^{1+2s}} \, dy \, dx.
\end{align*}
For $i=2$, it is easy to check that if $u \in C^{\frac{1}{2}}([0,1])$, then 
\begin{align*}
[u]_{H^s([0,\frac12],[\frac12,1])}^2 =\int_0^{\frac{1}{2}} \int_{\frac{1}{2}}^1 \frac{|u(x) - u(y)|^2}{(y-x)^{1+2s}} \, dy \, dx &\leq \int_0^{\frac{1}{2}} \int_{\frac{1}{2}}^1 \frac{[u]_{C^{\frac{1}{2}}([0,1])}^2(y-x)}{(y-x)^{1+2s}} \, dy \, dx
     <\infty.
\end{align*}
Thus,
\[
C^{\frac12}([0,1]) \subset H^s\left(\left[0,\frac12\right],\left[\frac12,1\right]\right) \subset \mathcal{F}_2^{(\infty)}
\]
and \eqref{eq:inclusion_domains} holds for $i=2$. 
On the other hand, for $i>2$, 
if $u \in L^2(\overline{V}_i^*, dx)$, then~\eqref{eq:boundedkernel} implies
\begin{align*}
2\Lambda_i \int_0^{\frac{1}{i}} \int_{1-\frac{1}{i}}^1 \frac{|u(x) - u(y)|^2}{(y-x)^{1+2s}} \, dy \, dx
    &\leq 2\Lambda_i b_i \int_0^{\frac{1}{i}} \int_{1-\frac{1}{i}}^1 |u(x) - u(y)|^2 \, dy \, dx\\
    &\leq 4\Lambda_i b_i \int_0^{\frac{1}{i}} \int_{1-\frac{1}{i}}^1 (|u(x)|^2 + |u(y)|^2) \, dy \, dx\\
    &= \frac{4\Lambda_i b_i}{i} \|u\|_{L^2(\overline{V}_i^*, dx)}^2.
\end{align*}
Therefore, 
\[
L^2(\overline{V}_i^*, dx) \subset \mathcal{F}_i^{(\infty)} \subset L^2(\overline{V}_i^*, dx) 
\]
and \eqref{eq:inclusion_domains} holds for $i>2$. 

\medskip

\noindent
\underline{\bf Step 2}.
We now turn our attention to 
\eqref{eq:domain-description}.
Let $i=1$. 
By Assumption~\ref{A:kernel}\ref{item:lower}, we can show as above that
\[
\mathcal{E}_1^{(\infty)}(u)
    \geq \lambda_1 \int_0^1 \int_0^1 \frac{|u(x) - u(y)|^2}{|x-y|^{1+2s}} \, dy \, dx = \lambda_1[u]_{H^s([0,1])}^2.
\]
Consequently,
\begin{equation*}
\mathcal{F}_1^{(\infty)} = H^s([0,1])
\end{equation*}
which proves \eqref{eq:domain-description} for $i=1$. 
Similarly, for $i=2$ we find
\[
\mathcal{E}_2^{(\infty)}(u)
    \geq 2\lambda_2 \int_0^{\frac{1}{2}} \int_{\frac{1}{2}}^1 \frac{|u(x) - u(y)|^2}{(y-x)^{1+2s}} \, dy \, dx,
\]
so that
\begin{equation*}
\mathcal{F}_2^{(\infty)} = H^s\left(\left[0,\frac{1}{2}\right],\left[\frac{1}{2},1\right]\right),
\end{equation*}
which is \eqref{eq:domain-description} for $i=2$. 
The result for $i>2$ was already established in \eqref{eq:inclusion_domains}. 
\end{proof}

%%%%%

In the next lemma, we describe the domain $\widetilde{\mathcal{F}}_i^{(\infty)}$ and consequently prove that it coincides with the domain $\mathcal{F}_i^{(\infty)}$. 

\begin{lem}\label{lem:domains-tilde}
Let $i \in \N$ be fixed and let Assumption~\ref{A:kernel}\ref{item:upper} hold for some $s\in (0,1)$. Then,
\begin{equation}\label{eq:inclusion_domains-tilde}
\begin{array}{ll}
\lip([0,1])\subset H^s([0,1])\subset\widetilde{\mathcal{F}}_1^{(\infty)}  
& \hbox{if}~i=1,\\[.75em]
C^{\frac12}([0,1])\subset H^s([0,\frac12],[\frac12,1])\subset\widetilde{\mathcal{F}}_2^{(\infty)} 
& \hbox{if}~i=2,\\[.75em]
L^{2}(\overline{V}_i^*,dx) =\widetilde{\mathcal{F}}_i^{(\infty)} & \hbox{if}~i>2.
\end{array}
\end{equation}
If in addition~Assumption~\ref{A:kernel}\ref{item:lower} holds, 
then
\begin{equation}\label{eq:equal_domains-tilde}
\widetilde{\mathcal{F}}_i^{(\infty)}
    = \begin{cases}
    H^s([0,1])& \hbox{if}~i=1,\\[.75em]
    H^s([0,\frac12],[\frac12,1]) & \hbox{if}~i=2,\\[.75em]
    L^2(\overline{V}_i^*, dx) & \hbox{if}~i>2,
    \end{cases}
\end{equation}
and, consequently, $\mathcal{F}_i^{(\infty)} = \widetilde{\mathcal{F}}_i^{(\infty)}$ for all $i \in \N$.
\end{lem}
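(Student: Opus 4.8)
The plan is to mirror the proof of Lemma~\ref{lem:domains}, treating $i=1$, $i=2$ and $i>2$ separately, and in each case to (i) establish the inclusion chains in \eqref{eq:inclusion_domains-tilde} using only the upper bound Assumption~\ref{A:kernel}\ref{item:upper}, and then (ii) upgrade these to the equalities \eqref{eq:equal_domains-tilde} by invoking the lower bound \ref{item:lower}. The final identity $\mathcal{F}_i^{(\infty)}=\widetilde{\mathcal{F}}_i^{(\infty)}$ is then immediate by comparing \eqref{eq:equal_domains-tilde} with the description of $\mathcal{F}_i^{(\infty)}$ already obtained in Lemma~\ref{lem:domains}. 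The essential new feature, relative to Lemma~\ref{lem:domains}, is that the discrete form is now evaluated at the averages $\avg_i^{(n)}u$ rather than at the restriction $u|_{V_i^{(n)}}$, and the two differ precisely at the level of nearest-neighbour pairs.

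\emph{Upper bound and inclusions.} First I would bound $\mathcal{E}_i^{(n)}(\avg_i^{(n)}u)$ from above, uniformly in $n$. Applying Jensen's inequality to each average and using $\mu_i^{(n)}(x)=|U_i^{(n)}(x)|$ gives, for $\{x,y\}\in W_i^{(n)}$,
\[
(\avg_i^{(n)}u(x)-\avg_i^{(n)}u(y))^2\,\mu_i^{(n)}(x)\mu_i^{(n)}(y)\le \int_{U_i^{(n)}(x)}\int_{U_i^{(n)}(y)}(u(\xi)-u(\eta))^2\,d\eta\,d\xi.
\]
Combining this with \ref{item:upper} and the elementary comparison $|\xi-\eta|\le 2|x-y|$ for $\xi\in U_i^{(n)}(x)$, $\eta\in U_i^{(n)}(y)$ (for $i=2$ the one-sided cells even give $\eta-\xi\le y-x$, hence constant $1$; here I use the bipartite form of the energy from Remark~\ref{rem:split}), and noting that the products $U_i^{(n)}(x)\times U_i^{(n)}(y)$ tile $[0,1]^2$ (respectively $[0,\tfrac12)\times(\tfrac12,1]$), I obtain $\widetilde{\mathcal{E}}_1^{(\infty)}(u)\le C\Lambda_1[u]_{H^s([0,1])}^2$ and $\widetilde{\mathcal{E}}_2^{(\infty)}(u)\le 2\Lambda_2[u]_{H^s([0,\frac12],[\frac12,1])}^2$, while for $i>2$ the bounded kernel \eqref{eq:boundedkernel} yields $\widetilde{\mathcal{E}}_i^{(\infty)}(u)\le \tfrac{4\Lambda_i b_i}{i}\|u\|_{L^2(\overline{V}_i^*)}^2$. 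These estimates say the $\widetilde{\mathcal{E}}$-norm is dominated by the target norm, so any class that is dense in the target space in its own norm (namely $\lip([0,1])\subset H^s([0,1])$, $C^{1/2}([0,1])\subset H^s([0,\tfrac12],[\tfrac12,1])$, and $L^\infty\subset L^2$) is automatically $\widetilde{\mathcal{E}}$-dense there, giving \eqref{eq:inclusion_domains-tilde}. For $i>2$, since the closure automatically lies in $L^2(\overline{V}_i^*)$ and $L^\infty$ is $L^2$-dense, this already produces $\widetilde{\mathcal{F}}_i^{(\infty)}=L^2(\overline{V}_i^*)$ with no use of \ref{item:lower}.

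\emph{Lower bound and reverse inclusion.} For $i\in\{1,2\}$ the remaining task is $\widetilde{\mathcal{F}}_i^{(\infty)}\subset\mathcal{F}_i^{(\infty)}$, which I would reduce to Lemma~\ref{lem:domains} by showing that on the dense class the averaged energy and the restriction energy share the same limit. Set $w^{(n)}:=\avg_i^{(n)}v-v|_{V_i^{(n)}}$ for $v$ in the dense class. Using $|w^{(n)}(x)|\le [v]_{\lip}2^{-n}$ when $i=1$ and $|w^{(n)}(x)|\le C[v]_{C^{1/2}}2^{-n/2}$ when $i=2$, together with \ref{item:upper}, I would prove $\mathcal{E}_i^{(n)}(w^{(n)})\to 0$: inserting the pointwise bound on $w^{(n)}$ into the kernel estimate and summing the convergent series $\sum_k k^{-(1+2s)}$ over the admissible distances gives a bound that is a positive power of $2^{-n}$ (with at most a logarithmic correction at $s=\tfrac12$ in the case $i=2$), hence tends to $0$ for every $s\in(0,1)$. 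By the triangle inequality for $\mathcal{E}_i^{(n)}(\cdot)^{1/2}$ this forces $\widetilde{\mathcal{E}}_i^{(\infty)}(v)=\mathcal{E}_i^{(\infty)}(v)$ on the dense class, so the lower bound of Lemma~\ref{lem:domains} gives $\widetilde{\mathcal{E}}_i^{(\infty)}(v)\ge\lambda_i[v]^2$ there (with $[v]^2$ the relevant $H^s$-seminorm). A standard closure argument then closes the loop: if $u\in\widetilde{\mathcal{F}}_i^{(\infty)}$ has dense-class approximants $u_k\to u$ in $L^2$ that are $\widetilde{\mathcal{E}}$-Cauchy, the lower bound makes $(u_k)$ Cauchy in the $H^s$-seminorm, so its $H^s$-limit coincides with $u$ and $u\in\mathcal{F}_i^{(\infty)}$.

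The hard part is the lower bound. Since Jensen's inequality runs only in the direction needed for the upper bound, one cannot estimate $\widetilde{\mathcal{E}}_i^{(\infty)}$ from below by $[\cdot]^2$ pair-by-pair: averaging destroys exactly the nearest-neighbour (adjacent-cell) information, where $|\xi-\eta|$ may be far smaller than $|x-y|$. The device above circumvents this by routing through the restriction energy on a \emph{regular} dense class, and the most delicate instance is $i=2$, where the kernel is singular at $\tfrac12$ (Remark~\ref{rem:nonsingular}) and controlling $\mathcal{E}_2^{(n)}(w^{(n)})$ requires both the $C^{1/2}$-regularity and a careful estimate of the pairs closest to $\tfrac12$. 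A secondary point to verify is the density of each approximating class in its target space in the target norm: routine for $\lip([0,1])\subset H^s([0,1])$ and $L^\infty\subset L^2$, but requiring a small mollification argument adapted to the gap at $\tfrac12$ for $C^{1/2}([0,1])\subset H^s([0,\tfrac12],[\tfrac12,1])$.
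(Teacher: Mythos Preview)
Your proposal is correct, but both halves take a genuinely different route from the paper's proof.

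\emph{Upper bound.} Your Jensen-plus-cell-comparison argument is more elementary than what the paper does. The paper introduces a cutoff $\delta>0$, writes $\mathcal{E}_{i,\delta}^{(n)}(\avg_i^{(n)}u)$ as a double integral, dominates the integrand pointwise by the Hardy--Littlewood maximal function of $u$, and then passes to the limit via dominated convergence and Lebesgue differentiation, finally sending $\delta\to 0$. This recovers the sharp constant $\Lambda_1$ (rather than your $2^{1+2s}\Lambda_1$), which matters for Remark~\ref{rem:ellipticity} but not for the domain identification. Your argument is shorter and avoids the maximal function entirely; for $i=2$ the one-sided cells even give you the sharp constant.

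\emph{Lower bound.} Here the paper is more direct: for every $u\in\widetilde{\mathcal{F}}_i^{(\infty)}$ (not just the dense class) it writes, for fixed $\delta>0$, the truncated $H^s$-integrand as a pointwise limit of $|\avg_i^{(n)}u(\bar x)-\avg_i^{(n)}u(\bar y)|^2/|\bar x-\bar y|^{1+2s}$ via Lebesgue differentiation, applies Fatou, converts the resulting integral back to a discrete sum, and then uses Assumption~\ref{A:kernel}\ref{item:lower} to bound that sum by $\widetilde{\mathcal{E}}_i^{(\infty)}(u)$. Taking $\sup_{\delta>0}$ finishes. No closure argument and no comparison with the restriction energy are needed. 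Your route---showing $\mathcal{E}_i^{(n)}(\avg_i^{(n)}v-v|_{V_i^{(n)}})\to 0$ on the dense class and then closing---also works (your growth estimates for $i=1,2$ check out for all $s\in(0,1)$), but it effectively reproves Lemmas~\ref{lem:samelimit}, \ref{lem:samelimit-2}, \ref{lem:samelimit-i} inside this lemma; the paper defers those to Section~\ref{subsec: forms} and keeps the present argument self-contained via Fatou.
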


\begin{proof}

The proof is broken up into several steps. 

\medskip
\noindent
\underline{\bf Step 1}. We begin by proving \eqref{eq:inclusion_domains-tilde} in the case $i=1$. 
The first inclusion was established in  \eqref{eq:inclusion_domains}, 
hence we will prove that, for $u \in H^s([0,1])$,
\begin{equation}\label{eq:tilde-Hs-1}
\widetilde{\mathcal{E}}_i^{(\infty)}(u) \leq \Lambda_1 [u]_{H^s([0,1])}^2. 
\end{equation}

Fix $\delta>0$ and define for $v \in \ell(V_1^{(n)})$
\begin{equation*}
\mathcal{E}_{1,\delta}^{(n)}(v)
    := \sum_{\bar{x} \in V_1^{(n)}} \sum_{\bar{y} \in V_1^{(n)}} j_1^{(n)}(\bar{x},\bar{y}) \mathbf{1}_{\{|\bar{x} - \bar{y}|>\delta\}} |v(\bar{x}) - v(\bar{y})|^2 \mu_1^{(n)}(\bar{x}) \mu_1^{(n)}(\bar{y}),
\end{equation*}
where $\mathbf{1}_A$ denotes the characteristic function of a measurable set $A \subset \R$. 

By Assumption~\ref{A:kernel}\ref{item:upper}, we get for any $v \in \ell(V_1^{(n)})$
\begin{align*}
\mathcal{E}_{1,\delta}^{(n)}(v)
    &\leq \Lambda_1 \sum_{\bar{x} \in V_1^{(n)}} \sum_{\bar{y} \in V_1^{(n)}} |\bar{x} - \bar{y}|^{-1-2s} \mathbf{1}_{\{|\bar{x} - \bar{y}|>\delta\}} |v(\bar{x}) - v(\bar{y})|^2 \mu_1^{(n)}(\bar{x}) \mu_1^{(n)}(\bar{y})\\
    &= \Lambda_1 \sum_{\bar{x} \in V_1^{(n)}} \sum_{\bar{y} \in V_1^{(n)}}\int_{U_1^{(n)}(\bar{x})}\int_{U_1^{(n)}(\bar{y})} \frac{ |v(\bar{x}) - v(\bar{y})|^2}{|\bar{x} - \bar{y}|^{1+2s}} \mathbf{1}_{\{|\bar{x} - \bar{y}|>\delta\}} \, dy \, dx.
\end{align*}
For $x \in [0,1]$, consider the function
\begin{equation}\label{eq:l}
\avg_1^{(n)}(x)  := \bar{x}
\end{equation}
where $\bar{x} \in V_1^{(n)}$ is the unique point such that $x \in U_1^{(n)}(\bar{x})$. 
With this, we write
\begin{align*}
\mathcal{E}_{1,\delta}^{(n)}(v)
    &\leq \Lambda_1 \int_0^1 \int_0^1 \frac{ |v(\bar{x}) - v(\bar{y})|^2}{|\bar{x} - \bar{y}|^{1+2s}} \mathbf{1}_{\{|\bar{x} - \bar{y}|>\delta\}} \, dy \, dx
\end{align*}
with the understanding that $\bar{x} = \avg_1^{(n)}(x)$ and $\bar{y} = \avg_1^{(n)}(y)$ are functions of $x$ and $y$, respectively.

Consider now $v = \avg_1^{(n)}u$ and observe that
\begin{multline*}
\frac{ |\avg_1^{(n)}u(\bar{x}) - \avg_1^{(n)}u(\bar{y})|^2}{|\bar{x} - \bar{y}|^{1+2s}} \mathbf{1}_{\{|\bar{x} - \bar{y}|>\delta\}} 
    \leq \frac{ |\avg_1^{(n)}u(\bar{x}) - \avg_1^{(n)}u(\bar{y})|^2}{\delta^{1+2s}}\\
    \leq 2\delta^{-1-2s} ( |\avg_1^{(n)}u(\bar{x})|^2 + | \avg_1^{(n)}u(\bar{y})|^2).
\end{multline*}
Then, for $x \in [0,1]$ and $\bar{x} = \avg_1^{(n)}(x)$, 
\begin{align*}
|\avg_1^{(n)}u(\bar{x})|
    &\leq \frac{1}{|U_1^{(n)}(\bar{x})|} \int_{U_1^{(n)}(\bar{x})} |u(z)| \, dz\\
    &\leq \sup \left\{\frac{1}{|B|} \int_{B} |u(z)| \, dz : B\subset [0,1]~\hbox{is a ball containing}~x\right\} =: Mu(x),
\end{align*}
where $M$ is the restricted, un-centered maximal function on $[0,1]$. By virtue of the Hardy-Littlewood maximal theorem, $M$ is a bounded operator on $L^2([0,1],dx)$ and 
\begin{align*}
\frac{ |\avg_1^{(n)}u(\bar{x}) - \avg_1^{(n)}u(\bar{y})|^2}{|\bar{x} - \bar{y}|^{1+2s}} \mathbf{1}_{\{|\bar{x} - \bar{y}|>\delta\}} 
    \leq \frac{2}{\delta^{1+2s}} ( (Mu(x))^2 + (Mu(y))^2) \in L^1([0,1] \times [0,1]). 
\end{align*}
The Dominated Convergence Theorem and the Lebesgue Differentiation Theorem thus yield
\begin{align*}
\lim_{n \to \infty} \mathcal{E}_{1,\delta}^{(n)}(\avg_1^{(n)}u)
    &\leq  \Lambda_1 \int_0^1 \int_0^1  \lim_{n \to \infty} \frac{ |\avg_1^{(n)}u(\bar{x}) - \avg_1^{(n)}u(\bar{y})|^2}{|\bar{x} - \bar{y}|^{1+2s}} \mathbf{1}_{\{|\bar{x} - \bar{y}|>\delta\}} \, dy \, dx\\
    &=\Lambda_1 \int_0^1 \int_0^1  \frac{ |u(x) - u(y)|^2}{|x-y|^{1+2s}} \mathbf{1}_{\{|x-y|>\delta\}} \, dy \, dx\\
    &\leq \Lambda_1 \int_0^1 \int_0^1   \frac{ |u(x) - u(y)|^2}{|x-y|^{1+2s}} \, dy \, dx = \Lambda_1 [u]_{H^s([0,1])}^2. 
\end{align*}
Since $u \in H^s([0,1])$, Fubini's Theorem implies
\begin{align*}
\widetilde{\mathcal{E}}_1^{(\infty)}(u)
    = \lim_{n \to \infty} \mathcal{E}_{1}^{(n)}(\avg_1^{(n)}u)
    &= \lim_{n \to \infty} \sup_{\delta>0} \mathcal{E}_{1,\delta}^{(n)}(\avg_1^{(n)}u)\\
    &= \sup_{\delta>0} \lim_{n \to \infty} \mathcal{E}_{1,\delta}^{(n)}(\avg_1^{(n)}u)\\
    &\leq \sup_{\delta>0}\Lambda_1 [u]_{H^s([0,1])}^2 = \Lambda_1 [u]_{H^s([0,1])}^2,
\end{align*}
and \eqref{eq:tilde-Hs-1} holds. 

Let $u \in H^s([0,1])$. 
By density of smooth functions in $H^s([0,1])$ (see e.g.~\cite{Mclean}*{Theorem 3.25}), we have
\[
C^\infty([0,1])\cap H^s([0,1])
\subset \lip([0,1])\cap H^s([0,1])
=\lip([0,1])\subset 
H^s([0,1])
\]
with both inclusions dense. 
Hence, there exists a sequence $u_k \in \lip([0,1])$ that converges to $u$ in $H^s([0,1])$ as $k \to \infty$. By \eqref{eq:tilde-Hs-1},
\[
\widetilde{\mathcal{E}}_1^{(\infty)}(u_k-u)
    + \|u_k-u\|_{L^2([0,1],dx)}^2 
        \leq \Lambda_1 [u_k-u]_{H^s([0,1])}^2 + \|u_k-u\|_{L^2([0,1],dx)}^2 \to 0
\]
as $k \to \infty$. Therefore, $u \in \widetilde{\mathcal{F}}_1^{(\infty)}$ and \eqref{eq:inclusion_domains-tilde} holds for $i=1$. 

\medskip
 
Next, we let $i=2$. 
It is enough to prove 
that
\begin{equation}\label{eq:tilde-Hs-2}
\widetilde{\mathcal{E}}_2^{(\infty)}(u) \leq 2\Lambda_2 [u]_{H^s([0,\frac12],[\frac12,1])}^2
\end{equation}
for $u \in H^s([0,\frac12],[\frac12,1])$. Then \eqref{eq:inclusion_domains-tilde} follows from the density of $C^{\frac12}([0,1])$ in $H^s([0,\frac12],[\frac12,1])$, which follows as in \cite{Mclean}.

First fix $\delta>0$ and define for $v \in \ell(V_2^{(n)})$
\begin{equation*}
\mathcal{E}_{2,\delta}^{(n)}(v)
    := 2\sum_{\bar{x} \in V_{2-}^{(n)}} \sum_{\bar{y} \in V_{2+}^{(n)}} j_2^{(n)}(\bar{x},\bar{y}) \mathbf{1}_{\{|\bar{x} - \bar{y}|>\delta\}} |v(\bar{x}) - v(\bar{y})|^2 \mu_2^{(n)}(\bar{x}) \mu_2^{(n)}(\bar{y}).
\end{equation*}
By Assumption~\ref{A:kernel}\ref{item:upper}, we get for any $v \in \ell(V_2^{(n)})$
\begin{align*}
\mathcal{E}_{2,\delta}^{(n)}(v)
    &\leq 2\Lambda_2 \sum_{\bar{x} \in V_{2-}^{(n)}} \sum_{\bar{y} \in V_{2+}^{(n)}} (\bar{y} - \bar{x})^{-1-2s} \mathbf{1}_{\{(\bar{y} - \bar{x})>\delta\}} |v(\bar{x}) - v(\bar{y})|^2 \mu_2^{(n)}(\bar{x}) \mu_2^{(n)}(\bar{y})\\
    &= 2\Lambda_2 \sum_{\bar{x} \in V_{2-}^{(n)}} \sum_{\bar{y} \in V_{2+}^{(n)}}\int_{U_2^{(n)}(\bar{x})}\int_{U_2^{(n)}(\bar{y})} \frac{ |v(\bar{x}) - v(\bar{y})|^2}{(\bar{y} - \bar{x})^{1+2s}} \mathbf{1}_{\{(\bar{y} - \bar{x})>\delta\}} \, dy \, dx.
\end{align*}
For $x \in [0,1] \setminus \{\frac12\}$, consider the function 
\begin{equation}\label{eq:l-2}
\avg_2^{(n)}(x) := \bar{x},
\end{equation}
where $\bar{x} \in V_2^{(n)}$ is the unique point for which $x \in U_2^{(n)}(\bar{x})$. Now, it follows from the above that
\begin{align*}
\mathcal{E}_{2,\delta}^{(n)}(v)
    &\leq 2\Lambda_2 \int_0^{\frac12} \int_{\frac12}^1 \frac{ |v(\bar{x}) - v(\bar{y})|^2}{(\bar{y} - \bar{x})^{1+2s}} \mathbf{1}_{\{(\bar{y} - \bar{x})>\delta\}} \, dy \, dx
\end{align*}
with the understanding that $\bar{x} = \avg_2^{(n)}(x)$ and $\bar{y} = \avg_2^{(n)}(y)$ are functions of $x$ and $y$, respectively.

Consider now $v = \avg_2^{(n)}u$ and let $M_2^-$ and $M_2^+$ denote the restricted, un-centered maximal functions on $[0,\frac12)$ and $(\frac12,1]$, respectively.
For $x \in [0,\frac12)$, $y \in (\frac12,1]$ and  $\bar{x} = \avg_2^{(n)} (x)$, $\bar{y} = \avg_2^{(n)}(y)$, the Hardy-Littlewood maximal theorem yields
\begin{multline*}
\frac{ |\avg_2^{(n)}u(\bar{x}) - \avg_2^{(n)}u(\bar{y})|^2}{(\bar{y} - \bar{x})^{1+2s}} \mathbf{1}_{\{(\bar{y} - \bar{x})>\delta\}} 
    \leq \frac{2}{\delta^{1+2s}} ( |\avg_2^{(n)}u(\bar{x})|^2 + | \avg_2^{(n)}u(\bar{y})|^2)\\
    \leq \frac{2}{\delta^{1+2s}} \left((M_2^-u(x))^2 + (M_2^+u(y))^2\right) \in L^1([0,\frac12) \times (\frac12,1]).
\end{multline*}
Hence, by the Dominated Convergence Theorem and the Lebesgue Differentiation Theorem, 
\begin{align*}
\lim_{n \to \infty} \mathcal{E}_{2,\delta}^{(n)}(\avg_2^{(n)}u)
    &\leq  2\Lambda_2 \int_0^{\frac12} \int_{\frac12}^1  \lim_{n \to \infty} \frac{ |\avg_2^{(n)}u(\bar{x}) - \avg_2^{(n)}u(\bar{y})|^2}{(\bar{y} - \bar{x})^{1+2s}} \mathbf{1}_{\{(\bar{y} - \bar{x})>\delta\}} \, dy \, dx\\
    &=2\Lambda_2 \int_0^{\frac12} \int_{\frac12}^1  \frac{ |u(x) - u(y)|^2}{(y-x)^{1+2s}} \mathbf{1}_{\{(y-x)>\delta\}} \, dy \, dx
    \leq 2\Lambda_2 [u]_{H^s([0,\frac12],[\frac12,1])}^2
\end{align*}
and by Fubini's Theorem we obtain \eqref{eq:tilde-Hs-2}. 

\medskip

Finally, let $i>2$. 
From the definition, it is clear that 
$\widetilde{F}_i^{(\infty)} \subset L^2(\overline{V}_i^*, dx)$.
We will prove that
\begin{equation}\label{eq:tilde-l2-i}
\widetilde{\mathcal{E}}_i^{(\infty)}(u) \leq \frac{4\Lambda_ib_i}{i} \|u\|^2_{L^2(\overline{V}_i^*, dx)}
\end{equation}
where $b_i$ is given in \eqref{eq:boundedkernel}.

For any $v \in \ell(V_2^{(n)})$, we use Assumption~\ref{A:kernel}\ref{item:upper} and \eqref{eq:boundedkernel} to estimate
\begin{align*}
\mathcal{E}_i^{(n)}(v)  
    &\leq 2\Lambda_i \sum_{\bar{x} \in V_{i+}^{(n)}} \sum_{\bar{y} \in V_{i-}^{(n)}} |\bar{x}- \bar{y}|^{-1-2s} |v(\bar{x}) - v(\bar{y})|^2 \mu_i^{(n)}(\bar{x})\mu_i^{(n)}(\bar{y})\\
    &\leq 2\Lambda_ib_i \sum_{\bar{x} \in V_{i+}^{(n)}} \sum_{\bar{y} \in V_{i-}^{(n)}}  |v(\bar{x}) - v(\bar{y})|^2 \mu_i^{(n)}(\bar{x})\mu_i^{(n)}(\bar{y})\\
    &\leq 4\Lambda_ib_i \sum_{\bar{x} \in V_{i+}^{(n)}} \sum_{\bar{y} \in V_{i-}^{(n)}} (|v(\bar{x})|^2 + |v(\bar{y})|^2) \mu_i^{(n)}(\bar{x})\mu_i^{(n)}(\bar{y})
    = \frac{4\Lambda_ib_i}{i} \sum_{\bar{x} \in V_{i}^{(n)}} |v(\bar{x})|^2 \mu_i^{(n)}(\bar{x}).
\end{align*}

Consider now $v = \avg_i^{(n)}u$. 
By Jensen's inequality,
\begin{align*}
|\avg_i^{(n)}u(\bar{x})|^2
    \leq \frac{1}{|U_i^{(n)}(\bar{x})|} \int_{U_i^{(n)}(\bar{x})} |u(x)|^2 \, dx,
\end{align*}
so that
\begin{align*}
\mathcal{E}_i^{(n)}(\avg_1^{(n)}u) 
&\leq  \frac{4\Lambda_ib_i}{i} \sum_{\bar{x} \in V_{i}^{(n)}} \int_{U_i^{(n)}(\bar{x})} |u(x)|^2 \, dx = \frac{4\Lambda_ib_i}{i} \int_{\overline{V}_i^*}|u(x)|^2 \, dx
    =\frac{4\Lambda_ib_i}{i} \|u\|_{L^2(\overline{V}_i^*, dx)}^2. 
\end{align*}
Taking the limit as $n \to \infty$ gives \eqref{eq:tilde-l2-i}. 

Since $L^\infty(\overline{V}_i^*)$ is dense in $L^2(\overline{V}_i^*, dx)$, we can show using \eqref{eq:tilde-l2-i} that $L^2(\overline{V}_i^*, dx) \subset \widetilde{\mathcal{F}}_i^{(\infty)}$. 
Therefore, we have \eqref{eq:inclusion_domains-tilde} for $i>2$. 

\medskip

\noindent 
\underline{\bf Step 2}. We now prove \eqref{eq:equal_domains-tilde}. 
Assume first that $i=1$. We will show that, for all $u \in \widetilde{\mathcal{F}}_1^{(\infty)}$,
\begin{equation}\label{eq:tilde-hs-lower}
\lambda_1[u]_{H^s([0,1])}^2 \leq \widetilde{\mathcal{E}}_1^{(\infty)}(u).
\end{equation}

Fix $\delta>0$. 
Recalling \eqref{eq:l-2}, the Lebesgue differentiation theorem and Fatou's lemma give
\begin{align*}
&\int_0^1 \int_0^1 \frac{|u(x) - u(y)|^2}{|x-y|^{1+2s}} \mathbf{1}_{\{|x-y|>\delta\}}\, dy \, dx\\
    &=  \int_0^1 \int_0^1 \lim_{n \to \infty} \frac{|\avg_1^{(n)}u(\bar{x}) - \avg_1^{(n)}u(\bar{y})|^2}{|\bar{x}-\bar{y}|^{1+2s}} \mathbf{1}_{\{|\bar{x}-\bar{y}|>\delta\}}\, dy \, dx\\
    &\leq \liminf_{n \to \infty}  \int_0^1 \int_0^1 \frac{|\avg_1^{(n)}u(\bar{x}) - \avg_1^{(n)}u(\bar{y})|^2}{|\bar{x}-\bar{y}|^{1+2s}} \mathbf{1}_{\{|\bar{x}-\bar{y}|>\delta\}}\, dy \, dx\\
    &= \lim_{n \to \infty}  \sum_{\bar{x} \in V_1^{(n)}}\sum_{\bar{y} \in V_1^{(n)} }\frac{|\avg_1^{(n)}u(\bar{x}) - \avg_1^{(n)}u(\bar{y})|^2}{|\bar{x}-\bar{y}|^{1+2s}} \mathbf{1}_{\{|\bar{x}-\bar{y}|>\delta\}}\, \int_{U_1^{(n)}(\bar{x})} \int_{U_1^{(n)}(\bar{y})} dy \, dx\\
    &= \lim_{n \to \infty}   \sum_{\bar{x} \in V_1^{(n)}}\sum_{\bar{y} \in V_1^{(n)}}\frac{|\avg_1^{(n)}u(\bar{x}) - \avg_1^{(n)}u(\bar{y})|^2}{|\bar{x}-\bar{y}|^{1+2s}} \mu_1^{(n)}(\bar{x}) \mu_1^{(n)}(\bar{y}).
\end{align*}
By Assumption \ref{A:kernel}\ref{item:lower},
\begin{align*}
\lambda_1 &\int_0^1 \int_0^1 \frac{|u(x) - u(y)|^2}{|x-y|^{1+2s}} \mathbf{1}_{\{|x-y|>\delta\}}\, dy \, dx\\
    &\leq \lim_{n \to \infty}   \sum_{\bar{x} \in V_1^{(n)}}\sum_{\bar{y} \in V_1^{(n)}} j_1^{(n)}(\bar{x}, \bar{y})|\avg_1^{(n)}u(\bar{x}) - \avg_1^{(n)}u(\bar{y})|^2\mu_1^{(n)}(\bar{x}) \mu_1^{(n)}(\bar{y}) = \widetilde{\mathcal{E}}^{(\infty)}_1(u).
\end{align*}
Taking the supremum over all $\delta>0$ gives  \eqref{eq:tilde-hs-lower}. 
Therefore, \eqref{eq:equal_domains-tilde} holds for $i=1$. 

\medskip

Next let $i=2$. 
We will show that, for all $u \in \widetilde{\mathcal{F}}_2^{(\infty)}$,
\begin{equation}\label{eq:tilde-hs-lower-2}
2\lambda_2[u]_{H^2([0,\frac12],[\frac12,1])}^2 \leq \widetilde{\mathcal{E}}_2^{(\infty)}(u).
\end{equation}

For this, fix $\delta>0$. 
Recalling \eqref{eq:l}, the Lebesgue differentiation theorem and Fatou's lemma give
\begin{align*}
 &\int_0^\frac12 \int_\frac12^1 \frac{|u(x) - u(y)|^2}{(y-x)^{1+2s}} \mathbf{1}_{\{(y-x)>\delta\}}\, dy \, dx\\
    &= \int_0^\frac12 \int_\frac12^1 \lim_{n \to \infty} \frac{|\avg_2^{(n)}u(\bar{x}) - \avg_2^{(n)}u(\bar{y})|^2}{(\bar{y}-\bar{x})^{1+2s}} \mathbf{1}_{\{(\bar{y}-\bar{x})>\delta\}}\, dy \, dx\\
    &\leq \liminf_{n \to \infty} \int_0^\frac12 \int_\frac12^1 \frac{|\avg_2^{(n)}u(\bar{x}) - \avg_2^{(n)}u(\bar{y})|^2}{(\bar{y}-\bar{x})^{1+2s}} \mathbf{1}_{\{(\bar{y}-\bar{x})>\delta\}}\, dy \, dx\\
    &= \lim_{n \to \infty}  \sum_{\bar{x} \in V_{2-}^{(n)}}\sum_{\bar{y} \in V_{2+}^{(n)} }\frac{|\avg_2^{(n)}u(\bar{x}) - \avg_2^{(n)}u(\bar{y})|^2}{(\bar{y}-\bar{x})^{1+2s}} \mathbf{1}_{\{(\bar{y}-\bar{x})>\delta\}}\, \int_{U_2^{(n)}(\bar{x})} \int_{U_2^{(n)}(\bar{y})} dy \, dx\\
    &= \lim_{n \to \infty}   \sum_{\bar{x} \in V_{2-}^{(n)}}\sum_{\bar{y} \in V_{2+}^{(n)}}\frac{|\avg_2^{(n)}u(\bar{x}) - \avg_2^{(n)}u(\bar{y})|^2}{(\bar{y}-\bar{x})^{1+2s}} \mu_2^{(n)}(\bar{x}) \mu_2^{(n)}(\bar{y}).
\end{align*}
By Assumption \ref{A:kernel}\ref{item:lower},
\begin{align*}
2\lambda_2 &\int_0^\frac12 \int_\frac12^1 \frac{|u(x) - u(y)|^2}{(y-x)^{1+2s}} \mathbf{1}_{\{(y-x)>\delta\}}\, dy \, dx\\
    &\leq \lim_{n \to \infty}  \sum_{\bar{x} \in V_{2-}^{(n)}}\sum_{\bar{y} \in V_{2+}^{(n)}}j_2^{(n)}(\bar{x},\bar{y})|\avg_2^{(n)}u(\bar{x}) - \avg_2^{(n)}u(\bar{y})|^2 \mu_2^{(n)}(\bar{x}) \mu_2^{(n)}(\bar{y}) = \widetilde{\mathcal{E}}^{(\infty)}_2(u).
\end{align*}
Taking the supremum over all $\delta>0$ gives  \eqref{eq:tilde-hs-lower-2}. 
Therefore, \eqref{eq:equal_domains-tilde} holds for $i=1$.

\medskip

Finally, we note that the result for $i>2$ was established in \eqref{eq:inclusion_domains-tilde}. 
 
\end{proof}

%%%%%

\begin{rem}
In the case $i=1$, $\widetilde{\mathcal{F}}^{(\infty)}_i$ could also be defined by taking 
the closure with respect to $\widetilde{\mathcal{E}}$ of the set $C^{s+\varepsilon}([0,1])$ for a fixed $\varepsilon>0$. 
We use Lipschitz functions for ease and to connect with previous work by Chen-Kim-Kumagai~\cite{CKK}. 
Similarly, for $i=2$, one could instead use $C^{\max\{s-\frac12,0\}}([\frac12-\varepsilon,\frac12+\varepsilon]) \cap L^{\infty}([0,1])$ for any fixed $\varepsilon \in (0,\frac12]$, but we consider $C^{\frac12}([0,1])$ for simplicity. 
\end{rem}

%%%%%%%%%%%%%%%%%%%%%%%%%%%%%%%%%
\subsection{Equivalence of energies}\label{subsec: forms}
%%%%%%%%%%%%%%%%%%%%%%%%%%%%%%%%%
The goal of this section is to prove that $\mathcal{E}_i^{(\infty)}(u) = \widetilde{\mathcal{E}}_i^{(\infty)}(u)$ for all $u$ in the dense set
\begin{equation}\label{eq:dense-set}
\mathcal{D}_i
    := \begin{cases}
        \lip([0,1]) & \hbox{if}~i=1\\
        C^{\frac12}([0,1]) & \hbox{if}~i=2\\
        L^{\infty}(\overline{V}_i^*) & \hbox{if}~i>2.
    \end{cases}
\end{equation}
Once this is proved, Theorem \ref{thm:DF-main} will follow from Lemma \ref{lem:domains-tilde}.

Recall from Remark~\ref{rem:split} that the network $(V_1^{(n)}, W_1^{(n)})$ is complete in the sense that all nodes are connected by a wire.  However, when $i>1$, there are no wires connecting points on the ``same side'' of the interval, see Figure \ref{fig:bipartite}. Thus,  the case $i>1$ must be treated separately from $i=1$. Moreover, we address $i=2$ separately from $i>2$ to account for the singularity at $\frac12$ (recall Remark \ref{rem:nonsingular}).

\begin{figure}[htb]
\begin{center}
\begin{tikzpicture}[thick,scale=1.3, main/.style ={circle, draw, fill=black!50,
                        inner sep=0pt, minimum width=4pt},
main2/.style ={circle, draw, fill=white,
                        inner sep=0pt, minimum width=4pt}]
%\node at (0,.5) {\hphantom{blank}};
\draw[thin,dashed] (0,0)--(4.5,0);
\draw[thin,dashed] (5.5,0)--(10,0);
\node at (5,0) {\dots};
\node[main] (0L) at (0,0) {};
\node[main] (1L) at (.5,0) {};
\node[main] (2L) at (1,0) {};
\node[main]  (3L) at (1.5,0) {};
\node[main2] at (2,0) {};
\node[main2] at (4,0) {};
\node[main2] at (6,0) {};
\node[main2] at (8,0) {};
\node[main] (3R) at (8.5,0) {};
\node[main] (2R) at (9,0) {};
\node[main] (1R) at (9.5,0) {};
\node[main] (0R) at (10,0) {};
%labels
\node at (0,-.5) {$0$};
\node at (.5,-.5) {$\frac{1}{2^2i}$};
\node at (1,-.5) {$\frac{2}{2^2i}$};
\node at (1.5,-.5) {$\frac{3}{2^2i}$};
\node at (2,-.5) {$\frac{1}{i}$};
\node at (4,-.5) {$\frac{2}{i}$};
\node at (10,-.5) {$1$};
\node at (9.5,-.5) {};%{$\frac{1}{2^2j}$};
\node at (9,-.5) {};%{$1-\frac{2}{2^2j}$};
\node at (8.5,-.5) {};%{$1-\frac{3}{2^2j}$};
\node at (8,-.5) {$1-\frac{1}{i}$};
\node at (6,-.5) {$1-\frac{2}{i}$};
%underbrace
\node[scale=1.25] at (.75,-1.2) {$\underbrace{\hphantom{\text{Some text}}}_{ V_{i-}^{(2)}}$};
\node[scale=1.25] at (9.25,-1.2) {$\underbrace{\hphantom{\text{Some text}}}_{ V_{i+}^{(2)}}$};
%wires
%from 4:
\draw[violet] (0L) to  [looseness=1]  (0R);
\draw[magenta] (0L) to  [looseness=.9]  (1R);
\draw[magenta] (1L) to  [looseness=.9]  (0R);
\draw[cyan] (1L) to  [looseness=.8]  (1R);
%From 3L
\draw[cyan] (0L) to  [looseness=.8]  (2R);
\draw[green!85!blue] (0L) to  [looseness=.7]  (3R);
\draw[green!85!blue] (1L) to  [looseness=.7]  (2R);
\draw[orange] (1L) to  [looseness=.6]  (3R);
%From 3R
\draw[cyan] (2L) to  [looseness=.8]  (0R);
\draw[green!85!blue] (2L) to  [looseness=.7]  (1R);
\draw[green!85!blue] (3L) to  [looseness=.7]  (0R);
\draw[orange] (3L) to  [looseness=.6]  (1R);
%From 2:
\draw[orange] (2L) to  [looseness=.6]  (2R);
\draw[red] (2L) to  [looseness=.5]  (3R);
\draw[red] (3L) to  [looseness=.5]  (2R);
\draw[blue] (3L) to  [looseness=.4]  (3R);
\end{tikzpicture}
\end{center}
\caption{Resistance network $(V_i^{(n)},W_i^{(n)})$ for $i>2$ at stage $n=2$}
\label{fig:bipartite}
\end{figure}
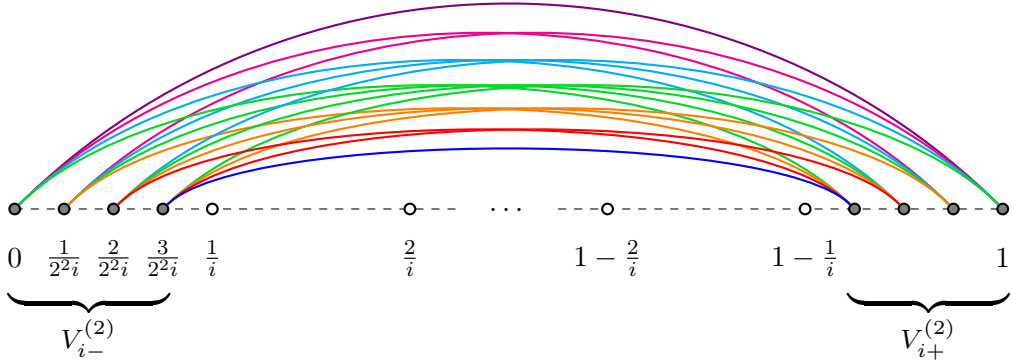

%%%%%%%%%%%%%%%%%%%%%%%%%%%%%%%%%
\subsubsection{The case $i=1$}
%%%%%%%%%%%%%%%%%%%%%%%%%%%%%%%%%

We will show $\mathcal{E}_1^{(\infty)}$ and $\widetilde{\mathcal{E}}_1^{(\infty)}$ agree on the class of Lipschitz functions. 

First, we prove a Lipschitz bound on the averaging function $\avg_i^{(n)}u$ given in Definition \ref{defn:operators}.

\begin{lem} \label{lem:lip}
If $u \in \lip([0,1])$, then for all $n \in \N$, it holds that $\avg_1^{(n)}u \in \lip(V_1^{(n)})$ with
\[
[\avg_1^{(n)}u]_{\lip(V_1^{(n)})} \leq [u]_{\lip([0,1])}. 
\]
\end{lem}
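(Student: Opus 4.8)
The plan is to reduce the statement to a pointwise estimate on pairs of nodes. Writing the Lipschitz seminorm on $V_1^{(n)}$ as a supremum over distinct node pairs, it suffices to fix $\bar{x},\bar{y}\in V_1^{(n)}$ with $\bar{x}\neq\bar{y}$ and prove that $|\avg_1^{(n)}u(\bar{x})-\avg_1^{(n)}u(\bar{y})|\leq [u]_{\lip([0,1])}\,|\bar{x}-\bar{y}|$; taking the supremum over such pairs then yields the claim. The key structural fact I would exploit is that the averaging windows $U_1^{(n)}(\bar{x})$ from~\eqref{eq:partiton} all have the same length $2^{-n}$ when $\bar{x}\in V_1^{(n)}\setminus\{0,1\}$, and since consecutive nodes are spaced exactly $2^{-n}$ apart, for two interior nodes the window $U_1^{(n)}(\bar{y})$ is an exact translate of $U_1^{(n)}(\bar{x})$ by $\bar{y}-\bar{x}$.

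First I would dispatch the generic case in which both $\bar{x}$ and $\bar{y}$ are interior. Using the substitution $t\mapsto t+(\bar{y}-\bar{x})$ to rewrite both averages over the common window $U_1^{(n)}(\bar{x})$, the difference $\avg_1^{(n)}u(\bar{x})-\avg_1^{(n)}u(\bar{y})$ becomes the window-average of $u(t)-u(t+(\bar{y}-\bar{x}))$. Applying the Lipschitz bound pointwise inside the integral gives exactly $[u]_{\lip([0,1])}\,|\bar{x}-\bar{y}|$; here the shifted argument $t+(\bar{y}-\bar{x})$ stays inside $[0,1]$ precisely because both interior windows lie in $[0,1]$, so $u$ is never evaluated outside its domain.

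The main obstacle is the two boundary nodes $0$ and $1$, whose windows $[0,2^{-(n+1)})$ and $[1-2^{-(n+1)},1]$ have only half the length, so the clean translation identity breaks down. To treat a pair with, say, $\bar{x}=0$ and $\bar{y}$ interior, I would split the full-length window $U_1^{(n)}(\bar{y})$ into its left and right halves $U^{-}$ and $U^{+}$, each of length $2^{-(n+1)}$, and write $\avg_1^{(n)}u(\bar{y})=\tfrac12(A^{-}+A^{+})$, where $A^{\pm}$ is the average of $u$ over $U^{\pm}$. Each half is an exact translate of $U_1^{(n)}(0)=[0,2^{-(n+1)})$, by $\bar{y}-2^{-(n+1)}$ and by $\bar{y}$ respectively, so the translation estimate of the previous paragraph applies to each and the triangle inequality gives $|\avg_1^{(n)}u(0)-\avg_1^{(n)}u(\bar{y})|\leq \tfrac12[u]_{\lip([0,1])}(\bar{y}-2^{-(n+1)})+\tfrac12[u]_{\lip([0,1])}\,\bar{y}\leq[u]_{\lip([0,1])}\,\bar{y}$, which is the desired bound since $|0-\bar{y}|=\bar{y}$. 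The node $\bar{y}=1$ is symmetric, and the remaining pair $\{0,1\}$, where both windows are short but of equal length, is handled by a single direct translation of those equal-length windows. Assembling the interior and boundary estimates and taking the supremum over all node pairs completes the proof.
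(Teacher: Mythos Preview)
Your argument is correct and follows the same overall structure as the paper: reduce to a pairwise estimate, handle the interior--interior case by translating one window onto the other, and treat the boundary nodes separately. The only genuine difference is in the mixed case $\bar{x}=0$, $\bar{y}$ interior. The paper extends $\avg_1^{(n)}u(0)$ to a full-length window centered at $0$ by even reflection, writing
\[
\avg_1^{(n)}u(0)=2^{n}\int_{0}^{2^{-(n+1)}}u(z)\,dz+2^{n}\int_{-2^{-(n+1)}}^{0}u(-z)\,dz,
\]
and then compares termwise with the two halves of $\avg_1^{(n)}u(\bar{y})$. You instead keep $\avg_1^{(n)}u(0)$ as is and split the full window $U_1^{(n)}(\bar{y})$ into two half-windows, each an exact translate of $U_1^{(n)}(0)$. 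Both tricks produce the same final bound $[u]_{\lip([0,1])}\,\bar{y}$; your version is arguably a bit more transparent since it avoids evaluating $u$ at reflected points and keeps all arguments manifestly inside $[0,1]$.
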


\begin{proof}
Let $x, y \in V_1^{(n)}$ and write
\[
\avg_1^{(n)}u(x) - \avg_1^{(n)}u(y)
    = \frac{1}{|U_1^{(n)}(x)|} \int_{U_1^{(n)}(x)} u(z) \, dz 
        - \frac{1}{|U_1^{(n)}(y)|}\int_{U_1^{(n)}(y)} u(z) \, dz. 
\]

Suppose first that $x,y \notin \{0,1\}$, so that $|U_i^{(n)}(x)|= |U_i^{(n)}(y)| = 2^{-n}$. Then, with the change of variables $\bar{z} = z-(y-x)$, we have
\[
\int_{U_1^{(n)}(y)} u(z) \, dz
     = \int_{U_1^{(n)}(x)} u(\bar{z}+y-x) \, d\bar{z}.
\]
Consequently, 
\begin{align*}
|\avg_1^{(n)}u(x) - \avg_1^{(n)}u(y)|
    &\leq 2^n \int_{U_1^{(n)}(x)} |u(z) -u(z+y-x)| \, dz \\
    &\leq 2^n \int_{U_1^{(n)}(x)} [u]_{\lip([0,1])} |x-y| \, dz 
    = [u]_{\lip([0,1])} |x-y|. 
\end{align*}

If $x,y \in \{0,1\}$, then $|U_i^{(n)}(x)|= |U_i^{(n)}(y)| = 2^{-n-1}$ and we can similarly show that 
\[
|\avg_1^{(n)}u(x) - \avg_1^{(n)}u(y)|
    \leq 2^{n+1} \int_{U_1^{(n)}(x)} |(u(z) -u(z+y-x))| \, dz 
    \leq  [u]_{\lip([0,1])} |x-y|.
\]

Lastly, consider $x \in \{0,1\}$ and $y \notin \{0,1\}$. For ease, assume that $x=0$ (the case $x=1$ is similar). 
Recalling that $U_1^{(n)}(0) = [0, 2^{-(n+1)})$, we write
\begin{align*}
\avg_1^{(n)}u(0)
    &= 2^{n+1} \int_{0}^{2^{-(n+1)}} u(z) \, dz
      = 2^{n} \int_{0}^{2^{-(n+1)}} u(z) \, dz
         + 2^{n} \int_{-2^{-(n+1)}}^0 u(-z) \, dz.
\end{align*}
Using the same change of variables as before, we can also write 
\begin{align*}
\avg_1^{(n)}u(y)
    &= 2^n \int_{y - 2^{-(n+1)}}^{y + 2^{-(n+1)}} u(z) \, dz 
    = 2^n \int_{ - 2^{-(n+1)}}^{ 2^{-(n+1)}} u(z+y) \, dz \\
    &= 2^n \int_{0}^{ 2^{-(n+1)}} u(z+y) \, dz 
        +  2^n \int_{- 2^{-(n+1)}}^0 u(z+y) \, dz.
\end{align*}
Therefore, 
\begin{align*}
|\avg_1^{(n)}&u(0) - \avg_1^{(n)}u(y)|\\
    &\leq 2^n \bigg[\int_{0}^{2^{-(n+1)}} |u(z) - u(z+y)| \, dz
        + \int_{-2^{-(n+1)}}^0 |u(-z) - u(z+y)| \, dz\bigg] \\
    &= 2^n \bigg[\int_{0}^{2^{-(n+1)}} [u]_{\lip([0,1])}  |y|  \, dz
        + \int_{-2^{-(n+1)}}^0 [u]_{\lip([0,1])}  (y-2|z|)  \, dz\bigg]\\
    &\leq 2^n \bigg[\int_{0}^{2^{-(n+1)}} [u]_{\lip([0,1])}  |y|  \, dz
        + \int_{-2^{-(n+1)}}^0 [u]_{\lip([0,1])}  |y|  \, dz\bigg]
    = [u]_{\lip([0,1])}  |y|
\end{align*}
as desired. 
\end{proof}

We will also need the following kernel estimate. 

\begin{lem}\label{lem:kernel-estimate1} 
Let Assumption~\ref{A:kernel}\ref{item:upper} hold for some $s\in (0,1)$. Then
\begin{equation}\label{eq:beta-kernel}
\sup_{n \in \N} \sup_{x \in V_1^{(n)}} \sum_{y \in V_1^{(n)}} j_1^{(n)}(x,y)|x-y|^{2} \mu_1^{(n)}(y) < \infty.
\end{equation}
\end{lem}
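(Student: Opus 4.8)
The plan is to throw away all the structure of the kernel beyond its pointwise upper bound and reduce \eqref{eq:beta-kernel} to an elementary sum over dyadic integers. First I would invoke Assumption~\ref{A:kernel}\ref{item:upper}, which for $i=1$ applies to every pair $x\neq y$ since $(V_1^{(n)},W_1^{(n)})$ is a complete graph (Remark~\ref{R:complete_graph}); this gives $j_1^{(n)}(x,y)|x-y|^2\leq \Lambda_1|x-y|^{1-2s}$ for $x\neq y$, while the diagonal term $y=x$ vanishes. Combined with the bound $\mu_1^{(n)}(y)\leq 2^{-n}$, valid at every node with equality except at the two endpoints, the sum in \eqref{eq:beta-kernel} is controlled by $\Lambda_1\,2^{-n}\sum_{y\in V_1^{(n)},\,y\neq x}|x-y|^{1-2s}$.

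Next I would exploit the explicit dyadic structure $V_1^{(n)}=\{k/2^n\}_{k=0}^{2^n}$ from \eqref{eq:nodes}. Writing $x=m/2^n$ and $y=k/2^n$, one has $|x-y|=|m-k|2^{-n}$, hence $|x-y|^{1-2s}=2^{-n(1-2s)}|m-k|^{1-2s}$. Pulling out the scaling factor and grouping the indices by $j=|m-k|\in\{1,\dots,2^n\}$, each value occurring at most twice, yields
\[
\sum_{\substack{y\in V_1^{(n)}\\ y\neq x}} j_1^{(n)}(x,y)|x-y|^2\,\mu_1^{(n)}(y)
\;\leq\; 2\Lambda_1\,2^{-n(2-2s)}\sum_{j=1}^{2^n} j^{1-2s},
\]
a quantity independent of the choice of $x$.

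The heart of the matter is then the growth of the partial sum $\sum_{j=1}^{2^n}j^{1-2s}$, and I expect this to be the only step requiring genuine care, precisely because the exponent $1-2s$ changes sign as $s$ crosses $\tfrac12$. The key observation is that $s\in(0,1)$ forces $2-2s>0$ irrespective of the sign of $1-2s$, so comparison with the integral $\int_1^{2^n}t^{1-2s}\,dt$ furnishes a constant $C_s$ with $\sum_{j=1}^{2^n}j^{1-2s}\leq C_s\,2^{n(2-2s)}$ uniformly in the three regimes $s<\tfrac12$, $s=\tfrac12$, and $s>\tfrac12$. Substituting this back, the factors $2^{-n(2-2s)}$ and $2^{n(2-2s)}$ cancel exactly, leaving the uniform bound $2C_s\Lambda_1$, independent of both $n$ and $x$. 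Taking the supremum over $x\in V_1^{(n)}$ and then over $n$ establishes \eqref{eq:beta-kernel}.
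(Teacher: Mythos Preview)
Your argument is correct and actually more streamlined than the paper's. Both proofs begin identically, reducing via Assumption~\ref{A:kernel}\ref{item:upper} to bounding $\sum_{y\neq x}|x-y|^{1-2s}\mu_1^{(n)}(y)$. From there the paper splits into cases: for $s\leq\tfrac12$ it uses $|x-y|^{1-2s}\leq 1$ and sums the measure; for $s>\tfrac12$ it proves, by a somewhat delicate pairing argument, that the Riemann sums are monotone increasing in $n$, hence dominated by their limit $\int_0^1|x-y|^{1-2s}\,dy$. Your route instead exploits the explicit dyadic parametrization to rewrite the sum as $2^{-n(2-2s)}\sum_{j=1}^{2^n}j^{1-2s}$ (up to a factor of two) and then applies integral comparison on the integers, which handles all $s\in(0,1)$ in one stroke since $2-2s>0$ throughout. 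What you gain is uniformity and brevity; what the paper's monotonicity argument buys is a direct identification of the limiting quantity with the continuous integral, which fits the narrative of the section (discrete energies converging to integrals) but is not needed for the lemma as stated.
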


\begin{proof}
Fix $n \in \N$ and let $x \in V_1^{(n)}$. 
Using \eqref{eq:kernel-estimate-new}, we first find
\begin{equation}\label{eq:beta_kernel_p1}
\sum_{y \in V_1^{(n)}} j_1^{(n)}(x,y) |x-y|^{2} \mu_1^{(n)}(y)
    \leq \Lambda_1 \sum_{y \in V_1^{(n)} \setminus \{x\}}  |x-y|^{1-2s} \mu_1^{(n)}(y).
\end{equation}
If $0 < s \leq \frac{1}{2}$, then, 
since $|x-y| \leq 1$, 
\[
\sum_{y \in V_1^{(n)} \setminus \{x\}}  |x-y|^{1-2s} \mu_1^{(n)}(y)
    \leq \sum_{y \in V_1^{(n)} \setminus \{x\}} \mu_1^{(n)}(y) =1
\]
which, together with \eqref{eq:beta_kernel_p1}, gives \eqref{eq:beta-kernel}. Assume for the rest of the proof that $\frac{1}{2}<s<1$. 

For ease, suppose that $x \in V_1^{(n)} \setminus \{0,1\}$. The case $x \in \{0,1\}$ is similar. 
We will show that the  finite sums on the right hand side of~\eqref{eq:beta_kernel_p1} are increasing in $n$ by proving that 
\[
\frac{|x|^{1-2s} + |1-x|^{1-2s}}{2^{n+1}}+ \sum_{\substack{y \in V_1^{(n)} \\ y\notin\{ x,0,1\}} } \frac{|x-y|^{1-2s}}{2^n} 
    \leq 
     \frac{|x|^{1-2s} + |1-x|^{1-2s}}{2^{n+2}}+
    \sum_{\substack{y \in V_1^{(n+1)} \\ y\notin\{ x,0,1\}}} \frac{|x-y|^{1-2s}}{2^{n+1}},
\]
or equivalently,
\begin{equation}\label{eq:beta_kernel_p2}
   \frac{|x|^{1-2s} + |1-x|^{1-2s}}{2} +
 2\sum_{y \in V_1^{(n)} \setminus \{x,0,1\}}|x-y|^{1-2s} 
    \leq 
    \sum_{y \in V_1^{(n+1)} \setminus \{x,0,1\}}|x-y|^{1-2s}.
\end{equation}
Since
\[
\sum_{y \in V_1^{(n+1)} \setminus \{x,0,1\}}|x-y|^{1-2s}
    = \sum_{y \in V_1^{(n+1)} \setminus V_1^{(n)}}|x-y|^{1-2s}
     +\sum_{y \in V_1^{(n)} \setminus \{x,0,1\}}|x-y|^{1-2s},
\]
to obtain~\eqref{eq:beta_kernel_p2} it is enough to show
\begin{equation}\label{eq:claim-final}
\sum_{y \in V_1^{(n)} \setminus \{x,0,1\}}|x-y|^{1-2s}  + \frac{|x|^{1-2s} + |1-x|^{1-2s}}{2}
    \leq 
    \sum_{y \in V_1^{(n+1)} \setminus V_1^{(n)}}|x-y|^{1-2s}.
\end{equation}
For each $y \in V_1^{(n)}\setminus \{x\}$, there is a unique $\bar{y} \in V_1^{(n+1)}\setminus V_1^{(n)}$ such that
\[
|y - \bar{y}| = \frac{1}{2^{n+1}} \quad \hbox{and} \quad |x - \bar{y}| < |x-y|.
\]
Since $\frac{1}{2}<s<1$, we consequently find 
\[
\frac{|x-y|^{1-2s}}{2} < |x-y|^{1-2s} < |x - \bar{y}|^{1-2s}. 
\]
Thus,~\eqref{eq:claim-final} holds and so does~\eqref{eq:beta_kernel_p2}. 
In view of the latter,
\begin{align*}
\sum_{y \in V_1^{(n)} \setminus \{x\}}|x-y|^{1-2s} \mu_1^{(n)}(y)
    &\leq \lim_{k \to \infty} \sum_{y \in V_1^{(k)} \setminus \{x\}}|x-y|^{1-2s} \mu_1^{(k)}(y) \\
    &=\int_0^1 |x-y|^{1-2s} \, dy
    \leq C
\end{align*}
for some $C>0$ depending only on $s \in (\frac12,1)$. 
Combining this with \eqref{eq:beta_kernel_p1}, we have \eqref{eq:beta-kernel}.
\end{proof}

We are now prepared to prove equivalence of forms 
given by the Lipschitz density approach \eqref{E:energy_limit_fin_energy} and finite energy approach \eqref{E:energy_limit_Lip} on the class of Lipschitz functions.

\begin{lem} \label{lem:samelimit}
Let Assumption \ref{A:kernel}\ref{item:upper} hold for some $s \in (0,1)$. 
Then, for $u \in \lip([0,1])$, it holds that 
\begin{equation}\label{eq:samelimit}
\widetilde{\mathcal{E}}_1^{(\infty)}(u)
=\mathcal{E}^{(\infty)}_1(u).
\end{equation}
\end{lem}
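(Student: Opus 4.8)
The plan is to deduce \eqref{eq:samelimit} by showing that the two discrete energies agree asymptotically along the sequence, i.e.\ that the difference of the $n$-th terms tends to $0$. Since $u\in\lip([0,1])\subset H^s([0,1])=\widetilde{\mathcal{F}}_1^{(\infty)}$, the limit $\widetilde{\mathcal{E}}_1^{(\infty)}(u)=\lim_n\mathcal{E}_1^{(n)}(\avg_1^{(n)}u)$ exists; so once the difference vanishes the limit defining $\mathcal{E}_1^{(\infty)}(u)$ must also exist and coincide with it. Write $e_n:=u|_{V_1^{(n)}}-\avg_1^{(n)}u\in\ell(V_1^{(n)})$ and let $\mathcal{E}_1^{(n)}(\cdot,\cdot)$ be the symmetric bilinear form polarizing $\mathcal{E}_1^{(n)}$, with nonnegative weights $j_1^{(n)}(x,y)\mu_1^{(n)}(x)\mu_1^{(n)}(y)$. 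As this form is positive semidefinite, polarization together with the Cauchy--Schwarz inequality gives
\begin{align*}
\bigl|\mathcal{E}_1^{(n)}(u|_{V_1^{(n)}})-\mathcal{E}_1^{(n)}(\avg_1^{(n)}u)\bigr|
&=\bigl|\mathcal{E}_1^{(n)}(e_n,\,u|_{V_1^{(n)}}+\avg_1^{(n)}u)\bigr|\\
&\le \mathcal{E}_1^{(n)}(e_n)^{1/2}\Bigl(\mathcal{E}_1^{(n)}(u|_{V_1^{(n)}})^{1/2}+\mathcal{E}_1^{(n)}(\avg_1^{(n)}u)^{1/2}\Bigr).
\end{align*}

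Next I would bound the bracketed factor uniformly in $n$. For any $v\in\ell(V_1^{(n)})$ with $[v]_{\lip(V_1^{(n)})}\le[u]_{\lip([0,1])}$ one has $(v(x)-v(y))^2\le[u]_{\lip([0,1])}^2|x-y|^2$, hence by \eqref{eq:beta-kernel} of Lemma \ref{lem:kernel-estimate1} and $\mu_1^{(n)}(V_1^{(n)})=1$,
\[
\mathcal{E}_1^{(n)}(v)\le[u]_{\lip([0,1])}^2\sum_{x\in V_1^{(n)}}\mu_1^{(n)}(x)\sum_{y\in V_1^{(n)}}j_1^{(n)}(x,y)|x-y|^2\mu_1^{(n)}(y)\le C\,[u]_{\lip([0,1])}^2,
\]
with $C$ the finite supremum in \eqref{eq:beta-kernel}. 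This applies to $v=u|_{V_1^{(n)}}$ and, by Lemma \ref{lem:lip}, to $v=\avg_1^{(n)}u$; so the bracketed factor is bounded uniformly in $n$ and it remains only to prove $\mathcal{E}_1^{(n)}(e_n)\to0$.

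For this main step I would exploit two complementary bounds on $e_n$. Since $u$ is Lipschitz and each cell $U_1^{(n)}(x)$ has diameter at most $2^{-n}$, a direct estimate yields the pointwise bound $|e_n(x)|\le[u]_{\lip([0,1])}2^{-n}$, whence $|e_n(x)-e_n(y)|\le2[u]_{\lip([0,1])}2^{-n}$; on the other hand, combining the Lipschitz bound for $u$ with Lemma \ref{lem:lip} for $\avg_1^{(n)}u$ gives $|e_n(x)-e_n(y)|\le2[u]_{\lip([0,1])}|x-y|$. Multiplying the two bounds gives $(e_n(x)-e_n(y))^2\le4[u]_{\lip([0,1])}^2\,|x-y|\min\{|x-y|,2^{-n}\}$, so by Assumption \ref{A:kernel}\ref{item:upper},
\[
\mathcal{E}_1^{(n)}(e_n)\le 4[u]_{\lip([0,1])}^2\,\Lambda_1\sum_{x,y\in V_1^{(n)}}|x-y|^{-2s}\min\{|x-y|,2^{-n}\}\,\mu_1^{(n)}(x)\mu_1^{(n)}(y).
\]
Fixing $\eta>0$ and splitting the sum at $|x-y|=\eta$, the far part ($|x-y|>\eta$, so $\min=2^{-n}$ for $n$ large) is at most $2^{-n}\eta^{-2s}\mu_1^{(n)}(V_1^{(n)})^2=2^{-n}\eta^{-2s}\to0$, while the near part ($0<|x-y|\le\eta$) is controlled by the Riemann sum $\sum_{0<|x-y|\le\eta}|x-y|^{1-2s}\mu_1^{(n)}(x)\mu_1^{(n)}(y)\to\iint_{|x-y|\le\eta}|x-y|^{1-2s}\,dx\,dy$.

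Sending $n\to\infty$ and then $\eta\to0$, and using that $|x-y|^{1-2s}$ is integrable near the diagonal for every $s\in(0,1)$ (since $1-2s>-1$), I conclude $\mathcal{E}_1^{(n)}(e_n)\to0$, which with the uniform bound above proves \eqref{eq:samelimit}. The delicate point is exactly this last estimate: for $s\ge\frac12$ the kernel is genuinely singular along the diagonal, so the near-diagonal contribution cannot be discarded crudely, and it is the two-sided control $|x-y|\min\{|x-y|,2^{-n}\}$ on $(e_n(x)-e_n(y))^2$---one factor coming from Lipschitz regularity and one from the $O(2^{-n})$ cell size---that keeps the singular sum integrable and forces it to vanish.
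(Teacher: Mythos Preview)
Your argument is correct and is in fact more complete than the paper's. Both proofs aim at $\bigl|\mathcal{E}_1^{(n)}(u|_{V_1^{(n)}})-\mathcal{E}_1^{(n)}(\avg_1^{(n)}u)\bigr|\to0$ and both use Lemmas~\ref{lem:lip} and~\ref{lem:kernel-estimate1}, but the routes diverge. The paper bounds the summand $\bigl|(u(x)-u(y))^2-(\avg_1^{(n)}u(x)-\avg_1^{(n)}u(y))^2\bigr|$ directly, records its pointwise vanishing via Lebesgue differentiation, records a uniform bound $C$ on the full sum via Lemma~\ref{lem:kernel-estimate1}, and then asserts the conclusion; the passage from ``pointwise zero summands plus bounded sum'' to ``sum tends to zero'' is left implicit. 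You instead factor through Cauchy--Schwarz, reducing everything to $\mathcal{E}_1^{(n)}(e_n)\to0$, and then exploit the two complementary controls $|e_n|\le[u]_{\lip}2^{-n}$ and $[e_n]_{\lip}\le2[u]_{\lip}$ to obtain the product bound $(e_n(x)-e_n(y))^2\le4[u]_{\lip}^2|x-y|\min\{|x-y|,2^{-n}\}$. The $\eta$-splitting then gives a genuine rate. This is cleaner and more quantitative than the paper's dominated-convergence sketch.

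One point deserves a line of justification: when $s>\tfrac12$ the near-diagonal Riemann sum $\sum_{0<|x-y|\le\eta}|x-y|^{1-2s}\mu_1^{(n)}(x)\mu_1^{(n)}(y)$ involves an unbounded summand, so ``Riemann sum $\to$ integral'' is not automatic. The monotonicity-in-$n$ argument from the proof of Lemma~\ref{lem:kernel-estimate1} (or a direct comparison $f(k2^{-n})2^{-n}\le\int_{(k-1)2^{-n}}^{k2^{-n}}f$ for the decreasing $f(t)=t^{1-2s}$) gives $\limsup_n$ of this sum $\le C\eta^{2-2s}$, which is what you need. With that addition your proof is complete.
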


\begin{proof}
First note by \eqref{eq:inclusion_domains},  we know that $\mathcal{E}_1^{(\infty)}(u)<\infty$. 
Recalling \eqref{E:energy_limit_fin_energy} and \eqref{E:energy_limit_Lip}, the equality \eqref{eq:samelimit} follows after showing 
\begin{equation}\label{eq:E1-difference}
\lim_{n \to \infty} |\mathcal{E}_1^{(n)}(u \big|_{V_1^{(n)}}) -\mathcal{E}_1^{(n)}(\avg_1^{(n)} u)| =0.
\end{equation}

Let $x,y \in [0,1]$ be such that $x,y \in V_1^{(n_0)}$ for some $n_0 \in \N$. Then, $x,y \in V_1^{(n)}$ for all $n \geq n_0$. 
We claim that
\begin{equation}\label{eq:zero-limit}
\lim_{n \to \infty} |(u\big|_{V_1^{(n)}}(x) - u\big|_{V_1^{(n)}}(y))^2 - (\avg_1^{(n)}u(x) -\avg_1^{(n)}u(y))^2| = 0. 
\end{equation}
Since $u$ is bounded on $[0,1]$, 
\begin{align*}
&|(u\big|_{V_1^{(n)}}(x) - u\big|_{V_1^{(n)}}(y))^2 - (\avg_1^{(n)}u(x) -\avg_1^{(n)}u(y))^2|\\
    &= | (u(x) - u(y)) + (\avg_1^{(n)}u(x) -\avg_1^{(n)}u(y))|  
    | (u(x) - u(y)) - (\avg_1^{(n)}u(x) -\avg_1^{(n)}u(y))|\\
    &\leq 
    4[u]_{L^{\infty}([0,1])}
        (| u(x)-\avg_1^{(n)}u(x)|+ | u(y)-\avg_1^{(n)}u(y)|).
\end{align*}
By the Lebesgue differentiation theorem, 
\[
| u(x)-\avg_1^{(n)}u(x)|
    = \bigg| \frac{1}{|U_1^{(n)}(x)|} \int_{U_1^{(n)}(x)} (u(x) - u(z)) \, dz \bigg| \to 0 \quad \hbox{as}~n \to \infty
\]
and similarly for $| u(y)-\avg_1^{(n)}u(y)|$. 
Consequently, \eqref{eq:zero-limit} holds. 

Next, we show that the difference in energies at level $n$ is a uniformly bounded sequence in $n$. 
For this, we recall Lemma \ref{lem:lip} and estimate
\begin{align*}
|(u\big|_{V_1^{(n)}}(x)& - u\big|_{V_1^{(n)}}(y))^2 - (\avg_1^{(n)}u(x) -\avg_1^{(n)}u(y))^2|\\
    &\leq |u(x) - u(y)|^2+|\avg_1^{(n)}u(x) -\avg_1^{(n)}u(y)|^2
    \leq 2[u]_{\lip([0,1])}^2 |x-y|^{2}. 
\end{align*}
Therefore, for $n \geq n_0$, we use Lemma \ref{lem:kernel-estimate1} to get
\begin{align*}
|\mathcal{E}_1^{(n)}&(u \big|_{V_1^{(n)}}) -\mathcal{E}_1^{(n)}(\avg_1^{(n)} u)|\\
    &\leq  \sum_{x \in V_1^{(n)}} \sum_{y \in V_1^{(n)}} 
        j_1^{(n)}(x,y)  \, \mu_1^{(n)}(y) \, \mu_1^{(n)}(x)  \\
        &\quad\quad \times \big| (u\big|_{V_1^{(n)}}(x) - u\big|_{V_1^{(n)}}(y))^2 - (\avg_1^{(n)}u(x) -\avg_1^{(n)}u(y))^2\big|\\
    &\leq 2[u]_{\lip([0,1])}^2 \sum_{x \in V_1^{(n)}} \sum_{y \in V_1^{(n)}} 
        j_1^{(n)}(x,y) |x-y|^2 \, \mu_1^{(n)}(y) \, \mu_1^{(n)}(x) \\
    &\leq C\sum_{x \in V_1^{(n)}}  \, \mu_1^{(n)}(x) = C
\end{align*}
for some $C$ is independent of $n$. With this and \eqref{eq:zero-limit}, we arrive at \eqref{eq:E1-difference}. 
\end{proof}

%%%%%%%%%%%%%%%%%%%%%%%%%%%%%%%%%%%%
\subsubsection{The case $i=2$}
%%%%%%%%%%%%%%%%%%%%%%%%%%%%%%%%%%%%

We follow the same steps as in the case $i=1$, beginning with a result analogous to Lemma \ref{lem:lip}.

\begin{lem} \label{lem:Holder}
Let $i=2$. 
If $u \in C^{\frac{1}{2}}([0,1])$, then for all $n \in \N$, it holds that $\avg_2^{(n)}u \in C^{\frac{1}{2}}(V_2^{(n)})$ with 
\[
[\avg_2^{(n)}u]_{C^{\frac{1}{2}}(V_2^{(n)})} \leq [u]_{C^{\frac12}([0,1])}.
\]
\end{lem}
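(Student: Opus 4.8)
The plan is to follow the same structure as the proof of Lemma \ref{lem:lip}, exploiting the $C^{\frac12}$-bound $|u(a)-u(b)|\le [u]_{C^{\frac12}([0,1])}|a-b|^{\frac12}$ together with a translation change of variables. A useful simplification over the case $i=1$ is that for $i=2$ every cell has the same length: writing $h:=2^{-(n+1)}$, one has $|U_2^{(n)}(\bar x)|=h$ for all $\bar x\in V_2^{(n)}$ (including the endpoints $0,1$), so no separate endpoint analysis is needed. Thus for $x,y\in V_2^{(n)}$ I would write $\avg_2^{(n)}u(x)-\avg_2^{(n)}u(y)=h^{-1}\big(\int_{U_2^{(n)}(x)}u-\int_{U_2^{(n)}(y)}u\big)$ and, after translating the second cell onto the first, bound the integrand pointwise by the H\"older seminorm. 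The whole argument splits according to whether $x,y$ lie on the same side of $\tfrac12$ or on opposite sides, recalling that $V_{2-}^{(n)}\subset[0,\tfrac12)$ and $V_{2+}^{(n)}\subset(\tfrac12,1]$, so $\tfrac12$ is never a node.

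If $x,y$ lie on the same side, say $x,y\in V_{2-}^{(n)}$ with $x<y$, the cells $U_2^{(n)}(x)=[x,x+h)$ and $U_2^{(n)}(y)=[y,y+h)$ are parallel translates, so the substitution $\bar z=z-(y-x)$ exactly as in Lemma \ref{lem:lip} gives
\[
|\avg_2^{(n)}u(x)-\avg_2^{(n)}u(y)|\le h^{-1}\int_{U_2^{(n)}(x)}|u(z)-u(z+(y-x))|\,dz\le [u]_{C^{\frac12}([0,1])}|x-y|^{\frac12}.
\]
The case $x,y\in V_{2+}^{(n)}$ is identical, the cells now both extending to the left. One checks that all shifted arguments remain in $[0,1]$, so the H\"older bound applies.

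The main obstacle is the opposite-side case $x\in V_{2-}^{(n)}$, $y\in V_{2+}^{(n)}$, because the two cells are oriented differently: $U_2^{(n)}(x)=[x,x+h)$ extends to the right while $U_2^{(n)}(y)=(y-h,y]$ extends to the left, so the naive shift by $y-x$ no longer aligns them. The fix is to translate by $t:=y-h-x$, which sends $[x,x+h)$ onto $[y-h,y)$ and hence agrees with $U_2^{(n)}(y)$ up to a set of measure zero; this yields
\[
|\avg_2^{(n)}u(x)-\avg_2^{(n)}u(y)|\le h^{-1}\int_{U_2^{(n)}(x)}|u(z)-u(z+t)|\,dz\le [u]_{C^{\frac12}([0,1])}\,t^{\frac12}.
\]
It then remains to compare $t$ with $|x-y|$. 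Since the minimal gap between $V_{2-}^{(n)}$ and $V_{2+}^{(n)}$ is $2^{-n}=2h$, we have $y-x\ge 2h$, whence $0<t=(y-x)-h\le y-x=|x-y|$; monotonicity of $\tau\mapsto\tau^{\frac12}$ then gives $t^{\frac12}\le|x-y|^{\frac12}$ and closes the estimate. I expect the only delicate points to be the correct choice of shift $t$ in this last case and the routine verification that the translated arguments stay inside $[0,1]$.
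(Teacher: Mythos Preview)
Your proposal is correct and follows essentially the same approach as the paper: the same-side case is handled by a direct translation as in Lemma~\ref{lem:lip}, and the opposite-side case uses the shift $t=y-x-h$ (with $h=2^{-(n+1)}$) together with the minimal gap $y-x\ge 2h$ to bound $t^{1/2}\le|x-y|^{1/2}$. The paper records the slightly sharper inequality $t\le\tfrac12(y-x)$, giving an extra factor $2^{-1/2}$, but this is not needed for the stated conclusion.
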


\begin{proof}
Let $x,y \in V_2^{(n)}$. If $x,y \in V_{2-}^{(n)}$ or $x,y \in V_{2+}^{(n)}$, the proof is similar to the proof of Lemma \ref{lem:lip}, so we do not write the details.
Assume that $x \in V_{2-}^{(n)}$ and $y \in V_{2+}^{(n)}$. 
With the definition of $\avg_2^{(n)}$ and making a simple change of variables, we write
\begin{align*}
\avg_2^{(n)}u(x) - \avg_2^{(n)}u(y)
    &= 2^{n+1}  \left[\int_{x}^{x + \frac{1}{2^{n+1}}} u(z) \, dz 
        - \int^{y}_{y - \frac{1}{2^{n+1}}} u(z) \, dz\right]\\
     &= 2^{n+1}  \int_{x}^{x + \frac{1}{2^{n+1}}} \left(u(z) - u(z - y+x + \frac{1}{2^{n+1}})\right) \, dz.
\end{align*}
Then, we estimate 
\begin{align*}
|\avg_2^{(n)}u(x) - \avg_2^{(n)}u(y)|
    &\leq [u]_{C^{\frac12}([0,1])}\left|y-x-\frac{1}{2^{n+1}}\right|^{\frac12}.
\end{align*}
Since $(y-x) \geq \frac{1}{2^n}$, we have that
\[
0 < y-x-\frac{1}{2^{n+1}} \leq (y-x) - \frac{y-x}{2} = \frac{y-x}{2}.  
\]
Therefore, we obtain the desired H\"older bound
\[
|\avg_2^{(n)}u(x) - \avg_2^{(n)}u(y)|
    \leq 2^{-\frac{1}{2}}[u]_{C^{\frac12}([0,1])}(y-x)^{\frac12} \leq [u]_{C^{\frac12}([0,1])}|y-x|^{\frac12}.
\]
\end{proof}

In the spirit of Lemma \ref{lem:kernel-estimate1}, we prove the following kernel estimate. 
The proof is similar, but we write the details for completeness.

\begin{lem}\label{lem:kernel-estimate2} 
Let Assumption~\ref{A:kernel}\ref{item:upper} hold for some $s\in (0,1)$. Then
\begin{equation*}
\sup_{n \in \N} \sum_{x \in V_{2-}^{(n)}} \sum_{y \in V_{2+}^{(n)}} j_2^{(n)}(x,y)(y-x) \mu_2^{(n)}(y) \mu_2^{(n)}(x)< \infty.
\end{equation*}
\end{lem}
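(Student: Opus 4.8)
The plan is to reduce, via the upper kernel bound, to a uniform estimate on a pure power sum, and then to exploit the fact that for $i=2$ the cells $U_2^{(n)}(x)$ have the nodes as \emph{endpoints} rather than as centers (as in the $i=1$ case of Lemma~\ref{lem:kernel-estimate1}). This endpoint structure is the essential difference from that lemma and is what lets the sum be dominated by a single convergent integral, with no monotonicity-in-$n$ argument required.

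First I would invoke Assumption~\ref{A:kernel}\ref{item:upper}. Since $y>x$ whenever $x \in V_{2-}^{(n)}$ and $y \in V_{2+}^{(n)}$, the estimate \eqref{eq:kernel-estimate-new} gives $j_2^{(n)}(x,y)(y-x) \le \Lambda_2 (y-x)^{-2s}$, so it suffices to bound
\[
S_n := \sum_{x \in V_{2-}^{(n)}} \sum_{y \in V_{2+}^{(n)}} (y-x)^{-2s}\, \mu_2^{(n)}(x)\,\mu_2^{(n)}(y)
\]
uniformly in $n$. The key step is then the following pointwise domination. Recall that for $x \in V_{2-}^{(n)}$ the cell is $U_2^{(n)}(x)=[x,x+2^{-(n+1)})$, so every $x'\in U_2^{(n)}(x)$ satisfies $x'\ge x$; likewise, for $y\in V_{2+}^{(n)}$ the cell is $U_2^{(n)}(y)=(y-2^{-(n+1)},y]$, so every $y'\in U_2^{(n)}(y)$ satisfies $y'\le y$. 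Hence $0 < y'-x'\le y-x$, and since $t\mapsto t^{-2s}$ is decreasing on $(0,\infty)$ we get $(y-x)^{-2s}\le (y'-x')^{-2s}$ on the whole cell $U_2^{(n)}(x)\times U_2^{(n)}(y)$. Writing $\mu_2^{(n)}(x)\mu_2^{(n)}(y)=\int_{U_2^{(n)}(x)}\int_{U_2^{(n)}(y)}dy'\,dx'$ and summing over all nodes, whose disjoint cells tile $[0,\tfrac12)$ and $(\tfrac12,1]$ respectively, yields for every $n$
\[
S_n \le \int_0^{\frac12}\int_{\frac12}^1 (y-x)^{-2s}\,dy\,dx =: I.
\]

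It then remains only to check that $I<\infty$, which is the single genuine computation. Substituting $u=\tfrac12-x$ and $v=y-\tfrac12$ rewrites $I$ as $\int_0^{1/2}\int_0^{1/2}(u+v)^{-2s}\,du\,dv$, whose only singularity sits at the corner $(0,0)$; slicing by $w=u+v$, the level set $\{u+v=w\}$ inside the square has length $O(w)$, so the contribution near the corner behaves like $\int_0^{c} w^{1-2s}\,dw$, which converges precisely because $s<1$. Thus $I<\infty$ for every $s\in(0,1)$ and $\sup_n S_n \le \Lambda_2 I<\infty$, as claimed. The main obstacle is conceptual rather than technical: one must notice that the endpoint (as opposed to centered) structure of the cells for $i=2$ produces the clean domination $(y-x)^{-2s}\le (y'-x')^{-2s}$, which sidesteps the delicate monotonicity argument needed in Lemma~\ref{lem:kernel-estimate1}; confirming borderline integrability as $s\to 1$ is then routine.
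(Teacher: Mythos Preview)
Your proof is correct and is in fact cleaner than the paper's argument. The paper proceeds by first proving that the inner sums $\sum_{y\in V_{2+}^{(n)}}(y-x)^{-2s}\mu_2^{(n)}(y)$ are monotone increasing in $n$ (via a node-pairing argument as in Lemma~\ref{lem:kernel-estimate1}), then passes to the limit to obtain the integral $\int_{1/2}^1(y-x)^{-2s}\,dy$, and finally treats the three regimes $s<\tfrac12$, $s=\tfrac12$, $s>\tfrac12$ separately, in the last two cases repeating a second monotonicity argument for the outer sum in $x$. Your route bypasses all of this: the observation that for $i=2$ the cells $U_2^{(n)}(x)$ and $U_2^{(n)}(y)$ have the nodes as their outermost endpoints (so $x'\ge x$ and $y'\le y$ throughout the cell) gives the pointwise domination $(y-x)^{-2s}\le (y'-x')^{-2s}$ directly, and hence $S_n\le I$ for every $n$ with no monotonicity step and no case split. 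The paper's approach has the minor advantage of mirroring the structure of Lemma~\ref{lem:kernel-estimate1} (where cells are centered and your domination trick is unavailable), but your argument is shorter and highlights precisely why the $i=2$ case is easier.
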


\begin{proof}
Fix $n \in \N$ and let $x \in V_{2-}^{(n)}$. 
Using \eqref{eq:kernel-estimate-new}, we first find
\begin{equation}\label{eq:beta_kernel_p1-2}
\sum_{y \in V_{2+}^{(n)}} j_2^{(n)}(x,y)(y-x) \mu_2^{(n)}(y)
    \leq \Lambda_2\sum_{y \in V_{2+}^{(n)}}  (y-x)^{-2s} \mu_2^{(n)}(y).
\end{equation}
We will show that the finite sums on the right hand side of~\eqref{eq:beta_kernel_p1-2} are increasing in $n$ by proving that
\[
\sum_{y \in V_{2+}^{(n)}}  (y-x)^{-2s} \frac{1}{2^{n+1}}
    \leq 
    \sum_{y \in V_{2+}^{(n+1)}}  (y-x)^{-2s} \frac{1}{2^{n+2}}
\]
or equivalently,
\begin{equation}\label{eq:beta_kernel_p2-2}
2\sum_{y \in V_{2+}^{(n)}}  (y-x)^{-2s} 
    \leq 
    \sum_{y \in V_{2+}^{(n+1)}}  (y-x)^{-2s}.
\end{equation}
Since
\[
\sum_{y \in V_{2+}^{(n+1)}}  (y-x)^{-2s}
    = \sum_{y \in V_{2+}^{(n+1)}\setminus V_{2+}^{(n)} }  (y-x)^{-2s}
        +  \sum_{y \in V_{2+}^{(n)} }  (y-x)^{-2s},
\]
to obtain~\eqref{eq:beta_kernel_p2-2}, it is enough to show
\begin{equation}\label{eq:claim-final-2}
\sum_{y \in V_{2+}^{(n)}}  (y-x)^{-2s}
    \leq \sum_{y \in V_{2+}^{(n+1)}\setminus V_{2+}^{(n)} }  (y-x)^{-2s}.
\end{equation}
For each $y \in V_{2+}^{(n)}$, the point $\bar{y} = y- \frac{1}{2^{n+2}}$ is the unique point in $V_{2+}^{(n+1)}\setminus V_{2+}^{(n)}$ satisfying
\[
|y - \bar{y}| = \frac{1}{2^{n+2}} \quad \hbox{and} \quad (\bar{y}-x) < y-x.
\]
In particular, 
\[
(y-x)^{-2s} < (\bar{y}-x)^{-2s}.
\]
Therefore,~\eqref{eq:claim-final-2} holds and so does~\eqref{eq:beta_kernel_p2-2}. In view of the latter,
\begin{equation}\label{eq:Lip-limit-bound-2}
 \sum_{y \in V_{2+}^{(n)}}  (y-x)^{-2s} \mu_2^{(n)}(y)
    \leq \lim_{k \to \infty}  \sum_{y \in V_{2+}^{(k)}}  (y-x)^{-2s} \mu_2^{(k)}(y)
    =\int_{\frac{1}{2}}^1 (y-x)^{-2s} \, dy.
\end{equation}

When $0 < s < \frac{1}{2}$, 
\begin{align*}
\int_{\frac{1}{2}}^1 (y-x)^{-2s} \, dy
    &\leq \frac{1}{1-2s} (1-x)^{1-2s} \leq \frac{1}{1-2s},
\end{align*}
so  with \eqref{eq:Lip-limit-bound-2}, we have
\[
\sum_{x \in V_{2-}^{(n)}} \sum_{y \in V_{2+}^{(n)}} j_2^{(n)}(x,y)(y-x) \mu_2^{(n)}(y) \mu_2^{(n)}(x)
    \leq C\sum_{x \in V_{2-}^{(n)}}  \mu_2^{(n)}(x) \leq C
\]
for $C$ independent of $n$. 
On the other hand, when $\frac{1}{2}<s<1$, 
\begin{align*}
\int_{\frac{1}{2}}^1 (y-x)^{-2s} \, dy
    &\leq \frac{1}{2s-1} \left(\frac{1}{2}-x\right)^{1-2s}
\end{align*}
and when $s = \frac{1}{2}$, 
\begin{align*}
\int_{\frac{1}{2}}^1 (y-x)^{-1} \, dy
    &\leq  
    - \ln \left(\frac{1}{2}-x\right)= \left|\ln \left(\frac{1}{2}-x\right) \right|.
\end{align*}
The previous two quantities are not uniformly bounded in $(0,\frac12)$, but they are integrable. 
Indeed, as above, we can show, for $\frac{1}{2}<s < 1$, that 
\[
\sum_{x \in V_{2-}^{(n)}}  \left(\frac{1}{2}-x\right)^{1-2s}  \mu_2^{(n)}(x)
\]
is increasing in $n \in \N$. 
Hence, 
\begin{align*}
&\sum_{x \in V_{2-}^{(n)}} \sum_{y \in V_{2+}^{(n)}} j_2^{(n)}(x,y)(y-x) \mu_2^{(n)}(y) \mu_2^{(n)}(x)
\leq C\sum_{x \in V_{2-}^{(n)}}  \left(\frac{1}{2}-x\right)^{1-2s}  \mu_2^{(n)}(x)\\
&\qquad\quad\leq C\lim_{k \to \infty} \sum_{x \in V_{2-}^{(k)}}  \left(\frac{1}{2}-x\right)^{1-2s}  \mu_2^{(k)}(x)
= C \int_0^{\frac{1}{2}} \left(\frac{1}{2}-x\right)^{1-2s}  \, dx \leq C.
\end{align*}
Similarly, for $s = \frac{1}{2}$, we can show that 
\[
\sum_{x \in V_{2-}^{(n)}}  \left|\ln \left(\frac{1}{2}-x\right) \right| \mu_2^{(n)}(x)
\]
is increasing in $n$, so we have
\begin{align*}
&\sum_{x \in V_{2-}^{(n)}} \sum_{y \in V_{2+}^{(n)}} j_2^{(n)}(x,y)(y-x) \mu_2^{(n)}(y) \mu_2^{(n)}(x)
\leq C\sum_{x \in V_{2-}^{(n)}}   \left|\ln \left(\frac{1}{2}-x\right) \right|  \mu_2^{(n)}(x)\\
&\qquad\quad\leq C\lim_{k \to \infty} \sum_{x \in V_{2-}^{(k)}}  \left|\ln \left(\frac{1}{2}-x\right) \right| \mu_2^{(k)}(x)
= C \int_0^{\frac{1}{2}} \left|\ln \left(\frac{1}{2}-x\right) \right|  \, dx \leq C.
\end{align*}
This completes the proof. 
\end{proof}

We are now able to prove equivalence of forms for $i=2$ on the set of $C^{\frac12}([0,1])$ functions. 

\begin{lem}\label{lem:samelimit-2}
Let $i=2$ and assume that \eqref{eq:kernel-estimate-new} holds.
If $u \in C^{\frac{1}{2}}([0,1])$, then
\[
\widetilde{\mathcal{E}}_2^{(\infty)}(u)
= \mathcal{E}_2^{(\infty)}(u). 
\]
\end{lem}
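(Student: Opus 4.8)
The plan is to mirror the structure of the proof of Lemma~\ref{lem:samelimit}, replacing the Lipschitz ingredients by their $C^{\frac12}$ counterparts. Since $u\in C^{\frac12}([0,1])\subset H^s([0,\frac12],[\frac12,1])\subset\mathcal{F}_2^{(\infty)}$ by \eqref{eq:inclusion_domains}, the finite-energy limit $\mathcal{E}_2^{(\infty)}(u)$ is finite, so in view of the definitions \eqref{E:energy_limit_fin_energy} and \eqref{E:energy_limit_Lip} it suffices to prove
\[
\lim_{n\to\infty}\bigl|\mathcal{E}_2^{(n)}(u|_{V_2^{(n)}})-\mathcal{E}_2^{(n)}(\avg_2^{(n)}u)\bigr|=0.
\]
Using Remark~\ref{rem:split}, I would write this difference as $2$ times a double sum over $x\in V_{2-}^{(n)}$, $y\in V_{2+}^{(n)}$ of the terms $\bigl|(u(x)-u(y))^2-(\avg_2^{(n)}u(x)-\avg_2^{(n)}u(y))^2\bigr|$ weighted by $j_2^{(n)}(x,y)\mu_2^{(n)}(x)\mu_2^{(n)}(y)$, and then pass to the limit by a dominated-convergence argument, exactly as in the $i=1$ case.

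For the pointwise part, I would fix $x,y$ with $\{x,y\}\in W_2^{(n_0)}$ for some $n_0$, so that $x,y$ persist in $V_2^{(n)}$ for all $n\ge n_0$; factoring the difference of squares and using the boundedness of $u$ bounds each term by $4\|u\|_{L^\infty([0,1])}\bigl(|u(x)-\avg_2^{(n)}u(x)|+|u(y)-\avg_2^{(n)}u(y)|\bigr)$, which tends to zero by the Lebesgue differentiation theorem just as in Lemma~\ref{lem:samelimit}. For the domination, the $C^{\frac12}$ regularity of $u$ gives $|u(x)-u(y)|^2\le[u]_{C^{\frac12}([0,1])}^2(y-x)$, while Lemma~\ref{lem:Holder} gives the same bound for $|\avg_2^{(n)}u(x)-\avg_2^{(n)}u(y)|^2$, so the term is controlled by $2[u]_{C^{\frac12}([0,1])}^2(y-x)$. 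Summing against the kernel and invoking Lemma~\ref{lem:kernel-estimate2} yields
\[
\bigl|\mathcal{E}_2^{(n)}(u|_{V_2^{(n)}})-\mathcal{E}_2^{(n)}(\avg_2^{(n)}u)\bigr|\le 4[u]_{C^{\frac12}([0,1])}^2\sum_{x\in V_{2-}^{(n)}}\sum_{y\in V_{2+}^{(n)}}j_2^{(n)}(x,y)(y-x)\mu_2^{(n)}(x)\mu_2^{(n)}(y)\le C
\]
uniformly in $n$. Combining this uniform bound with the pointwise convergence then gives the claim.

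The main obstacle is the singularity of the limiting kernel at $x=y=\frac12$: unlike the case $i=1$, paired nodes in $V_{2-}^{(n)}$ and $V_{2+}^{(n)}$ can be as close as $(y-x)=2^{-n}$, so no uniform lower bound on $(y-x)$ is available. This is precisely why one must exploit the \emph{linear}-in-$(y-x)$ gain coming from $C^{\frac12}$ regularity (rather than the quadratic gain used for $i=1$), and why the more delicate stage-monotone estimate of Lemma~\ref{lem:kernel-estimate2}---which absorbs the contributions near $\frac12$ through the integrability of $(\frac12-x)^{1-2s}$ or $|\ln(\frac12-x)|$---is required in place of the simpler bound of Lemma~\ref{lem:kernel-estimate1}. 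Verifying that the monotone-in-$n$ truncations together with Fubini justify interchanging the limit and the summation is the only genuinely new bookkeeping relative to the $i=1$ argument.
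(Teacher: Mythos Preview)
Your proposal is correct and follows essentially the same approach as the paper: the paper's proof likewise invokes the Lebesgue differentiation theorem for the pointwise limit, uses Lemma~\ref{lem:Holder} to obtain the bound $2[u]_{C^{\frac12}([0,1])}^2|x-y|$, applies Lemma~\ref{lem:kernel-estimate2} for the uniform-in-$n$ kernel estimate, and then concludes exactly as in Lemma~\ref{lem:samelimit}. Your identification of the linear-in-$(y-x)$ gain as the key replacement for the quadratic gain in the $i=1$ case is precisely the point.
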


\begin{proof}
We first use the Lebesgue differentiation theorem (see Lemma \ref{lem:samelimit}), to show that
\[
\lim_{n \to \infty}|(u\big|_{V_2^{(n)}}(x) - u\big|_{V_2^{(n)}}(y))^2 - (\avg_2^{(n)}u(x) -\avg_2^{(n)}u(y))^2|
=0 \quad \hbox{for a.e.}~x, y \in [0,1].
\]
By Lemma \ref{lem:Holder}, we also have for a.e.~$x, y \in [0,1]$, 
\begin{align*}
|(u\big|_{V_2^{(n)}}(x)& - u\big|_{V_2^{(n)}}(y))^2 - (\avg_2^{(n)}u(x) -\avg_2^{(n)}u(y))^2|\\
    &\leq |u(x) - u(y)|^2+|\avg_2^{(n)}u(x) -\avg_2^{(n)}u(y)|^2
    \leq 2[u]_{C^{\frac12}([0,1])}^2 |x-y|. 
\end{align*}
Therefore, by Lemma \ref{lem:kernel-estimate2} and for $n \in \N$,  we have
\begin{align*}
|\mathcal{E}_2^{(n)}&(u \big|_{V_2^{(n)}}) -\mathcal{E}_2^{(n)}(\avg_2^{(n)} u)|\\
    &\leq 2 \sum_{x \in V_{2-}^{(n)} } \sum_{y \in V_{2+}^{(n)}} 
        j_2^{(n)}(x,y)  \, \mu_2^{(n)}(y) \, \mu_2^{(n)}(x)  \\
    &\quad\quad \times \big| (u\big|_{V_2^{(n)}}(x) - u\big|_{V_2^{(n)}}(y))^2 - (\avg_2^{(n)}u(x) -\avg_2^{(n)}u(y))^2\big|\\
    &\leq 4[u]_{C^{\frac{1}{2}}([0,1])}^2 \sum_{x \in V_{2-}^{(n)} } \sum_{y \in V_{2+}^{(n)}} 
        j_2^{(n)}(x,y) (y-x) \, \mu_2^{(n)}(y) \, \mu_2^{(n)}(x) 
    \leq C,
\end{align*}
for some $C>0$ independent of $n$. The conclusion follows as in the proof of Lemma \ref{lem:samelimit}.
\end{proof}

%%%%%%%%%%%%%%%%%%%%%%
\subsubsection{The case $i>2$}
%%%%%%%%%%%%%%%%%%%%%%%

Since the kernel for $i>2$ is nonsingular, we can immediately prove equivalence of forms. 

\begin{lem}\label{lem:samelimit-i}
Let $i>2$. Under Assumption~\ref{A:kernel}\ref{item:upper}, for any $u \in L^{\infty}(\overline{V}_i^{*})$, it holds that 
\[ 
    \widetilde{\mathcal{E}}_i^{(\infty)}(u)
    = \mathcal{E}_i^{(\infty)}(u).
\]
\end{lem}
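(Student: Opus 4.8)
The plan is to follow the same scheme as in Lemmas~\ref{lem:samelimit} and~\ref{lem:samelimit-2}: reduce the claim to showing
\[
\lim_{n\to\infty}\big|\mathcal{E}_i^{(n)}(u\big|_{V_i^{(n)}})-\mathcal{E}_i^{(n)}(\avg_i^{(n)}u)\big|=0,
\]
from which $\mathcal{E}_i^{(\infty)}(u)=\widetilde{\mathcal{E}}_i^{(\infty)}(u)$ is immediate from the definitions~\eqref{E:energy_limit_fin_energy} and~\eqref{E:energy_limit_Lip}. The feature that makes the case $i>2$ \emph{immediate}, as announced before the statement, is that by Remark~\ref{rem:nonsingular} together with Assumption~\ref{A:kernel}\ref{item:upper} the kernel is uniformly bounded, $j_i^{(n)}(x,y)\le \Lambda_i b_i$ for every $n\ge 0$ and every $\{x,y\}\in W_i^{(n)}$, with $b_i<\infty$ as in~\eqref{eq:boundedkernel}. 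This single bound plays the role that Lemmas~\ref{lem:kernel-estimate1} and~\ref{lem:kernel-estimate2} played for $i=1,2$, but now costs nothing.

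Using Remark~\ref{rem:split}, I would write both energies as double sums over $V_{i-}^{(n)}\times V_{i+}^{(n)}$ and, since $\mu_i^{(n)}(x)\mu_i^{(n)}(y)$ equals the Lebesgue measure of $U_i^{(n)}(\bar x)\times U_i^{(n)}(\bar y)$, recast the energy difference as the integral of a step function over the bounded rectangle $[0,\frac1i]\times[1-\frac1i,1]$. Writing $\bar x,\bar y$ for the nodes with $x\in U_i^{(n)}(\bar x)$ and $y\in U_i^{(n)}(\bar y)$, the kernel bound yields
\[
\big|\mathcal{E}_i^{(n)}(u\big|_{V_i^{(n)}})-\mathcal{E}_i^{(n)}(\avg_i^{(n)}u)\big|
\le 2\Lambda_i b_i\int_0^{1/i}\!\!\int_{1-1/i}^1\big|(u(\bar x)-u(\bar y))^2-(\avg_i^{(n)}u(\bar x)-\avg_i^{(n)}u(\bar y))^2\big|\,dy\,dx.
\]
The integrand converges to $0$ at a.e.\ $(x,y)$: by the Lebesgue differentiation theorem $\avg_i^{(n)}u(\bar x)\to u(x)$ and $\avg_i^{(n)}u(\bar y)\to u(y)$, and the node values $u(\bar x),u(\bar y)$ are compared with these averages exactly as in Lemma~\ref{lem:samelimit-2}. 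Since $u\in L^\infty(\overline{V}_i^*)$, the integrand is dominated by the constant $8\|u\|_{L^\infty(\overline{V}_i^*)}^2$, which is integrable over the bounded rectangle. The Dominated Convergence Theorem then drives the integral, and hence the energy difference, to $0$, giving the claim.

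I do not expect a genuine obstacle here; the content is that the usual difficulty disappears. In contrast to $i=1,2$, no singular kernel estimate and no Lipschitz or Hölder regularity of $u$ is required, because the uniform boundedness of $j_i^{(n)}$ supplies both the pointwise domination and the uniform-in-$n$ control at once. The only point meriting any care is the appearance of the pointwise node values $u(\bar x)$ in the finite-energy term, which I would handle precisely as in Lemma~\ref{lem:samelimit-2}, i.e.\ via the Lebesgue differentiation theorem, so that their comparison with the averages vanishes for a.e.\ $(x,y)$.
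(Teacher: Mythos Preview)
Your proposal is correct and follows essentially the same approach as the paper: bound the energy difference using the uniform kernel bound $j_i^{(n)}\le \Lambda_i b_i$ together with $\|u\|_{L^\infty}$, then conclude via the Lebesgue differentiation theorem and dominated convergence exactly as in the earlier lemmas. The paper's write-up is slightly terser---it just records the uniform-in-$n$ bound and defers to the proof of Lemma~\ref{lem:samelimit}---but the underlying argument is the one you describe.
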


\begin{proof}
By the estimates~\eqref{eq:kernel-estimate-new} and \eqref{eq:boundedkernel},
\begin{align*}
|\mathcal{E}_i^{(n)}&(u\big|_{V_i^{(n)}}) - \mathcal{E}_i^{(n)}(\avg_i^{(n)}(u))|\\
    &\leq 2\sum_{x \in V_{i-}^{(n)}} 
    \sum_{y \in V_{i+}^{(n)}} 
    j_i^{(n)}(x,y) \mu_i^{(n)}(y) \mu_i^{(n)}(x)\\
    &\quad\quad  \times \big|(u\big|_{V_i^{(n)}}(x) - u\big|_{V_i^{(n)}}(y))^2 - 
    (\avg_i^{(n)}u(x) - \avg_i^{(n)}u(y))^2\big|\\
    &\leq 2\Lambda_i\sum_{x \in V_{i-}^{(n)}} 
    \sum_{y \in V_{i+}^{(n)}} 
    (y-x)^{-(1+2s)} 2(2\|u\|_{L^{\infty}(V_i^*)})^2 \mu_i^{(n)}(y) \mu_i^{(n)}(x)\\
    &\leq C\sum_{x \in V_{i-}^{(n)}} 
    \sum_{y \in V_{i+}^{(n)}} 
    \mu_i^{(n)}(y) \mu_i^{(n)}(x) = C
\end{align*}
for some $C$, independent of $n$. 
The conclusion follows as in the proof of Lemma \ref{lem:samelimit}.
\end{proof}

%%%%%%%%%%%%%%%%%%%%%%%%%%%%%%%%
\section{Mosco convergence in the generalized sense}\label{sec:mosco}
%%%%%%%%%%%%%%%%%%%%%%%%%%%%%%%%

In this section we further analyze the convergence of the sequence of approximating forms $\{(\mathcal{E}_i^{(n)}, \ell(V_i^{(n)}))\}_{n\geq 0}$ from~\eqref{E:energy_as_DF} to the energy form $(\mathcal{E}_i^{(\infty)}, \mathcal{F}_i^{(\infty)})$ given in  \eqref{E:energy_limit_fin_energy}. 

\medskip

First, we present the notions of \emph{Mosco and $\Gamma$-convergence in the generalized sense} in our setting. See Mosco's seminal paper 
\cite{Mosco} for the original notion of what is now referred to as 
Mosco convergence and \cite{CKK} for Mosco convergence in the generalized sense.

Recall the operators $\ext_i^{(n)}$ and $\avg_i^{(n)}$ in Definition \ref{defn:operators}.

\begin{defn}\label{defn:mosco}
Let $i \in \N$. 
We say that $(\mathcal{E}_i^{(n)}, \ell(V_i^{(n)}))$ \emph{Mosco converges to $(\mathcal{E}_i^{(\infty)}, \mathcal{F}_i^{(\infty)})$ in the generalized sense} if the following conditions hold:
\begin{enumerate}[start=1,label={(\arabic*)}]
\item \label{item:mosco1}
If $u_n \in \ell^2(V_i^{(n)}, \mu_i^{(n)})$ and $u \in L^2(\overline{V}_i^{*},dx)$ are such that $\ext_i^{(n)} u_n \to u$ weakly in $L^2(\overline{V}_i^*,dx)$, then 
\[
\liminf_{n \to \infty} \mathcal{E}_i^{(n)}(u_n) \geq \mathcal{E}_i^{(\infty)}(u).
\]
\item \label{item:mosco2}
Given $u \in L^2(\overline{V}_i^{*},dx)$, there is a sequence $u_n \in \ell^2(V_i^{(n)}, \mu_i^{(n)})$ such that $\ext_i^{(n)} u_n \to u$ strongly in $L^2(\overline{V}_i^*,dx)$ and
\[
\limsup_{n \to \infty} \mathcal{E}_i^{(n)}(u_n) \leq \mathcal{E}_i^{(\infty)}(u).
\]
\end{enumerate}
We say that $(\mathcal{E}_i^{(n)}, \ell(V_i^{(n)}))$ \emph{$\Gamma$-converges to $(\mathcal{E}_i^{(\infty)}, \mathcal{F}_i^{(\infty)})$ in the generalized sense} if \ref{item:mosco1} holds under the hypothesis that $\ext_i^{(n)}u_n \to u$ strongly in $L^2(\overline{V}_i^*, dx)$ and \ref{item:mosco2} holds as written.
\end{defn}

We emphasize that the $\limsup$-inequality in Definition \ref{defn:mosco} is the same for both $\Gamma$ and Mosco convergence and also that Mosco convergence is in general stronger than $\Gamma$-convergence. 

We now present our main results.

\begin{thm}\label{thm:mosco}
Let $i = 1$ and let Assumption \ref{A:kernel} hold for some fixed $s \in (0,\frac12)$. 
Then the sequence of Dirichlet forms $(\mathcal{E}_1^{(n)}, \ell(V_1^{(n)}))$ Mosco converges to $(\mathcal{E}_1^{(\infty)}, \mathcal{F}_1^{(\infty)})$ in the generalized sense. 
\end{thm}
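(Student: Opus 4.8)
The plan is to verify the two conditions of Definition \ref{defn:mosco} directly, using Theorem \ref{thm:DF-main} to identify the target form: $\mathcal{F}_1^{(\infty)}=H^s([0,1])$, $\mathcal{E}_1^{(\infty)}=\widetilde{\mathcal{E}}_1^{(\infty)}$, and the two-sided comparison $\lambda_1[u]_{H^s([0,1])}^2\le\mathcal{E}_1^{(\infty)}(u)\le\Lambda_1[u]_{H^s([0,1])}^2$ of Remark \ref{rem:ellipticity}. Throughout I write $w_n:=\ext_1^{(n)}u_n$ and let $\mathcal{B}_1^{(n)}$ denote the bilinear form polarizing $\mathcal{E}_1^{(n)}$.

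The recovery inequality \ref{item:mosco2} is the easy half. Given $u\in L^2([0,1],dx)$, I take $u_n:=\avg_1^{(n)}u$. By Jensen's inequality each composition $\ext_1^{(n)}\avg_1^{(n)}$ is an $L^2$-contraction, and for $u\in C([0,1])$ the cell averages converge uniformly since $|U_1^{(n)}(\bar{x})|\to0$; density of $C([0,1])$ in $L^2([0,1],dx)$ then upgrades this to strong convergence $\ext_1^{(n)}\avg_1^{(n)}u\to u$ for every $u\in L^2$. The energy bound is immediate from the definition \eqref{E:energy_limit_Lip}, since $\mathcal{E}_1^{(n)}(\avg_1^{(n)}u)\to\widetilde{\mathcal{E}}_1^{(\infty)}(u)=\mathcal{E}_1^{(\infty)}(u)$ (trivial when $u\notin H^s([0,1])$, where the right-hand side is $+\infty$).

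The liminf inequality \ref{item:mosco1} is the heart of the matter. I may assume $\liminf_n\mathcal{E}_1^{(n)}(u_n)=C<\infty$ and pass to a subsequence realizing it with $\sup_n\mathcal{E}_1^{(n)}(u_n)\le C$. First, Assumption \ref{A:kernel}\ref{item:lower} bounds $\mathcal{E}_1^{(n)}(u_n)$ below by a discrete Gagliardo form; rewriting this through $\ext_1^{(n)}$, truncating to $\{|x-y|>\delta\}$ where the kernel is bounded, and invoking weak lower semicontinuity of the resulting \emph{fixed} quadratic functional against $w_n\rightharpoonup u$, then letting $\delta\downarrow0$, yields $\lambda_1[u]_{H^s([0,1])}^2\le C$, so in particular $u\in H^s([0,1])=\mathcal{F}_1^{(\infty)}$. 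To recover the \emph{sharp} functional I decompose $u_n=\avg_1^{(n)}u+r_n$, so that $\ext_1^{(n)}r_n\rightharpoonup0$ and $\sup_n\mathcal{E}_1^{(n)}(r_n)<\infty$, and expand $\mathcal{E}_1^{(n)}(u_n)=\mathcal{E}_1^{(n)}(\avg_1^{(n)}u)+2\mathcal{B}_1^{(n)}(\avg_1^{(n)}u,r_n)+\mathcal{E}_1^{(n)}(r_n)\ge\mathcal{E}_1^{(n)}(\avg_1^{(n)}u)+2\mathcal{B}_1^{(n)}(\avg_1^{(n)}u,r_n)$. As $\mathcal{E}_1^{(n)}(\avg_1^{(n)}u)\to\mathcal{E}_1^{(\infty)}(u)$, the whole inequality follows once I show the cross term vanishes. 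I approximate $u$ by Lipschitz $u^{(k)}\to u$ in $H^s([0,1])$: Cauchy--Schwarz together with $\limsup_n\mathcal{E}_1^{(n)}(\avg_1^{(n)}(u-u^{(k)}))\le\Lambda_1[u-u^{(k)}]_{H^s([0,1])}^2$ from Lemma \ref{lem:domains-tilde} and $\sup_n\mathcal{E}_1^{(n)}(r_n)<\infty$ make the error term small uniformly in $n$ as $k\to\infty$. For fixed Lipschitz $u^{(k)}$, the remaining term is a constant multiple of the $L^2$-pairing $\langle\ext_1^{(n)}r_n,\ext_1^{(n)}(L_n\avg_1^{(n)}u^{(k)})\rangle$ of the sequence $\ext_1^{(n)}r_n\rightharpoonup0$ against the discrete nonlocal operator $L_n$ applied to $u^{(k)}$. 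Here is precisely where $s\in(0,\tfrac12)$ enters: testing the singular weight $|x-y|^{-(1+2s)}$ against a Lipschitz increment of size $O(|x-y|)$ is integrable exactly because $2s<1$, which furnishes a uniform bound and an $L^2$-precompactness estimate (Fr\'echet--Kolmogorov) for $\{\ext_1^{(n)}(L_n\avg_1^{(n)}u^{(k)})\}_n$; a subsequence argument then forces the weak--strong pairing to tend to $0$. Letting $k\to\infty$ gives $\mathcal{B}_1^{(n)}(\avg_1^{(n)}u,r_n)\to0$ and hence \ref{item:mosco1}.

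I expect the decisive difficulty to be this liminf step, for two intertwined reasons absent from both the classical local theory and the framework of \cite{CKK}: the networks are complete, so the kernels $j_1^{(n)}$ genuinely depend on $n$ and need \emph{not} converge to a single limit kernel (only the two-sided bound of Assumption \ref{A:kernel} is available), and the admissible limit functions $u\in H^s([0,1])$ are too rough for the discrete nonlocal operators to converge strongly. The Hilbert-space decomposition confines the whole obstruction to the single cross term, and the restriction $s<\tfrac12$ is what renders its near-diagonal part controllable; I anticipate that the bookkeeping of the near-diagonal versus far-field splitting and the uniform precompactness estimate will be the most delicate portion of the argument.
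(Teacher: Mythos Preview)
Your recovery sequence matches the paper's (Proposition \ref{prop:limsup}). The liminf argument takes a genuinely different route and has a gap in the cross-term step. The claimed $L^2$-precompactness of $\{\ext_1^{(n)}(L_n\avg_1^{(n)}u^{(k)})\}_n$ does not follow from the uniform bound you obtain: these are piecewise-constant functions whose pointwise values $\sum_{\bar y}j_1^{(n)}(\bar x,\bar y)\bigl(\avg_1^{(n)}u^{(k)}(\bar x)-\avg_1^{(n)}u^{(k)}(\bar y)\bigr)\mu_1^{(n)}(\bar y)$ can oscillate by $O(\Lambda_1-\lambda_1)$ between neighboring grid points under only the two-sided bound of Assumption \ref{A:kernel} (take $u^{(k)}$ linear and let the kernels alternate between the two extremes across cells). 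No Fr\'echet--Kolmogorov equicontinuity is available in general, and without strong convergence on one side the weak--strong pairing need not vanish.

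The paper puts the compactness on the other factor. The key computation is Lemma \ref{lem:ext-Hs-lower}: for $s\in(0,\tfrac12)$ one has $\mathcal{E}_1^{(n)}(v)\ge c\lambda_1[\ext_1^{(n)}v]_{H^s([0,1])}^2$ for every $v\in\ell(V_1^{(n)})$; the restriction $s<\tfrac12$ enters precisely here, in controlling the near-diagonal Gagliardo contribution of a step function. Bounded energy plus bounded $\ell^2$ norm thus gives $\ext_1^{(n)}u_n$ bounded in $H^s([0,1])$, hence precompact in $L^2$ by compact embedding, so weak convergence upgrades to strong along a subsequence (Proposition \ref{prop:a-compact}). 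The liminf then follows from the $\Gamma$-convergence of Proposition \ref{prop:liminf-gamma}, valid for all $s\in(0,1)$, via Lemma \ref{lem:Gamma-Mosco}. Your decomposition can be salvaged the same way---Lemma \ref{lem:ext-Hs-lower} applied to $r_n$ forces $\ext_1^{(n)}r_n\to0$ strongly, after which the uniform $L^\infty$ bound on the operator side suffices to kill the cross term---but that is the paper's asymptotic-compactness argument in disguise.
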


\begin{rem}
Theorem \ref{thm:mosco} holds for all $i \in \N$ and $s \in (0,1)$ under the following additional assumption: for every $\varepsilon>0$, there is an $N = N(\varepsilon, i,s) \in \N$ such that for all $n \geq k \geq N$ and all $u \in \ell^2(V_i^{(k)}, \mu_i^{(k)})$,
\begin{equation}\label{eq:extra}
\mathcal{E}_i^{(n)}(\avg_i^{(n)} \ext_i^{(k)} u)^{\frac12} \leq \mathcal{E}_i^{(k)}( u)^{\frac12} + \varepsilon,
\end{equation}
namely \cite{CKK}*{Assumption (A4).(ii)}. 
Alternatively, one may reframe \eqref{eq:extra} as an assumption on the jump kernel.
We emphasize that all other assumptions in \cite{CKK} are satisfied or unnecessary in our setting and also that \eqref{eq:extra} is not necessary when $i=1$ and $s \in (0,\frac12)$. 
We expect Mosco convergence to hold in general without assumption \eqref{eq:extra}, and leave the question for future investigations.  
\end{rem}

For the proof of Theorem \ref{thm:mosco}, we first prove that $\Gamma$-convergence holds for all $i \in \N$ and under Assumption \ref{A:kernel} for any $s \in (0,1)$.

\begin{thm}\label{thm:Gamma}
Fix $i \in \N$ and let Assumption \ref{A:kernel} hold for some fixed $s \in (0,1)$. 
The sequence of discrete Dirichlet forms $(\mathcal{E}_i^{(n)}, \ell(V_i^{(n)}))$ $\Gamma$-converges to $(\mathcal{E}_i^{(\infty)}, \mathcal{F}_i^{(\infty)})$ in the generalized sense. 
\end{thm}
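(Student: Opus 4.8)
The plan is to verify the two conditions of Definition \ref{defn:mosco}: the $\limsup$-inequality \ref{item:mosco2} (which is common to $\Gamma$- and Mosco convergence) and the $\liminf$-inequality \ref{item:mosco1} under \emph{strong} $L^2$-convergence. For the recovery sequence in \ref{item:mosco2} I would simply take $u_n:=\avg_i^{(n)}u$. Then $\ext_i^{(n)}u_n\to u$ strongly in $L^2(\overline{V}_i^*,dx)$ by the Lebesgue differentiation theorem (the standard convergence of cell averages), and the density-approach definition \eqref{E:energy_limit_Lip} together with Theorem \ref{thm:DF-main} gives $\lim_{n\to\infty}\mathcal{E}_i^{(n)}(\avg_i^{(n)}u)=\widetilde{\mathcal{E}}_i^{(\infty)}(u)=\mathcal{E}_i^{(\infty)}(u)$, which is exactly the required inequality (in fact an equality). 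If $u\notin\mathcal{F}_i^{(\infty)}$ the right-hand side is $+\infty$ and nothing is to be shown.

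The substance is \ref{item:mosco1}. Let $v_n:=\ext_i^{(n)}u_n\to u$ strongly in $L^2$. I may assume $L:=\liminf_n\mathcal{E}_i^{(n)}(u_n)<\infty$ and pass to a subsequence along which $\mathcal{E}_i^{(n)}(u_n)\to L$ and (after a further subsequence) $v_n\to u$ a.e. Writing $\mathcal{E}_i^{(n)}(u_n)$ as a double integral of $|v_n(x)-v_n(y)|^2$ against the piecewise-constant kernel $J_n$ built from $j_i^{(n)}$ over the cells $U_i^{(n)}$, the lower bound in Assumption \ref{A:kernel}\ref{item:lower} and Fatou's lemma already yield $\liminf_n\mathcal{E}_i^{(n)}(u_n)\ge\lambda_i[u]^2$, where $[u]$ denotes the relevant fractional seminorm from Remark \ref{rem:ellipticity}. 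In particular, if $u$ were not in the domain \eqref{eq:domain-description}, then $L=\infty$, a contradiction; hence $u\in\mathcal{F}_i^{(\infty)}$ and $\mathcal{E}_i^{(\infty)}(u)<\infty$.

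To upgrade this to the \emph{sharp} value $\mathcal{E}_i^{(\infty)}(u)$, I would truncate exactly as in the proof of Lemma \ref{lem:domains-tilde}: for fixed $\delta>0$ keep only wires with $|\bar x-\bar y|>\delta$, so that $\mathcal{E}_i^{(n)}(u_n)\ge\mathcal{E}_{i,\delta}^{(n)}(u_n)$. On this region the kernel is bounded by $\Lambda_i\delta^{-(1+2s)}$, so the truncated quadratic form is continuous along $L^2$-convergent sequences: by the Cauchy--Schwarz inequality for the associated bilinear form,
\[
\bigl|\mathcal{E}_{i,\delta}^{(n)}(u_n)-\mathcal{E}_{i,\delta}^{(n)}(\avg_i^{(n)}u)\bigr|
\le \mathcal{E}_{i,\delta}^{(n)}\!\bigl(u_n-\avg_i^{(n)}u\bigr)^{1/2}\,\mathcal{E}_{i,\delta}^{(n)}\!\bigl(u_n+\avg_i^{(n)}u\bigr)^{1/2}.
\]
The second factor is bounded since $\mathcal{E}_i^{(n)}(u_n)\to L$ and $\mathcal{E}_i^{(n)}(\avg_i^{(n)}u)\to\mathcal{E}_i^{(\infty)}(u)$ are both finite, while the first factor tends to $0$, because $\ext_i^{(n)}(u_n-\avg_i^{(n)}u)=v_n-\ext_i^{(n)}\avg_i^{(n)}u\to u-u=0$ strongly in $L^2$ and the bounded kernel converts this $L^2$-smallness into energy-smallness. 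Therefore $\liminf_n\mathcal{E}_i^{(n)}(u_n)\ge\lim_n\mathcal{E}_{i,\delta}^{(n)}(\avg_i^{(n)}u)$ for every $\delta>0$, and taking the supremum over $\delta$ together with the identity $\sup_{\delta>0}\lim_n\mathcal{E}_{i,\delta}^{(n)}(\avg_i^{(n)}u)=\widetilde{\mathcal{E}}_i^{(\infty)}(u)=\mathcal{E}_i^{(\infty)}(u)$ (established en route to Lemma \ref{lem:domains-tilde}) gives \ref{item:mosco1}. For $i>2$ the kernel is globally bounded by \eqref{eq:boundedkernel}, so no truncation is needed and the Cauchy--Schwarz step directly gives $\lim_n\mathcal{E}_i^{(n)}(u_n)=\mathcal{E}_i^{(\infty)}(u)$; for $i=1,2$ one runs the $\delta$-truncation as above, using the bipartite sums over $V_{i\pm}^{(n)}$ when $i=2$.

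The main obstacle is precisely extracting the exact limiting value in \ref{item:mosco1}. A naive Fatou estimate using only the lower kernel bound produces $\lambda_i[u]^2$, which is \emph{strictly} below $\mathcal{E}_i^{(\infty)}(u)$ whenever $\lambda_i<\Lambda_i$, since the kernels $j_i^{(n)}$ are only assumed comparable to $|x-y|^{-(1+2s)}$ and need not converge. The resolution is to compare $u_n$ against the averaging sequence $\avg_i^{(n)}u$ through the \emph{same} kernel $J_n$ on each region $\{|x-y|>\delta\}$ where $J_n$ is bounded, and only then to exhaust the singularity by letting $\delta\to0$. I expect the delicate point to be that this comparison relies essentially on the strong $L^2$-convergence of $v_n$ (so that $\ext_i^{(n)}(u_n-\avg_i^{(n)}u)\to0$ in $L^2$); this is exactly why the argument delivers $\Gamma$-convergence but not Mosco convergence in general, the latter requiring the additional control \eqref{eq:extra} on the near-diagonal contribution under mere weak convergence.
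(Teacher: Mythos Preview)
Your proposal is correct but takes a somewhat different route from the paper's. For the $\limsup$-inequality, the paper invokes \cite{CKK}*{Lemma~8.2}, verifying its hypotheses on the dense set $\mathcal{D}_i$ and building the recovery sequence by a diagonal argument; you instead take $u_n=\avg_i^{(n)}u$ and quote Theorem~\ref{thm:DF-main} directly, which is more economical provided one accepts that $\widetilde{\mathcal{E}}_i^{(\infty)}(u)=\lim_n\mathcal{E}_i^{(n)}(\avg_i^{(n)}u)$ for \emph{all} $u\in\mathcal{F}_i^{(\infty)}$ and not only for $u\in\mathcal{D}_i$. For the $\liminf$-inequality, the paper follows \cite{CKK}*{Theorem~4.5}: it passes to Ces\`aro means $v_n$ via Banach--Saks, approximates $u$ in $L^2$ by some $f\in\mathcal{D}_i$, and uses Lemma~\ref{lem:bar-limit} (namely $\bar{\mathcal{E}}_{i,\delta}^{(n)}(f)\to\mathcal{E}_{i,\delta}^{(\infty)}(f)$ for $f\in\mathcal{D}_i$) as the anchor. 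You replace the anchor $f\in\mathcal{D}_i$ by $\avg_i^{(n)}u$ itself and compare via the quadratic-form identity $|Q(a)-Q(b)|\le Q(a-b)^{1/2}Q(a+b)^{1/2}$ on the bounded truncated form, which sidesteps both Banach--Saks (indeed unnecessary once strong convergence is assumed) and Lemma~\ref{lem:bar-limit}. The trade-off is that your final step relies on the interchange $\sup_{\delta}\lim_n\mathcal{E}_{i,\delta}^{(n)}(\avg_i^{(n)}u)=\widetilde{\mathcal{E}}_i^{(\infty)}(u)$ from the proof of Lemma~\ref{lem:domains-tilde} together with Theorem~\ref{thm:DF-main} to recover the sharp value; the paper instead closes via $\sup_\delta\mathcal{E}_{i,\delta}^{(\infty)}(u)=\mathcal{E}_i^{(\infty)}(u)$. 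Your remark that Fatou against only the lower kernel bound yields $\lambda_i[u]^2$ and cannot deliver the exact $\mathcal{E}_i^{(\infty)}(u)$ is precisely the crux: both arguments must compare through the actual kernels $j_i^{(n)}$ on the truncated region before exhausting the singularity.
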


An important consequence of Theorem \ref{thm:Gamma} is that $(\mathcal{E}_i^{(\infty)}, \mathcal{F}_i^{(\infty)})$ defines a Dirichlet form on $\overline{V}_i^*$ for each $i \in \N$, see for instance \cites{KS05, Mosco}. In particular, $(\mathcal{E}_1^{(\infty)}, \mathcal{F}_1^{(\infty)})$ defines a nonlocal Dirichlet form on $[0,1]$, c.f.~\eqref{eq:ellipticity}.

The rest of this section is organized as follows. 
First, we prove the $\limsup$-inequality in Theorems \ref{thm:mosco} and \ref{thm:Gamma}. 
Then, we introduce an extension of the discrete jump kernels $j_i^{(n)}(x,y)$ to the physical space $\overline{V}_i^*$ that will allow us to move between the electric networks and the physical spaces. 
With these extended kernels at hand, we prove the $\liminf$-inequality in Theorem \ref{thm:Gamma} by following the proof of \cite{CKK}*{Theorem 4.5}.
Finally, we prove Theorem \ref{thm:mosco} by showing that the energies are asymptotically compact in the generalized sense when $i=1$ and $s \in (0,\frac12)$.

%%%%%%%%%%%%%%%%%%%%%%%%%%%%%%%%
\subsection{Proof of $\limsup$ inequality}
%%%%%%%%%%%%%%%%%%%%%%%%%%%%%%%%

Here we prove the $\limsup$ inequality in Theorems \ref{thm:mosco} and \ref{thm:Gamma}. 

\begin{prop}\label{prop:limsup}
Fix $i \in \N$ and suppose Assumption \ref{A:kernel} holds. 
For every $u \in L^2(\overline{V}_i^*, dx)$, there is a sequence $u_n \in \ell^2(V_i^{(n)}, \mu_i^{(n)})$ such that
$\ext_i^{(n)}u_n \to u$ strongly in $L^2(\overline{V}_i^*, dx)$ and 
\[
\limsup_{n \to \infty} \mathcal{E}_i^{(n)}(u_n) \leq \mathcal{E}_i^{(\infty)}(u).
\]
\end{prop}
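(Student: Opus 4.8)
The plan is to use the canonical recovery sequence $u_n := \avg_i^{(n)} u$ and to split the argument according to whether $u$ lies in $\mathcal{F}_i^{(\infty)}$. First I would note that $u_n \in \ell^2(V_i^{(n)}, \mu_i^{(n)}) = \ell(V_i^{(n)})$ for each $n$, since $V_i^{(n)}$ is finite, so this candidate is admissible. The whole point of the density approach is that this sequence is tailor-made so that the energy statement becomes essentially a restatement of Theorem \ref{thm:DF-main}.

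For the energy bound, suppose first $u \in \mathcal{F}_i^{(\infty)}$. By Theorem \ref{thm:DF-main} the finite-energy and density-approach forms coincide, so $u \in \widetilde{\mathcal{F}}_i^{(\infty)}$, and combining the definition \eqref{E:energy_limit_Lip} of $\widetilde{\mathcal{E}}_i^{(\infty)}$ with the identity $\widetilde{\mathcal{E}}_i^{(\infty)} = \mathcal{E}_i^{(\infty)}$ on the common domain gives
\[
\lim_{n \to \infty} \mathcal{E}_i^{(n)}(u_n) = \lim_{n \to \infty} \mathcal{E}_i^{(n)}(\avg_i^{(n)} u) = \widetilde{\mathcal{E}}_i^{(\infty)}(u) = \mathcal{E}_i^{(\infty)}(u).
\]
In particular $\limsup_n \mathcal{E}_i^{(n)}(u_n) = \mathcal{E}_i^{(\infty)}(u)$, which is the required inequality (in fact an equality). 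If instead $u \in L^2(\overline{V}_i^*, dx) \setminus \mathcal{F}_i^{(\infty)}$ — a case that can only arise for $i=1$ and $i=2$, since $\mathcal{F}_i^{(\infty)} = L^2(\overline{V}_i^*, dx)$ for $i > 2$ by \eqref{eq:domain-description} — then $\mathcal{E}_i^{(\infty)}(u) = +\infty$ and the $\limsup$ inequality holds trivially with the same choice of $u_n$.

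It then remains to verify the strong convergence $\ext_i^{(n)} u_n = \ext_i^{(n)} \avg_i^{(n)} u \to u$ in $L^2(\overline{V}_i^*, dx)$ for every $u \in L^2(\overline{V}_i^*, dx)$. The operators $T_n := \ext_i^{(n)} \avg_i^{(n)}$ replace $u$ by its mean over each cell of the partition $\{U_i^{(n)}(x)\}_{x \in V_i^{(n)}}$; by Jensen's inequality each $T_n$ is a contraction on $L^2(\overline{V}_i^*, dx)$, so $\sup_n \|T_n\|_{L^2 \to L^2} \leq 1$. For continuous $v$ on the compact set $\overline{V}_i^*$, uniform continuity together with $\max_{x} |U_i^{(n)}(x)| \to 0$ yields $\|T_n v - v\|_{L^\infty} \to 0$, hence $T_n v \to v$ in $L^2$. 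Since continuous functions are dense in $L^2(\overline{V}_i^*, dx)$ and $\|T_n\| \leq 1$ uniformly, the usual approximation estimate $\|T_n u - u\| \leq 2\|u - v\| + \|T_n v - v\|$ upgrades this to $T_n u \to u$ in $L^2$ for all $u \in L^2(\overline{V}_i^*, dx)$.

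I expect the only genuine work to be this last strong $L^2$ convergence, and the subtle point is that for $i=1$ the cells $U_1^{(n)}(x)$ are \emph{centered} at the dyadic nodes rather than having them as endpoints, so the partitions are \emph{not} nested in $n$; consequently $T_n$ is not a sequence of nested conditional expectations and one cannot simply invoke $L^2$-martingale convergence. The contraction-plus-density argument above circumvents this, using only that the mesh of the partitions tends to zero. (For $i > 1$ the cells do have the nodes as endpoints and the partitions refine, so there a direct martingale argument is also available.) Everything else follows formally from Theorem \ref{thm:DF-main} and the definition of the density-approach energy.
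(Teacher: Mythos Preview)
Your argument is correct but takes a different route from the paper. The paper does not use the universal recovery sequence $u_n=\avg_i^{(n)}u$ directly; instead it verifies the three hypotheses of \cite{CKK}*{Lemma 8.2}---density of $\mathcal{D}_i$ in $\mathcal{F}_i^{(\infty)}$, $\avg_i^{(n)}u\in\ell^2(V_i^{(n)},\mu_i^{(n)})$, and $\limsup_n\mathcal{E}_i^{(n)}(\avg_i^{(n)}u)=\mathcal{E}_i^{(\infty)}(u)$ \emph{only} for $u\in\mathcal{D}_i$---and then quotes that external lemma to produce the recovery sequence (which, inside the CKK lemma, is built by a diagonal argument through $\mathcal{D}_i$ rather than by averaging $u$ itself).

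The trade-off is this: your argument is more elementary and self-contained (no external citation, and the contraction-plus-density proof of $\ext_i^{(n)}\avg_i^{(n)}u\to u$ in $L^2$ is clean, correctly handling the non-nested partitions for $i=1$), but it leans on the full strength of Theorem~\ref{thm:DF-main}, namely that $\lim_n\mathcal{E}_i^{(n)}(\avg_i^{(n)}u)$ exists and equals $\mathcal{E}_i^{(\infty)}(u)$ for \emph{every} $u\in\mathcal{F}_i^{(\infty)}$, not just for $u\in\mathcal{D}_i$. The paper's route only needs that equality on the dense core $\mathcal{D}_i$ (established directly in Lemmas~\ref{lem:samelimit}, \ref{lem:samelimit-2}, \ref{lem:samelimit-i}) and lets the CKK machinery handle the extension to general $u$, which is a slightly more robust packaging if one is cautious about whether the pointwise limit in \eqref{E:energy_limit_Lip} is well-defined on the full closure. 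Given Theorem~\ref{thm:DF-main} as stated, however, your proof stands.
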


\begin{proof}
By Theorem \ref{thm:DF-main} and Lemmas \ref{lem:samelimit}, \ref{lem:samelimit-i}, and \ref{lem:samelimit-2} the set $\mathcal{D}_i$ in \eqref{eq:dense-set}
satisfies the following hypotheses of \cite{CKK}*{Lemma 8.2}: 
\begin{enumerate}[start=1,label={(\alph*)}]
\item \label{item:mosco2a}
$\mathcal{D}_i \subset L^2(\overline{V}_i^*, dx)$ is dense in $\mathcal{F}_i^{(\infty)}$ with respect to the norm $(\mathcal{E}_i^{(\infty)}(u) + \|u\|_{L^2(\overline{V_i}^*, dx)}^2)^{\frac12}$.
\item \label{item:mosco2b}
$\avg_i^{(n)} u \in \ell^2(V_i^{(n)}, \mu_i^{(n)})$ for all $u \in \mathcal{D}_i$.
\item \label{item:mosco2c}
For every $u \in \mathcal{D}_i$, it holds that $ \displaystyle 
\limsup_{n \to \infty} \mathcal{E}^{(n)}_i(\avg_i^{(n)} u) = \mathcal{E}^{(\infty)}(u). 
$
\end{enumerate}
Note that \ref{item:mosco2b} is obvious since $\mathcal{D}_i \subset L^{\infty}(\overline{V}_i^*)$ and $V_i^{(n)}$ is a finite set. 
(See also Lemmas \ref{lem:lip} and \ref{lem:Holder} for $i=1,2$.)
Therefore, we apply \cite{CKK}*{Lemma 8.2} to complete the proof. 
\end{proof}

%%%%%%%%%%%%%%%%%%%%%
\subsection{Extension and regularization of discrete jump kernels}
%%%%%%%%%%%%%%%%%%%%%

%%%%%%%%%%%%%%%%%%%%%
\subsubsection{Extension of discrete jump kernels}
%%%%%%%%%%%%%%%%%%%%%

To view the discrete energies as energies on the  physical space $\overline{V}_i^* = [0, \frac{1}{i}] \cup [1- \frac{1}{i},1]$, 
it will be helpful to define an extension of the discrete jump kernels $j_i^{(n)}(x,y)$. 
For $x,y \in \overline{V_{i}}^{*}$, we define 
\begin{equation*}
\overline{j}_i^{(n)}(x,y):=
\begin{cases}
j_i^{(n)}(\bar{x}, \bar{y}) & \hbox{if}~x \in \operatorname{Int}(U_i^{(n)}(\bar{x})),~
y \in \operatorname{Int}(U_i^{(n)}(\bar{y}))~\hbox{for some}~\bar{x},\bar{y} \in V_i^{(n)}\\
0 & \hbox{otherwise},
\end{cases}
\end{equation*}
where $\operatorname{Int}(U)$ denotes the interior of a set $U \subset \R$. 
For functions $u \in L^2(\overline{V}_i^*, dx)$, we now define the extended energy
\[
\overline{\mathcal{E}}_i^{(n)}(u)
:=\begin{cases}
\displaystyle \int_0^{1} \int_{0}^1 \overline{j}_1^{(n)}(x,y) |u(x) - u(y)|^2 \, dy \, dx & \hbox{if}~i=1\\[.6em]
      \displaystyle 2\int_0^{\frac{1}{i}} \int_{1- \frac{1}{i}}^1 \overline{j}_i^{(n)}(x,y) |u(x) - u(y)|^2 \, dy \, dx & \hbox{if}~i > 1.
\end{cases}
\]

For all $i,n \geq 1$, it is easy to check, using only definitions, that
\begin{equation}\label{eq:L2-ell2}
\|u\|_{\ell^2(V_i^{(n)}, \mu_i^{(n)})}^2 
=\|\ext_i^{(n)}u\|_{L^2(\overline{V}_i^*, dx)}^2 
\end{equation}
and
\begin{equation*}
\mathcal{E}_i^{(n)}(u)=\bar{\mathcal{E}}_i^{(n)}(\ext_i^{(n)}u) \quad \hbox{for all}~u \in \ell(V_i^{(n)}).
\end{equation*}

Under Assumption \ref{A:kernel}, the extended jump kernel satisfies estimates similar to the discrete jump kernel $j_i^{(n)}(x,y)$. 
When $i\in \{1,2\}$, the lower bound does not hold for $x$ and $y$ as $|x-y|^{-1-2s}$ is more singular than $\bar{j}_i^{(n)}(x,y)$.  
While here only the case $i=1$ is relevant, we present the general result for completeness.

\begin{lem}\label{lem:extended-assumption}
For $i \in \N$ and $n \in \N$, let Assumption \ref{A:kernel} hold. 
Then, if $x,y \in \overline{V}_i^*$, $x\not=y$, 
\begin{equation}\label{eq:j-bar-upper}
\bar{j}_i^{(n)}(x,y) 
\leq 
    \begin{cases}
        \Lambda_1 2^{1+2s} |x-y|^{-1-2s} & \hbox{if}~i=1\\[.5em]
        \Lambda_i  |x-y|^{-1-2s} & \hbox{if}~i>1. 
    \end{cases}
\end{equation}
Moreover, if $\bar{j}_i^{(n)}(x,y) \not=0$, then
\begin{equation}\label{eq:j-bar-lower}
\bar{j}_i^{(n)}(x,y)
\geq\begin{cases}
    \lambda_i 2^{-1-2s} |x-y|^{-1-2s} &\hbox{if}~i=1,2~\hbox{and}~|x-y|\geq 2^{-n}\\[.5em]
     \lambda_i 2^{-1-2s} |x-y|^{-1-2s}  & \hbox{if}~i>2. 
    \end{cases}
\end{equation}
\end{lem}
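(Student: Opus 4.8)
The plan is to reduce both \eqref{eq:j-bar-upper} and \eqref{eq:j-bar-lower} to a purely geometric comparison between $|x-y|$ and the distance $|\bar x-\bar y|$ of the representative nodes, and then to invoke Assumption \ref{A:kernel}. First I would dispose of the trivial case: by the definition of $\bar j_i^{(n)}$, if $\bar j_i^{(n)}(x,y)\neq 0$ then there exist (unique) $\bar x,\bar y\in V_i^{(n)}$ with $x\in\operatorname{Int}(U_i^{(n)}(\bar x))$, $y\in\operatorname{Int}(U_i^{(n)}(\bar y))$ and $\{\bar x,\bar y\}\in W_i^{(n)}$, and then $\bar j_i^{(n)}(x,y)=j_i^{(n)}(\bar x,\bar y)$. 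When $\bar j_i^{(n)}(x,y)=0$ the upper bound is immediate and the lower bound is vacuous, so I may assume $\{\bar x,\bar y\}\in W_i^{(n)}$ and apply Assumption \ref{A:kernel} to get $\lambda_i|\bar x-\bar y|^{-1-2s}\le \bar j_i^{(n)}(x,y)\le\Lambda_i|\bar x-\bar y|^{-1-2s}$. Since $t\mapsto t^{-1-2s}$ is decreasing, everything reduces to two-sided estimates relating $|\bar x-\bar y|$ and $|x-y|$.

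For $i=1$ I would use that every $x\in\operatorname{Int}(U_1^{(n)}(\bar x))$ satisfies $|x-\bar x|<2^{-(n+1)}$ (and likewise for $y$), while distinct level-$n$ dyadic points obey $|\bar x-\bar y|\ge 2^{-n}$. The triangle inequality then gives $|x-y|<|\bar x-\bar y|+2^{-n}\le 2|\bar x-\bar y|$, which feeds the upper bound with the constant $2^{1+2s}$, and $|\bar x-\bar y|<|x-y|+2^{-n}$, which gives $|\bar x-\bar y|\le 2|x-y|$ precisely when $|x-y|\ge 2^{-n}$ and hence the lower bound with constant $2^{-1-2s}$.

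For $i>1$ I would exploit the bipartite structure: relabeling if necessary, $\bar x\in V_{i-}^{(n)}\subset[0,\tfrac1i)$ and $\bar y\in V_{i+}^{(n)}\subset(1-\tfrac1i,1]$, whence $x\in(\bar x,\bar x+(i2^n)^{-1})$, $y\in(\bar y-(i2^n)^{-1},\bar y)$ and therefore $\bar x<x<y<\bar y$. In particular $|x-y|<|\bar x-\bar y|$, which directly gives the upper bound with constant $\Lambda_i$ and no factor of $2$. For the lower bound I would record $|\bar x-\bar y|<|x-y|+2(i2^n)^{-1}$; when $i=2$ one has $2(i2^n)^{-1}=2^{-n}$, so $|x-y|\ge 2^{-n}$ again yields $|\bar x-\bar y|\le 2|x-y|$, while when $i>2$ the kernel is nonsingular with $|x-y|=y-x>1-\tfrac2i$, and I would check the comparison $2(i2^n)^{-1}\le|x-y|$, which reduces to $i\ge 2(1+2^{-n})$ and holds for all $i\ge 3$ and $n\ge 1$.

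Combining these comparisons with the sandwich $\lambda_i|\bar x-\bar y|^{-1-2s}\le\bar j_i^{(n)}(x,y)\le\Lambda_i|\bar x-\bar y|^{-1-2s}$ then yields \eqref{eq:j-bar-upper} and \eqref{eq:j-bar-lower}. The hard part will be the $i>2$ lower bound: there one cannot afford to lose a factor coming from the interval lengths naively, and I expect to need both the separation $|x-y|>1-2/i$ and the restriction $n\ge 1$ to secure the constant $2$ in $|\bar x-\bar y|\le 2|x-y|$. This is also where the dichotomy between the singular regimes $i\in\{1,2\}$ (where the hypothesis $|x-y|\ge 2^{-n}$ is indispensable because $\bar x,\bar y$ may straddle a vanishing gap) and the nonsingular regime $i>2$ becomes transparent.
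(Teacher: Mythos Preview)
Your proposal is correct and follows essentially the same approach as the paper: both reduce to Assumption~\ref{A:kernel} via a two-sided comparison between $|x-y|$ and $|\bar x-\bar y|$, using the triangle inequality together with the cell sizes and (for $i>1$) the bipartite ordering $\bar x<x<y<\bar y$. The only cosmetic differences are that the paper phrases the comparisons as bounds on the ratio $(y-x)/(\bar y-\bar x)$ and, for the $i\in\{1,2\}$ lower bound, splits off the borderline case $\bar y-\bar x=2^{-n}$, whereas your single inequality $|\bar x-\bar y|<|x-y|+2^{-n}\le 2|x-y|$ handles it uniformly.
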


\begin{proof}
Let $x\not=y \in \overline{V}_i^*$. The result is trivial when $\bar{j}_i^{(n)}(x,y) = 0$, so assume without loss of generality that
$x \in \operatorname{Int}(U_i^{(n)}(\bar{x}))$ and $y \in \operatorname{Int}(U_i^{(n)}(\bar{y}))$ for some $\{\bar{x}, \bar{y}\} \in W_i^{(n)}$. 
We may further assume that $\bar{y}>\bar{x}$ and thus also have $y>x$.

\medskip

We begin by proving \eqref{eq:j-bar-upper}. First let $i=1$. 
Notice that
\begin{equation}\label{eq:difference-1}
|(y-x) - (\bar{y}-\bar{x})| \leq |x- \bar{x}| + |y - \bar{y}| \leq \frac{1}{2^n}. 
\end{equation}
Since $2^{-n} \leq \bar{y} - \bar{x} \leq 1$ and $n \geq 1$, we have
\[
\frac{y-x}{\bar{y} - \bar{x}}   
\leq \frac{ (\bar{y}- \bar{x})+\frac{1}{2^n}}{\bar{y} - \bar{x}} 
= 1 + \frac{1}{2^n(\bar y - \bar x)}
\leq 2.
\]
Therefore, by Assumption \ref{A:kernel},
\begin{align*}
\bar{j}_1^{(n)}(x,y) 
    = j_1^{(n)}(\bar{x},\bar{y})
    \leq \Lambda_1 (\bar{y} - \bar{x})^{-1-2s} 
    \leq \Lambda_1 2^{1+2s}(y-x)^{-1-2s},
\end{align*}
so that \eqref{eq:j-bar-lower} holds for $i=1$. 

Next, let $i\geq2$. In this case, 
\begin{equation}\label{eq:difference-i}
0 < (\bar{y} - \bar{x}) - (y-x) \leq \frac{1}{i2^{n-1}}.
\end{equation}
By the first inequality and Assumption \ref{A:kernel},
\[
\bar{j}_i^{(n)}(x,y)
    = j_i^{(n)}(\bar{x}, \bar{y})
    \leq \Lambda_i (\bar{y} - \bar{x})^{-1-2s}
    \leq \Lambda_i (y-x)^{-1-2s},
\]
so \eqref{eq:j-bar-upper} holds for $i \geq 2$. 

\medskip

Now we prove \eqref{eq:j-bar-lower}. 
Let $i \in \{1,2\}$ and $y-x\geq 2^{-n}$. If $\bar{y} - \bar{x} \geq 2^{-n+1}$, then, by \eqref{eq:difference-1} and \eqref{eq:difference-i},
\[
\frac{y-x}{\bar{y} - \bar{x}} \geq
\frac{(\bar{y} - \bar{x}) - \frac{1}{2^n}}{\bar{y} - \bar{x}}
=
1 - \frac{1}{2^n(\bar{y} - \bar{x})} \geq 1 - \frac{1}{2} = \frac12.
\]
Hence,
\begin{align*}
\bar{j}_i^{(n)}(x,y) 
    \geq \lambda_i (\bar{y} - \bar{x})^{-1-2s} 
    \geq \lambda_i 2^{-1-2s} (y-x)^{-1-2s}.
\end{align*}
If $\bar{y} - \bar{x} =2^{-n}$, then 
\[
\bar{j}_i^{(n)}(x,y) 
    \geq \lambda_i (\bar{y} - \bar{x})^{-1-2s} 
    = \lambda_i\left(\frac{1}{2^n}\right)^{-1-2s}
    \geq \lambda_i (y-x)^{-1-2s}. 
\]
Therefore, \eqref{eq:j-bar-lower} holds for $i \in \{1,2\}$. 

Lastly, let $i \geq 3$. 
Since
\[
\bar{y} - \bar{x} \geq \left( 1 - \frac{2}{i}\right) + \frac{1}{i2^{n-1}} = \frac{i 2^{n-1} - 2^n+1}{i2^{n-1}}
\geq \frac{2}{i2^{n-1}},
\]
we have, with \eqref{eq:difference-i},
\[
\frac{y-x}{\bar{y} - \bar{x}} 
    \geq \frac{(\bar{y} - \bar{x}) - \frac{1}{i2^{n-1}}}{\bar{y} - \bar{x}}
    = 1 - \frac{1}{i2^{n-1}(\bar{y} - \bar{x})}
    \geq 1 - \frac{1}{2} = \frac12. 
\]
Therefore, we use Assumption \ref{A:kernel} to show that \eqref{eq:j-bar-lower} holds for $i \geq 3$:
\[
\bar{j}_i^{(n)}(x,y)
    = j_i^{(n)}(\bar{x}, \bar{y})
    \geq \lambda_i (\bar{y} - \bar{x})^{-1-2s}
    \geq \lambda_i 2^{-1-2s} (y-x)^{-1-2s}.
\]
\end{proof}

%%%%%%%%%%%%%%%%%%%%%%%%%%%%%%%%

\subsubsection{Regularized jump kernels}

We will also need a regularization of the jump kernels $j_i^{(n)}(x,y)$ and $\bar{j}_i^{(n)}(x,y)$ near the singularities when $i=1,2$. We include the case $i>2$ to present a unified proof of Theorem \ref{thm:Gamma} for all $i \in \N$. 

Fix a small $\delta = \delta(i)>0$ and let $n \geq n_0$ for some $n_0 = n_0(\delta)\geq 1$, to be determined. 
We may take $\delta (i) = 0$ and $n_0 = 1$ for $i>2$ since the kernels are bounded. 

Let $x,y \in \overline{V}_i^*$ be such that
$|x-y|>\delta$
and such that $x \in U_i^{(n)}(\bar{x})$ and $y \in U_i^{(n)}(\bar{y})$ for some  $\{\bar{x}, \bar{y}\} \in W_i^{(n)}$. 
If $i \in \{1,2\}$, then
\begin{equation}\label{eq:delta}
\delta < |x-y| \leq |x- \bar{x}| + |y- \bar{y}| + |\bar{x} - \bar{y}| < 
\frac{2}{2^{n+1}} + |\bar{x} - \bar{y}| \leq \frac{1}{2^{n_0}} + |\bar{x} - \bar{y}|,
\end{equation}
and we can find $n_0 = n_0(\delta)$ large enough to guarantee that $|\bar{x} - \bar{y}| > \delta/2$. 
Thus, by \eqref{eq:kernel-estimate-new}, 
 \begin{equation}\label{eq:j-bar-estimate}
\bar{j}_i^{(n)}(x,y) 
\leq j_i^{(n)}(\bar{x},\bar{y})
\leq \Lambda_i |\bar{x}-\bar{y}|^{-1-2s} 
\leq \Lambda_i \left( \frac{2}{\delta} \right)^{1+2s} \quad \hbox{for}~i=1,2.
\end{equation}
On the other hand, for $i>2$, we recall \eqref{eq:boundedkernel} and find that
\[
\bar{j}^{(n)}_i(x,y)
\leq j_i^{(n)}(\bar{x},\bar{y})
\leq \Lambda_i |\bar{x}-\bar{y}|^{-1-2s} 
\leq \Lambda_i b_i
\]
We set 
\begin{equation}\label{eq:aidelta}
a_{i,\delta}:= 
\begin{cases}
\displaystyle  4\Lambda_i \left( \frac{2}{\delta} \right)^{1+2s}
    & \hbox{if}~i=1,2\\[.25em]
\displaystyle \frac{4\Lambda_i b_i}{i}
& \hbox{if}~i>2.
\end{cases}
\end{equation}
Note that $a_{i,\delta}$ is independent of $\delta$ for $i>2$. 

With $\delta>0$, we now introduce nonsingular energies that approximate  
$\overline{\mathcal{E}}_{i}^{(n)}$ and  $\mathcal{E}_i^{(\infty)}$.
For all $n \geq n_0$ and $u \in L^2([0,1],dx)$, define 
\begin{align*}
\overline{\mathcal{E}}_{1,\delta}^{(n)}(u)
    &:= \int_0^1 \int_0^1 \bar{j}_1^{(n)}(x,y)\mathbf{1}_{\{|x-y|>\delta\}} |u(x) - u(y)|^2 \, dy \, dx\\[.5em]
\mathcal{E}_{1,\delta}^{(\infty)}(u)
    &:= \lim_{n \to \infty} \sum_{x \in V_1^{(n)}}  \sum_{y \in V_1^{(n)}} j_1^{(n)}(x,y) \mathbf{1}_{\{|x-y|>\delta\}} |u(x) - u(y)|^2 \mu_1^{(n)}(x)\mu_1^{(n)}(y)
\end{align*}
and for $i \geq 2$,
\begin{align*}
\overline{\mathcal{E}}_{i,\delta}^{(n)}(u)
    &:= \int_0^{\frac{1}{i}} \int_{1 - \frac{1}{i}}^1 \bar{j}_i^{(n)}(x,y)\mathbf{1}_{\{|x-y|>\delta\}} |u(x) - u(y)|^2  \, dy \, dx\\[.5em]
\mathcal{E}_{i,\delta}^{(\infty)}(u)
    &:= \lim_{n \to \infty} \sum_{x \in V_{i-}^{(n)}}  \sum_{y \in V_{i+}^{(n)}} j_i^{(n)}(x,y) \mathbf{1}_{\{|x-y|>\delta\}} |u(x) - u(y)|^2 \mu_i^{(n)}(x)\mu_i^{(n)}(y).
\end{align*}
Here, $\mathbf{1}_A$ denotes the characteristic function associated to a set $A \subset \R$.
Notice also that, since $|x-y|> 1-\frac{2}{i}$ for all $x \in V_{i-}^{(n)}$, $y \in V_{i+}^{(n)}$ and $i>2$, we have
\[
\overline{\mathcal{E}}_{i,\delta}^{(n)}(u) =\overline{\mathcal{E}}_{i}^{(n)}(u)
\quad \hbox{and} \quad 
\mathcal{E}_{i,\delta}^{(\infty)}(u)
= \mathcal{E}_{i}^{(\infty)}(u)
\]
for $0 \leq \delta < 1-\frac{2}{i}$.
We claim that
\begin{equation}\label{eq:bar-L2}
\overline{\mathcal{E}}_{i,\delta}^{(n)}(u) \leq a_{i,\delta}\|u\|_{L^2([0,1],dx)}^2 
\quad \hbox{and} \quad
\mathcal{E}_{i,\delta}^{(\infty)}(u) \leq 
a_{i,\delta}\|u\|_{L^2([0,1],dx)}^2 
\quad \hbox{for}~n \geq n_0. 
\end{equation}
Indeed, for $i=1$, we use \eqref{eq:j-bar-estimate} to estimate
\begin{align*}
\overline{\mathcal{E}}_{1,\delta}^{(n)}(u)  
    &\leq \Lambda_1 \left( \frac{2}{\delta}\right)^{1+2s}\int_0^1 \int_0^1 |u(x) - u(y)|^2  \, dy \, dx \\
    &\leq 2\Lambda_1 \left( \frac{2}{\delta}\right)^{1+2s}\int_0^1 \int_0^1 (|u(x)|^2 + |u(y)|^2)  \, dy \, dx 
    = a_{1,\delta} \|u\|_{L^2([0,1],dx)}^2
\end{align*}
and \eqref{eq:kernel-estimate-new} to find 
\begin{align*}
\mathcal{E}_{1,\delta}^{(\infty)}(u)
    &\leq \Lambda_1 \lim_{n \to \infty} \sum_{x \in V_1^{(n)}}  \sum_{y \in V_1^{(n)}} |x-y|^{-1-2s} \mathbf{1}_{\{|x-y|>\delta\}} |u(x) - u(y)|^2 \mu_1^{(n)}(x)\mu_1^{(n)}(y)\\
    &\leq \Lambda_1\left( \frac{1}{\delta}\right)^{1+2s} \lim_{n \to \infty} \sum_{x \in V_1^{(n)}}  \sum_{y \in V_1^{(n)}}  |u(x) - u(y)|^2 \mu_1^{(n)}(x)\mu_1^{(n)}(y)\\
    &\leq a_{1,\delta} \|u\|_{L^2([0,1],dx)}^2.
\end{align*}
The argument for $i\geq 2$ is similar (see also \eqref{eq:ellipticity} for $i>2$). 

\medskip

Recall the dense sets
\begin{equation}\label{eq:dense-set-again}
\mathcal{D}_i
    := \begin{cases}
        \lip([0,1]) & \hbox{if}~i=1\\
        C^{\frac12}([0,1]) & \hbox{if}~i=2\\
        L^{\infty}(\overline{V}_i^*) & \hbox{if}~i>2.
    \end{cases}
\end{equation}

\begin{lem}\label{lem:bar-limit}
Let $i \in \N$. 
Fix $\delta>0$. 
Under Assumption~\ref{A:kernel},  it holds that 
\[
    \lim_{n \to \infty} \bar{\mathcal{E}}_{i,\delta}^{(n)}(u) 
    = \mathcal{E}_{i,\delta}^{(\infty)}(u)
\quad \hbox{for any}~u \in \mathcal{D}_i.
\]
If $i>2$, the statement holds for $\delta=0$. 
\end{lem}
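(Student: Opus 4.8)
The plan is to reduce everything to showing that the integral $\overline{\mathcal{E}}_{i,\delta}^{(n)}(u)$ differs from the discrete partial sum
\[
S_n := \sum_{x \in V_i^{(n)}}\sum_{y \in V_i^{(n)}} j_i^{(n)}(x,y)\,\mathbf{1}_{\{|x-y|>\delta\}}\,|u(x)-u(y)|^2\,\mu_i^{(n)}(x)\,\mu_i^{(n)}(y)
\]
(and its bipartite analogue for $i>1$) by a quantity that vanishes as $n\to\infty$. Since $\mathcal{E}_{i,\delta}^{(\infty)}(u)=\lim_{n\to\infty}S_n$ by definition, this gives the claim. The crucial point --- and the reason a direct dominated-convergence argument applied to $\overline{\mathcal{E}}_{i,\delta}^{(n)}$ is unavailable --- is that the kernels $j_i^{(n)}$ are controlled only by the two-sided bounds of Assumption \ref{A:kernel} and need not converge pointwise; comparing $\overline{\mathcal{E}}_{i,\delta}^{(n)}(u)$ against $S_n$, which carries exactly the same kernel, sidesteps this entirely.

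First I would rewrite $\overline{\mathcal{E}}_{i,\delta}^{(n)}(u)$ as a sum over cells, using that $\bar j_i^{(n)}(x,y)=j_i^{(n)}(\bar x,\bar y)$ is constant on each product of interiors $U_i^{(n)}(\bar x)\times U_i^{(n)}(\bar y)$ and that $\mu_i^{(n)}(\bar x)=|U_i^{(n)}(\bar x)|$. Then $\overline{\mathcal{E}}_{i,\delta}^{(n)}(u)-S_n$ becomes a single sum of cell integrals of the integrand
\[
\mathbf{1}_{\{|x-y|>\delta\}}\bigl(|u(x)-u(y)|^2-|u(\bar x)-u(\bar y)|^2\bigr)+\bigl(\mathbf{1}_{\{|x-y|>\delta\}}-\mathbf{1}_{\{|\bar x-\bar y|>\delta\}}\bigr)|u(\bar x)-u(\bar y)|^2,
\]
which I would estimate term by term. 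For the first term, since $|x-\bar x|,|y-\bar y|\lesssim 2^{-n}$ and $u\in\mathcal{D}_i$ is bounded with modulus of continuity fixed by its regularity (Lipschitz for $i=1$, $C^{1/2}$ for $i=2$), the factorization $|a|^2-|b|^2=(a-b)(a+b)$ bounds it by $C\,\omega(2^{-n})$ with $\omega(t)=t$ for $i=1$ and $\omega(t)=t^{1/2}$ for $i=2$; multiplying by the kernel (which on $\{|x-y|>\delta\}$ is at most $\Lambda_i(\delta/2)^{-1-2s}$ once $n$ is large, cf.\ \eqref{eq:j-bar-estimate}) and summing the cell masses, whose total is finite, yields a bound tending to $0$. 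For the second term, the two indicators disagree only when $|\bar x-\bar y|$ lies within $2^{-n}$ of $\delta$; for each $\bar x$ there are $O(1)$ such $\bar y$ on the grid of spacing $\sim 2^{-n}$, so there are $O(2^n)$ offending pairs, each contributing at most (bounded kernel)$\times(2\|u\|_{\infty})^2\times|U_i^{(n)}(\bar x)|\,|U_i^{(n)}(\bar y)|=O(2^{-2n})$, for a total of $O(2^{-n})\to 0$.

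For $i>2$ the kernel is bounded by $\Lambda_i b_i$ (Remark \ref{rem:nonsingular} and \eqref{eq:boundedkernel}) and we may take $\delta=0$, so the second term is absent. For continuous (in particular Lipschitz) $u$ the first-term estimate goes through verbatim, using uniform continuity on the separated compact sets $[0,\frac1i]$ and $[1-\frac1i,1]$, giving $\overline{\mathcal{E}}_i^{(n)}(u)-S_n\to 0$ and hence the claim. To upgrade to arbitrary $u\in L^\infty(\overline{V}_i^*)=\mathcal{D}_i$ I would exploit that, by \eqref{eq:bar-L2} (with $\delta=0$) and \eqref{eq:ellipticity}, both $\overline{\mathcal{E}}_i^{(n)}$ and $\mathcal{E}_i^{(\infty)}$ are quadratic forms bounded by $a_i\|\cdot\|_{L^2}^2$; a standard three-term split together with the Cauchy--Schwarz inequality for these bilinear forms then transfers convergence from a dense set of continuous functions to all of $L^2(\overline{V}_i^*,dx)$, a fortiori to $L^\infty$.

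I expect the main obstacle to be the bookkeeping in the indicator-mismatch term: one must verify carefully that only $O(2^n)$ pairs straddle the cutoff and that the per-pair contribution is $O(2^{-2n})$, while tracking the differing cell sizes at the endpoints $\{0,1\}$ (for $i=1$) and the different cell width $\tfrac{1}{2^n i}$ for $i>1$. Everything else is a routine modulus-of-continuity estimate once the comparison with $S_n$ is in place.
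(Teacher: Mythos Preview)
Your proposal is correct, and the overall architecture---rewrite $\overline{\mathcal{E}}_{i,\delta}^{(n)}(u)$ as a cell sum and compare it to the discrete partial sum $S_n$---is the same as the paper's. The execution, however, diverges. The paper does \emph{not} split off the indicator-mismatch term; instead it bounds the entire difference $\bigl|\,|u(\bar x)-u(\bar y)|^2\mathbf{1}_{\{|\bar x-\bar y|>\delta\}}-\fint\fint|u(x)-u(y)|^2\mathbf{1}_{\{|x-y|>\delta\}}\,\bigr|$ by $C[u]_{\lip}^2|\bar x-\bar y|^2$ (dropping both indicators via the Lipschitz bound), invokes the weighted kernel estimate of Lemma~\ref{lem:kernel-estimate1} to get $\sum j_1^{(n)}|\bar x-\bar y|^2\mu\mu\le C$ uniformly in $n$, and then appeals to the Lebesgue differentiation theorem and a dominated-convergence argument, exactly parallel to Lemmas~\ref{lem:samelimit}, \ref{lem:samelimit-2}, and~\ref{lem:samelimit-i}. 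Your route instead exploits the $\delta$-cutoff to bound the kernel outright (so Lemma~\ref{lem:kernel-estimate1} is never needed), controls the continuity error with the explicit rate $\omega(2^{-n})$, and handles the indicator mismatch by the direct counting argument. What you gain is a self-contained proof with an explicit rate of convergence; what the paper gains is brevity and structural parallelism with the earlier lemmas. Your straddle count (at most $O(1)$ offending $\bar y$ per $\bar x$, each contributing $O(2^{-2n})$) is correct, and the density extension for $i>2$ via the $L^2$-boundedness of both forms is a clean way to reach all of $L^\infty$; the paper instead relies on the Lebesgue differentiation theorem directly for $u\in L^\infty$, in the spirit of Lemma~\ref{lem:samelimit-i}.
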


\begin{proof}
The proof follows along similar lines as Lemmas \ref{lem:samelimit}, \ref{lem:samelimit-i}, and \ref{lem:samelimit-2}.
Thus, we only write the details for $i=1$.

Let $u \in \mathcal{D}_1=\lip([0,1])$. It is enough to show that 
\begin{equation}\label{eq:bar-convergence}
\lim_{n \to \infty} \bigg|\sum_{\bar x \in V_1^{(n)}}  \sum_{\bar y \in V_1^{(n)}} j_1^{(n)}(\bar x,\bar y) \mathbf{1}_{\{|\bar x-\bar y|>\delta\}} |u(\bar x) - u(\bar y)|^2 \mu_1^{(n)}(\bar x)\mu_1^{(n)}(\bar y) - \bar{\mathcal{E}}_{1,\delta}^{(n)}(u)\bigg| = 0.
\end{equation}
Begin by writing
\begin{align*}
\bar{\mathcal{E}}_{1,\delta}^{(n)}(u)
    &=  \int_{0}^{1} \int_{0}^1 \bar{j}_1^{(n)}(x,y)\mathbf{1}_{\{|x-y|>\delta\}} |u(x) - u(y)|^2
    \, dy \, dx\\
    &=  \sum_{\bar{x} \in V_{1}^{(n)}} 
    \sum_{\bar{y} \in V_{1}^{(n)}} 
    \int_{U_1^{(n)}(\bar{x})} \int_{U_1^{(n)}(\bar{y})} \bar{j}_1^{(n)}(x,y) 
    \mathbf{1}_{\{|x-y|>\delta\}}
    |u(x) - u(y)|^2
    \, dy \, dx\\
    &=  \sum_{\bar{x} \in V_{1}^{(n)}} 
    \sum_{\bar{y} \in V_{1}^{(n)}}  j_i^{(n)}(\bar{x},\bar{y}) \mu_1^{(n)}(\bar{x}) \mu_1^{(n)}(\bar{y})
    \fint_{U_1^{(n)}(\bar{x})} \fint_{U_1^{(n)}(\bar{y})}  |u(x) - u(y)|^2 \mathbf{1}_{\{|x-y|>\delta\}} \, dy \, dx,
\end{align*}
where $\fint_A := \frac{1}{|A|} \int_A$ for a bounded, measurable set $A \subset \R$.
Therefore, 
\begin{equation}\label{eq:delta-energy-difference}
\begin{aligned}
&|\mathcal{E}_{1,\delta}^{(n)}(u\big|_{V_1^{(n)}}) - \bar{\mathcal{E}}_{1,\delta}^{(n)}(u)|\\
    &\leq   \sum_{\bar{x} \in V_{1}^{(n)}} 
    \sum_{\bar{y} \in V_{1}^{(n)}}  j_i^{(n)}(\bar{x},\bar{y}) \mu_i^{(n)}(\bar{x}) \mu_i^{(n)}(\bar{y}) \\
   &\quad\times \left||u(\bar{x}) - u(\bar{y})|^2\mathbf{1}_{\{|\bar x-\bar y|>\delta\}} - 
    \fint_{U_i^{(n)}(\bar{x})} \fint_{U_i^{(n)}(\bar{y})}|u(x) - u(y)|^2 \mathbf{1}_{\{|x-y|>\delta\}} \, dy \, dx\right|. 
\end{aligned}
\end{equation}

Using now that $u \in \lip([0,1])$, we estimate for $\bar{x}, \bar{y} \in V_1^{(n)}$
\[
|u(\bar{x}) - u(\bar{y})|^2  \leq [u]_{\lip([0,1])}^2 |\bar{x} - \bar{y}|^2.
\]
Recalling \eqref{eq:delta}, 
if $x \in U_i^{(n)}(\bar{x})$ and $y \in U_i^{(n)}(\bar{y})$ with $|x-y|>\delta$, then $|\bar{x} - \bar{y}|>\delta/2$ for $n \geq n_0(\delta)$. In particular, $\bar{x} \not= \bar{y}$ and
\[
|x-y| \leq \frac{\delta}{2} + |\bar{x}-\bar{y}| \leq 2|\bar{x}-\bar{y}|
\]
which gives 
\begin{align*}
&\fint_{U_i^{(n)}(\bar{x})} \fint_{U_i^{(n)}(\bar{y})}  |u(x) - u(y)|^2\mathbf{1}_{\{|x-y|>\delta\}} \, dy \, dx\\
&\quad\leq 
    \fint_{U_i^{(n)}(\bar{x})} \fint_{U_i^{(n)}(\bar{y})}[u]_{\lip([0,1])}^2 |x-y|^2 \mathbf{1}_{\{|x-y|>\delta\}} \, dy \, dx\\
&\quad \leq \fint_{U_i^{(n)}(\bar{x})} \fint_{U_i^{(n)}(\bar{y})} 4 [u]_{\lip([0,1])}^2|\bar{x} - \bar{y}|^2 \mathbf{1}_{\{|x-y|>\delta\}} \, dy \, dx
    \leq 4 [u]_{\lip([0,1])}^2|\bar{x} - \bar{y}|^2.
\end{align*}
Therefore, by \eqref{eq:delta-energy-difference} and Lemma \ref{lem:kernel-estimate1}, there is a $C$ independent of $n$ such that
\begin{align*}
|\mathcal{E}_{1,\delta}^{(n)}&(u\big|_{V_1^{(n)}}) - \bar{\mathcal{E}}_{1,\delta}^{(n)}(u)|\\
    &\leq  5 [u]_{\lip([0,1])}^2 \sum_{\bar{x} \in V_{1}^{(n)}} 
    \sum_{\bar{y} \in V_{1}^{(n)}}  j_i^{(n)}(\bar{x},\bar{y})|\bar{x} - \bar{y}|^2 \mu_i^{(n)}(\bar{x}) \mu_i^{(n)}(\bar{y})
    \leq C. 
\end{align*}
Applying the Lebesgue differentiation theorem in \eqref{eq:delta-energy-difference}, we prove \eqref{eq:bar-convergence}.  
\end{proof}

%%%%%%%%%%%%%%%%%%%%%%%%%
\subsection{Proof of $\Gamma$-convergence}
%%%%%%%%%%%%%%%%%%%%%%%%%

We now prove the $\liminf$-inequality for $\Gamma$-convergence and complete the proof of Theorem \ref{thm:Gamma}. 

\begin{prop}\label{prop:liminf-gamma}
Fix $i \in \N$ and suppose that Assumption \ref{A:kernel} holds. 
If
$u_n \in \ell^2(V_i^{(n)}, \mu_i^{(n)})$ and $u \in L^2(\overline{V}_i^{*},dx)$ are such that $\ext_i^{(n)} u_n \to u$ strongly in $L^2(\overline{V}_i^*,dx)$, then
\[
\liminf_{n \to \infty} \mathcal{E}_i^{(n)}(u_n) \geq \mathcal{E}_i^{(\infty)}(u).
\]
\end{prop}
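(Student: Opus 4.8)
The plan is to prove the inequality first at the level of the \emph{truncated} energies $\overline{\mathcal{E}}_{i,\delta}^{(n)}$, whose kernels are bounded, and then remove the truncation by letting $\delta\to0$. Since the full kernel dominates the truncated one, $\mathcal{E}_i^{(n)}(u_n)=\overline{\mathcal{E}}_i^{(n)}(\ext_i^{(n)}u_n)\geq\overline{\mathcal{E}}_{i,\delta}^{(n)}(\ext_i^{(n)}u_n)$ for every $\delta>0$ and every $n\geq n_0(\delta)$, so it suffices to bound $\liminf_n\overline{\mathcal{E}}_{i,\delta}^{(n)}(\ext_i^{(n)}u_n)$ from below. (For $i>2$ the kernel is already bounded by \eqref{eq:boundedkernel} and one simply takes $\delta=0$ throughout; for $i\in\{1,2\}$ the truncation is genuinely needed, near the diagonal and near $\frac12$ respectively.) If $u\notin\mathcal{F}_i^{(\infty)}$, then $\mathcal{E}_i^{(\infty)}(u)=\infty$, and the lower bound \eqref{eq:j-bar-lower} of Lemma~\ref{lem:extended-assumption} gives $\overline{\mathcal{E}}_{i,\delta}^{(n)}(u)\geq \lambda_i 2^{-1-2s}\int\!\int_{\{|x-y|>\delta\}}|u(x)-u(y)|^2|x-y|^{-1-2s}\,dy\,dx$, which diverges as $\delta\to0$ because $u\notin H^s$; combined with the swap described below, this forces $\liminf_n\mathcal{E}_i^{(n)}(u_n)=\infty$. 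Hence we may assume $u\in\mathcal{F}_i^{(\infty)}$.

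The crucial observation is that, for fixed $\delta>0$, the truncated kernel $\overline{j}_i^{(n)}(x,y)\mathbf{1}_{\{|x-y|>\delta\}}$ is bounded uniformly in $n$ by \eqref{eq:j-bar-estimate}, say by $M_\delta$. Writing $g_n(x,y):=\ext_i^{(n)}u_n(x)-\ext_i^{(n)}u_n(y)$ and $g(x,y):=u(x)-u(y)$, the strong convergence $\ext_i^{(n)}u_n\to u$ in $L^2(\overline{V}_i^*,dx)$ yields $g_n\to g$ strongly in $L^2$ of the product, and hence $|g_n|^2\to|g|^2$ in $L^1$. Because the kernel is bounded by $M_\delta$, I would then deduce
\[
\liminf_{n\to\infty}\overline{\mathcal{E}}_{i,\delta}^{(n)}(\ext_i^{(n)}u_n)=\liminf_{n\to\infty}\overline{\mathcal{E}}_{i,\delta}^{(n)}(u),
\]
since replacing $|g_n|^2$ by $|g|^2$ under the integral changes the energy by at most $M_\delta\big\||g_n|^2-|g|^2\big\|_{L^1}\to0$. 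This trades an arbitrary recovery sequence for the \emph{fixed} limit $u$ at each truncation level and is the heart of the argument.

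It then remains to show $\sup_{\delta>0}\liminf_n\overline{\mathcal{E}}_{i,\delta}^{(n)}(u)\geq\mathcal{E}_i^{(\infty)}(u)$, which I would obtain by density. Choose $u^{(k)}\in\mathcal{D}_i$ with $u^{(k)}\to u$ in the $\mathcal{F}_i^{(\infty)}$-norm (Lemmas~\ref{lem:domains} and \ref{lem:domains-tilde}). For these, Lemma~\ref{lem:bar-limit} gives $\liminf_n\overline{\mathcal{E}}_{i,\delta}^{(n)}(u^{(k)})=\mathcal{E}_{i,\delta}^{(\infty)}(u^{(k)})$, while the triangle inequality for the nonnegative quadratic form together with the upper bound \eqref{eq:j-bar-upper} controls the error uniformly in $n$ and $\delta$,
\[
\overline{\mathcal{E}}_{i,\delta}^{(n)}(u-u^{(k)})^{1/2}\leq C_i\,[u-u^{(k)}]_{H^s}=:\varepsilon_k\to0,
\]
with $[\cdot]_{H^s}$ the appropriate Gagliardo seminorm for the given $i$. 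Hence $\liminf_n\overline{\mathcal{E}}_{i,\delta}^{(n)}(u)^{1/2}\geq\mathcal{E}_{i,\delta}^{(\infty)}(u^{(k)})^{1/2}-\varepsilon_k$. Letting $\delta\to0$, and using that $\mathcal{E}_i^{(\infty)}(u^{(k)})<\infty$ so that monotone convergence yields $\mathcal{E}_{i,\delta}^{(\infty)}(u^{(k)})\to\mathcal{E}_i^{(\infty)}(u^{(k)})$, then letting $k\to\infty$ and invoking the continuity of $\mathcal{E}_i^{(\infty)}$ in the $\mathcal{F}_i^{(\infty)}$-norm — a consequence of the two-sided bound \eqref{eq:ellipticity} and Cauchy–Schwarz for the associated bilinear form — gives $\mathcal{E}_i^{(\infty)}(u^{(k)})\to\mathcal{E}_i^{(\infty)}(u)$. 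Chaining these estimates completes the proof.

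The main obstacle is precisely that the kernels $\overline{j}_i^{(n)}$ do not converge to any fixed limiting kernel — only the two-sided comparison with $|x-y|^{-1-2s}$ is available — so the standard lower-semicontinuity argument for a single fixed nonlocal form does not apply. The bounded-kernel swap of the second paragraph circumvents this by reducing the arbitrary sequence $u_n$ to the fixed function $u$ at each fixed $\delta$, after which the non-convergence of the kernel is absorbed into the density estimate, whose error is uniform in both $n$ and $\delta$ thanks to the clean upper bound \eqref{eq:j-bar-upper}. Care is needed to run the three cases $i=1$, $i=2$, $i>2$ in parallel (with $\lip([0,1])$, $C^{\frac12}([0,1])$, and $L^{\infty}(\overline{V}_i^*)$ as the respective dense classes, and $\delta=0$ when $i>2$), and to treat the diagonal singularity for $i=1$ and the singularity at $\frac12$ for $i=2$ when passing to the limit $\delta\to0$.
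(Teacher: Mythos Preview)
Your proof is correct and takes a genuinely different route from the paper's. The paper follows the template of \cite{CKK}*{Theorem~4.5}: it passes to Ces\`aro means $v_n=\frac{1}{n}\sum_{k=1}^n\ext_i^{(k)}u_k$ via Banach--Saks, approximates $u$ in $L^2$ by some $f\in\mathcal{D}_i$, and exploits only the $L^2$-continuity bound \eqref{eq:bar-L2} for the truncated forms to transfer estimates between $v_n$, $f$, and $u_n$. Your argument bypasses Banach--Saks entirely by observing that strong $L^2$-convergence of $\ext_i^{(n)}u_n$ already gives $|g_n|^2\to|g|^2$ in $L^1$ of the product, so the bounded truncated kernel lets you swap $u_n$ for $u$ in one line; you then approximate $u$ in the stronger $\mathcal{F}_i^{(\infty)}$-norm and use the pointwise upper bound \eqref{eq:j-bar-upper} (rather than \eqref{eq:bar-L2}) to control the error uniformly in $n$ and $\delta$. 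Your approach is more elementary and better exploits the hypothesis of strong convergence; the paper's approach, being the CKK machinery, is built to handle weak convergence and is therefore the natural template once one also aims for Mosco convergence (Theorem~\ref{thm:mosco}), where the Ces\`aro-mean trick becomes essential. One small point worth making explicit in your write-up: the claim $\mathcal{E}_{i,\delta}^{(\infty)}(u^{(k)})\to\mathcal{E}_i^{(\infty)}(u^{(k)})$ as $\delta\to0$ is not literally ``monotone convergence'' since these are limits in $n$ of truncated sums; it follows instead from the tail bound $\mathcal{E}_i^{(n)}(u^{(k)})-\mathcal{E}_{i,\delta}^{(n)}(u^{(k)})\leq C\,[u^{(k)}]_{\mathcal{D}_i}^2\,\delta^{2-2s}$ uniformly in $n$ (use Assumption~\ref{A:kernel}\ref{item:upper} and the regularity of $u^{(k)}\in\mathcal{D}_i$, as in the proofs of Lemmas~\ref{lem:kernel-estimate1} and \ref{lem:kernel-estimate2}).
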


\begin{proof}
We follow the proof of \cite{CKK}*{Theorem 4.5} with minor modifications for our setting and write the details to highlight where all necessary assumptions in \cite{CKK} are satisfied.

Let $u_n \in \ell^2(V_i^{(n)},\mu_i^{(n)})$ and $u \in L^2(\overline{V}_i^*,dx)$ be such that
$\ext_i^{(n)}u_n \to u$ strongly in $L^2(\overline{V}_i^*,dx)$. 
Up to taking a subsequence, we may assume 
$\lim_{n \to \infty} \mathcal{E}_i^{(n)}(u_n)$ exists and is finite and that 
\[
\sup_{n \geq 1} \bigg(\mathcal{E}_i^{(n)}(u_n) + \|u_n\|^2_{\ell^2(V_i^{(n)}, \mu_i^{(n)})}
\bigg)<\infty.
\]
By the uniform boundedness principle, $\{\ext_i^{(n)} u_n\}_{n \in \N}$ is a bounded sequence on $L^2(\overline{V_i^*}, dx)$. 
By the Banach-Saks theorem, up to a subsequence, 
\[
v_n := \frac{1}{n} \sum_{k=1}^n \ext_i^{(k)} u_k
\]
converges to $u$ in $L^2(\overline{V_i^*}, dx)$. 

\medskip

Let $\delta = \delta(i)>0$ be small 
and take $a_{i,\delta}>0$ as in \eqref{eq:aidelta}. 
Note that we may take $\delta(i) = 0$ for $i>2$.
Fix $\varepsilon>0$. 
Since $u \in L^2(\overline{V}_i^*,dx)$, we let $f \in \mathcal{D}_i$ (recall \eqref{eq:dense-set-again}) be such that
\[
\|u - f\|_{L^2(\overline{V}_i^*,dx)} \leq \frac{\varepsilon}{\sqrt{a_{i,\delta}}}.
\]
By Lemma \ref{lem:bar-limit} and \eqref{eq:bar-L2}, we find 
\begin{align*}
\limsup_{n \to \infty} &| \overline{\mathcal{E}}_{i,\delta}^{(n)}(v_n)^{\frac12}- \mathcal{E}_{i,\delta}^{(\infty)}(f)^{\frac12}|\\
&\leq \limsup_{n \to \infty} | \overline{\mathcal{E}}_{i,\delta}^{(n)}(v_n)^{\frac12}- \overline{\mathcal{E}}_{i,\delta}^{(n)}(f)^{\frac12}|
    + \limsup_{n \to \infty} | \overline{\mathcal{E}}_{i,\delta}^{(n)}(f)^{\frac12}- \mathcal{E}_{i,\delta}^{(\infty)}(f)^{\frac12}|\\
&\leq \sqrt{a_{i,\delta}} \|u-f\|_{L^2(\overline{V}_i^*,dx)} <\varepsilon, 
\end{align*}
and similarly,
\[
|\mathcal{E}_{i,\delta}^{(\infty)}(f)^{\frac12}- \mathcal{E}_{i,\delta}^{(\infty)}(u)^{\frac12}|
\leq \mathcal{E}_{i,\delta}^{(\infty)}(f-u)^{\frac12}
\leq \sqrt{a_{i,\delta}}\|u-f\|_{L^2(\overline{V}_i^*,dx)}  < \varepsilon. 
\]
Together, we find that 
\begin{equation}\label{eq:vn-eps}
\liminf_{n \to \infty}  \overline{\mathcal{E}}_{i,\delta}^{(n)}(v_n)^{\frac12} \geq \mathcal{E}_{i,\delta}^{(\infty)}(u)^{\frac12} - 2 \varepsilon. 
\end{equation}

Since $\ext_i^{(n)}u_n$ converges strongly in $L^2(\overline{V}_i^*,dx)$, 
there is an $N \in \N$ such that
\[
\|\ext_i^{(k)}u_k -\ext_i^{(n)}u_n\|_{L^2(\overline{V}_i^*,dx)} < \frac{\varepsilon}{\sqrt{a_{i,\delta}}} \quad \hbox{for all}~n \geq k \geq N. 
\]
Then, for $n \geq k >N$, we have that 
\begin{align*}
\overline{\mathcal{E}}_{i,\delta}^{(n)}(\ext_i^{(k)}u_k)^{\frac12} - \mathcal{E}_{i,\delta}^{(n)}(u_n)^{\frac12}
     &= \bar{\mathcal{E}}_{i,\delta}^{(n)}(\ext_i^{(k)}u_k)^{\frac12} - \overline{\mathcal{E}}_{i,\delta}^{(n)}(\ext_i^{(n)} u_n)^{\frac12}
    < \varepsilon. 
\end{align*}
Next, by Lemma \ref{lem:bar-limit} and taking $N$ larger if necessary, we have, for $N \leq n \leq m$, that
\begin{align*}
\overline{\mathcal{E}}_{i, \delta}^{(m)} (v_n)^{\frac12}
    &\leq \left|\overline{\mathcal{E}}_{i, \delta}^{(m)} (v_n)^{\frac12} - \overline{\mathcal{E}}_{i, \delta}^{(m)}(f)^{\frac12}\right| + \overline{\mathcal{E}}_{i, \delta}^{(m)}(f)^{\frac12}\\
    &\leq  \overline{\mathcal{E}}_{i, \delta}^{(m)} (v_n-f)^{\frac12} + {\mathcal{E}}_{i, \delta}^{(\infty)}(f)^{\frac12}+\varepsilon \\
    &\leq  \sqrt{a_{i,\delta}} \|v_n-f\|_{L^2(\overline{V}_i^*, dx)} + {\mathcal{E}}_{i, \delta}^{(\infty)}(f)^{\frac12}+\varepsilon <{\mathcal{E}}_{i, \delta}^{(\infty)}(f)^{\frac12} +2 \varepsilon,
\end{align*}
which in particular implies
\[
\sup_{m \geq N} \overline{\mathcal{E}}_{i, \delta}^{(m)}(v_N)^{\frac12}<\infty.
\]
This gives, for $n > N$, 
\begin{align*}
\overline{\mathcal{E}}_{i,\delta}^{(n)}(v_n)^{\frac12}
    &= \overline{\mathcal{E}}_{i,\delta}^{(n)}\left(\frac{N}{n}\frac{1}{N} \sum_{k=1}^N \ext_i^{(k)} u_k + \frac{1}{n} \sum_{k=N+1}^n \ext_i^{(k)} u_k \right)^{\frac12}\\
    &\leq \frac{N}{n} \overline{\mathcal{E}}_{i,\delta}^{(n)}(v_N)^{\frac12} + \frac{1}{n} \sum_{k=N+1}^n  \overline{\mathcal{E}}_{i,\delta}^{(n)}\left(\ext_1^{(k)} u_k \right)^{\frac12}\\
     &\leq \frac{N}{n} \sup_{m \geq N} \overline{\mathcal{E}}_{i,\delta}^{(m)}(v_N)^{\frac12} + \mathcal{E}_{i,\delta}^{(n)}(u_n)^{\frac12}+\varepsilon.   
\end{align*}
Therefore,
\[
\liminf_{n \to \infty} \overline{\mathcal{E}}_{i,\delta}^{(n)}(v_n)^{\frac12}
\leq \liminf_{n \to \infty} \mathcal{E}_{i,\delta}^{(n)}\left(u_n \right)^{\frac12} + \varepsilon
\]
which, recalling \eqref{eq:vn-eps}, implies 
\begin{align*}
\mathcal{E}_{i,\delta}^{(\infty)}(u)^{\frac12} 
    &\leq \liminf_{n \to \infty} \mathcal{E}_{i,\delta}^{(n)}\left(u_n \right)^{\frac12} + 3 \varepsilon
    \leq \liminf_{n \to \infty} \mathcal{E}_{i}^{(n)}\left(u_n \right)^{\frac12} + 3 \varepsilon.
\end{align*}
Letting $\varepsilon\to 0$ followed by $\delta\to 0$ gives the desired conclusion. 
\end{proof}

Combining Propositions \ref{prop:limsup} and \ref{prop:liminf-gamma}, we obtain Theorem \ref{thm:Gamma}.

%%%%%%%%%%%%%%%%%%%%%%%%%
\subsection{Proof of Mosco convergence}
%%%%%%%%%%%%%%%%%%%%%%%%%

We now prove Theorem \ref{thm:mosco}. To prove the $\liminf$-inequality, 
we use a generalization of \emph{asymptotically compact} from \cite{Mosco}, written in our setting. 

\begin{defn}
Let $i \in \N$. 
We say the sequence $(\mathcal{E}_i^{(n)}, \ell(V_i^{(n)}))$ is \emph{asymptotically compact in the generalized sense} if every sequence $\{u_n\}$ such that $u_n \in \ell^2(V_i^{(n)}, \mu_i^{(n)})$ and
\begin{equation}\label{eq:u_n-sequence-bound}
\liminf_{n \to \infty} (\mathcal{E}_i^{(n)}(u_n) + \|u_n\|_{\ell^2(V_i^{(n)}, \mu_i^{(n)})}^2) < \infty
\end{equation}
has a subsequence $\{u_{n_k}\}$ such that $\{\ext_i^{(n_k)}u_{n_k}\}$ converges strongly in $L^2(\overline{V}_i^*,dx)$. 
\end{defn}

The following characterization is proved exactly as in  \cite{Mosco}*{Lemma 2.3}.

\begin{lem}\label{lem:Gamma-Mosco}
Let $i \in \N$. 
If $(\mathcal{E}_i^{(n)}, \ell(V_i^{(n)}))$ is asymptotically compact in the generalized sense, 
then $(\mathcal{E}_i^{(n)}, \ell(V_i^{(n)}))$ Mosco converges to $(\mathcal{E}_i^{(\infty)}, \mathcal{F}_i^{(\infty)})$ in the generalized sense  if and only if 
$(\mathcal{E}_i^{(n)}, \ell(V_i^{(n)}))$ $\Gamma$-converges to $(\mathcal{E}_i^{(\infty)}, \mathcal{F}_i^{(\infty)})$ in the generalized sense.
\end{lem}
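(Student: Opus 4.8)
The plan is to treat the two implications separately and to exploit that the $\limsup$-conditions (item \ref{item:mosco2}) are literally identical in the definitions of $\Gamma$- and Mosco convergence in the generalized sense; consequently only the $\liminf$-conditions (item \ref{item:mosco1}) require argument, and asymptotic compactness will be invoked for exactly one of them.

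The direction ``Mosco $\Rightarrow$ $\Gamma$'' is immediate and needs no compactness. Indeed, strong convergence of $\ext_i^{(n)} u_n$ to $u$ in $L^2(\overline{V}_i^*, dx)$ implies weak convergence, so the Mosco $\liminf$-inequality---assumed under the weaker (weak) hypothesis---holds a fortiori under the strong-convergence hypothesis used in $\Gamma$-convergence. Together with the shared $\limsup$-condition, this gives $\Gamma$-convergence.

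For the substantive direction ``$\Gamma \Rightarrow$ Mosco'', I would start from $u_n \in \ell^2(V_i^{(n)}, \mu_i^{(n)})$ with $\ext_i^{(n)} u_n \to u$ weakly in $L^2(\overline{V}_i^*, dx)$ and seek $\liminf_n \mathcal{E}_i^{(n)}(u_n) \geq \mathcal{E}_i^{(\infty)}(u)$. After passing to a subsequence along which $\mathcal{E}_i^{(n)}(u_n)$ converges to the $\liminf$, I may assume this value is finite (otherwise the inequality is trivial), so $\{\mathcal{E}_i^{(n)}(u_n)\}_n$ is bounded; the Banach--Steinhaus theorem makes the weakly convergent sequence $\{\ext_i^{(n)} u_n\}_n$ bounded in $L^2$, and the isometry $\|u_n\|_{\ell^2(V_i^{(n)}, \mu_i^{(n)})} = \|\ext_i^{(n)} u_n\|_{L^2(\overline{V}_i^*, dx)}$ from \eqref{eq:L2-ell2} then shows $\{u_n\}$ satisfies the boundedness hypothesis \eqref{eq:u_n-sequence-bound}. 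Asymptotic compactness in the generalized sense now yields a further subsequence $\{u_{n_k}\}$ with $\ext_i^{(n_k)} u_{n_k}$ converging \emph{strongly} in $L^2(\overline{V}_i^*, dx)$, and uniqueness of weak limits identifies its limit with $u$. Applying the $\Gamma$-$\liminf$ inequality (whose hypothesis is precisely strong convergence) along $\{n_k\}$ gives $\liminf_k \mathcal{E}_i^{(n_k)}(u_{n_k}) \geq \mathcal{E}_i^{(\infty)}(u)$; since the value along $\{n_k\}$ equals the original $\liminf$ by the first subsequence choice, the conclusion follows. The main obstacle, already resolved by the hypothesis, is the upgrade from weak to strong convergence: without asymptotic compactness one cannot extract a strongly convergent subsequence, and the $\Gamma$-$\liminf$ inequality---valid only under strong convergence---would be inapplicable.
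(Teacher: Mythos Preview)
Your proposal is correct and is precisely the classical argument: the paper itself does not give a proof but simply refers to \cite{Mosco}*{Lemma 2.3}, and what you have written is the standard adaptation of that argument to the ``generalized'' setting (weak/strong convergence of $\ext_i^{(n)} u_n$ in $L^2(\overline{V}_i^*,dx)$ via the isometry \eqref{eq:L2-ell2}).
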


We will prove that $(\mathcal{E}_1^{(n)}, \ell(V_1^{(n)}))$ is asymptotically compact in the generalized sense when $s \in (0,\frac12)$.  
However, when $i>2$, the sequence
$(\mathcal{E}_i^{(n)}, \ell(V_i^{(n)}))$ is not necessarily asymptotically compact in the generalized sense because, in this case, \eqref{eq:u_n-sequence-bound}  is equivalent to 
\[
\liminf_{n \to \infty} \|\ext_i^{(n)}u_n\|_{L^2(\overline{V}_i^*, dx)}^2< \infty.
\]

We first establish a lower bound on the discrete energies when $i=1$ and $s \in (0,\frac12)$. 

\begin{lem}\label{lem:ext-Hs-lower}
Let $s \in (0, \frac12)$ and let Assumption \ref{A:kernel} hold. There is a constant $c>0$ depending only on $s$ such that, for all $n \in \N$ and $u \in \ell^2(V_1^{(n)}, \mu_1^{(n)})$,
\[
\mathcal{E}_1^{(n)}(u) \geq c \lambda_1 [\ext_1^{(n)}u]_{H^s([0,1])}^2. 
\]
\end{lem}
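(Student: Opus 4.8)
The plan is to forget about the $n\to\infty$ structure and prove the asserted bound at each fixed level $n$ by a termwise comparison of two sums. Write $f:=\ext_1^{(n)}u$, which by definition is constant equal to $u(\bar x)$ on each cell $U_1^{(n)}(\bar x)$. Since the diagonal terms vanish, the Gagliardo seminorm of $f$ decomposes over pairs of cells as
\[
[\ext_1^{(n)}u]_{H^s([0,1])}^2
= \sum_{\substack{\bar x,\bar y\in V_1^{(n)}\\ \bar x\neq\bar y}} |u(\bar x)-u(\bar y)|^2\, I_n(\bar x,\bar y),
\qquad
I_n(\bar x,\bar y):=\int_{U_1^{(n)}(\bar x)}\int_{U_1^{(n)}(\bar y)} \frac{dy\,dx}{|x-y|^{1+2s}}.
\]
On the other hand, because the network is complete (Remark~\ref{R:complete_graph}), every pair $\bar x\neq\bar y$ is a wire, so Assumption~\ref{A:kernel}\ref{item:lower} together with~\eqref{E:energy_as_DF} gives
\[
\mathcal{E}_1^{(n)}(u)
\geq \lambda_1 \sum_{\substack{\bar x,\bar y\in V_1^{(n)}\\ \bar x\neq\bar y}} \frac{|u(\bar x)-u(\bar y)|^2}{|\bar x-\bar y|^{1+2s}}\,\mu_1^{(n)}(\bar x)\mu_1^{(n)}(\bar y).
\]
Comparing these two displays termwise, it suffices to exhibit a constant $C=C(s)$ such that
\[
I_n(\bar x,\bar y)\leq C\,\frac{\mu_1^{(n)}(\bar x)\mu_1^{(n)}(\bar y)}{|\bar x-\bar y|^{1+2s}}
\qquad\hbox{for all}~\bar x\neq\bar y,
\]
after which the claim follows with $c=1/C$.

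To establish this pointwise comparison, set $h:=2^{-n}$ and recall that $|U_1^{(n)}(\bar x)|\leq h$ while $|\bar x-\bar y|\geq h$ for distinct nodes. The easy case is that of non-adjacent cells, $|\bar x-\bar y|\geq 2h$: then every $x\in U_1^{(n)}(\bar x)$ and $y\in U_1^{(n)}(\bar y)$ obey $|x-y|\geq |\bar x-\bar y|-h\geq \tfrac12|\bar x-\bar y|$, so $|x-y|^{-1-2s}\leq 2^{1+2s}|\bar x-\bar y|^{-1-2s}$ and integrating over the two cells yields the comparison with constant $2^{1+2s}$.

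The delicate case, and the only place where the restriction $s\in(0,\tfrac12)$ is used, is that of adjacent cells sharing a common endpoint, where $I_n(\bar x,\bar y)$ meets the non-removable singularity of $|x-y|^{-1-2s}$ at that endpoint. For two interior cells, say $U_1^{(n)}(\bar x)=[\bar x-\tfrac h2,\bar x+\tfrac h2)$ and $U_1^{(n)}(\bar y)=[\bar x+\tfrac h2,\bar x+\tfrac{3h}2)$ with $\bar y=\bar x+h$, I would change variables by $t=\bar x+\tfrac h2-x$ to reduce the integral to
\[
I_n(\bar x,\bar y)=\frac{1}{2s}\int_0^h \big(t^{-2s}-(t+h)^{-2s}\big)\,dt
=\frac{2-2^{1-2s}}{2s(1-2s)}\,h^{1-2s},
\]
which is finite precisely because $1-2s>0$. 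The target weight is $\mu_1^{(n)}(\bar x)\mu_1^{(n)}(\bar y)|\bar x-\bar y|^{-1-2s}=h^{2}h^{-1-2s}=h^{1-2s}$, so the comparison holds with constant $\tfrac{2-2^{1-2s}}{2s(1-2s)}$; the boundary cells at $0$ and $1$ have half-length and only decrease $I_n$, so the identical computation applies with a smaller constant. Taking $C(s)$ to be the maximum of the constants from the two cases and $c=1/C(s)$ finishes the argument. The hard part is thus entirely this adjacent-cell integral: it converges only for $s\in(0,\tfrac12)$, exactly the hypothesis, and its explicit value produces the homogeneity factor $h^{1-2s}$ that precisely matches the discrete weight $\mu_1^{(n)}(\bar x)\mu_1^{(n)}(\bar y)/|\bar x-\bar y|^{1+2s}$.
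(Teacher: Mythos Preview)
Your proof is correct and follows essentially the same strategy as the paper: reduce to a termwise comparison between the discrete weights $\mu_1^{(n)}(\bar x)\mu_1^{(n)}(\bar y)|\bar x-\bar y|^{-1-2s}$ and the cell-pair integrals $I_n(\bar x,\bar y)$, handle distant pairs by the triangle inequality, and treat the adjacent-cell case by an explicit integration that terminates precisely because $s<\tfrac12$. The paper organizes the same ingredients slightly differently---it passes through the extended kernel $\bar j_1^{(n)}$ and splits the $[0,1]^2$ integral according to whether $|x-y|<2^{-n}$ or $|x-y|\geq 2^{-n}$, invoking Lemma~\ref{lem:extended-assumption} for the far part---whereas you decompose directly over cell pairs and avoid that auxiliary machinery; your route is a little more self-contained, but the essential computation (the adjacent-cell integral producing the factor $h^{1-2s}$) is identical.
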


\begin{proof}
Begin by writing
\begin{align*}
\mathcal{E}_1^{(n)}(u)   
    &= \bar{\mathcal{E}}_1^{(n)}(\ext_1^{(n)}u)\\
    &= \int_0^1 \int_{\{y \in [0,1] : |x-y|< 2^{-n}\}} \bar{j}_1^{(n)}(x,y) |\ext_1^{(n)}u(x) - \ext_1^{(n)}u(y)|^2 \, dy \, dx \\
    &\quad+ \int_0^1 \int_{\{y \in [0,1] : |x-y|\geq 2^{-n}\}} \bar{j}_1^{(n)}(x,y) |\ext_1^{(n)}u(x) - \ext_1^{(n)}u(y)|^2 \, dy \, dx.
\end{align*}
and
\begin{align*}
    [\ext_1^{(n)}u]_{H^s([0,1])}^2
    &= \int_{U_1^{(n)}(\bar{x})} \int_{\{y \in [0,1] : |x-y|< 2^{-n}\}} \frac{ |\ext_1^{(n)}u(x) - \ext_1^{(n)}u(y)|^2}{|x-y|^{1+2s}} \, dy \, dx\\
    &\quad + \int_{U_1^{(n)}(\bar{x})} \int_{\{y \in [0,1] : |x-y|\geq 2^{-n}\}} \frac{ |\ext_1^{(n)}u(x) - \ext_1^{(n)}u(y)|^2}{|x-y|^{1+2s}} \, dy \, dx.
\end{align*}
By Lemma \ref{lem:extended-assumption}, we estimate the long-range interactions by
\begin{equation}\label{eq:long-range}
\begin{split}
&\int_0^1 \int_{\{y \in [0,1] : |x-y|\geq 2^{-n}\}} \bar{j}_1^{(n)}(x,y) |\ext_1^{(n)}u(x) - \ext_1^{(n)}u(y)|^2 \, dy \, dx\\ 
&\quad\geq \frac{\lambda_1}{2^{1+2s}} \int_0^1 \int_{\{y \in [0,1] : |x-y|\geq 2^{-n}\}} \frac{ |\ext_1^{(n)}u(x) - \ext_1^{(n)}u(y)|^2}{|x-y|^{1+2s}} \, dy \, dx.
\end{split}
\end{equation}

For the close-range interactions, fix $\bar{x} \in V_1^{(n)}$. We will first show that
\begin{equation} \label{eq:summary}
\begin{split}
&\int_{U_1^{(n)}(\bar{x})} \int_{\{y \in [0,1] : |x-y|< 2^{-n}\}} \bar{j}_1^{(n)}(x,y) |\ext_1^{(n)}u(x) - \ext_1^{(n)}u(y)|^2 \, dy \, dx\\
    &\quad\geq \frac{3\lambda_1}{8} (2^n)^{2s-1} \begin{cases}
    |u(\bar{x}) - u(\bar{x} - \frac{1}{2^n})|^2 + |u(\bar{x}) - u(\bar{x} +\frac{1}{2^n})|^2 & \hbox{if}~\bar{x} \in V_1^{(n)} \setminus \{0,1\} \\
    |u(1) - u(1 - \frac{1}{2^n})|^2 & \hbox{if}~ \bar{x} =1\\
    |u(0) - u( \frac{1}{2^n})|^2 & \hbox{if}~ \bar{x} = 0.
        \end{cases}
\end{split}
\end{equation}
Note that
\begin{equation}\label{eq:zero}
\bar{j}_1^{(n)}(x,y) = 0 \quad \hbox{and} \quad 
|\ext_1^{(n)}u(x) - \ext_1^{(n)}u(y)|^2 =0 \quad \hbox{for all}~x, y \in \operatorname{Int}(U_1^{(n)}(\bar{x})).
\end{equation}
Therefore, we can write
\begin{align*}
&\int_{U_1^{(n)}(\bar{x})} \int_{\{y \in [0,1] : |x-y|< 2^{-n}\}} \bar{j}_1^{(n)}(x,y) |\ext_1^{(n)}u(x) - \ext_1^{(n)}u(y)|^2 \, dy \, dx\\
&\quad =\int_{U_1^{(n)}(\bar{x})} \int_{\{y \in [0,1]\setminus U_1^{(n)}(\bar{x}) : |x-y|< 2^{-n}\}} \bar{j}_1^{(n)}(x,y) |\ext_1^{(n)}u(x) - \ext_1^{(n)}u(y)|^2 \, dy \, dx.
\end{align*}
Next notice, for $x \in U_1^{(n)}(\bar{x})$, that  
\begin{equation}\label{eq:not-in-Ux}
\begin{split}
&\{y \in [0,1]\setminus U_1^{(n)}(\bar{x}) : |x-y|< 2^{-n}\}\\
    &\qquad\quad= \begin{cases}
    \left(x - \frac{1}{2^n}, \bar{x} - \frac{1}{2^{n+1}}\right) \cup  \left(\bar{x} + \frac{1}{2^{n+1}}, x + \frac{1}{2^n}\right) & \hbox{if}~\bar{x} \in V_1^{(n)} \setminus \{0,1\}\\
     \left(\frac{1}{2^{n+1}}, x + \frac{1}{2^n}\right) & \hbox{if}~ \bar{x} = 0 \\
     \left(x - \frac{1}{2^n}, 1 - \frac{1}{2^{n+1}}\right) & \hbox{if}~ \bar{x} = 1. 
    \end{cases}
\end{split}
\end{equation}
If $\bar{x} \in V_1^{(n)} \setminus \{0,1\}$, then 
\begin{align*}
&\int_{U_1^{(n)}(\bar{x})} \int_{x - \frac{1}{2^n}}^{\bar{x} - \frac{1}{2^{n+1}}} \bar{j}_1^{(n)}(x,y) |\ext_1^{(n)}u(x) - \ext_1^{(n)}u(y)|^2 \, dy \, dx\\
&\quad= \int_{\bar{x}-\frac{1}{2^{n+1}}}^{\bar{x}+\frac{1}{2^{n+1}}} \int_{x - \frac{1}{2^n}}^{\bar{x} - \frac{1}{2^{n+1}}} j_1^{(n)}(\bar{x}, \bar{x} - \frac{1}{2^n}) |u(\bar{x}) - u(\bar{x} - \frac{1}{2^n})|^2 \, dy \, dx\\
&\quad \geq \lambda_1 (2^{-n})^{-1-2s}|u(\bar{x}) - u(\bar{x} - \frac{1}{2^n})|^2 \int_{\bar{x}-\frac{1}{2^{n+1}}}^{\bar{x}+\frac{1}{2^{n+1}}} \int_{x - \frac{1}{2^n}}^{\bar{x} - \frac{1}{2^{n+1}}} dy \, dx \\
&\quad = \frac{\lambda_1}{2} (2^n)^{2s-1}|u(\bar{x}) - u(\bar{x} - \frac{1}{2^n})|^2,
\end{align*}
and analogously 
\begin{align*}
&\int_{U_1^{(n)}(\bar{x})} \int_{\bar{x} + \frac{1}{2^{n+1}}}^{x+ \frac{1}{2^{n}}} \bar{j}_1^{(n)}(x,y) |\ext_1^{(n)}u(x) - \ext_1^{(n)}u(y)|^2 \, dy \, dx  \geq \frac{\lambda_1}{2} (2^n)^{2s-1}|u(\bar{x}) - u(\bar{x} + \frac{1}{2^n})|^2.
\end{align*}
so that \eqref{eq:summary} holds. The calculation for $\bar{x} \in \{0,1\}$ is similar. 

Now we estimate the corresponding  term in the $H^s$ seminorm of $\ext_1^{(n)}u$ from above. 
We will show that
\begin{equation}\label{eq:summary2}
\begin{split}
&\int_{U_1^{(n)}(\bar{x})} \int_{\{y \in [0,1] : |x-y|< 2^{-n}\}} \frac{ |\ext_1^{(n)}u(x) - \ext_1^{(n)}u(y)|^2}{|x-y|^{1+2s}} \, dy \, dx\\
    &\quad\leq \frac{1}{2s(1-2s)}(2^{n})^{2s-1}
\begin{cases}
    |u(\bar{x}) - u(\bar{x} - \frac{1}{2^n})|^2 + |u(\bar{x}) - u(\bar{x} +\frac{1}{2^n})|^2 & \hbox{if}~\bar{x} \in V_1^{(n)} \setminus \{0,1\} \\
    |u(1) - u(1 - \frac{1}{2^n})|^2 & \hbox{if}~ \bar{x} =1\\
    |u(0) - u( \frac{1}{2^n})|^2 & \hbox{if}~ \bar{x} = 0.
        \end{cases}.
\end{split}
\end{equation}
By \eqref{eq:zero}, we have
\begin{align*}
&\int_{U_1^{(n)}(\bar{x})} \int_{\{y \in [0,1] : |x-y|< 2^{-n}\}} \frac{ |\ext_1^{(n)}u(x) - \ext_1^{(n)}u(y)|^2}{|x-y|^{1+2s}} \, dy \, dx\\
&\quad =\int_{U_1^{(n)}(\bar{x})} \int_{\{y \in [0,1] \setminus U_1^{(n)}(\bar{x}) : |x-y|< 2^{-n}\}} \frac{ |\ext_1^{(n)}u(x) - \ext_1^{(n)}u(y)|^2}{|x-y|^{1+2s}} \, dy \, dx.
\end{align*}
Recalling \eqref{eq:not-in-Ux}, 
if $\bar{x} \in V_1^{(n)} \setminus \{0,1\}$, then 
\begin{align*}
&\int_{U_1^{(n)}(\bar{x})} \int_{x - \frac{1}{2^n}}^{\bar{x} - \frac{1}{2^{n+1}}} 
\frac{|\ext_1^{(n)}u(x) - \ext_1^{(n)}u(y)|^2}{|x-y|^{1+2s}} \, dy \, dx\\
&\quad = |u(\bar{x})- u(\bar{x}-\frac{1}{2^n})|^2\int_{\bar{x} - \frac{1}{2^{n+1}}}^{\bar{x} + \frac{1}{2^{n+1}}} \int_{x - \frac{1}{2^n}}^{\bar{x} - \frac{1}{2^{n+1}}} 
(x-y)^{-1-2s} \, dy \, dx\\
&\quad \leq \frac{1}{2s(1-2s)}(2^{n})^{2s-1}|u(\bar{x})- u(\bar{x}-\frac{1}{2^n})|^2,
\end{align*}
and similarly 
\begin{align*}
&\int_{U_1^{(n)}(\bar{x})} \int^{x + \frac{1}{2^n}}_{\bar{x} + \frac{1}{2^{n+1}}} 
\frac{|\ext_1^{(n)}u(x) - \ext_1^{(n)}u(y)|^2}{|x-y|^{1+2s}} \, dy \, dx\\
&\quad \leq \frac{1}{2s(1-2s)}(2^{n})^{2s-1}|u(\bar{x}) -u(\bar{x}+\frac{1}{2^n})|^2.
\end{align*}
The computation for $\bar{x} \in \{0,1\}$ is similar.  

With \eqref{eq:summary} and \eqref{eq:summary2} and writing the integral over $x \in [0,1]$ as a sum over the partitions $U_1^{(n)}(\bar{x})$ for $\bar{x} \in V_1^{(n)}$, we get 
\begin{align*}
&\int_0^1 \int_{\{y \in [0,1] : |x-y|< 2^{-n}\}} \bar{j}_1^{(n)}(x,y) |\ext_1^{(n)}u(x) - \ext_1^{(n)}u(y)|^2 \, dy \, dx \\
&\quad \leq 2s(1-2s) \frac{3\lambda_1}{8}\int_0^1 \int_{\{y \in [0,1] : |x-y|< 2^{-n}\}}  \frac{ |\ext_1^{(n)}u(x) - \ext_1^{(n)}u(y)|^2}{|x-y|^{1+2s}} \, dy \, dx.
\end{align*}
Together with \eqref{eq:long-range}, the result follows by taking $c = \min\{6s(1-2s)/8,  2^{-1-2s}\}$. 
\end{proof}

\begin{prop}\label{prop:a-compact}
Let $i=1$ and Assumption \ref{A:kernel} hold for $s \in (0,\frac12)$. 
Then the sequence $(\mathcal{E}_1^{(n)}, \ell(V_1^{(n)}))$ is asymptotically compact in the generalized sense.
\end{prop}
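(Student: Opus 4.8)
The plan is to read off asymptotic compactness directly from the lower bound in Lemma~\ref{lem:ext-Hs-lower} together with the compactness of the fractional Sobolev embedding $H^s([0,1]) \hookrightarrow L^2([0,1])$. Concretely, I would start with an arbitrary sequence $\{u_n\}$, $u_n \in \ell^2(V_1^{(n)},\mu_1^{(n)})$, satisfying \eqref{eq:u_n-sequence-bound}. Since the $\liminf$ is finite, I can pass to a subsequence---still written $\{u_n\}$---along which
\[
\mathcal{E}_1^{(n)}(u_n) + \|u_n\|_{\ell^2(V_1^{(n)},\mu_1^{(n)})}^2 \leq M
\]
for some finite $M$ and every $n$.

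Next I would convert this into a uniform bound on the full $H^s$-norm of the extensions. By the isometry \eqref{eq:L2-ell2}, $\|\ext_1^{(n)}u_n\|_{L^2([0,1],dx)}^2 = \|u_n\|_{\ell^2(V_1^{(n)},\mu_1^{(n)})}^2 \leq M$, while Lemma~\ref{lem:ext-Hs-lower} gives $[\ext_1^{(n)}u_n]_{H^s([0,1])}^2 \leq (c\lambda_1)^{-1}\mathcal{E}_1^{(n)}(u_n) \leq (c\lambda_1)^{-1}M$. Adding these,
\[
\|\ext_1^{(n)}u_n\|_{L^2([0,1],dx)}^2 + [\ext_1^{(n)}u_n]_{H^s([0,1])}^2 \leq \Big(1 + \tfrac{1}{c\lambda_1}\Big)M,
\]
so $\{\ext_1^{(n)}u_n\}$ is bounded in $H^s([0,1])$. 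Since $s \in (0,\tfrac12) \subset (0,1)$ and $[0,1]$ is a bounded interval, the embedding $H^s([0,1]) \hookrightarrow L^2([0,1])$ is compact (fractional Rellich--Kondrachov); hence a further subsequence $\{\ext_1^{(n_k)}u_{n_k}\}$ converges strongly in $L^2([0,1],dx)$, which is exactly the required conclusion.

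The genuinely substantial input is Lemma~\ref{lem:ext-Hs-lower}, which both supplies the $H^s$-control and is the source of the restriction $s \in (0,\tfrac12)$; once that bound is available, the remaining argument is routine. The only points deserving attention are that the uniform bound must be extracted along the subsequence realising the finite $\liminf$, and that the compact embedding is into the fixed space $L^2([0,1])$, independent of $n$, so that the extraction step is legitimate.
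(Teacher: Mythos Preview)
Your proposal is correct and follows essentially the same approach as the paper: use Lemma~\ref{lem:ext-Hs-lower} together with the isometry \eqref{eq:L2-ell2} to obtain a uniform $H^s([0,1])$ bound on the extensions along a subsequence, and then invoke the compact embedding $H^s([0,1])\hookrightarrow L^2([0,1])$ to extract a strongly convergent subsequence. The paper's proof is slightly terser (it writes the bound as a $\liminf$ rather than explicitly passing to a subsequence first), but the logic is identical.
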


\begin{proof}
Suppose $u_n \in \ell^2(V_1^{(n)}, \mu_1^{(n)})$ is such that \eqref{eq:u_n-sequence-bound} holds. 
By Lemma \ref{lem:ext-Hs-lower} and \eqref{eq:L2-ell2}, 
\[
\liminf_{n \to \infty} ([\ext_1^{(n)}u]^2_{H^s([0,1])} + \|\ext_1^{(n)}u\|_{L^2([0,1]),dx)}^2) < \infty. 
\]
Since $H^s([0,1])$ compactly embeds into  $L^2([0,1],dx)$ (see \cite{Hitchhikers}*{Theorem 6.7}), the sequence $\ext_1^{(n)}u_n$ has a subsequence that converges strongly in $L^2([0,1], dx)$.
\end{proof}

With this, we can prove Theorem \ref{thm:mosco}.

\begin{proof}[Proof of Theorem \ref{thm:mosco}]
By Theorem \ref{thm:Gamma}, Proposition \ref{prop:a-compact}, and Lemma \ref{lem:Gamma-Mosco}, 
the discrete Dirichlet forms  $(\mathcal{E}_1^{(n)}, \ell(V_1^{(n)}))$ Mosco converge to $(\mathcal{E}_1^{(\infty)}, \mathcal{F}_1^{(\infty)})$ in the generalized sense.

\end{proof}

%%%%%%%%%%%%%%%%%%%%%%%%%%%%%%%%%%%%
\section{Limits of energies as $i \to \infty$}\label{sec:i-limit}
%%%%%%%%%%%%%%%%%%%%%%%%%%%%%%%%%%%%

The purpose of this section is to further address the novelty of having infinitely many vertices in the directed graph by considering
the limit as $i \to \infty$. Specifically, for a function $u:\overline{V}_{i_0}^* \to \R$ with $i_0 \in \N$, we take the limit of $\mathcal{E}_i^{(\infty)}(u\big|_{\overline{V}_i^*})$ and $\mathcal{E}_i^{(n)}(u\big|_{V_i^{(n)}})$ for a fixed $n \in \N$, as indicated in Figure \ref{fig:infty}. 
We assume $u \in C(\overline{V}_{i_0}^*)$ when studying the limit at a fixed stage of approximation 
to allow us to easily move between discrete spaces, but we believe that this can be weakened to $u \in L^\infty(\overline{V}_{i_0}^*)$ and also that more robust notions of convergence could be considered.

\begin{figure}[hbt]
\begin{tikzcd}
\mathcal{E}_i^{(n)}(u\big|_{V_i^{(n)}}) \quad
\arrow[r, blue, "i \to \infty",shorten <=-1.5mm, shorten >=-1.5mm] 
\arrow[d, blue, "n \to \infty", shorten <=-1mm, shorten >=-1mm]
& \quad 2|u(0) - u(1)|^2 
\arrow[d,blue,leftrightarrow,"="] \\[.5em]
\mathcal{E}_{i}^{(\infty)}(u\big|_{\overline{V}_{i}^*}) \quad 
\arrow[r, blue, "i \to \infty ", shorten <=-1.5mm, shorten >=-1.5mm]
& \quad 2|u(0) - u(1)|^2
\end{tikzcd}
\caption{Limiting energies of a fixed $u \in C(\overline{V_{i_0}}^*)$ for some $i_0>2$}
\label{fig:infty}
\end{figure}

Recall the notation $\sigma_{x,y}^{(n)} \in E_i^{(n)}$ from \eqref{eq:path-notation}. 
A sufficient condition for the limiting energies to exist is that the limit of $\delta_{\sigma_{x,y}^{(n)}}^{-1}$ as $i \to \infty$ exists, is finite, and is uniform in $(x,y) \in [0,\frac{1}{i_0}] \times [1 - \frac{1}{i_0},1]$ for some fixed $i_0$.
In the context of the jump kernel in \eqref{E:def_jump_kernel},

we make the following assumption on the constants $\lambda_i, \Lambda_i$ from Assumption \ref{A:kernel}. 

\begin{assumption}\label{A:lambdas}
The constants $0 < \lambda_i\leq \Lambda_i <\infty$ satisfy 
\begin{equation}\label{eq:Ci-condition-moved}
\lim_{i \to \infty} \frac{\lambda_i}{i^2} = 1 \quad \hbox{and} \quad \lim_{i \to \infty} \frac{\Lambda_i}{i^2} = 1. 
\end{equation}
\end{assumption}

Assumption \ref{A:lambdas} ensures the following limiting result regarding the summation of the discrete jump kernels. 

\begin{lem}\label{lem:i-infty-kernel}
Fix $n \in \N$. If Assumption \ref{A:kernel} and Assumption \ref{A:lambdas} hold for $i >2$, then
\[
\lim_{i \to \infty}  \sum_{x \in V_{i-}^{(n)}} \sum_{y \in V_{i+}^{(n)}} j_i^{(n)}(x,y) \mu_i^{(n)}(x) \mu_i^{(n)}(y) = 1.
\]
\end{lem}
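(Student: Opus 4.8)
The plan is a direct squeeze (sandwich) argument, exploiting the fact that for $i>2$ the kernel is non-singular (Remark~\ref{rem:nonsingular}): every pair $(x,y)\in V_{i-}^{(n)}\times V_{i+}^{(n)}$ satisfies $|x-y|=y-x\in[1-\tfrac2i,1]$, so the whole double sum reduces to comparing $\lambda_i/i^2$ with $\Lambda_i b_i/i^2$, and I would then appeal to Assumption~\ref{A:lambdas} together with $b_i\to1$ to close the estimate. Denote the sum in the statement by $S_i$.

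First I would record the elementary mass identity. For $i>1$ every node of $V_i^{(n)}$ carries the same mass $\mu_i^{(n)}(x)=\frac{1}{2^n i}$, and $|V_{i-}^{(n)}|=|V_{i+}^{(n)}|=2^n$, hence
\[
\sum_{x\in V_{i-}^{(n)}}\sum_{y\in V_{i+}^{(n)}}\mu_i^{(n)}(x)\mu_i^{(n)}(y)=\frac{2^{2n}}{(2^n i)^2}=\frac{1}{i^2}.
\]
Next I would apply Assumption~\ref{A:kernel} termwise. Since $1-\tfrac2i\le|x-y|\le1$ on the relevant pairs, one has $1\le|x-y|^{-(1+2s)}\le b_i$ with $b_i=(1-\tfrac2i)^{-(1+2s)}$ as in \eqref{eq:boundedkernel}. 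Combining the lower bound \eqref{eq:kernel-lower-new}, the upper bound \eqref{eq:kernel-estimate-new}, and the mass identity sandwiches $S_i$ as
\[
\frac{\lambda_i}{i^2}\le\lambda_i\sum_{x,y}|x-y|^{-(1+2s)}\mu_i^{(n)}(x)\mu_i^{(n)}(y)\le S_i\le\Lambda_i b_i\sum_{x,y}\mu_i^{(n)}(x)\mu_i^{(n)}(y)=\frac{\Lambda_i b_i}{i^2}.
\]

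Finally I would let $i\to\infty$. By Assumption~\ref{A:lambdas} the lower bound satisfies $\lambda_i/i^2\to1$, while $\Lambda_i/i^2\to1$ and $b_i=(1-\tfrac2i)^{-(1+2s)}\to1$ force the upper bound $\Lambda_i b_i/i^2\to1$; the squeeze theorem then gives $S_i\to1$. I do not expect any genuine obstacle here: the only point worth flagging is that the $n$-dependence cancels exactly, because the number of summands $2^{2n}$ is precisely absorbed by the product of masses $(2^n i)^{-2}$, so the resulting limit is independent of the fixed $n$, as the statement asserts.
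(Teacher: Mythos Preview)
Your proof is correct and follows essentially the same approach as the paper: both arguments use the kernel bounds from Assumption~\ref{A:kernel} together with $1-\tfrac{2}{i}<|x-y|\le 1$ to squeeze $S_i$ between quantities comparable to $\lambda_i/i^2$ and $\Lambda_i/i^2$, and then invoke Assumption~\ref{A:lambdas}. The paper carries this out termwise (fixing $k,\ell$ and showing each normalized kernel value tends to $1$), whereas you perform the global squeeze directly via the mass identity $\sum_{x,y}\mu_i^{(n)}(x)\mu_i^{(n)}(y)=1/i^2$; this is a minor packaging difference, not a different idea.
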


\begin{proof}
With the definition of $V_{i\pm}^{(n)}$, we can write
\begin{align*}
 \sum_{x \in V_{i-}^{(n)}} \sum_{y \in V_{i+}^{(n)}}  j_i^{(n)}(x,y) \mu_i^{(n)}(x) \mu_i^{(n)}(y)
    &=  \sum_{k,\ell=0}^{2^n-1}j_i^{(n)}\left(\frac{k}{i2^n},1-\frac{\ell}{i2^n}\right)\frac{1}{(i2^n)^2}.
\end{align*}
Fix $k,\ell \in \{0,1 \dots, 2^n-1\}$. 
By Assumption \ref{A:kernel}, 
we find
\begin{align*}
j_i^{(n)}\left(\frac{k}{i2^n},1-\frac{\ell}{i2^n}\right) \frac{1}{i^2}
&\leq \frac{\Lambda_i}{i^2} \left(1-\frac{(\ell+k)}{i2^n}\right)^{-1-2s}
\leq \frac{\Lambda_i}{i^2}\left(1-\frac{2(2^n-1)}{i2^n}\right)^{-1-2s}, 
\end{align*}
and 
\begin{align*}
j_i^{(n)}\left(\frac{k}{i2^n},1-\frac{\ell}{i2^n}\right) \frac{1}{i^2}
&\geq \frac{\lambda_i}{i^2} \left(1-\frac{(\ell+k)}{i2^n}\right)^{-1-2s} \geq \frac{\lambda_i}{i^2}.
\end{align*}
Hence, by Assumption \ref{A:lambdas} 
and the comparison principle,  
\begin{equation}\label{eq:kernel-infty}
\lim_{i \to \infty} j_i^{(n)}\left(\frac{k}{i2^n},1-\frac{\ell}{i2^n}\right) \frac{1}{i^2} 
= 1.
\end{equation}
Therefore,
\begin{align*}
\lim_{i \to \infty} \sum_{x \in V_{i-}^{(n)}} \sum_{y \in V_{i+}^{(n)}}  j_i^{(n)}(x,y) \mu_i^{(n)}(x) \mu_i^{(n)}(y)
    &= \sum_{k,\ell=0}^{2^n-1} \lim_{i \to \infty} \left[ j_i^{(n)}\left(\frac{k}{i2^n},1-\frac{\ell}{i2^n}\right)\frac{1}{(i2^n)^2} \right]\\
    &= \sum_{k,\ell=0}^{2^n-1} \frac{1}{2^{2n}} 
    =1.
\end{align*}
\end{proof}

For a fixed $n \in \N$, we now analyze the limit of the sequence $\{\mathcal{E}_i^{(n)}\}_{i\in \N}$ as $i\to\infty$, which we expect to yield a functional in $\ell(\{0,1\})$.

\begin{lem}
Fix $n \in \N$. Let Assumption \ref{A:kernel} and Assumption \ref{A:lambdas} hold for $i >2$. 
If $u \in {C(\overline{V}_{i_0}^*)}$ for some $i_0 >2$, then
\[
\lim_{i \to \infty} \mathcal{E}_i^{(n)}(u\big|_{V_i^{(n)}}) = 2|u(0) - u(1)|^2.
\]
\end{lem}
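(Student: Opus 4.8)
The plan is to exploit the fact that, for $n$ held fixed, the energy $\mathcal{E}_i^{(n)}$ is a \emph{finite} sum, so the limit $i \to \infty$ may be passed inside termwise. First I would invoke Remark~\ref{rem:split} together with the explicit description of the nodes in~\eqref{eq:nodes} and the identity $\mu_i^{(n)}(x) = \frac{1}{i2^n}$ (valid for $i>1$) to write
\[
\mathcal{E}_i^{(n)}(u\big|_{V_i^{(n)}})
= 2\sum_{k,\ell=0}^{2^n-1} \Big(u\big(\tfrac{k}{i2^n}\big) - u\big(1 - \tfrac{\ell}{i2^n}\big)\Big)^2 \, j_i^{(n)}\Big(\tfrac{k}{i2^n}, 1 - \tfrac{\ell}{i2^n}\Big) \frac{1}{(i2^n)^2}.
\]
Here one should first check that the restriction $u\big|_{V_i^{(n)}}$ makes sense for all large $i$: since $\overline{V}_i^* \subset \overline{V}_{i_0}^*$ whenever $i \geq i_0$, every point $\tfrac{k}{i2^n} \in [0,\tfrac1i)$ and $1 - \tfrac{\ell}{i2^n} \in (1 - \tfrac1i, 1]$ lies in the domain $\overline{V}_{i_0}^*$ of $u$, and in particular $u(0)$ and $u(1)$ are well defined.

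The second step is to take the limit inside the finite double sum. Fixing $k, \ell \in \{0, \dots, 2^n-1\}$, as $i \to \infty$ we have $\tfrac{k}{i2^n} \to 0$ and $1 - \tfrac{\ell}{i2^n} \to 1$, so continuity of $u$ on $\overline{V}_{i_0}^*$ yields $u(\tfrac{k}{i2^n}) \to u(0)$ and $u(1 - \tfrac{\ell}{i2^n}) \to u(1)$. For the kernel factor I would reuse the termwise limit~\eqref{eq:kernel-infty} established inside the proof of Lemma~\ref{lem:i-infty-kernel}, namely $j_i^{(n)}(\tfrac{k}{i2^n}, 1 - \tfrac{\ell}{i2^n}) \, i^{-2} \to 1$. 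Combining these, each summand converges to $|u(0) - u(1)|^2 \cdot 2^{-2n}$.

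Finally, summing the $2^{2n}$ limits over $k,\ell$ gives
\[
\lim_{i \to \infty} \mathcal{E}_i^{(n)}(u\big|_{V_i^{(n)}})
= 2\sum_{k,\ell=0}^{2^n-1} |u(0) - u(1)|^2 \, \frac{1}{2^{2n}} = 2|u(0) - u(1)|^2,
\]
as claimed. I do not expect any genuinely hard part: the whole argument is a termwise passage to the limit in a finite sum, with continuity controlling the function values and Lemma~\ref{lem:i-infty-kernel} (more precisely its internal estimate~\eqref{eq:kernel-infty}) controlling the kernel. The only point requiring minor care is the well-definedness of the restriction for large $i$ noted above; as the authors themselves remark, the continuity hypothesis on $u$ is used only to make this termwise convergence immediate and could plausibly be relaxed to $u \in L^\infty(\overline{V}_{i_0}^*)$.
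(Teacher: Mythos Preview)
Your proposal is correct and follows essentially the same approach as the paper: both arguments exploit that the sum is finite (so the limit passes inside termwise), invoke continuity of $u$ for the function values, and use the kernel limit~\eqref{eq:kernel-infty} from the proof of Lemma~\ref{lem:i-infty-kernel}. The paper's version introduces an auxiliary quantity $\kappa_i = \sum j_i^{(n)}(x,y)\mu_i^{(n)}(x)\mu_i^{(n)}(y)$ and bounds the difference $|\mathcal{E}_i^{(n)}(u) - 2|u(0)-u(1)|^2|$ directly, but this is a minor presentational variation on the same idea.
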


\begin{proof}
Fix $i \geq i_0$. Note that $V_i^{(n)} \subset \overline{V_i}^* \subset \overline{V_{i_0}}^*$. 
For convenience, define
\[
\kappa_i :=  \sum_{x \in V_{i-}^{(n)}} \sum_{y \in V_{i+}^{(n)}} j_i^{(n)}(x,y) \mu_i^{(n)}(x) \mu_i^{(n)}(y) >0.
\]
By Lemma \ref{lem:i-infty-kernel}, notice that
\begin{equation}\label{eq:kappa-limit}
\lim_{i \to \infty} \kappa_i = 1. 
\end{equation}
We write
\begin{align*}
2|u(0) - u(1)|^2
 &=  \frac{2}{\kappa_i} \sum_{x \in V_{i-}^{(n)}} \sum_{y \in V_{i+}^{(n)}} j_i^{(n)}(x,y)|u(0) - u(1)|^2\mu_i^{(n)}(x) \mu_i^{(n)}(y).
\end{align*}
Consequently,
\begin{align*}
\big|&\mathcal{E}_i^{(n)}(u\big|_{V_i^{(n)}}) - 2(u(0) - u(1))^2\big|\\
    &\leq  2\sum_{x \in V_{i-}^{(n)}} \sum_{y \in V_{i+}^{(n)}} j_i^{(n)}(x,y)\left| (u(x) - u(y))^2 - \frac{1}{\kappa_i}(u(0) - u(1))^2\right|\mu_i^{(n)}(x) \mu_i^{(n)}(y)\\
    &=2\sum_{k,\ell =0}^{2^n-1}  j_i^{(n)}\left(\frac{k}{i^2n},1-\frac{\ell}{i^2n}\right)\left| \left(u\left(\frac{k}{i^2n}\right) - u\left(1-\frac{\ell}{i^2n}\right)\right)^2 - \frac{1}{\kappa_i}(u(0) - u(1))^2\right| \frac{1}{(i2^n)^2}.
\end{align*}
Let $k,\ell \in \{0,\dots,2^{n}-1\}$.  
{Since $u$ is continuous},  
\[
\lim_{i \to \infty} u \left( \frac{k}{i2^n}\right) = u (0) \quad \hbox{and} \quad
\lim_{i \to \infty} u \left(1- \frac{\ell}{i2^n}\right) = u (1).
\]
Therefore, with \eqref{eq:kappa-limit},
\[
\lim_{i \to \infty} \left| \left(u\left(\frac{k}{i^2n}\right) - u\left(1-\frac{\ell}{i^2n}\right)\right)^2 - \frac{1}{\kappa_i}(u(0) - u(1))^2\right| = 0. 
\] 
With this and recalling \eqref{eq:kernel-infty}, the result follows. 
\end{proof}

Next, we compare the previous limit with that of the sequence $\{\mathcal{E}_{i}^{(\infty)}\}_{i\in \N}$ as $i \to \infty$.

\begin{lem}
Let Assumption \ref{A:kernel} and Assumption \ref{A:lambdas} hold for $i >2$. 
Let $u \in \mathcal{F}_{i_0}^{(\infty)}$ for some fixed $i_0 \in \N$. Then it holds that $u\big|_{\overline{V}_{i}^*} \in \mathcal{F}_{i}^{(\infty)}$ for all $i \geq i_0$, and  
\[
\lim_{i \to \infty} \mathcal{E}_{i}^{(\infty)}(u\big|_{\overline{V}_{i}^*}) =  2|u(0) - u(1)|^2.
\]
\end{lem}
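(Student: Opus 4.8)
The plan is to reduce the whole statement to a single Lebesgue differentiation computation by squeezing $\mathcal{E}_i^{(\infty)}$ between two multiples of one integral. First I would dispose of the membership claim. For $i\geq i_0$ one has $\overline{V}_i^*\subseteq\overline{V}_{i_0}^*$, so the restriction $u\big|_{\overline{V}_i^*}$ lies in $L^2(\overline{V}_i^*,dx)$ because $\mathcal{F}_{i_0}^{(\infty)}\subseteq L^2(\overline{V}_{i_0}^*,dx)$ in all cases of \eqref{eq:domain-description}. For $i>2$ this immediately gives $u\big|_{\overline{V}_i^*}\in L^2(\overline{V}_i^*,dx)=\mathcal{F}_i^{(\infty)}$ by Theorem \ref{thm:DF-main}; the finitely many remaining cases $i_0\leq i\leq 2$ (only $i_0=1,i=2$ is nontrivial) follow from the seminorm monotonicity $[u]_{H^s([0,\frac12],[\frac12,1])}\leq [u]_{H^s([0,1])}$, so $H^s([0,1])\subseteq H^s([0,\frac12],[\frac12,1])=\mathcal{F}_2^{(\infty)}$.

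For the energy limit I would use the two-sided ellipticity bound
\[
2\lambda_i\,\xi_i\;\leq\;\mathcal{E}_i^{(\infty)}(u)\;\leq\;2\Lambda_i\,\xi_i,\qquad \xi_i:=\int_0^{1/i}\!\!\int_{1-1/i}^1\frac{|u(x)-u(y)|^2}{(y-x)^{1+2s}}\,dy\,dx,
\]
valid for $i>2$: the upper bound is exactly the estimate in Step 1 of the proof of Lemma \ref{lem:domains}, and the lower bound follows from the same Fatou/Riemann-sum argument used there for $i=2$ (cf.~\eqref{eq:tilde-hs-lower-2}), since $\mathcal{E}_i^{(\infty)}=\widetilde{\mathcal{E}}_i^{(\infty)}$. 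Because $y-x\in[1-\tfrac2i,1]$ on the domain of integration, the weight $(y-x)^{-1-2s}$ lies between $1$ and $b_i=(1-\tfrac2i)^{-1-2s}$ from \eqref{eq:boundedkernel}, so $\eta_i\leq\xi_i\leq b_i\,\eta_i$ where $\eta_i:=\int_0^{1/i}\int_{1-1/i}^1|u(x)-u(y)|^2\,dy\,dx$. Combining, $\tfrac{2\lambda_i}{i^2}(i^2\eta_i)\leq\mathcal{E}_i^{(\infty)}(u)\leq\tfrac{2\Lambda_i b_i}{i^2}(i^2\eta_i)$.

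The heart of the matter is to show $i^2\eta_i\to|u(0)-u(1)|^2$. Expanding the square and using $\int_{1-1/i}^1dy=\int_0^{1/i}dx=1/i$ gives the exact identity $i^2\eta_i=A_i^-(|u|^2)-2A_i^-(u)A_i^+(u)+A_i^+(|u|^2)$, where $A_i^-(f):=i\int_0^{1/i}f$ and $A_i^+(f):=i\int_{1-1/i}^1f$ are the averages of $f$ over the shrinking endpoint intervals. By the Lebesgue differentiation theorem $A_i^-(f)\to f(0)$ and $A_i^+(f)\to f(1)$ for $f\in\{u,|u|^2\}$, so $i^2\eta_i\to|u(0)|^2-2u(0)u(1)+|u(1)|^2=|u(0)-u(1)|^2$. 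Letting $i\to\infty$ in the squeeze and using Assumption \ref{A:lambdas} (so $\lambda_i/i^2\to1$, $\Lambda_i/i^2\to1$) together with $b_i\to1$, both outer quantities converge to $2|u(0)-u(1)|^2$, which yields the claim.

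The main obstacle is precisely the last step: justifying $A_i^\pm(u)\to u(0),u(1)$ for a function that is only in $L^2$ (or in $H^s$ with $s\leq\tfrac12$, hence possibly discontinuous at the endpoints). This forces the interpretation of $u(0)$ and $u(1)$ as the Lebesgue values of $u$ at $0$ and $1$, and one must take $0,1$ to be Lebesgue points of both $u$ and $|u|^2$. A secondary technical point is confirming the lower ellipticity bound $\mathcal{E}_i^{(\infty)}(u)\geq 2\lambda_i\,\xi_i$ for $i>2$, which is not stated verbatim earlier but is obtained by repeating the $i=2$ computation word for word.
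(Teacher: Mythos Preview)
Your proof is correct and follows essentially the same approach as the paper: sandwich $\mathcal{E}_i^{(\infty)}(u)$ between $2\lambda_i$ and $2\Lambda_i$ times the weighted double integral over $[0,\tfrac1i]\times[1-\tfrac1i,1]$, rescale by $i^{-2}$, and invoke the Lebesgue differentiation theorem together with Assumption~\ref{A:lambdas}. The only cosmetic difference is that the paper applies the differentiation theorem directly to the double average of $\frac{|u(x)-u(y)|^2}{(y-x)^{1+2s}}$ at the point $(0,1)$, whereas you first strip off the bounded weight via $b_i\to1$ and expand the square to reduce to one-dimensional endpoint averages of $u$ and $|u|^2$; both variants share the implicit Lebesgue-point caveat you correctly flag, which the paper does not address either.
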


\begin{proof}
Without loss of generality, assume that $i_0>1$. The case $i_0=1$ is similar. 

Fix $i > i_0$. 
Since $\overline{V}_i^* \subset \overline{V}_{i_0}^*$, \eqref{eq:inclusion_domains} implies that $u\big|_{\overline{V}_i^*} \in \mathcal{F}_i^{(\infty)}$.

Next, observe that
\begin{align*}
\lim_{i \to \infty} \mathcal{E}_{i}^{(\infty)}(u\big|_{\overline{V}_{i}^*})
    &\leq \lim_{i \to \infty} 2\Lambda_{i} \int_0^{\frac{1}{i}} \int_{1 - \frac{1}{i}}^1 \frac{|u(x) - u(y)|^2}{(y-x)^{1+2s}} \, dy \, dx\\
    &= 2 \lim_{i \to \infty} \left(\frac{\Lambda_{i}}{i^2} \right) 
    \fint_0^{\frac{1}{i}} \fint_{1 - \frac{1}{i}}^1 \frac{|u(x) - u(y)|^2}{(y-x)^{1+2s}} \, dy \, dx
    = 2\frac{|u(0) - u(1)|^2}{(1-0)^{1+2s}},
\end{align*}
where use \eqref{eq:Ci-condition-moved} and the Lebesgue differentiation theorem in the last line.  
Similarly, from below, we estimate
\[
\lim_{i \to \infty} \mathcal{E}_{i}^{(\infty)}(u\big|_{\overline{V}_{i}^*})
    \geq 2 \lim_{i \to \infty} \left(\frac{\lambda_{i}}{i^2} \right) 
    \fint_0^{\frac{1}{i}} \fint_{1 - \frac{1}{i}}^1 \frac{|u(x) - u(y)|^2}{(y-x)^{1+2s}} \, dy \, dx 
    = 2 |u(0) - u(1)|^2.
\]
\end{proof}

In summary, under Assumption~\ref{A:lambdas}, the energies $\mathcal{E}_i^{(n)}$ and $\mathcal{E}_i^{(\infty)}$ yield the same functional on $\ell(\{0,1\})$ as $i\to\infty$.

\begin{rem}
Let $u \in L^{\infty}(V_{i_0}^{(\infty)})$ for a fixed $i_0 \in \N$. 
If the sequence $\frac{\lambda_i}{i^2} \to +\infty$, then we can show that $\mathcal{E}_i^{(n)}(u\big|_{V_i^{(n)}})$ and $\mathcal{E}_i^{(\infty)}(u\big|_{\overline{V}_i^*})$ both diverge to $+\infty$ as $i \to \infty$.
On the other hand, 
if $\frac{\Lambda_i}{i^2} \to 0$, then we can show that 
$\mathcal{E}_i^{(n)}(u\big|_{V_i^{(n)}})$ and $\mathcal{E}_i^{(\infty)}(u\big|_{\overline{V}_i^*})$ both converge to zero. 
\end{rem}

%%%%%%%%%%%%%%%%%%%%%%%%%%%%%%%%%%%%%
\begin{appendix}
%%%%%%%%%%%%%%%%%%%%%%%%%%%%%%%%%%%%%
\section{Local vs.~nonlocal energies}\label{sec:appendixA}
%%%%%%%%%%%%%%%%%%%%%%%%%%%%%%%%%%%%%%

For expository purposes, we find it helpful to present the construction of classical (local) energies on the unit interval and showcase how the usual techniques for compatibility are not applicable in the nonlocal setting. 

%%%%%%%%%%%%%%%%%%%
\subsection{Graph-directed construction of a local energy on the unit interval}
%%%%%%%%%%%%%%%%%%%

We briefly review the graph-directed construction of a local energy on the unit interval. Our presentation follows the same general layout as Sections \ref{sec:construction} and  \ref{sec:discrete}. 

\subsubsection*{Index space}
The index space is the directed graph $(1, E_L)$ consisting of a single vertex trivially denoted by $1$ and an edge set $E_L=\{e_{1,1}, e_{1,1}'\}$ with two directed edges starting and terminating at vertex $1$, see Figure \ref{fig:graph-directed-local}. On both edges $e \in E_L$, we assign the same weight $r>0$. 
A path $\sigma$ of length $n \in \N$ is a concatenation of $n$ edges in $E_L$. Let $E_L^{(n)}$ denote the collection of paths of length $n$.

\begin{figure}[hbt]
\begin{tikzpicture}[ node distance={19mm}, thick, main/.style = {draw, circle}] 
\node[main] (1) {$1$}; 
\draw[->] (1) to [out=25,in=-25,looseness=8] (1);
\draw[->] (1) to [out=155,in=205,looseness=8] (1);
\node [left=.6cm of 1] {\small $e_{1,1}$};
\node [right=.6cm of 1] {\small $e_{1,1}'$};
\end{tikzpicture}
\caption{Directed graph $(1, E_L)$}
\label{fig:graph-directed-local}
\end{figure}
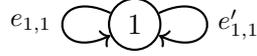

\subsubsection*{Physical space}
To each $e \in E_L$, we associate the map $\phi_e$ given in \eqref{E:def_phi_e}. 
Given a path $\sigma$, let $\phi_\sigma$ denote the corresponding composition of maps $\phi_e$, $e \in E_L$ from \eqref{eq:composition}. 
The finite sequence of approximations 
\[
V_1^{(0)} = \{0,1\}, \quad 
\bigcup_{\sigma \in E_L^{(n)}} \phi_\sigma(V_1^{(0)}) = \left\{\frac{k}{2^n}\right\}_{k=0}^{2^n} = V_1^{(n)}, \quad n \geq 1,
\]
is the same as \eqref{eq:nodes} for $i=1$ and gives rise to the same the physical space $\overline{V}_1^* = [0,1]$.

\subsubsection*{Electrical networks}

For $n \geq 0$, define a new set of wires ${W}_L^{(n)}$ by
\[
{W}_L^{(n)} = \bigcup_{\sigma \in E_L^{(n)}} \{ \phi_\sigma(0), \phi_\sigma(1)\} = 
\left\{ \left\{x, x \pm \frac{1}{2^{n}}\right\} : x \in V_1^{(n)} \setminus \{0,1\}\right\}. 
\]
The corresponding electrical network is $(V_1^{(n)}, {W}_L^{(n)})$.  
In contrast to \eqref{eq:wires}, each node in  $(V_1^{(n)}, {W}_L^{(n)})$ is only connected to its nearest neighbor with respect to the Euclidean distance (see Figure \ref{fig:local-networks}). 
The resistance on a wire in $W_L^{(n)}$ at stage $n \geq 0$ is given by $r^n$ where $r$ is the edge weight on $e \in E_L$. 

\subsubsection*{Approximating energies}
Define $\mathcal{E}_L^{(0)}(u) = (u(0) - u(1))^2$.
For $n \in \N$, the energy $(\mathcal{E}_L^{(n)}, \ell(V_1^{(n)}))$ associated with $(V_1^{(n)}, W_L^{(n)})$ is given by
\[
\mathcal{E}_L^{(n)}(u) = \sum_{\sigma \in E_L^{(n)}} r^{-n} \mathcal{E}_L^{(0)}(u \circ \phi_\sigma), \quad u \in \ell(V_1^{(n)})
\]
and one can check that
\[
 \mathcal{E}_L^{(n)}(u)   
    = \sum_{\{x,y\} \in W_L^{(n)}} r^{-n} (u(x) - u(y))^2, \quad u \in \ell(V_1^{(n)}).
\]

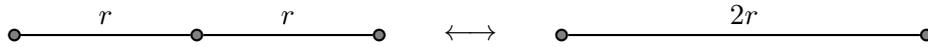
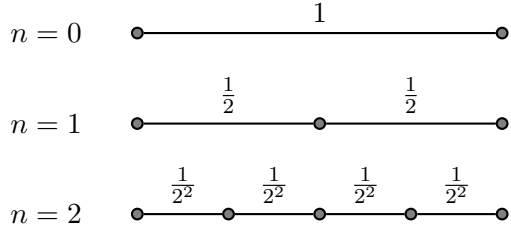
\begin{figure}[h]
\centering
\subfloat[Network reduction from stage 1 to stage 0 \label{fig:local-reduction}]{
 \begin{tikzpicture}[thick,scale=1.2, main/.style ={circle, draw, fill=black!50,
                        inner sep=0pt, minimum width=4pt}]
%%%first graph
\node[main]  (0) at (0,0) {};
\node[main]  (1) at (2,0) {};
\node[main]  (2) at (4,0) {};
\path
	(0) edge node[above] {$r$} (1);
\path
	(1) edge node[above] {$r$} (2);
%%%
\node at (5,0) {$\longleftrightarrow$};
%%%second graph
\node[main]  (0) at (6,0) {};
\node[main]  (2) at (10,0) {};
\path
	(0) edge node[above] {$2r$} (2);
\node[opacity=0] at (0,-.5) {};
\end{tikzpicture}
}

\medskip

\subfloat[Electrical networks at stages $n=0,1,2$
\label{fig:local-networks}]{
 \begin{tikzpicture}[thick,scale=1.2, main/.style ={circle, draw, fill=black!50,
                        inner sep=0pt, minimum width=4pt}]
%%labels
\node at (-1,0) {$n=0$};
\node at (-1,-1) {$n=1$};
\node at (-1,-2) {$n=2$};
%%in=0
\node[main]  (0) at (0,0) {};
\node[main]  (4) at (4,0) {};
    \path (0) edge node[above] {$1$} (4);
%%n=1
\node[main]  (0) at (0,-1) {};
\node[main]  (2) at (2,-1) {};
\node[main]  (4) at (4,-1) {};
    \path (0) edge node[above] {$\frac12$} (2);
    \path (2) edge node[above] {$\frac12$} (4);
%%n=2
\node[main]  (0) at (0,-2) {};
\node[main]  (1) at (1,-2) {};
\node[main]  (2) at (2,-2) {};
\node[main]  (3) at (3,-2) {};
\node[main]  (4) at (4,-2) {};
    \path (0) edge node[above] {$\frac{1}{2^2}$} (1);
    \path (1) edge node[above] {$\frac{1}{2^2}$} (2);
    \path (2) edge node[above] {$\frac{1}{2^2}$} (3);
    \path (3) edge node[above] {$\frac{1}{2^2}$} (4);
\node[opacity=0] at (-1.75,-2.5) {};
\node[opacity=0] at (5.75,-2.5) {};
\end{tikzpicture}
}
\caption{Electrical networks associated to discrete approximating energies of a local energy on the unit interval}
%\label{fig:local}
\end{figure}

\subsubsection*{Compatibility}

Since the wires at stage $n=1$ are connected in series, we reduce as in Figure \ref{fig:local-reduction} to find that the networks at stages $n=0$ and $n=1$ are electrically equivalent if and only if $r = \frac12$. 
At any higher stage $n\geq 1$, all wires are connected in series, so by assigning the resistance $r^n =\frac{1}{2^n}$ to each wire in ${W}_L^{(n)}$, it follows that the sequence of networks 
$\{(V_1^{(n)}, W_L^{(n)})\}_{n \geq 0}$ is electrically equivalent, see Figure \ref{fig:local-networks}. 
Consequently, the sequence of energies $(\mathcal{E}_L^{(n)}, \ell(V_1^{(n)}))$ satisfy
\[
 \mathcal{E}_L^{(n)}(u)   
    = \sum_{x \in V_1^{(n)}} \sum_{\substack{y \in V_1^{(n)}\\
    \text{s.t.}~|x-y|=2^{-n}}} 2^{n} (u(x) - u(y))^2, \quad u \in \ell(V_1^{(n)})
\]
and are compatible.  
Moreover, the resulting effective resistance distance is equal to the Euclidean distance. 
One can check using the definition of Riemann integration that
\[
\lim_{n \to \infty}
 \mathcal{E}_L^{(n)}(u)
    = \lim_{n \to \infty} 2 \sum_{k=0}^{2^n-1} \left|  \frac{u \left( \frac{k}{2^n}\right) - u \left( \frac{k+1}{2^n}\right)}{2^{-n}}\right|^2 2^{-n}
    =  2 \int_0^1 | \nabla u|^2 \, dx. 
\]

%%%%%%%%%%%%%%%%%%%
\subsection{Compatibility challenges in the nonlocal setting}
%%%%%%%%%%%%%%%%%%%

Here, we 
explain why the typical electrical network approaches for compatibility, including those from the previous subsection, do not readily apply in our context.  
Recall that compatibility of energy forms is characterized by  electrical equivalence of the corresponding electrical networks.

%%%%%%%

\subsubsection*{Network reduction rules}

A key feature for constructing local energies on the  unit interval is the ability to divide the network approximations into appropriate subnetworks that partition the wires of the network and that pairwise share at most one 
node (from an electrical perspective, this node represents the shared point of current flow). For instance, in Figure \ref{fig:local-networks} we see that the network at stage $n=2$ consists of two scaled copies of the network at stage $n=1$. 
Because the two copies (subnetworks) are connected by a single node, one may apply network reduction rules within each subnetwork without impacting the other subnetwork. 
Moreover, because of the self-similarity of the copies, one may iterate these reduction rules from stage to stage. 
This is a specific example of the typical procedure used to construct (local) energies on post critically finite (p.c.f.) fractals in \cite{Kigami01}.

To construct nonlocal energy forms on the unit interval, we build networks which are complete/complete bipartite (see Section \ref{sec:construction}).  
As the number of nodes increases, the additional wires quickly preclude the ability to construct appropriate wire-partitions. Thus, after stage $n=1$, there is no context in which to apply simple network reduction rules (series/parallel/delta-wye).

%%%%%%%

\subsubsection*{Solving matrix equations}

Although simple network reduction rules cannot be applied to our complete network setting, conditions for electrical equivalence can still be determined using the (combinatorial) graph Laplacian for the electrical network. 
For this, consider a wire $\{x,y\} \in W_i^{(n)}$ at stage $n \geq 0$ associated with the path $\sigma_{x,y}^{(n)} \in E_i^{(n)}$ in the index space. The inverse of the resistance is called the conductance and is denoted by 
\[
c_i^{(n)}(x,y) = \delta_{\sigma_{x,y}^{(n)}}^{-1}. 
\]
The weighted degree of $x \in V_i^{(n)}$ is the sum of the conductances of all wires connected to $x$ and is denoted by 
\begin{equation}\label{eq:degree}
c_i^{(n)}(x) = \sum_{\{y \in V_i^{(n)} : \{x,y\} \in W_i^{(n)}\}}c_i^{(n)}(x,y). 
\end{equation}
The graph Laplacian associated to the network $(V_i^{(n)}, W_i^{(n)})$ is the matrix $L_i^{(n)}$ with the values $c_i^{(n)}(x)$ along the diagonal and $-c_i^{(n)}(x,y)$ in the off-diagonal entries. 
For example, when $n=2$, the graph Laplacian for $i=1$ is given by
\begin{equation}\label{eq:Laplace-example}
L_1^{(2)} =  \begin{pmatrix}
c_1^{(2)}(0)
%\frac{1}{r_{11}^2} +\frac{1}{r_{11}r_{12}}+\frac{1}{r_{12}r_{23}} +\frac{1}{r_{12}r_{24}} 
& - \frac{1}{r_{11}^2} 
& - \frac{1}{r_{11}r_{12}}
& - \frac{1}{r_{12}r_{23}} 
& -\frac{1}{r_{12}r_{24}}\\[.5em]
-\frac{1}{r_{11}^2} 
&c_1^{(2)}\left(\frac14\right)
%& \frac{2}{r_{11}^2} + \frac{1}{r_{12}r_{22}}+ \frac{1}{r_{12}r_{23}}
& -\frac{1}{r_{11}^2}
&-\frac{1}{r_{12}r_{22}}
&-\frac{1}{r_{12}r_{23}}\\[.5em]
-\frac{1}{r_{11}r_{12}} 
&-\frac{1}{r_{11}^2} 
&c_1^{(2)}\left(\frac12\right)
%& \frac{2}{r_{11}r_{12}} +\frac{2}{r_{11} r_{11}} 
&-\frac{1}{r_{11}^2} 
&-\frac{1}{r_{11}r_{12}} \\[.5em]
-\frac{1}{r_{12} r_{23}} 
&- \frac{1}{r_{12}r_{22}}
&- \frac{1}{r_{11}^2}
&c_1^{(2)}\left(\frac34\right)
%&\frac{2}{r_{11}^2} + \frac{1}{r_{12}r_{22}}+\frac{1}{r_{12} r_{23}} 
& -\frac{1}{r_{11}^2}\\[.5em]
-\frac{1}{r_{12}r_{24}} 
& -\frac{1}{r_{12}r_{23}}
&-\frac{1}{r_{11}r_{12}}
& -\frac{1}{r_{11}^2}
&c_1^{(2)}(1)
%&\frac{1}{r_{11}^2} + \frac{1}{r_{11}r_{12}} +\frac{1}{r_{12}r_{23}}+\frac{1}{r_{12}r_{24}} 
\end{pmatrix},
\end{equation}
where $c_1^{(2)}(x)>0$ are such that the row sums are zero.

The effective resistance (see Definition \ref{defn:ER}) between two nodes $x,y \in V_i^{(n)}$ 
can be written as
\begin{equation*}\label{eq:R-MPinverse}
R_i^{(n)}(x,y) = {b_{xy}^{(n)}}^T(L_i^{(n)})^\dagger b_{xy}^{(n)},
\end{equation*}
where $(L_i^{(n)})^\dagger$ is the Moore-Penrose inverse of the corresponding graph Laplacian and the $b_{xy}^{(n)}$ are differences of standard basis vectors corresponding to $x$ and $y$ at stage $n$.
To determine constraints on the class of resistances that give electrical equivalence between stages $n-1$ and $n$, one would need to symbolically solve the equation 
\begin{equation}\label{eq:MP}
{b_{xy}^{(n)}}^T\left(L_i^{(n)}\right)^\dagger b_{xy}^{(n)}={b_{xy}^{(n-1)}}^T\left(L_i^{(n-1)}\right)^\dagger b_{xy}^{(n-1)},
\end{equation}
for all $x,y \in V_i^{(n-1)}$. Applying this method directly was computationally infeasible.
Even when $n=2$ and $i=1$, MATLAB was not able to solve \eqref{eq:MP} for the values of $L_1^{(2)}$ given in \eqref{eq:Laplace-example}.

\subsubsection*{Star mesh transform} 
As an alternative to solving equation \eqref{eq:MP}, we sought to employ a more complex network reduction rule known as the star-mesh transform. 
The star-mesh transform naturally aligns with our setting as it is used to reduce networks by removing one node at a time from a network, along with all connected edges \cite{LyonsPeres}. 

Let us outline the details for an electrical network $(V,W)$ with effective resistance $R(x,y)$ and conductances $c(x,y)$. 
Fix a node $x_0\in V$. 
Let $R'$ denote the effective resistance of the network $(V\setminus \{x_0\},W')$ where $W'$ is the collection of wires $\{x,y\}$, such that $x,y \in V \setminus \{x_0\}$ with $x\not= y$, determined by the conductances
\[
c'(x,y)=c(x,y)+\frac{c(x,x_0)c(x_0,y)}{c(x_0)}.
\]
Then $R'(x,y)=R(x,y)$ for all $x, y\in V\setminus\{x_0\}$ and $c'(x)=c(x) - \frac{c(x_0,x)^2}{c(x_0)}$.

The star-mesh transform allows one to remove a single vertex from a network without changing the effective resistances between all other vertices \cite{LyonsPeres}. 
In our setting, one would apply the star-mesh transform $|V_i^{(n+1)}|-|V_i^{(n)}| \geq 2^n$ times to determine conditions on the class of weights $r_{j,k}$ that ensure electrical equivalence between the networks $(V_i^{(n)}, W_i^{(n)})$ and $(V_i^{(n-1)}, W_i^{(n-1)})$.  
However, these constraint equations quickly become unwieldy in practice because of the new $c(x_0)$ term introduced at each application. 
Moreover, due to the graph-directed structure, the weights $r_{j,k}$ appear in the resistances in the networks $(V_i^{(n)}, W_i^{(n)})$ for all $1 \leq i \leq j$ and $n \geq N$ for some $N = N(i) \geq 0$.
Consequently, for electrical equivalence to be achieved between consecutive stages in a given component, the weight $r_{j,k}$ must satisfy a collection of constraints.

%%%%%%%%%%%%%%%%%%%%%%%%%%%%%%%%%%%%%%%%%%%%
\section{
Effective resistance distance estimates}\label{sec:appendixB}
%%%%%%%%%%%%%%%%%%%%%%%%%%%%%%%%%%%%%%%%%%%%

When the sequence of discrete energies is 
compatible, the  
effective resistances $R_1^{(n)}(x,y)$ are 
equal on common domain values (c.f.~Definitions~\ref{defn:ER},~\ref{defn:EE}, and~\ref{defn:Compatible}). In this case, we may define a single effective resistance distance on $V_1^*$ as
$$
R_1(x,y)=R_1^{(n)}(x,y)~when~
~x, y \in V_1^{(n)}.
$$

Assuming compatible energies, we obtain upper and lower bounds on this effective resistance distance.

\begin{lem}
Let $i=1$. If $\{(V_1^{(n)}, W_1^{(n)})\}_{n=0}^{\infty}$ are sequence of electrically equivalent networks, then for all $n 
\geq 0$, it holds that
\[
R_1(x,y)=R_1^{(n)}(x,y) \leq 2 \sum_{k=0}^n r_{1,1}^k \quad \hbox{for all}~x,y \in V_1^{(n)}. 
\]
Consequently, if $r_{1,1} \in (0,1)$, then $R_1(x,y)$ is uniformly bounded from above by $2/(1-r_{1,1})$. 
\end{lem}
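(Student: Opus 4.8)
The plan is to control everything through the resistance from the left endpoint $0$ to an arbitrary node and then invoke the metric property of $R_1^{(n)}$. Set $M_n := \max_{x \in V_1^{(n)}} R_1^{(n)}(0,x)$. Since effective resistance is a metric on $V_1^{(n)}$ and $0 \in V_1^{(n)}$ for every $n$, the triangle inequality gives, for all $x,y \in V_1^{(n)}$,
\[
R_1^{(n)}(x,y) \le R_1^{(n)}(x,0) + R_1^{(n)}(0,y) \le 2M_n,
\]
so it suffices to prove $M_n \le \sum_{k=0}^n r_{1,1}^k$. The factor $2$ out front in the statement matches this single triangle‑inequality step exactly.

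Next I would establish the recursion $M_n \le r_{1,1}(1 + M_{n-1})$ with base case $M_0 = R_1^{(0)}(0,1) = 1$ (the stage‑$0$ network is a single wire of resistance $1$). For the inductive step I exploit the self‑similar structure of Remark \ref{rem:copies}: $(V_1^{(n)},W_1^{(n)})$ contains the scaled copy $\phi_{e_{1,1}}\big((V_1^{(n-1)},W_1^{(n-1)})\big)$ on $[0,\tfrac12]$ and $\phi_{e_{1,1}'}\big((V_1^{(n-1)},W_1^{(n-1)})\big)$ on $[\tfrac12,1]$, in each of which every resistance is multiplied by $r_{1,1}$ in view of \eqref{E:def_resistance_wire}. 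By the variational characterization in Definition \ref{defn:ER}, deleting the remaining (long‑range) wires only lowers the total energy and hence only raises the effective resistance; restricting to the left copy, whose effective resistance between $0=\phi_{e_{1,1}}(0)$ and $\phi_{e_{1,1}}(2x)$ equals $r_{1,1}R_1^{(n-1)}(0,2x)$, yields $R_1^{(n)}(0,x) \le r_{1,1} M_{n-1}$ for $x$ in the left half. For $x$ in the right half I pass through the cut node $\tfrac12$:
\[
R_1^{(n)}(0,x) \le R_1^{(n)}(0,\tfrac12) + R_1^{(n)}(\tfrac12,x) \le r_{1,1} + r_{1,1}M_{n-1},
\]
using $R_1^{(n)}(\tfrac12,x) \le r_{1,1}R_1^{(n-1)}(0,2x-1) \le r_{1,1}M_{n-1}$ together with $R_1^{(n)}(0,\tfrac12) \le r_{1,1}R_1^{(n-1)}(0,1) = r_{1,1}$. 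The last equality $R_1^{(n-1)}(0,1)=R_1^{(0)}(0,1)=1$ is precisely where electrical equivalence is used. Taking the maximum over $x$ gives $M_n \le r_{1,1}(1+M_{n-1})$.

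Finally I would close the induction: assuming $M_{n-1}\le \sum_{k=0}^{n-1}r_{1,1}^k$, the recursion yields $M_n \le r_{1,1} + \sum_{k=1}^{n} r_{1,1}^k$, and since $r_{1,1}\le 1$ this is bounded by $1 + \sum_{k=1}^n r_{1,1}^k = \sum_{k=0}^n r_{1,1}^k$. Combined with $R_1^{(n)}(x,y)\le 2M_n$ this is the asserted bound. For the concluding statement, when $r_{1,1}\in(0,1)$ the geometric series converges, so $\sum_{k=0}^n r_{1,1}^k \le (1-r_{1,1})^{-1}$ and therefore $R_1(x,y)\le 2/(1-r_{1,1})$ uniformly in $n$.

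The main obstacle is the inductive step, and specifically the bookkeeping of the self‑similar reduction: one must check that passing to a single half‑copy is a genuine wire‑deletion (so that the monotonicity of $R_1^{(n)}$ in the wire set applies), that the resistances in that copy scale by exactly $r_{1,1}$, and that electrical equivalence is what pins $R_1^{(n-1)}(0,1)=1$ so that the coarse ``crossing'' term contributes only $r_{1,1}$ rather than the (a priori much larger) resistance of the direct long‑range wire $\{0,\tfrac12\}$. I would also flag that the final elementary step uses $r_{1,1}\le 1$, which is automatic in the regime $r_{1,1}\in(0,1)$ of the concluding statement; for $r_{1,1}>1$ the crude estimate $R_1^{(n)}(0,\tfrac12)\le r_{1,1}$ must be sharpened via the matching condition \eqref{eq:matching}, which makes the crossing term strictly smaller.
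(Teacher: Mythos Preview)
Your route via $M_n=\max_{x}R_1^{(n)}(0,x)$, Rayleigh monotonicity, and the self-similar half-copies is correct and is genuinely different from the paper's proof. The recursion $M_n\le r_{1,1}(1+M_{n-1})$ with $M_0=1$ is valid, and it does yield the ``Consequently'' part for $r_{1,1}\in(0,1)$. However, as you yourself flag, closing the induction to $M_n\le\sum_{k=0}^n r_{1,1}^k$ needs $r_{1,1}\le 1$; when $r_{1,1}>1$ your recursion unrolls to $M_n\le\sum_{k=1}^n r_{1,1}^k+r_{1,1}^n$, which is strictly larger than $\sum_{k=0}^n r_{1,1}^k$, so the first displayed inequality of the lemma is not established in that range. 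Your proposed fix via the matching condition~\eqref{eq:matching} is not available: the hypothesis of the lemma is electrical equivalence of the full sequence, not~\eqref{eq:matching}, and~\eqref{eq:matching} only concerns the passage from stage~$0$ to stage~$1$.

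The paper's argument sidesteps this entirely with a more elementary induction that never passes through $0$ or $\tfrac12$. Instead it moves each endpoint to its nearest neighbour in $V_1^{(n-1)}$: if $y\in V_1^{(n)}\setminus V_1^{(n-1)}$ then $y\pm 2^{-n}\in V_1^{(n-1)}$, and the wire $\{y,y\pm 2^{-n}\}$ has resistance exactly $\delta_{\sigma_{y,y\pm 2^{-n}}^{(n)}}=r_{1,1}^n$ (its index-space path stays at vertex~$1$ throughout), so $R_1^{(n)}(y,y\pm 2^{-n})\le r_{1,1}^n$. One such step per endpoint, followed by electrical equivalence and the inductive bound $2\sum_{k=0}^{n-1}r_{1,1}^k$ on $V_1^{(n-1)}$, gives at most $2r_{1,1}^n+2\sum_{k=0}^{n-1}r_{1,1}^k=2\sum_{k=0}^{n}r_{1,1}^k$ with no restriction on $r_{1,1}$. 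This is both simpler and covers the full statement; your self-similar reduction is more structural but, as written, only delivers the lemma in the regime $r_{1,1}\le 1$.
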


\begin{proof}
For $n=0$, we simply note that
\[
R_1^{(0)}(0,1)= 1 < 2 r_{1,1}^0. 
\]
Let $n >0$ and assume inductively that the statement holds for $n-1$. We will show the statement holds for $n$. 
Let $x,y \in V_1^{(n)}$ with $x \not= y$. We break into cases based on whether the nodes $x$ and $y$ are born at stage $n$ or at a previous stage. 

\medskip

\noindent{\bf Case 1}. Suppose that $x,y \in V_1^{(n-1)}$. 
By electrical equivalence and the inductive hypothesis,
\[
R_1^{(n)}(x,y)
    = R_1^{(n-1)}(x,y) \leq 2 \sum_{k=0}^{n-1} r_{1,1}^k \leq 2 \sum_{k=0}^n r_{1,1}^k. 
\]

\medskip

\noindent{\bf Case 2}. Suppose that $x \in V_1^{(n-1)}$, $y \in V_1^{(n)} \setminus V_1^{(n-1)}$.
Without loss of generality, suppose that $x < y$. 
Note that $y - 2^{-n} \in V_1^{(n-1)}$. Therefore, by electrical equivalence and the inductive hypothesis,
\begin{align*}
R_1^{(n)}(x,y)
    &\leq R_1^{(n)}(x,y - 2^{-n}) + R_1^{(n)}(y-2^{-n},y) \\
    &= R_1^{(n-1)}(x,y - 2^{-n}) + R_1^{(n)}(y-2^{-n},y) \\
    &\leq 2 \sum_{k=0}^{n-1} r_{1,1}^k+ R_1^{(n)}(y-2^{-n},y). 
\end{align*}
Since the effective resistance between $x$ and $y$ is bounded by the resistance on the wire connecting $x$ and $y$,
\[
R_1^{(n)}(y-2^{-n},y) \leq \delta_{\sigma_{y-2^{-n},y}^{(n)}} = r_{1,1}^{n}. 
\]
Combining, we have
\[
R_1^{(n)}(x,y)
    \leq  2 \sum_{k=0}^{n-1} r_{1,1}^k+  r_{1,1}^{n} \leq 2 \sum_{k=0}^{n} r_{1,1}^k.
\]

\medskip

\noindent{\bf Case 3}. Suppose that  $x,y \in V_1^{(n)} \setminus V_1^{(n-1)}$. 
Without loss of generality, suppose that $x < y$. 
Note that $x+2^{-n},y - 2^{-n} \in V_1^{(n-1)}$. As in Case 2, we find 
\begin{align*}
R_1^{(n)}(x,y)
    &\leq R_1^{(n)}(x,x+2^{-n}) +R_1^{(n)}(x+2^{-n},y - 2^{-n})+ R_1^{(n)}(y-2^{-n},y) \\[.5em]
    &\leq \delta_{\sigma_{x,x+2^{-n}}^{(n)}} 
        + 2\sum_{k=0}^{n-1} r_{1,1}^k
        + \delta_{\sigma_{y-2^{-n},y}^{(n)}}
        = 2\sum_{k=0}^{n} r_{1,1}^k.
\end{align*}

We have exhausted all cases. Consequently, the lemma holds by induction. 
\end{proof}

Recall the definition of the weighted degrees $c_i^{(n)}(x)$ introduced in \eqref{eq:degree}. 

\begin{lem}\label{lem:lowerbound}
Let $i=1$. If $\{(V_1^{(n)}, W_1^{(n)})\}_{n=0}^{\infty}$ are sequence of electrically equivalent networks, then for all $n 
\geq 0$, it holds that
\[
R_1(x,y)\geq \frac{1}{\min\{c_1^{(n)}(x), c_1^{(n)}(y)\}}\quad\hbox{for all}~\{x, y\} \in W_1^{(n)}.
\]
\end{lem}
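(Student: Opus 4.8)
The statement is a lower bound for the effective resistance, so the natural tool is the variational (Dirichlet-principle) characterization in Definition \ref{defn:ER}: it suffices to exhibit a single test function $u$ for which the Rayleigh quotient $|u(x)-u(y)|^2/\mathcal{E}_1^{(n)}(u,u)$ is at least $1/\min\{c_1^{(n)}(x),c_1^{(n)}(y)\}$. Since $\{x,y\}\in W_1^{(n)}$ forces $x,y\in V_1^{(n)}$, electrical equivalence of the sequence gives $R_1(x,y)=R_1^{(n)}(x,y)$, so the entire argument can be carried out at the fixed stage $n$.

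First I would rewrite the energy in conductance form. By the one-to-one correspondence between wires and paths (Lemma \ref{lem:last edge}) and the definitions \eqref{E:def_finite_energy}, \eqref{E:def_resistance_wire}, the discrete energy is
\begin{equation*}
\mathcal{E}_1^{(n)}(u)=\sum_{\substack{\{a,b\}\in W_1^{(n)}\\ a<b}} c_1^{(n)}(a,b)\,(u(a)-u(b))^2,
\qquad c_1^{(n)}(a,b)=\delta_{\sigma_{a,b}^{(n)}}^{-1}.
\end{equation*}
This is exactly the form to which the weighted degree \eqref{eq:degree} is adapted. The key computation is then to feed in the indicator test function $u=\mathbf{1}_{\{x\}}$: for any wire $\{a,b\}$ the term $(u(a)-u(b))^2$ equals $1$ precisely when exactly one endpoint equals $x$ and vanishes otherwise, so that
\begin{equation*}
\mathcal{E}_1^{(n)}(\mathbf{1}_{\{x\}})
=\sum_{\{b:\,\{x,b\}\in W_1^{(n)}\}} c_1^{(n)}(x,b)
=c_1^{(n)}(x).
\end{equation*}
Since $u(x)=1$ and $u(y)=0$ (as $x\neq y$), the Rayleigh quotient equals $1/c_1^{(n)}(x)$, whence $R_1^{(n)}(x,y)\geq 1/c_1^{(n)}(x)$.

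By symmetry, using $u=\mathbf{1}_{\{y\}}$ gives $R_1^{(n)}(x,y)\geq 1/c_1^{(n)}(y)$ as well. Taking the larger of the two lower bounds yields
\begin{equation*}
R_1(x,y)=R_1^{(n)}(x,y)\geq \max\Big\{\tfrac{1}{c_1^{(n)}(x)},\tfrac{1}{c_1^{(n)}(y)}\Big\}
=\frac{1}{\min\{c_1^{(n)}(x),c_1^{(n)}(y)\}},
\end{equation*}
which is the claim. There is no real obstacle here: the only point requiring care is the identification of the indicator's energy with the weighted degree, and the observation that the supremum in Definition \ref{defn:ER} need only be tested against these two admissible functions (both have positive energy since $x$ and $y$ each carry at least one wire). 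The estimate is sharp in the sense that it reflects the obvious fact that the resistance between two nodes cannot be smaller than that seen by cutting off one of them from the rest of the network.
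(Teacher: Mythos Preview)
Your proof is correct. It differs from the paper's argument: the paper invokes the Nash--Williams inequality with the single edge-cut $S=V_1^{(n)}\setminus\{x\}$ separating $x$ from $y$, which directly gives $R_1^{(n)}(x,y)\geq 1/c_1^{(n)}(x)$, and then repeats with $S=V_1^{(n)}\setminus\{y\}$. Your approach instead plugs the indicator $\mathbf{1}_{\{x\}}$ into the variational definition of effective resistance (Definition~\ref{defn:ER}) and reads off the same bound from the Rayleigh quotient. The two arguments are dual in the usual energy/flow sense: Nash--Williams bounds resistance from below via cuts, while your test function does so via the Dirichlet principle. Your route is more self-contained, needing nothing beyond the definitions already in the paper; the paper's route situates the bound within a standard tool that could in principle exploit multiple disjoint cutsets, though as the authors themselves remark, the completeness of $(V_1^{(n)},W_1^{(n)})$ means no further cutsets are available and Nash--Williams cannot improve on this single-vertex cut.
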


\begin{proof}
Let $\{x,y\} \in W_1^{(n)}$. Define $S=V_i^{(n)}\setminus \{x\}$ and notice that $S$ and its complement, $S^c$, determine an edgecut between $x$ and $y$, that is, any path connecting $x$ to $y$ must contain an edge connecting a vertex in $S$ to a vertex in $S^c$. Applying the Nash-Williams Inequality \cites{Nash-Williams,LyonsPeres}, we obtain
$$
R_1^{(n)}(x,y)\geq \frac{1}{\sum_{\{x,y\} \in W_1^{(n)}: x\in S^c, y \in S\}} c_i^{(n)}(x,y)}=\frac{1}{c_i^{(n)}(x)}.
$$
 We complete the same process for $S=V_i^{(n)}\setminus \{y\}$ and the result follows.
\end{proof}

\begin{rem} \label{rem:lowerbound1}
Since $c_i^{(n)}(x)$ increases 
as $n$ increases,
the sharpest lower bound in Lemma \ref{lem:lowerbound} is obtained by taking the minimum over all $n$ such that $\{x,y\} \in W_1^{(n)}$.
\end{rem}

\begin{rem} 
Being that the networks $(V_i^{(n)},W_i^{(n)})$ are complete, there are no other possible edge cuts once $\{x,y\}$ is fixed. 
Thus, the Nash-Williams Inequality cannot provide us with a larger lower bound than in Remark \ref{rem:lowerbound1}. 
\end{rem}

Lastly, we show that the effective resistance distance on each finite level approximation is bounded by the Euclidean distance when $i=1$ and a geodesic-type distance when $i>1$ at a fixed stage.

\begin{lem}\label{lem:R-euclidean}
Suppose Assumption \ref{A:kernel} holds and that  $\{(V_i^{(n)}, W_i^{(n)})\}_{n=0}^{\infty}$ are sequences of electrically equivalent networks. 
Then, for $i=1$,
\[
R_1(x,y) = R_1^{(n)}(x,y) \leq \frac{(2^{n+1})^2}{\lambda_1} |x-y|^{1+2s} \quad \hbox{for all}~x,y \in V_1^{(n)},
\]
and for $i>1$, if  $x,y \in V_i^{(n)}$ with $x< y$, then
\[
R_i(x,y) = R_i^{(n)}(x,y) 
\leq \frac{(2^ni)^2}{\lambda_i} 
\begin{cases}
(y-x)^{1+2s}& \hbox{if}~x \in V_{i-}^{(n)},~y \in V_{i+}^{(n)} \\[.5em]
\displaystyle \min_{z \in V_{i+}^{(n)}} \left(|x-z|^{1+2s} + |y-z|^{1+2s}\right) & \hbox{if}~x,y \in V_{i-}^{(n)}\\
\displaystyle \min_{z \in V_{i-}^{(n)}} \left(|x-z|^{1+2s} + |y-z|^{1+2s}\right) & \hbox{if}~x,y \in V_{i+}^{(n)}.
\end{cases}
\]
\end{lem}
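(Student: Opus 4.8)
The plan is to reduce the statement to two standard facts about the effective resistance on a finite network. The first is that $R_i^{(n)}(x,y)\le \delta_{\sigma_{x,y}^{(n)}}$ whenever $\{x,y\}\in W_i^{(n)}$: the effective resistance between two nodes never exceeds the resistance of a single wire joining them. This follows at once from the variational definition in Definition~\ref{defn:ER}, since dropping all but the $\{x,y\}$ term of the (nonnegative) energy gives
\[
\mathcal{E}_i^{(n)}(u,u)\ge \delta_{\sigma_{x,y}^{(n)}}^{-1}\,|u(x)-u(y)|^2,
\]
so every Rayleigh quotient is at most $\delta_{\sigma_{x,y}^{(n)}}$; this is exactly the single-wire bound already invoked in the preceding lemmas of this appendix. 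The second fact is that $R_i^{(n)}$ is a metric, hence satisfies the triangle inequality, as recorded just after Definition~\ref{defn:ER}. Because the networks are assumed electrically equivalent, the values $R_i^{(n)}(x,y)$ agree on common nodes, which is what lets us write $R_i(x,y)=R_i^{(n)}(x,y)$.

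The single computational input is the conversion of the lower kernel bound, Assumption~\ref{A:kernel}\ref{item:lower}, into an upper bound on individual wire resistances. By Remark~\ref{rem:conductance-bound}, for $\{x,y\}\in W_i^{(n)}$ one has $\delta_{\sigma_{x,y}^{(n)}}\le \lambda_i^{-1}|x-y|^{1+2s}\bigl(\mu_i^{(n)}(x)\mu_i^{(n)}(y)\bigr)^{-1}$. Inserting the explicit masses computed in Section~\ref{sec:discrete}---so that $\mu_1^{(n)}(x)\mu_1^{(n)}(y)\ge (2^{-(n+1)})^2$ for $i=1$ (the extreme case being when both points are the endpoints $0,1$) and $\mu_i^{(n)}(x)\mu_i^{(n)}(y)=(2^n i)^{-2}$ for $i>1$---yields
\[
\delta_{\sigma_{x,y}^{(n)}}\le \frac{(2^{n+1})^2}{\lambda_1}\,|x-y|^{1+2s}\ \ (i=1),\qquad \delta_{\sigma_{x,y}^{(n)}}\le \frac{(2^n i)^2}{\lambda_i}\,|x-y|^{1+2s}\ \ (i>1).
\]

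With these in hand the conclusions follow by a short case analysis driven by Remark~\ref{R:complete_graph}. For $i=1$ the network is \emph{complete}, so every pair $x,y\in V_1^{(n)}$ is joined by a wire and the single-wire bound combined with the displayed estimate gives the first assertion directly. For $i>1$ the network is \emph{complete bipartite}: if $x\in V_{i-}^{(n)}$ and $y\in V_{i+}^{(n)}$ lie on opposite sides there is a direct wire and the same bound applies, giving the first branch. If instead $x,y\in V_{i-}^{(n)}$ lie on the same side there is no joining wire, but for any $z\in V_{i+}^{(n)}$ both $\{x,z\}$ and $\{z,y\}$ are wires, so the triangle inequality yields
\[
R_i^{(n)}(x,y)\le \delta_{\sigma_{x,z}^{(n)}}+\delta_{\sigma_{z,y}^{(n)}}\le \frac{(2^n i)^2}{\lambda_i}\bigl(|x-z|^{1+2s}+|y-z|^{1+2s}\bigr);
\]
minimizing over $z\in V_{i+}^{(n)}$ gives the second branch, and the case $x,y\in V_{i+}^{(n)}$ is identical after exchanging the two sides.

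This argument is essentially routine and I do not anticipate a genuine obstacle. The only points requiring care are bookkeeping ones: using the worst-case product $(2^{-(n+1)})^2$ for $i=1$ so that the single constant $(2^{n+1})^2$ covers every pair (including the two endpoints), and observing that the bipartite structure forces any same-side connection to route through a single opposite-side vertex, which is precisely what validates the two-wire series estimate and the ensuing minimization over $z$.
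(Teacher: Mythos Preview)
Your proposal is correct and follows essentially the same approach as the paper's proof: bound the effective resistance by the single-wire resistance via Remark~\ref{rem:conductance-bound} and the measure estimates, use completeness of the graph for $i=1$, and for $i>1$ combine the bipartite structure with the triangle inequality through an opposite-side vertex, then minimize. Your write-up is in fact slightly more explicit than the paper's in justifying the single-wire bound from the variational definition.
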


\begin{proof}
First take $i=1$. Recall that $
\mu_1^{(n)}(x) \geq 2^{-(n+1)}$
for all $x \in V_1^{(n)}$. 
For any $x,y \in V_1^{(n)}$, we use Remark \ref{rem:conductance-bound} to find
\[
R_1^{(n)}(x,y) 
    \leq \delta_{\sigma_{x,y}^{(n)}}
    \leq \frac{1}{\lambda_1\mu_1^{(n)}(x)\mu_1^{(n)}(y)} |x-y|^{1+2s}
    \leq \frac{(2^{n+1})^2}{c_1}|x-y|^{1+2s}.
\]

Now take $i>1$.  
If $x \in V_{i-}^{(n)}$, $y \in V_{i+}^{(n)}$, then
\[
R_i^{(n)}(x,y) 
    \leq \delta_{\sigma_{x,y}^{(n)}}
    \leq \frac{1}{\lambda_i\mu_i^{(n)}(x)\mu_i^{(n)}(y)} |x-y|^{1+2s}
    = \frac{(2^{n}i)^2}{\lambda_i}(y-x)^{1+2s}.
\]
On the other hand, if $x,y \in V_{i-}^{(n)}$, then there is no wire connecting $x$ and $y$, so we find
\begin{align*}
R_i^{(n)}(x,y)
    &\leq \min_{z \in V_{i+}^{(n)}} \left( R_i^{(n)}(x,z)+R_i^{(n)}(y,z)\right) \\
    &\leq  \min_{z \in V_{i+}^{(n)}} \left( \delta_{\sigma_{x,z}^{(n)}}+\delta_{\sigma_{z,y}^{(n)}}\right)\\
    &\leq \min_{z \in V_{i+}^{(n)}} \left(\frac{(2^{n}i)^2}{\lambda_i}(z-x)^{1+2s} + \frac{(2^{n}i)^2}{\lambda_i}(z-y)^{1+2s}\right)\\
    &= \frac{(2^{n}i)^2}{\lambda_i} \min_{z \in V_{i+}^{(n)}} \left((z-x)^{1+2s} + (z-y)^{1+2s}\right).
\end{align*} 
For $x,y \in V_{i+}^{(n)}$, we similarly find
\[
R_i^{(n)}(x,y)
    \leq \frac{(2^{n}i)^2}{\lambda_i} \min_{z \in V_{i-}^{(n)}} \left((x-z)^{1+2s} + (y-z)^{1+2s}\right).
\]
\end{proof}

\end{appendix}

%%%%%%%%%%%%%%%%%%%%%%%%%%
\section*{Acknowledgments}
%%%%%%%%%%%%%%%%%%%%%%%%%%

It is a pleasure to thank Tushar Das for regular  discussions surrounding this research and for suggestions that helped improve the presentation of the article. 
Part of this research was conducted during a SQuaRE at the American Institute of
Mathematics; the authors thank AIM for the support. 
PAR was partly supported by the NSF grant DMS 2140664. 
MV acknowledges the support of Australian
Laureate Fellowship FL190100081 ``Minimal surfaces, free boundaries and partial differential
equations.''

%%%%%%%%%%%%%%%%%%%%%%%%%%%%%%%%%%%%
\bibliographystyle{imsart-number}
\bibliography{ref}
%%%%%%%%%%%%%%%%%%%%%%%%%%%%%%%%%%%%

\end{document}